\numberwithin{equation}{section}
\newtheorem{theorem}{Theorem}[section]
\newtheorem{cor}[theorem]{Corollary}
\newtheorem{lemma}[theorem]{Lemma}
\newtheorem{Conjecture}[theorem]{Conjecture}
\theoremstyle{definition}
\theoremstyle{remark}
\newtheorem{rem}[theorem]{Remark}
\theoremstyle{remark}
\renewcommand{\Re}{\operatorname{Re}}
\renewcommand{\Im}{\operatorname{Im}}
\newcommand{\LHS}{\operatorname{LHS}}
\DeclareMathOperator{\erf}{erf}
\newcommand{\R}{\mathbb{R}}
\newcommand{\Z}{\mathbb{Z}}
\newcommand{\C}{\mathbb{C}}
\def\al{\alpha}
\def\la{\lambda}
\def\Ga{\Gamma}
\def\Ct{C_0}
\def\dd{\delta}
\begin{document}

\title{An asymptotic approach to Borwein-type sign pattern theorems}
\author{Chen Wang \and Christian Krattenthaler}
\address{Fakult\"at f\"ur Mathematik, Universit\"at Wien,
Oskar-Morgenstern-Platz~1, A-1090 Vienna, Austria.}
\email{chen.wang@univie.ac.at}
\urladdr{http://www.mat.univie.ac.at/\~{}kratt}

\thanks{This work is supported by the Austrian Science Fund (FWF) grant SFB F50 (F5009-N15).}

\begin{abstract}
The celebrated (First) Borwein Conjecture predicts that for all positive integers~$n$ 
the sign pattern of the coefficients of the ``Borwein polynomial''
$$(1-q)(1-q^2)(1-q^4)(1-q^5)
\cdots(1-q^{3n-2})(1-q^{3n-1})$$ 
is $+--+--\cdots$. It was proved by the first
author in [{\it Adv.\ Math.} {\bf 394} (2022), Paper No.~108028]. 
In the present paper, we extract the essentials from the former paper
and enhance them to a conceptual approach for the proof of ``Borwein-like''
sign pattern statements. In particular, we provide a new proof of the original
(First) Borwein Conjecture, a proof of the Second Borwein Conjecture (predicting
that the sign pattern of the square of the ``Borwein polynomial'' is also 
$+--+--\cdots$), and a partial proof of a ``cubic'' Borwein Conjecture due to the
first author (predicting the same sign pattern for the cube of the ``Borwein
polynomial''). Many further applications are discussed.
\end{abstract}

\maketitle

\section{Introduction}
It was in 1993 at a workshop at Cornell University, when what became known
as {\it the Borwein Conjecture} was born. (One of the authors was an intrigued
witness of this event.) George Andrews delivered a two-part lecture on
{\it``AXIOM and the Borwein Conjecture''}, in which he --- first of all ---
stated three conjectures that had been communicated to him by Peter Borwein 
(the first of which became known as ``the Borwein
Conjecture''), and then reported the lines of attack that he had tried, all of
which had failed to give a proof, stressing (quoting from~\cite{MR1395410},
which contains Andrews' findings in printed form)
that {\it``this is the sort of intriguing simply stated problem that
devotees of the theory of partitions love.''} Indeed, the statement of the
first conjecture, dubbed the ``First Borwein Conjecture'' in~\cite{MR1395410}, 
is the following.

\begin{Conjecture}[\sc P. Borwein]\label{cjBorwein}
For all positive integers $n$, the sign pattern of the coefficients
in the expansion of the polynomial $P_n(q)$ defined by
\begin{equation}\label{eqPolynomial}
P_n(q):=(1-q)(1-q^2)(1-q^4)(1-q^5)\cdots(1-q^{3n-2})(1-q^{3n-1})
\end{equation}
is $+--+--+--\cdots$, with a coefficient $0$ being considered as both $+$
and~$-$.
\end{Conjecture}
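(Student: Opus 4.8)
The plan is to estimate every coefficient $a_k$ in $P_n(q)=\sum_{k=0}^{3n^2}a_kq^k$ directly and to show that $\operatorname{sign}(a_k)$ always agrees with $\operatorname{sign}\bigl(\cos(2\pi k/3)\bigr)$, which is $+$ for $k\equiv 0$ and $-$ for $k\equiv 1,2\pmod 3$. First I would record two structural facts. Writing $P_n(q)=(q;q)_{3n}/(q^3;q^3)_n$, one sees that $P_n$ has an even number $2n$ of factors $1-q^j$ with $\sum_{3\nmid j\le 3n}j=3n^2$, so $P_n(q)=q^{3n^2}P_n(1/q)$ and $a_k=a_{3n^2-k}$; since $3n^2-k\equiv-k\pmod 3$, it suffices to prove $a_k\ge 0$ for $k\equiv 0$ and $a_k\le 0$ for $k\equiv 1\pmod 3$. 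Secondly, on the unit circle $|P_n(e^{i\theta})|=\prod_{3\nmid j\le 3n}2|\sin(j\theta/2)|$, and this attains its maximum $3^n$ exactly at $\theta=\pm2\pi/3$, where \emph{every} factor equals $2\sin(\pi/3)=\sqrt3$ --- precisely because the residue class $j\equiv 0$ that would kill $1-\om^j$ is the one omitted from the product. Away from $\pm2\pi/3$ the size drops off geometrically (at $\theta=2\pi/(3m)$ one gets only $3^{n/m}$, and off the Farey structure even less), so a key lemma to establish is a uniform bound $|P_n(re^{i\theta})|\le C(3-c)^n$ for $\theta$ outside fixed neighbourhoods of $\pm2\pi/3$, uniformly for $r$ in a suitable range.

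The heart of the argument is then the localization of $a_k=\frac1{2\pi i}\oint_{|q|=r}P_n(q)q^{-k-1}\,dq$ near $q=r\om$ and $q=r\bar\om$. Take $r=1$ first. Expanding $\log P_n(\om e^{i\ph})$ in $\ph$, one computes that the coefficient of $\ph$ equals $\tfrac{3n^2}{2}i+O(n)$ --- an essentially pure linear phase, matching the centre $3n^2/2$ of the support --- while the coefficient of $\ph^2$ equals $-\tfrac16\sum_{3\nmid j\le 3n}j^2=-n^3+O(n)$, which is real and negative and yields Gaussian decay on the scale $|\ph|\sim n^{-3/2}$. Substituting $\theta=\tfrac{2\pi}{3}+\ph$ and evaluating the resulting Gaussian integral, the neighbourhood of $\tfrac{2\pi}{3}$ contributes $\tfrac{e^{-2\pi ik/3}}{2\pi}\cdot3^n\cdot g$ with $g$ a positive real of size $\lesssim n^{-3/2}$; adding the complex-conjugate contribution from $\bar\om$ turns $e^{-2\pi ik/3}$ into $2\cos(2\pi k/3)$, so
$$a_k=\frac{3^n}{\pi}\,g\,\cos\!\Big(\frac{2\pi k}{3}\Big)+(\text{error}),$$
and the main term already exhibits the predicted pattern $+--+--\cdots$. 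This is the conceptual point of the whole approach: the arithmetic peak of $|P_n|$ at primitive cube roots of unity forces the period-three oscillation of the coefficients.

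It remains to dominate the error, and this is where the work lies. The contribution of $\theta$ outside neighbourhoods of $\pm2\pi/3$ is $O((3-c)^n)$ by the size lemma, hence negligible against $3^n n^{-3/2}$; the higher-order Taylor terms near $\pm2\pi/3$ contribute a relative $o(1)$, since on the scale $|\ph|\sim n^{-3/2}$ the cubic term $\sum j^3\ph^3$ is $O(n^{-3/2})$. The genuine obstacle is \emph{uniformity in $k$}: the Gaussian main term has size $\sim3^n n^{-3/2}$ only for $|k-\tfrac{3n^2}{2}|\lesssim n^{3/2}$ and decays super-exponentially outside that window, whereas the true coefficients toward the ends of the support are only polynomially small (indeed $a_1=-1$), so the $r=1$ estimate cannot detect their sign. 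I would handle this by running the Cauchy integral on a circle of radius $r=r(k/(3n^2))\in(0,1]$ tuned to $k$; near $r\om$ and $r\bar\om$ the integrand is still exponentially the largest (factor by factor, $|1-r^j\om^j|^2=1+r^j+r^{2j}>(1-r^j)^2$ for $3\nmid j$), and the same Laplace analysis yields a main term proportional to $\cos(2\pi k/3+\ps)$ with a secondary phase $\ps=\ps(n,k,r)$. The delicate step --- the one I expect to be hardest --- is to bound $\ps$ uniformly over the full relevant range of $r$ so tightly that it can never overturn the sign dictated by $\cos(2\pi k/3)$, in particular as $k$ approaches the ends of the support; the finitely many very small $k$ (where the coefficients stabilize, as $n\to\infty$, to those of $(q;q)_\infty/(q^3;q^3)_\infty$) and the finitely many small $n$ not reached by the asymptotics can be disposed of by separate elementary arguments and a direct check.
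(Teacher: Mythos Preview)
Your proposal is correct and follows essentially the same approach as the paper: contour integral on a circle of $m$-dependent radius, localization near the dominant peaks at $re^{\pm 2\pi i/3}$, Gaussian approximation there, exponential suppression of the tail, infinite-case argument for the extreme coefficients, and a finite computer check for small~$n$. The step you single out as hardest---uniform control of the secondary phase~$\psi$---is exactly the paper's Lemma~\ref{leArg}, which shows $\arg P_n(re^{2\pi i/3})\in(-\pi/18,0]$ for all $r\in(0,1]$, comfortably inside the $(-\pi/6,\pi/6)$ window needed to preserve the sign of $\cos(2\pi k/3)$.
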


The {\it Second Borwein Conjecture} from \cite{MR1395410} 
predicts the same sign behaviour of the
coefficients for the square of the ``Borwein polynomial''.

\begin{Conjecture}[\sc P. Borwein]\label{cjBorwein2}
For all positive integers $n$, the sign pattern of the coefficients
in the expansion of the polynomial $P_n^2(q)$, where $P_n(q)$ is defined by
\eqref{eqPolynomial},
is $+--+--+--\cdots$, with the same convention concerning zero coefficients.
\end{Conjecture}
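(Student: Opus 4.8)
The plan is to reduce the Second Borwein Conjecture to a pair of convolution inequalities between the three residue-class components of $P_n$, and then to establish these inequalities from the same saddle-point analysis of $P_n$ that also powers our (re)proof of the First Borwein Conjecture. Write the residue decomposition $P_n(q)=A_n(q^3)-q\,B_n(q^3)-q^2\,C_n(q^3)$; by the First Borwein Conjecture (re-proved here) the polynomials $A_n,B_n,C_n$ have nonnegative coefficients. Squaring and sorting by exponent modulo~$3$ gives the exact identities
\begin{align*}
[q^{3m}]\,P_n(q)^2&=(A_n*A_n)_m+2\,(B_n*C_n)_{m-1},\\
[q^{3m+1}]\,P_n(q)^2&=(C_n*C_n)_{m-1}-2\,(A_n*B_n)_m,\\
[q^{3m+2}]\,P_n(q)^2&=(B_n*B_n)_m-2\,(A_n*C_n)_m,
\end{align*}
where $*$ is the Cauchy product of coefficient sequences and $(\,\cdot\,)_m$ denotes its $m$-th term. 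Since the first identity forces $[q^{3m}]P_n(q)^2\ge0$ at once, everything beyond the First Conjecture reduces to the two inequalities
\begin{equation*}
2\,(A_n*B_n)_m\ \ge\ (C_n*C_n)_{m-1}\qquad\text{and}\qquad 2\,(A_n*C_n)_m\ \ge\ (B_n*B_n)_m
\end{equation*}
for all $m\ge0$ and $n\ge1$ (reading $(\,\cdot\,)_{-1}:=0$). This genuinely exceeds the First Conjecture --- the sign pattern $+--$ of $P_n$ does not propagate to $P_n^2$ by any formal manipulation --- and it is precisely the factor~$2$ that will carry the argument.

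\textbf{Asymptotics of $A_n,B_n,C_n$.} The key input is the shape of these three polynomials. On $|q|=1$ one has $P_n(\zeta^{\pm1})=\big((1-\zeta)(1-\zeta^2)\big)^n=3^n$ with $\zeta=e^{2\pi i/3}$, and this is, up to a factor $1+o(1)$, the maximum of $\lvert P_n\rvert$ on the circle: any other root of unity $e^{2\pi ia/(3b)}$ contributes only $\le 3^{n/b}\le 3^{n/2}$ up to a polynomial factor, and the ``minor arcs'' are negligible. Plugging this into the roots-of-unity expressions
\begin{equation*}
3A_n(q^3)=\sum_{t=0}^{2}P_n(\zeta^tq),\qquad -3q\,B_n(q^3)=\sum_{t=0}^{2}\zeta^{-t}P_n(\zeta^tq),\qquad -3q^2C_n(q^3)=\sum_{t=0}^{2}\zeta^{-2t}P_n(\zeta^tq),
\end{equation*}
then writing the relevant coefficient as a contour integral over $|q|=1$, localizing near the three points $1,\zeta,\zeta^2$ (where $q^3$ is near $1$ and exactly two of the three summands have size $3^n$), and carrying out a Laplace expansion, one should obtain a positive Gaussian-type master function $g_{n,m}\asymp 3^{n}\,n^{-3/2}\,\exp\!\big({-}\,c\,n^{-3}(m-\tfrac12 n^{2})^{2}\big)$ (with $c>0$) such that, uniformly for $m$ in a bulk window about the centre $m\approx\tfrac12 n^2$ of $P_n$,
\begin{equation*}
A_{n,m}=2\,g_{n,m}(1+o(1)),\qquad B_{n,m}=g_{n,m}(1+o(1)),\qquad C_{n,m}=g_{n,m}(1+o(1)).
\end{equation*}
The proportion $2:1:1$ comes out because near each of the three points the dominant term has the common shape $3^n\,e^{i\beta_n\phi-\lambda_n\phi^2}$ (real $\beta_n$, $\lambda_n>0$), while the phase weights attached to $A_n,B_n,C_n$ evaluate respectively to $1+1=2$, $\zeta^2+\zeta=-1$, $\zeta+\zeta^2=-1$; the differences between the normalized parts are of strictly smaller exponential order, coming only from the subdominant resonances.

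\textbf{The convolution inequalities, and the tails.} In the bulk, writing $A_{n,k}=2g_{n,k}(1+a_k)$, $B_{n,k}=g_{n,k}(1+b_k)$, $C_{n,k}=g_{n,k}(1+c_k)$ with $\lvert a_k\rvert,\lvert b_k\rvert,\lvert c_k\rvert\le\delta_n=o(1)$, and setting $S_m:=\sum_k g_{n,k}g_{n,m-k}$ (itself Gaussian in $m$, with $S_{m-1}=(1+o(1))S_m$), one gets $(A_n*B_n)_m=2S_m(1+O(\delta_n))$ and $(C_n*C_n)_{m-1}=S_m(1+o(1))$, hence
\begin{equation*}
2\,(A_n*B_n)_m-(C_n*C_n)_{m-1}=(4-1+o(1))\,S_m>0
\end{equation*}
for $n$ large, and likewise $2(A_n*C_n)_m-(B_n*B_n)_m=(4-1+o(1))S_m>0$. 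The generous slack is exactly why \emph{no information on the phase $\beta_n$ is needed}, only the $2:1:1$ proportionality and the nonnegativity of $A_n,B_n,C_n$. Outside the bulk window $g_{n,m}$ no longer approximates the coefficients (for instance $A_{n,0}=1$, while $g_{n,0}$ is exponentially small), so the tails must be treated separately: (i) for $N\le 3n$ one has $[q^N]P_n^2=[q^N]P_\infty^2$, where $P_\infty(q)=(q;q)_\infty/(q^3;q^3)_\infty$, reducing this range to the ($n$-free) statement that $(q;q)_\infty^2/(q^3;q^3)_\infty^2$ has sign pattern $+--$, which one establishes from the classical asymptotics of the coefficients of $P_\infty$; (ii) the self-reciprocity of $P_n^2$ (which has degree $6n^2$) transfers (i) to the top range $N\ge 6n^2-3n$; (iii) in the remaining large-deviation annulus one uses a saddle-point estimate with the saddle displaced off $|q|=1$, which should again exhibit the $2:1:1$ proportionality at the indices that matter, so that the factor~$2$ wins as before; (iv) the finitely many remaining $n$ are checked by direct computation.

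\textbf{Expected main obstacle.} The hard part will be to make all error terms \emph{uniform in $m$} --- in particular, to control the tails of the convolutions, which reach out of the bulk window into the low-degree range, where the Gaussian approximation fails but the coefficients are not small --- and to dovetail the three regimes, above all the large-deviation annulus of (iii), where the three residue components of $P_n$ must be shown to stay asymptotically proportional even as the governing saddle leaves the unit circle (this is exactly where the original proof of the First Borwein Conjecture was most delicate). Securing a value of $n_0$ small enough that the finite verification in (iv) is practical is a secondary concern.
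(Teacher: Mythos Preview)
Your route is genuinely different from the paper's. You reduce the Second Borwein Conjecture to two convolution inequalities among the residue-class components $A_n,B_n,C_n$, and then propose to feed in pointwise saddle-point asymptotics for those components. The paper instead applies the contour integral and saddle-point analysis \emph{directly} to $P_n^2$: it writes
\[
[q^m]P_n^2(q)=\frac{r^{-m}}{2\pi}\int_{-\pi}^{\pi}P_n^2(re^{i\theta})e^{-im\theta}\,d\theta,
\]
chooses $r$ from the approximate saddle-point equation~\eqref{eqStationaryPoint} with $\delta=2$, and bounds the peak and tail errors exactly as for $\delta=1$. The point is that squaring commutes with the integral representation, so the convolution you set up in coefficient space is performed automatically (and trivially) in the integral domain; there is no separate convolution step to control. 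This buys the paper a uniform treatment of $\delta\in\{1,2,3\}$ with essentially the same estimates (Lemmas~\ref{leIneqPeak} and~\ref{leIneqTail}).

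Your approach is not wrong, but the obstacle you flag is more serious than it may appear. In $(A_n*B_n)_m=\sum_k A_{n,k}B_{n,m-k}$, each summand is governed by a \emph{different} saddle radius $r_k$ (since $r$ depends on the index), and the proportion $A:B:C$ is $2\cos\gamma : -2\cos(\gamma-2\pi/3) : -2\cos(\gamma+2\pi/3)$ with $\gamma=\arg P_n(r e^{2\pi i/3})$ varying over $(-\pi/18,0]$ as $r$ varies --- so the proportion is not literally $2{:}1{:}1$ off the unit circle, and your remark that ``no information on the phase $\beta_n$ is needed'' is not quite right: the $2{:}1{:}1$ itself already encodes that the phase is small. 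The slack $4>1$ does survive these perturbations, but turning your heuristic into a proof would require (i) relative-error asymptotics for $A_{n,k},B_{n,k},C_{n,k}$ uniform over the whole index range (this is essentially the content of~\cite{WANG2022108028} or of the present paper for $\delta=1$), plus (ii) a tail estimate showing that the portion of the convolution with $k$ or $m-k$ outside the bulk window is negligible, plus (iii) a separate treatment of the large-deviation annulus where the summands have mismatched saddle radii. All of this is extra work that the paper's direct approach simply sidesteps.
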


The {\it Third Borwein Conjecture} from \cite{MR1395410} 
is an assertion on the sign behaviour of the
coefficients of a polynomial similar to $P_n(q)$, where however the involved
modulus is~$5$ instead of~$3$. We shall return to it at the end of
this paper, see Conjecture~\ref{cjBorwein3} in Section~\ref{seDiscuss}.

Interestingly, 
the first author observed recently that a cubic version of the conjecture also appears 
to hold, which both Borwein and Andrews missed.

\begin{Conjecture}[\sc C. Wang]\label{cjBorwein4}
For all positive integers $n$, the sign pattern of the coefficients
in the expansion of the polynomial $P_n^3(q)$, where $P_n(q)$ is defined by
\eqref{eqPolynomial},
is $+--+--+--\cdots$, with the same convention concerning zero coefficients
as before.
\end{Conjecture}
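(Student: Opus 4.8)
I would prove as much as possible of Conjecture~\ref{cjBorwein4} by the circle method, treating the powers $P_n^m(q)$ for $m=1,2,3$ uniformly and localising the analysis at the primitive cube roots of unity $\omega=e^{2\pi i/3}$ and $\bar\omega$. The structural reason these points govern everything is that, on the unit circle, $\lvert P_n(e^{i\theta})\rvert$ is by far largest near $\theta=\pm2\pi/3$: since $\omega^k\in\{\omega,\omega^2\}$ whenever $3\nmid k$ and $(1-\omega)(1-\omega^2)=3$, one has $P_n(\omega)=P_n(\bar\omega)=3^n$, whereas $P_n(1)=P_n(-1)=0$ and $\lvert P_n(e^{i\theta})\rvert\le 3^{(1-c)n}$ for a fixed $c>0$ once $e^{i\theta}$ is bounded away from $\omega,\bar\omega$. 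Starting from
\[
[q^N]P_n^3(q)=\frac1{2\pi}\int_{-\pi}^{\pi}P_n^3(e^{i\theta})\,e^{-iN\theta}\,d\theta,
\]
I would split the circle into two major arcs of radius about $n^{-3/2}\log n$ around $\theta=\pm2\pi/3$ and a minor arc; discarding the minor arc, together with the ``collar'' between the major arcs and the region where the $3^{(1-c)n}$ bound applies, requires precisely the uniform upper bounds for $\lvert P_n(q)\rvert$ near and away from $\omega,\bar\omega$ --- a Gaussian-type bound $\lvert P_n(\omega e^{i\phi})\rvert\le 3^n e^{-c n^3\phi^2}$ for $\lvert\phi\rvert$ up to order $n^{-1}$, then $3^{(1-c)n}$ --- which the paper isolates as its main analytic input.

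On a major arc, writing $q=\omega e^{i\phi}$ and using $\sum_{1\le k\le 3n,\,3\nmid k}k=3n^2$, $\sum_{1\le k\le 3n,\,3\nmid k}k^2=6n^3+O(n^2)$ together with $\tfrac{\omega}{(1-\omega)^2}=\tfrac{\omega^2}{(1-\omega^2)^2}=-\tfrac13$, one obtains
\[
\log P_n^3(\omega e^{i\phi})=3n\log 3+i\bigl(\tfrac92 n^2+O(n)\bigr)\phi-3n^3\phi^2+O(n^4\phi^3),
\]
where the imaginary linear coefficient is essentially the centre of the coefficient sequence of $P_n^3$ and $3n^3=\tfrac12\cdot6n^3$ encodes its ``variance''. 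On $\lvert\phi\rvert\le n^{-3/2}\log n$ the cubic remainder is $o(1)$; completing the Gaussian integral and adding the complex-conjugate contribution from the arc around $\bar\omega$ gives
\[
[q^N]P_n^3(q)=\frac{3^{3n}}{\sqrt{3\pi\,n^3}}\,\cos\Bigl(\frac{2\pi N}{3}\Bigr)\exp\Bigl(-\frac{\bigl(\tfrac92 n^2-N\bigr)^2}{12\,n^3}\Bigr)\bigl(1+o(1)\bigr)+(\text{error}).
\]
Since $\cos(2\pi N/3)$ equals $1$ for $N\equiv0$ and $-\tfrac12$ for $N\equiv1,2\pmod3$, this main term has exactly the sign pattern $+--+--\cdots$, and it dominates the errors whenever $\lvert\tfrac92 n^2-N\rvert\le n^{3/2}\log n$ and $n$ is large. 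A point to watch is that the lower-order complex constants occurring in the linear and quadratic coefficients (such as $\tfrac{\omega}{1-\omega}+2\tfrac{\omega^2}{1-\omega^2}$) must contribute only a phase of size $o(1)$ in this range, not an honest rotation of the cosine. By the palindromy $q^{3n^2}P_n(1/q)=P_n(q)$, hence $[q^N]P_n^3=[q^{9n^2-N}]P_n^3$, it remains to treat $N\le\tfrac92 n^2$.

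The difficulty is the complementary range $N\le\tfrac92 n^2-n^{3/2}\log n$, where the Gaussian term is swamped by the errors. For $N\le 3n$ one has $[q^N]P_n^3(q)=[q^N]P_\infty^3(q)$ with $P_\infty(q)=(q;q^3)_\infty(q^2;q^3)_\infty$, and a singularity analysis of $P_\infty^3$ at its dominant singularities $q=\omega,\bar\omega$ --- where $\log P_\infty^3(\omega e^{-t})\sim A/t$ with $A=-\bigl(\operatorname{Li}_2(\omega)+\operatorname{Li}_2(\omega^2)\bigr)=\tfrac{\pi^2}{9}>0$, using $\operatorname{Li}_2(\omega)+\operatorname{Li}_2(\bar\omega)=-\tfrac{\pi^2}{9}$ --- again yields a main term proportional, up to polynomial factors, to $e^{2\sqrt{AN}}\cos(2\pi N/3)$, of the right sign once $N$ is large, with the finitely many smallest $N$ checked by hand. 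The genuinely hard part is the \emph{intermediate} range $3n\ll N\ll n^2$: neither the unit-circle expansion (whose main term is now exponentially small) nor the fixed-product analysis applies, and one must run a saddle-point argument on a circle $\lvert q\rvert=r(N)$ with $r(N)$ sliding from $0$ towards $1$, expanding $P_n^3\bigl(r(N)\,\omega\,e^{i\phi}\bigr)$ with an Edgeworth-type correction and controlling the saddle-angle phase, which is no longer negligible there. I expect this to be the main obstacle. For the square $P_n^2$, and a fortiori for $P_n$ itself, the analogous error and phase terms stay comfortably below the main term across the whole range of $N$, which is why the First and Second Borwein Conjectures can be settled outright; for the cube the estimates are far tighter, so this step will close only for $N$ --- equivalently, for $n$ --- beyond an explicit but impractically large threshold, leaving a narrow window of parameters to a currently infeasible direct verification. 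That is exactly the sense in which only a \emph{partial} proof of Conjecture~\ref{cjBorwein4} is to be expected.
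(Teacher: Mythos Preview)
Your overall strategy --- contour integral, peaks at the primitive cube roots of unity, Gaussian approximation there, decay on the rest of the circle --- is exactly the paper's approach, and your diagnosis that the intermediate range $3n\ll N\ll n^2$ forces a sliding radius $r=r(N)$ is spot on. However, the paper does not first work on the unit circle and then switch to a variable radius for the hard range: it takes the radius~$r=r_{m,n}$ to be the solution of an approximate saddle-point equation from the outset and treats all $m\in[3n,\tfrac12\deg P_n^3]$ uniformly in a single framework. This avoids your patchwork of regimes and is what makes the error estimates tractable.

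There is a genuine misidentification of the obstacle. Off the unit circle $P_n^3(re^{2\pi i/3})$ is no longer real; the main term is proportional to $\cos\!\bigl(\arg P_n^3(re^{2\pi i/3})-2m\pi/3\bigr)$, not to $\cos(2\pi m/3)$. The paper shows $\arg P_n(re^{2\pi i/3})\in(-\pi/18,0]$, hence $\arg P_n^3(re^{2\pi i/3})\in(-\pi/6,0]$. For $m\equiv 0,1\pmod 3$ this cosine is bounded below by an absolute constant, and the paper's error bounds beat it for all $n\ge 3150$, with a perfectly feasible finite check below that --- so those two thirds of the conjecture are proved outright. The failure is \emph{not} an impractically large threshold: it is specific to the residue class $m\equiv 2\pmod 3$ (below the midpoint), where as $r\to r_0$ the argument approaches $-\pi/6$ and the cosine tends to~$0$, while the error terms are only polynomially small in~$n$. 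This degeneracy is structural, not a matter of tighter constants; it is mirrored by the identity for $P_\infty^3(q)$ (from Borwein--Borwein--Garvan) which forces $[q^{3m+2}]P_\infty^3(q)=0$ for all~$m$. Your own caveat about ``an honest rotation of the cosine'' is exactly the issue, but it bites only in one residue class, and resolving it would require extracting the next-order term in the peak expansion rather than pushing a threshold.
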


These deceivingly simple conjectures intrigued many researchers after
Andrews had introduced them to a larger audience --- in particular the
first one, Conjecture~\ref{cjBorwein}. Various approaches were tried
--- combinatorial, or using $q$-series techniques
(cf.\ e.g.\ \cite{MR1395410,MR4119403,MR2140441,MR1392489,MR1660081,SchlZhou,MR1874535,MR2009544,MR2220659}) ---, 
variations and generalisations were proposed (see 
\cite{MR4039550,MR1392489,MR1660081,SchlZhou}) --- most notably
Bressoud's conjecture in~\cite{MR1392489} --- sometimes leading to proofs
of related results. However, none of these attempts came anything close to
progress concerning the original First Borwein Conjecture, Conjecture~\ref{cjBorwein}.
It took almost 30 years until the first author succeeded in proving this
conjecture in~\cite{WANG2022108028}, using analytic means.

Starting point of the proof in \cite{WANG2022108028} was explicit sum representations
of the polynomials $A_n(q),B_n(q),C_n(q)$ in the decomposition
of $P_n(q)$ given by
\begin{equation} \label{eq:ABC} 
P_n(q)=A_n(q^3)-qB_n(q^3)-q^2C_n(q^3),
\end{equation}
due to Andrews \cite{MR1395410}. It should be noted that the First Borwein Conjecture,
Conjecture~\ref{cjBorwein}, is equivalent to the statement that all coefficients
of the polynomials $A_n(q),B_n(q),C_n(q)$ are non-negative.
These coefficients were written in~\cite{WANG2022108028} 
in terms of the obvious Cauchy integrals.
Subsequent saddle point approximations showed that for $n>7000$
the coefficient of $q^m$
in $A_n(q),B_n(q),C_n(q)$ is positive in the range $n< m< n^2-n$.
The proof could then be completed by appealing to
another result of Andrews~\cite{MR1395410} which gives non-negativity of the
coefficients of $q^m$ in $A_n(q),B_n(q),C_n(q)$ for $m\le n$ and $m\ge n^2-n$
``for free'', and by performing a
computer check of the conjecture for $n\le 7000$.

At this point, it must be mentioned that formulae analogous to Andrews' formulae
for the decomposition polynomials $A_n(q),B_n(q),C_n(q)$ are not available for
the analogous decompositions of $P_n^2(q)$ or $P_n^3(q)$, or for the corresponding
decomposition of the polynomial $S_n(q)$ in the Third Borwein Conjecture
(Conjecture~\ref{cjBorwein3}), and that it is unlikely that such formulae exist. 

Thus, the article~\cite{WANG2022108028} left open
the question whether it was just an isolated instance that this approach
succeeded to prove the First Borwein Conjecture, or whether similar ideas
could also lead to proofs of the Second and Third Borwein Conjecture, or of
the new Conjecture~\ref{cjBorwein4}. Admittedly, since the proof
in~\cite{WANG2022108028} relied on Andrews' sum representations
for the decomposition polynomials $A_n(q),B_n(q),C_n(q)$ in an
essential way, at the time it
did not seem very realistic to expect that, with these ideas, one could
go beyond the First Borwein Conjecture.

In the meantime, however, we realised that, instead of relying on Andrews'
sum representations for the decomposition polynomials, the saddle point
approximation idea could be directly applied to $P_n(q)$ and its powers, and, when doing
this, surprisingly the quantities that have to be approximated are very similar
to those that were at stake in~\cite{WANG2022108028}
(compare, for instance, the sum over~$m$ at the beginning of the
proof of Proposition~11.1 in \cite{WANG2022108028} with
\eqref{eq:P/P-sum} below, or
\cite[Lemma~B.3]{WANG2022108028} and Lemma~\ref{leIneqCosSum1}). There is a price to pay
though: while in~\cite{WANG2022108028} the (dominant) saddle points were located on the
real axis, with this new approach we have to deal with (dominant) saddle points located
at complex points. This makes the estimations that have to be
performed more delicate.\footnote{There is in fact a further subtlety not
present in~\cite{WANG2022108028} that makes the task of carrying
through this new approach more difficult, see Footnotes~\ref{foot:2}
and~\ref{foot:3}.}
On the positive side, it allows one to proceed in a more streamlined fashion
--- for example, here we do not have to deal with several different kinds of peaks
along the integration contour, as opposed to~\cite{WANG2022108028} where 
an unbounded number of peaks of two different kinds 
had to be considered; here we encounter only two
peaks that are (complex) conjugate to each other.
Most importantly, it allows us
to provide a {\it uniform} proof of the First {\it and\/} Second Borwein Conjecture, 
{\it as well as} a partial proof of the cubic conjecture, and
altogether this is not longer than the proof of ``just'' the First Borwein
Conjecture in~\cite{WANG2022108028}.

In the next section, we provide an outline of our proof of Conjectures~\ref{cjBorwein}
and \ref{cjBorwein2}, and of ``two thirds'' of Conjecture~\ref{cjBorwein4}.
Very roughly, the approach that we put forward consists of the following steps: 

\begin{enumerate} 
\item show that the conjectures hold for the ``first few'' and the
``last few'' coefficients (see Part~A in Section~\ref{seOutline}); 
\item represent the coefficients by a contour integral 
(see Part~B in Section~\ref{seOutline}); 
\item divide the contour into two parts, the ``peak part'' (the part
close to the dominant saddle points of the integrand) and the
remaining part, the ``tail part''
(see Part~C in Section~\ref{seOutline});
\item for ``large'' $n$ (where ``large'' is made precise),
bound the error made by approximating the ``peak part'' by a Gau\ss ian 
integral (the ``peak error'') 
(see Part~D in Section~\ref{seOutline});
\item for ``large'' $n$,
bound the error contributed by the ``tail part'' (the ``tail
error'') (see Part~D in Section~\ref{seOutline}); 
\item verify the conjectures for ``small'' $n$ (see Part~E in Section~\ref{seOutline}); 
\item put everything together to complete the proofs 
(see Part~E in Section~\ref{seOutline}).
\end{enumerate}

The details are then filled in in the subsequent sections. More precisely, 
in Section~\ref{seInfinite} we explain how prior results of Andrews,
of Kane, and of Borwein, Borwein and Garvan confirm the conjectures 
for the ``first few'' and the ``last few'' coefficients. 
Section~\ref{seConvention} prepares some notation and preliminary material on
log-derivatives of the ``Borwein polynomial'' $P_n(q)$ that is used
ubiquitously in the subsequent sections. In Section~\ref{seLocate}, 
we make our choice of contour for the integral representation precise:
it is a circle whose radius satisfies an equation, namely
\eqref{eqStationaryPoint}, that approximates the actual saddle point
equation. Lemma~\ref{leRadiusBound} presents fundamental properties
that this choice satisfies. In Section~\ref{secutoff}, we make precise
how we divide the contour into the ``peak part'' and the ``tail part''.
Lemma~\ref{leCutoffPrelim} in that section presents first properties
of this cutoff, to be used in the later parts of the paper.
The fundamental inequality that is derived from this subdivision of the
integral contour is the subject of Section~\ref{seError}. Namely,
Lemma~\ref{lem:ep0ep1} provides
a qualitative upper bound for the resulting approximation of the
coefficients of $P_n^\dd(q)$, with $\dd\in\{1,2,3\}$, in terms of a 
peak error term and a tail error term.
How to bound the peak error efficiently from above is shown in
Section~\ref{seEps0}. This section contains in particular a fundamental result on
the approximation of a (complex) function by a Gau\ss ian integral 
that may be of independent interest for other applications; see
Lemma~\ref{leGaussian}. Subsequently, Section~\ref{seEps1} is devoted
to bound the tail error from above. Finally, in Section~\ref{seMain}
we put everything together and complete the proofs of
Conjectures~\ref{cjBorwein} and~\ref{cjBorwein2}, and of ``two
thirds'' of Conjecture~\ref{cjBorwein4}.

Without any doubt, several of the arguments that we need are quite
technical. In the interest of not losing pace (too much) while guiding the reader
through our proofs, we have ``outsourced'' some of the auxiliary
results and have collected them in an appendix.

It must be emphasised though that a certain ``level of technicality''
is unavoidable since the approximations that we are carrying out here
go with an intrinsic subtlety (already present in~\cite{WANG2022108028})
that is absent in most applications of the saddle point approximation
technique:
our goal is to show that the coefficients of $q^m$ in the ``Borwein
polynomial'' $P_n(q)$ (respectively in its powers) obey a certain sign pattern,
with $m$ running through a range that includes the asymptotic orders
$O(n^\omega)$, where $1\le \omega\le2$. 
Consequently, our estimations must hold for that entire range,
which makes it necessary to manage expressions that contain 
the radius~$r$ of our contour that is solution of the approximate
saddle point equation \eqref{eqStationaryPoint} without further
specification of its asymptotic order, as for example in the
definition of the cutoff in \eqref{eqCutoffTheta0}. The ``best'' that
we can say about~$r$ is its range as given in
Lemma~\ref{leRadiusBound} (which again --- necessarily --- covers
several different asymptotic orders in terms of~$n$ at logarithmic scale).

The last section, Section~\ref{seDiscuss}, is devoted to a discussion
of our approach and further applications. 
We start by explaining what is missing for the
completion of the proof of Conjecture~\ref{cjBorwein4}. We discuss
the applicability of our methods for proving the Third Borwein Conjecture
(see Conjecture~\ref{cjBorwein3}), a conjecture of Ismail,
Kim and Stanton vastly generalising
the First Borwein Conjecture (see Conjecture~\ref{cjIsKimSt}), 
or related or similar conjectures,
including some new ones that we present in this last section 
(in particular Conjectures~\ref{cjmod4} and~\ref{cjmod7}).
We also point out that the Bressoud Conjecture might as well be amenable
to the ideas developed in this paper.
Finally, we contemplate on the question whether the Borwein
Conjecture(s) should be considered as combinatorial or analytic,
a question which is evidently raised by our proof(s) (and other
observations).

\section{An outline of the proof}\label{seOutline}
Here, we provide a brief outline of our proof of Conjectures~\ref{cjBorwein}
and \ref{cjBorwein2}, and of a part of Conjecture~\ref{cjBorwein4}.
From here on, we use the standard notation for $q$-shifted factorials,
\begin{align*}
(\alpha;q)_n&=(1-\alpha)(1-\alpha q)\cdots(1-\alpha q^{n-1}), \text{ for }n\geq1,\\
(\alpha;q)_0&=1.
\end{align*}
If $\vert q\vert<1$, or in the sense of formal power series in~$q$, 
this definition also makes sense for $n=\infty$.
Using this notation, the ``Borwein polynomial'' can be written as
$$
P_n(q)=\frac {(q;q)_{3n}} {(q^3;q^3)_n}.
$$
Furthermore, in the following we shall write $[q^m]P(q)$ for the coefficient of~$q^m$
in the polynomial~$P(q)$.

Our goal is to show that the sign pattern of the coefficients 
$$
[q^m]P_n^\dd (q),\quad m=0,1,2,\dots,
$$
is $+--+--+--\cdots$, where $\dd $ is $1$, $2$, or $3$.

Our proof is composed of several parts.

\medskip
{\sc A. The conjectures hold for the ``first'' $3n+1$ coefficients and the
``last'' $3n+1$ coefficients.} We observe that the first few coefficients
of $P_n^\dd (q)$ and $P_\infty^\dd (q)$, with $\dd \in\{1,2,3\}$, are identical.
More precisely, we have
\begin{equation} \label{eq:d=infty} 
[q^m]P_n^\dd (q)=[q^m]P_\infty^\dd (q)
\end{equation}
for $0\le m\le 3n$ and $\dd \in\{1,2,3\}$ (actually for all integers~$\dd $).
By a result of Andrews~\cite{MR1395410} this implies the sign pattern
of the first $3n+1$ coefficients of $P_n(q)$ as predicted by
Conjecture~\ref{cjBorwein}. Similarly, by a result of 
Kane~\cite{MR2052400}, this implies the sign pattern
of the first $3n+1$ coefficients of $P_n^2(q)$ as predicted by
Conjecture~\ref{cjBorwein2}. By using a result of Borwein, Borwein and
Garvan~\cite{MR1243610}, this also implies the sign pattern
of the first $3n+1$ coefficients of $P_n^3(q)$ as predicted by
Conjecture~\ref{cjBorwein4}. See Section~\ref{seInfinite} for the
details.

Combining the above observation with 
the fact that $P_n(q)$, and hence $P_n^\dd (q)$ for all~$\dd $, is
palindromic, it remains to show that the coefficients of $q^m$ in
$P_n^\dd (q)$ for $3n\le m\le(\dd \deg P_n)/2$ follow the sign pattern
predicted by Conjectures~\ref{cjBorwein}--\ref{cjBorwein4}.

\medskip
{\sc B. Contour integral representation of the coefficients of
  $P_n^\dd (q)$.}
From now on, for convenience, we shall often use $Q_n(q)$ to denote
$P_n^\dd (q)$, where $\dd $ is~$1$, $2$, or~$3$.

Using Cauchy's integral formula, the coefficient $[q^m]Q_n(q)$ 
can be represented as the integral
\[
\frac{1}{2\pi i}\int_{\Gamma} Q_n(q)\frac{dq}{q^{m+1}},
\]
where $\Gamma$ is any contour about~$0$ with winding number 1. We will choose $\Gamma$ as a circle centred at $0$ with radius $r$ for some $r\in\R^+$, so that the integral becomes
\begin{equation}
[q^m]Q_n(q)=\frac{r^{-m}}{2\pi}\int_{-\pi}^{\pi}Q_n\left(re^{i\theta}\right)e^{-im\theta}\,d\theta.\label{eqIntRep}
\end{equation}

\medskip
{\sc C. The saddle point approximation.}
The exact choice of $r$ is related to the \emph{saddle points\/} of
$q\mapsto\vert q^{-m}Q_n(q)\vert$, 
and we will elaborate on this in Section~\ref{seLocate}.
The appropriate choice for~$r$ is a value smaller than~$1$ but close to~$1$,
see Lemma~\ref{leRadiusBound}.

Figure \ref{fiCircle} illustrates the typical behaviour of 
$\theta\mapsto|P_n\left(re^{i\theta}\right)|$ on the circle
$\{z\in\C:|z|=r\}$. 
In particular, we can observe the following general features in the graph:
\begin{itemize}
\item the function has two peaks close to $\theta=2\pi/3$ and
  $\theta=-2\pi/3$;\footnote{\label{foot:2}The actual locations of the peaks have arguments
slightly off $\theta=\pm2\pi/3$. This is one of the delicate
points of the estimations to be performed.}
\item the function values outside small neighbourhoods of $\theta=
2\pi/3$
  and $\theta=-2\pi/3$ are very small compared to the peak value.
\end{itemize}

\begin{figure}
\includegraphics[width=0.8\textwidth]{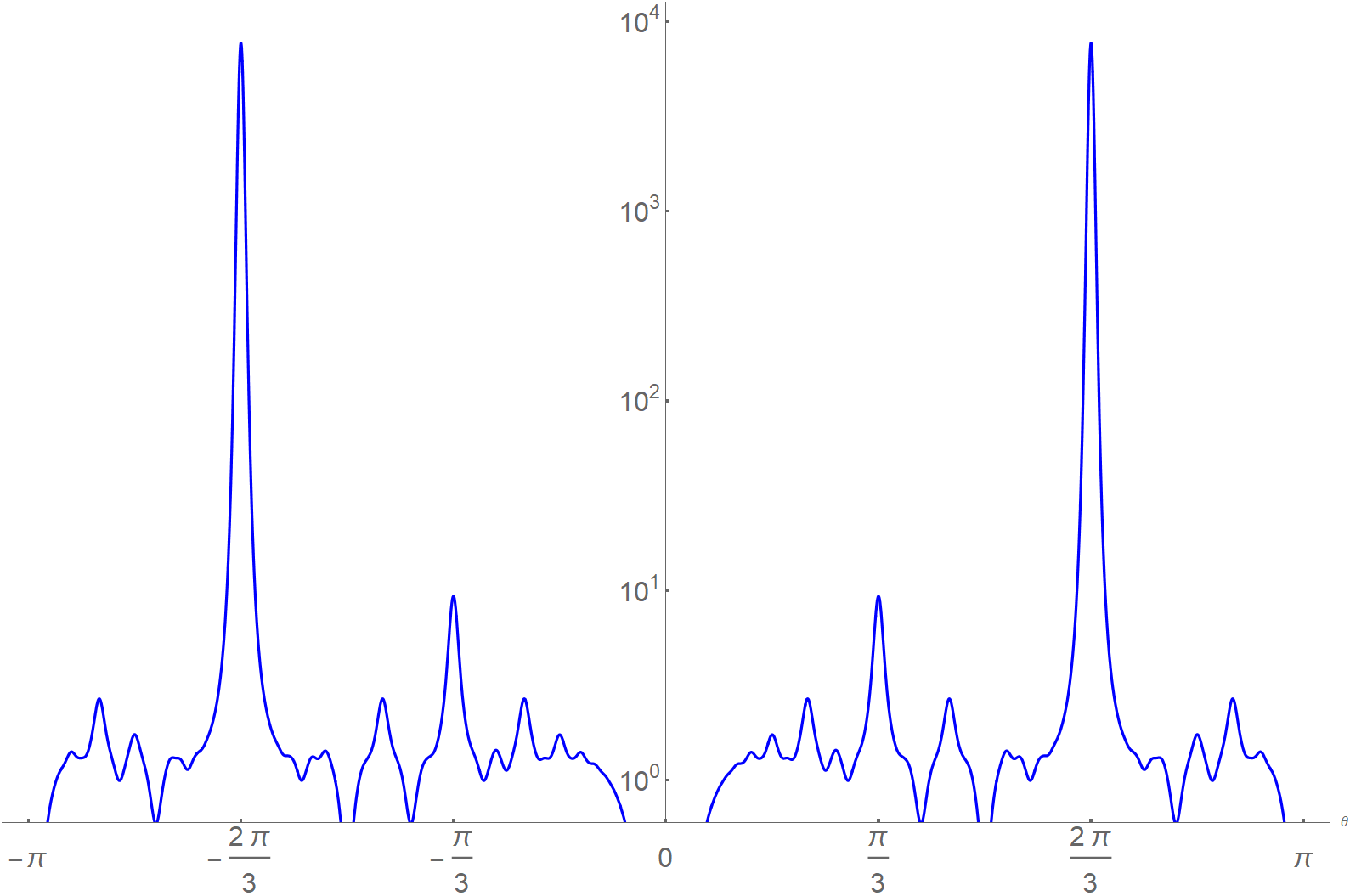}
\caption{Modulus of $P_{81}(0.95e^{i\theta})$. The vertical axis has logarithmic scale.}\label{fiCircle}
\end{figure}

Based on these heuristics, we choose a cutoff $\theta_0$ (to be
determined in \eqref{eqCutoffTheta0} in Section~\ref{secutoff}), 
and distinguish the following parts of the interval $[-\pi,\pi]$: 
\begin{itemize}
\item The \emph{peak part} $I_\text{peak}:=[-2\pi/3-\theta_0,-2\pi/3+\theta_0]\cup[2\pi/3-\theta_0,2\pi/3+\theta_0]$.
\item The \emph{tail part} $I_\text{tail}:=[-\pi,\pi]\setminus I_\text{peak}$.
\end{itemize}
\noindent
Naturally, the integral \eqref{eqIntRep} can be divided into two subintegrals corresponding to the two parts above.

We make the following observations concerning the subintegrals:

\smallskip
$\bullet$ The subintegral $\int_{I_{\text{peak}}}
  Q_n\left(re^{i\theta}\right)e^{-im\theta}\,d\theta$ can be
  approximated by a Gau\ss ian integral. 
More specifically, if we define
\begin{equation}\label{eqDefG}
g_{Q_n}(r)=\left.-\Re\frac{\partial^2}{\partial\theta^2}\log Q_n(re^{i\theta})\right|_{\theta=2\pi/3},
\end{equation}
then we have
\begin{align}
\int^{2\pi/3+\theta_0}_{2\pi/3-\theta_0}Q_n(re^{i\theta})e^{-im\theta}\,d\theta 
&=e^{-2\pi m i/3}\int_{-\theta_0}^{\theta_0}Q_n(re^{i(\theta+2\pi/3)})e^{-im\theta}\,d\theta \nonumber\\
&\approx e^{-2\pi m i/3}Q_n(re^{2\pi i/3})\int_{-\theta_0}^{\theta_0}e^{-g_{Q_n}(r)\theta^2/2}\,d\theta \nonumber\\
&= e^{-2\pi m i/3}Q_n(re^{2\pi i/3})\frac{\sqrt{2\pi}}{\sqrt{g_{Q_n}(r)}}\erf\left(\frac{\theta_0\sqrt{g_{Q_n}(r)}}{\sqrt2}\right).
\label{eqPart1}
\end{align}
Here, ``$\approx$'' means ``approximated by''.
Since $Q_n(q)$ is a polynomial with real coefficients, we have
$Q_n(\bar{z})=\overline{Q_n(z)}$. Therefore, an analogous
approximation holds for the other interval of $I_{\text{peak}}$,
that is, for the integral over~$\theta$ in $[-2\pi/3-\theta_0,-2\pi/3+\theta_0]$.
The error made by these approximations is
captured by the term $\epsilon_{0,Q_n}(m,r)$ defined below.

\smallskip
$\bullet$
The subintegral over $I_\text{tail}$ can be bounded above by
\begin{equation}\label{eqPart2}
\left|\int_{I_\text{tail}}Q_n(re^{i\theta})e^{-im\theta}\,d\theta\right|
\leq \left|Q_n(re^{2\pi i/3})\right|\int_{I_\text{tail}}\left|\frac{Q_n(re^{i\theta})}{Q_n(re^{2\pi i/3})}\right|\,d\theta.
\end{equation}
The error of this approximation is
captured by the term $\epsilon_{1,Q_n}(r)$ defined below.

\medskip
{\sc D. Bounding the errors.}
Our next step is to estimate the error in the approximation
\eqref{eqPart1} of the peak part, and to bound the tail part
\eqref{eqPart2} of the integral. Accordingly, we define the
error terms $\epsilon_{0,Q_n}(m,r)$ and $\epsilon_{1,Q_n}(r)$.
Both are {\it relative} errors, namely relative 
to the modulus of the (presumably, at this point) dominating part 
$$|Q_n(re^{2\pi i/3})|\frac{\sqrt{2\pi}}{\sqrt{g_{Q_n}(r)}}
\erf\left(\frac{\theta_0\sqrt{g_{Q_n}(r)}}{\sqrt2}\right)$$ 
(cf.\ \eqref{eqPart1}). 
Namely, we define
\begin{multline}
\epsilon_{0,Q_n}(m,r)\\
\label{eq:ep0}
:=2\left|
\frac{\sqrt{g_{Q_n}(r)}}
{\sqrt{2\pi}\erf\left({\theta_0\sqrt{g_{Q_n}(r)/2}}\right)}
\int_{-\theta_0}^{\theta_0}\left(\frac{Q_n(re^{i(\theta+2\pi/3)})}{Q_n(re^{2\pi i/3})}e^{-im\theta}-e^{-g_{Q_n}(r)\theta^2/2}\right)\,d\theta\right|
\end{multline}
and
\begin{equation}
\label{eq:ep1}
\epsilon_{1,Q_n}(r):=\frac{\sqrt{g_{Q_n}(r)}}
{\sqrt{2\pi}\erf\left({\theta_0\sqrt{g_{Q_n}(r)/2}}\right)}
\int_{I_\text{tail}}\left|\frac{Q_n(re^{i\theta})}{Q_n(re^{2\pi i/3})}\right|\,d\theta.
\end{equation}
In Lemma~\ref{lem:ep0ep1} in Section~\ref{seError}, we show that,
with these error terms, the coefficient of~$q^m$ in $Q_n(q)$ can be
approximated by
\begin{multline}
\left|\frac{r^m\sqrt{2\pi g_{Q_n}(r)}}
{\erf\left({\theta_0\sqrt{g_{Q_n}(r)/2}}\right)}
\frac {1} {|Q_n(re^{2\pi i/3})|}[q^m]Q_n(q)
-2\cos\left(\arg Q_n(re^{2\pi i/3})-2m\pi/3\right)
\right|
\\
\le\epsilon_{0,Q_n}(m,r)+\epsilon_{1,Q_n}(r).
\label{eqTargetMain}
\end{multline}

Therefore, there are two things to accomplish, the second required by
the first: 

\begin{enumerate} 
\item Show that the error terms $\epsilon_{0,Q_n}(m,r)$ and $\epsilon_{1,Q_n}(r)$ are small enough to satisfy the inequality
\begin{equation} \label{eqFinalTarget}
\epsilon_{0,Q_n}(m,r)+\epsilon_{1,Q_n}(r)<\left|2\cos\left(\arg Q_n(re^{2\pi i/3})-2m\pi/3\right)\right|.
\end{equation}
\item Get a control on $\arg Q_n(re^{2\pi i/3})$ and show that it is
less than $\frac {2\pi} {3}-\frac {\pi} {2}=\frac {\pi} {6}$ in
absolute value.
\end{enumerate}

Both together allow us to conclude that $[q^m]Q_n(q)$ has the
same sign as the cosine term on the right-hand side of \eqref{eqFinalTarget}, that is, it is positive
if $m\equiv0$~(mod~$3$) and negative otherwise, exactly as predicted by
Conjectures~\ref{cjBorwein}--\ref{cjBorwein4}.

The peak error $\epsilon_{0,Q_n}(m,r)$ is estimated in
Section~\ref{seEps0} (see Lemma~\ref{leIneqPeak}), 
and Section~\ref{seEps1} treats the tail error
$\epsilon_{1,Q_n}(r)$ (see Lemma~\ref{leIneqTail}). 

\medskip
{\sc E. Concluding the proof.}
As explained in the preceding Part~D, the tasks formulated in Items~(1) and~(2)
above must be accomplished. Task~(2) is taken care of in Lemma~\ref{leArg}. 
By combining this with the obtained bounds on $\epsilon_{0,Q_n}(m,r)$ and
$\epsilon_{1,Q_n}(r)$, Task~(1) is carried out in the remaining parts of 
Section~\ref{seMain} for ``large''~$n$.
In combination with suitable direct calculations for ``small''~$n$, 
this leads to full proofs of the First and Second Borwein Conjecture, and
to a partial proof of the Cubic Borwein Conjecture,
see Theorems~\ref{thMain1}, \ref{thMain2} and \ref{thMain3}.

\section{The infinite cases}\label{seInfinite}
In this section, we show that the first $3n+1$
coefficients of $P_n^\dd (q)$, where $\dd $ is $1$, $2$, or~$3$,
follow the sign pattern $+--+--+--\cdots$,
by using the simple fact, observed before
in \eqref{eq:d=infty}, that they agree with the corresponding
coefficients of $P_\infty^\dd (q)$, and by exploiting known properties
of the expansions of $P_\infty^\dd (q)$.

\medskip
Andrews \cite[Eqs.~(4.2)--(4.4)]{MR1395410} showed that
  \[
  P_{\infty}(q)=\frac{(q;q)_{\infty}}{(q^3;q^3)_\infty}=\frac{(q^{12},q^{15},q^{27};q^{27})_\infty-q(q^{6},q^{21},q^{27};q^{27})_\infty-q^2(q^{3},q^{24},q^{27};q^{27})_\infty}{(q^3;q^3)_\infty}.
  \]
Clearly, this implies that the sign pattern of the coefficients of
$P_\infty(q)$ is $+--+--+--\cdots$.\footnote{We point out that this
  sign pattern of the coefficients of $P_\infty(q)$ also follows from
a general result of Andrews~\cite[Theorem~2.1]{MR1395410} that,
according to \cite{MR1395410}, has also been independently obtained
by Garvan and P.~Borwein.}

\medskip
Using the circle method,
Kane~\cite{MR2052400} established the sign pattern $+--+--+--\cdots$
for the power series $(q;q)^2_{\infty}/(q^3;q^3)_\infty$,
except for the coefficient of $q^5$ which is equal to~$1$. A
multiplication with the series $(q^3;q^3)_\infty^{-1}$ (which has
positive coefficients) transforms this power series into
$P_{\infty}^2(q)$, and in the process removes the mentioned outlier. 

\medskip
Finally, it follows from results of Borwein, Borwein and Garvan~\cite{MR1243610} that 
\begin{equation} \label{eq:BBG} 
  \frac{(q;q)_\infty^3}{(q^3;q^3)_\infty}
=\sum_{m,n\in\Z}q^{3(m^2+mn+n^2)}
-q\sum_{m,n\in\Z}q^{3(m^2+mn+n^2+m+n)},
\end{equation}
where, as usual, $\Z$ denotes the set of integers.
To be precise, from Items~(ii) and~(iii) of Lemma~2.1
in~\cite{MR1243610}, one can derive the equation $b(q)=a(q^3)-c(q^3)$.
Proposition~2.2 in~\cite{MR1243610} shows that $b(q)$ equals the
left-hand side in \eqref{eq:BBG}, while the definitions of $a(q^3)$
and $b(q^3)$ from~\cite{MR1243610} are as stated on the right-hand
side of \eqref{eq:BBG}. As before, multiplication of both sides of
\eqref{eq:BBG} by $(q^3;q^3)_\infty^{-2}$, which is a power series
with non-negative coefficients, shows that the coefficients of
$P_n^3(q)$ follow the sign pattern $+--+--+--\cdot$.\footnote{We point
out that this sign pattern of the coefficients of $P_\infty^3(q)$ also
follows from a general result of Schlosser and Zhou~\cite[Theorem~6]{SchlZhou}.}

It should be noted however that \eqref{eq:BBG} also implies that 
the coefficients of $q^{3m+2}$ in $P_\infty^3(q)$
are zero for all~$m$. This observation, and its implications, will be
discussed in more detail in Item~(1) of Section~\ref{seDiscuss}.

\section{The log-derivatives of the ``Borwein polynomial'' $P_n(q)$}\label{seConvention}

In this section, we present some basic facts on derivatives of $\log
P_n(r e^{i\theta})$ with respect to~$\theta$. These will be used
ubiquitously in the subsequent sections.

By routine calculation, we see that the $j$-th derivative of $\log
P_n(r e^{i\theta})$, ``centred'' at $\theta=2\pi/3$, can be expressed as 
\begin{equation}\label{eqPnDerivs}
\left(\frac{\partial}{\partial\theta}\right)^j\log P_n(r e^{i\theta})=\frac12i^jU_j(n,re^{i(\theta-2\pi/3)})+\frac{\sqrt{3}}{2}i^{j-1}V_j(n,re^{i(\theta-2\pi/3)}),
\end{equation}
where
\begin{align}
U_j(n,z):=\sum_{k=1}^{n}\left((3k-2)^ju_j(z^{3k-2})+(3k-1)^ju_j(z^{3k-1})\right),\label{eqDerivU}\\
V_j(n,z):=\sum_{k=1}^{n}\left((3k-2)^jv_j(z^{3k-2})-(3k-1)^jv_j(z^{3k-1})\right),\label{eqDerivV}
\end{align}
and the rational functions $u_j$ and $v_j$ are given by
\begin{align}\label{eqDerivFuncs-u}
u_j(z)&:=\left(z\frac{d}{dz}\right)^{j-1} \frac{z(1+2z)}{1+z+z^2}, \\
\label{eqDerivFuncs-v}
v_j(z)&:=\left(z\frac{d}{dz}\right)^{j-1} \frac{z}{1+z+z^2}.
\end{align}
In particular, the first few of these functions are given by
\begin{align*}
  u_1(z)&=\frac{z(1+2z)}{1+z+z^2}, \\ v_1(z)&=\frac{z}{1+z+z^2}, \\
  u_2(z)&=\frac{z(1+4z+z^2)}{(1+z+z^2)^2}, \\ v_2(z)&=\frac{z(1-z^2)}{(1+z+z^2)^2}, \\
  u_3(z)&=\frac{z(1-z^2)(1+7z+z^2)}{(1+z+z^2)^3}, \\ v_3(z)&=\frac{z(1-z-6z^2-z^3+z^4)}{(1+z+z^2)^3}, \\
  u_4(z)&=\frac{z(1+12z-12z^2-56z^3-12z^4+12z^5+z^6)}{(1+z+z^2)^4}, \\ v_4(z)&=\frac{z(1-z^2)(1-4z-21z^2-4z^3+z^4)}{(1+z+z^2)^4}.
\end{align*}

We also define the sums
\begin{equation}\label{eqXmDef}
X_j(n,r):=\underset{3\nmid k}{\sum_{k=1}^{3n}}k^jr^k=\sum_{k=1}^{3n}k^jr^k-3^j\sum_{k=1}^nk^j(r^3)^k,
\end{equation}
and denote the corresponding infinite sum by $X_j(\infty,r)$. It is
easy to see that\break $(1-r^3)^{j+1}X_j(n,r)$ is a polynomial in $n$, $r$
and~$r^n$. 
Furthermore, $X_j(n,r)$ is increasing with respect to both $n$
and~$r$. A collection of inequalities between
various products of these sums is given in Lemma~\ref{leIneqX}. 
These inequalities are used in the estimations in Section~\ref{seEps0}.

\section{Locating the dominant (approximate) saddle points}\label{seLocate}
The results of Section~\ref{seInfinite}, and the fact that the
polynomial $P_n(q)$ is palindromic for all~$n$, together show that it
suffices to consider $[q^m]Q_n(q)$ for $m\in[3n,(\deg Q_n)/2]$, where
$Q_n$ is chosen as $P_n^\dd (q)$ for $\dd \in\{1,2,3\}$, as before.
The purpose of this section is to describe our choice of the radius
$r$ in \eqref{eqIntRep}.  

Ideally, in line with standard practice in analytic combinatorics, the
radius~$r$ in the integral in~\eqref{eqIntRep} should
be chosen such that the circle $\theta\mapsto re^{i\theta}$, 
$-\pi\le\theta\le\pi$, passes through the
dominant saddle point(s)\footnote{Here, ``dominant saddle point(s)'' means 
``the saddle point(s) with largest modulus of the
integrand''. We shall sometimes also abuse terminology and
speak of ``dominant peaks''.} of the function $q\mapsto \vert q^{-m}Q_n(q)\vert$. If
$Q_n(q)$ has non-negative coefficients, according to Pringsheim's
theorem, the dominant saddle point is
located on the positive real axis, and the problem is equivalent
to the minimisation of the quantity $r^{-m}Q_n(r)$. 

In our case however, the dominant saddle points are located near the complex
third roots of unity instead of on the positive real axis. In analogy
to the process above, we choose the radius $r$ so that the quantity
$r^{-m}\left|Q_n(re^{2\pi i/3})\right|$ is minimised. By taking a
log-derivative, and substituting $Q_n=P_n^\dd (q)$, we obtain an equation
in terms of $r$:\footnote{\label{foot:3}The reader must be warned: this is {\it not\/} the
  saddle point equation! The saddle point equation is $q\frac {d}
  {dq}P_n(q)=m/\dd $, as an equation for complex~$q$. It will have two
  solutions with arguments {\it close} to $\pm 2\pi/3$, but not {\it
    exactly} $\pm 2\pi/3$. Equation~\eqref{eqStationaryPoint} is
a ``saddle point-like equation'', in which the argument of the solution
is ``frozen'' to $2\pi/3$. In our analysis, it mimics the role of a
saddle point equation, but is in fact ``just'' an
``approximate'' saddle point equation. We made this deliberate choice
since we deemed it unfeasible to carry through the programme of
approximations without having a firm control on the arguments of the
(approximate or not) saddle points.
As it turns out, this is
nevertheless good enough for performing our estimations.}
\begin{equation}\label{eqStationaryPoint}
r\Re\left(\frac{d}{dr} \log P_n(re^{2\pi i/3})\right)=\frac{m}{\dd }.
\end{equation}
It must  be emphasised that the solution $r$ of this equation (it will
indeed be shown in Lemma~\ref{leRadiusBound} below that there is a
unique solution) depends on $n$ and $m$ (and $\dd$ of course). We will however
most of the time suppress this dependency 
in the interest of better readability. Only occasionally, when we
think that this is necessary, we will add
an index that indicates the dependency (as for example in
Lemmas~\ref{leRadiusBound} and~\ref{leIneqPeak}, or in the proofs of
Theorems~\ref{thMain1}, \ref{thMain2} and~\ref{thMain3}).

It turns out that, under the above restriction on~$m$, the minimiser
radius $r$ approaches~$1$ as $n\to\infty$. These observations are
proved in the following lemma. They are crucial in our estimations of
the error terms $\epsilon_{i,Q_n}$, $i=0,1$.

\begin{lemma}\label{leRadiusBound}
For all integers $n\geq1$ and $m\in(0,\dd \deg P_n)$, with
$\dd \in\{1,2,3\}$, the approximate saddle point equation~\eqref{eqStationaryPoint} has a unique solution $r=r_{m,n}\in\R^+$. Moreover, if\/ $3n\leq m\leq (\dd \deg P_n)/2$, then we have $r_0<r\leq1$, where
\begin{equation}\label{eqCutoffR0}
r_0=e^{-\sqrt{4\dd /27n}}.
\end{equation}
Furthermore, as a function in~$m$, the solution $r=r_{m,n}$
to~\eqref{eqStationaryPoint} is increasing.
\end{lemma}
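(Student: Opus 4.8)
The plan is to analyze the function
\[
F_n(r) := r\,\Re\!\left(\frac{d}{dr}\log P_n(re^{2\pi i/3})\right)
= \Re\!\left(\left.\frac{\partial}{\partial\theta}\,i^{-1}\log P_n(re^{i\theta})\right|_{\theta=2\pi/3}\right)\cdot(\text{sign bookkeeping}),
\]
and to show it is a strictly increasing bijection from $(0,1]$ onto a suitable interval, so that \eqref{eqStationaryPoint} (which reads $F_n(r)=m/\dd$) has a unique solution $r=r_{m,n}$ for each admissible value of $m/\dd$. Using the log-derivative formula \eqref{eqPnDerivs} with $j=1$ and the identity $P_n(q)=(q;q)_{3n}/(q^3;q^3)_n$, I would first rewrite $r\,\frac{d}{dr}\log P_n(re^{2\pi i/3})$ explicitly as a sum over the exponents $k$ with $3\nmid k$, $1\le k\le 3n$, of terms $-\dfrac{k\,r^k e^{2\pi i k/3}}{1-r^k e^{2\pi i k/3}}$. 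Taking real parts and using that for $k\equiv 1,2\pmod 3$ one has $e^{2\pi ik/3}=e^{\pm 2\pi i/3}$, each summand simplifies to a real rational function of $r^k$; concretely $\Re\dfrac{-w}{1-w}$ with $w=r^k e^{\pm2\pi i/3}$ equals $\dfrac{r^{2k}/2 - r^k\cos(2\pi/3)\cdot(\cdots)}{|1-w|^2}$, which after simplification is a manifestly increasing function of $r^k$ on $[0,1)$ that tends to $+\infty$ as $r\to 1^-$ (the $k$ coprime to $3$ hit the singularity of $1/(1-w)$). Summing, $F_n$ is a sum of strictly increasing functions, hence strictly increasing, with $F_n(0^+)=0$ and $F_n(1^-)=+\infty$; together with $F_n$ being continuous this gives existence and uniqueness of $r_{m,n}\in\R^+$ for every $m/\dd\in(0,\infty)$, in particular for $0<m<\dd\deg P_n$. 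Monotonicity of $r_{m,n}$ in $m$ is then immediate: since $F_n$ is strictly increasing, its inverse is strictly increasing, and $r_{m,n}=F_n^{-1}(m/\dd)$.

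The substantive part is the lower bound $r_{m,n}>r_0=e^{-\sqrt{4\dd/27n}}$ under the hypothesis $3n\le m\le(\dd\deg P_n)/2$. Because $F_n^{-1}$ is increasing, this is equivalent to $F_n(r_0)<m/\dd$, and since the hypothesis only guarantees $m\ge 3n$, it suffices to prove the clean inequality
\[
F_n(r_0) < \frac{3n}{\dd}, \qquad\text{i.e.}\qquad F_n\!\bigl(e^{-\sqrt{4\dd/27n}}\bigr) < \frac{3n}{\dd}.
\]
Here I would use the ``$\Re$-at-angle-$2\pi/3$'' structure: from \eqref{eqPnDerivs}--\eqref{eqDerivV} with $j=1$, $F_n(r)=\tfrac12 U_1(n,r)$ is (up to the bookkeeping) controlled by $u_1(z)=z(1+2z)/(1+z+z^2)$ evaluated at $z=r^k e^{\pm 2\pi i/3}$. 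The key elementary fact is that $\Re\,u_1(w)$ for $|w|<1$ is bounded above by something like $\tfrac{3}{2}\cdot\dfrac{|w|^2}{\text{(bounded below)}}$ or, more usefully, that $\sum_k k\,\Re u_1(r^k e^{\pm2\pi i/3})$ is dominated by $\tfrac32\sum_{k\le 3n}k\,r^{2k}$-type sums, which are exactly the quantities $X_1(\cdot,r)$ (or a shifted variant) introduced in \eqref{eqXmDef}. So the plan is: (i) bound $\Re u_1$ pointwise by an increasing rational function of $r^k$ with no pole (the denominator $|1+z+z^2|^2$ stays bounded away from $0$ when $z$ is near a primitive cube root of unity only if $|z|$ is bounded away from $1$ — this is a delicate point and I will need the constraint coming from $r_0$); (ii) sum the pointwise bounds to get $F_n(r)\le C\,X_1(n,r)$ or similar; (iii) plug in $r=r_0$ and use the explicit near-$1$ expansion $X_1(n,e^{-\delta})\sim \delta^{-2}$-type asymptotics to show $C\,X_1(n,r_0)<3n/\dd$, which is where the specific exponent $\sqrt{4\dd/27n}$ is reverse-engineered to make the constants match. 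For the upper bound $r_{m,n}\le 1$: since $F_n(1^-)=+\infty$ while $m/\dd<\deg P_n<\infty$, the solution lies strictly below... actually we want $r\le 1$, which follows because $F_n$ is only defined/finite for $r<1$ at the exponents coprime to $3$; more carefully, $F_n(r)\to+\infty$ as $r\uparrow 1$ so for any finite right-hand side the solution satisfies $r<1\le 1$, giving $r_{m,n}<1$, a fortiori $\le 1$. (If $P_n$ had a factor making $q=1$ a removable point one would need a limiting argument, but $(q;q)_{3n}$ vanishes at $q=1$ to higher order than $(q^3;q^3)_n$, so $P_n(1)=0$ and the log-derivative genuinely blows up.)

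The main obstacle I anticipate is step (i)--(iii) above: obtaining a pointwise upper bound on $\Re\,u_1(r^k e^{\pm 2\pi i/3})$ that is simultaneously (a) simple enough to sum into an $X_1$-type quantity and (b) tight enough that the resulting inequality $F_n(r_0)<3n/\dd$ actually holds rather than being off by a constant factor. The difficulty is the near-resonance: for $k$ coprime to $3$, $z=r^k e^{\pm2\pi i/3}$ sits close to a primitive cube root of unity exactly where $1+z+z^2$ is small, so $u_1(z)$ is large and $\Re u_1(z)$ must be estimated with care — one cannot simply bound $|u_1(z)|$. I would handle this by writing $1+z+z^2 = (1-r^k e^{\mp 2\pi i/3})$ up to a unit (since $1+\omega+\omega^2=0$ for $\omega$ a primitive cube root) and tracking the real part of the resulting Möbius-type expression exactly, reducing everything to monotone functions of $r^k\in(0,1)$; then the sum over $k$ is compared to $\sum k\,r^{2k}=X_1$-style sums via term-by-term inequalities, and the final numerical check at $r=r_0$ uses the elementary estimate $\sum_{k=1}^{N} k\,x^k \le x/(1-x)^2$ together with $1-r_0^3 \ge$ a multiple of $\sqrt{\dd/n}$ coming from $1-e^{-t}\ge t - t^2/2$. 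I expect the constant $4/27$ to emerge precisely from matching $\tfrac32\cdot\dfrac{1}{(1-r_0^3)^2}$-type leading behavior against $3n/\dd$ after the substitution. The monotonicity claims and uniqueness are, by contrast, routine once the explicit real-part formula is in hand.
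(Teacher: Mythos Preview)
Your proposal contains a genuine error that cascades through the argument. You claim $F_n(1^-)=+\infty$ because ``the $k$ coprime to $3$ hit the singularity of $1/(1-w)$'' with $w=r^k e^{2\pi ik/3}$. But for $k$ coprime to $3$, $e^{2\pi ik/3}$ is a \emph{primitive} cube root of unity, so as $r\to1$ we get $w\to e^{\pm 2\pi i/3}\neq 1$ and $|1-w|^2\to 3$, not $0$. In fact, if you carry out the real-part computation you started, you find
\[
\Re\frac{-r^k e^{2\pi ik/3}}{1-r^k e^{2\pi ik/3}}=\frac{r^k(1+2r^k)}{2(1+r^k+r^{2k})}=\tfrac12\,u_1(r^k),
\]
which is exactly the function $u_1$ from \eqref{eqDerivFuncs-u}, smooth on $[0,\infty)$ with $u_1(0)=0$, $u_1(1)=1$, $u_1(+\infty)=2$. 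Hence $F_n(r)=\tfrac12\sum_{3\nmid k}k\,u_1(r^k)$ is finite everywhere, with $F_n(0)=0$, $F_n(1)=\tfrac12\deg P_n$, and $F_n(+\infty)=\deg P_n$. Two consequences: (i) your proof of $r\le 1$ collapses --- the correct argument is simply that $m/\dd\le(\deg P_n)/2=F_n(1)$ combined with monotonicity; (ii) your existence argument, living on $(0,1)$, misses the range $m/\dd\in((\deg P_n)/2,\deg P_n)$, for which one must allow $r>1$.

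For the lower bound $r>r_0$, your plan is more convoluted than necessary and partly rests on the same misconception (you worry about ``near-resonance where $1+z+z^2$ is small'', but at real arguments this never happens). Once you have the real-argument formula $F_n(r)=\tfrac12\sum_{3\nmid k}k\,u_1(r^k)$, the elementary pointwise bound $u_1(x)\le\frac{2}{\sqrt3}\,x$ on $[0,1]$ (the maximum of $(1+2x)/(1+x+x^2)$) reduces the task to showing $\frac{2}{\sqrt3}\sum_{3\nmid k}k\,r_0^{k}<6n/\dd$. The infinite sum has the closed form $r_0(1+2r_0+2r_0^3+r_0^4)/(1-r_0^3)^2$, and a single one-variable numerical check against $(-\log r_0)^{-2}$ finishes it --- this is where $4\dd/27n$ is tuned. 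No complex-argument estimates, no $X_1$-machinery, and no delicate constant-matching beyond that one check.
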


\begin{proof}
We infer from \eqref{eqPnDerivs} that the left-hand side of \eqref{eqStationaryPoint} can be written as
\begin{equation}
r\Re\left(\frac{d}{dr} \log P_n(re^{2\pi i/3})\right)=\frac12\underset{3\nmid k}{\sum_{k=1}^{3n}} ku_1(r^k),
\end{equation}
where $u_1(x)=x(1+2x)/(1+x+x^2)$ is defined as in Section~\ref{seConvention}.

Therefore, Equation \eqref{eqStationaryPoint} is equivalent to
\begin{equation}\label{eqStationaryPoint2}
\underset{3\nmid k}{\sum_{k=1}^{3n}}ku_1(r^k)=\frac{2m}{\dd }.
\end{equation}

Note that 
\begin{equation} \label{eq:Du1} 
u_1'(r)=\frac{1+4r+r^2}{(1+r+r^2)^2}>0,
\end{equation}
so $u_1$ is increasing. Moreover, we have the special values
\begin{align} \label{eq:u1}
u_1(0)&=0,&
u_1(1)&=1,&
\lim_{r\to+\infty}u_1(r)&=2.
\end{align}
Along with the fact that 
$$\deg P_n=\underset{3\nmid k}{\sum_{k=1}^{3n}}k,$$ 
these special
values imply that the sum  
$$\underset{3\nmid k}{\sum_{k=1}^{3n}}ku_1(r^k)/2$$ 
tends to $0$, $(\deg P_n)/2$, and $\deg P_n$ when $r\to0,1,+\infty$, respectively. The existence and uniqueness of solution, as well as the upper bound $r\leq1$, follow from the intermediate value theorem.

It remains to prove the lower bound on~$r$. Since $u_1$ is increasing,
it suffices to show that 
$$\underset{3\nmid k}{\sum_{k=1}^{3n}}ku_1(r_0^k)<\frac {6n}\dd .$$
Equation \eqref{eq:Ungl1} in Lemma~\ref{leUVBounds} implies that
\begin{multline*}
\underset{3\nmid k}{\sum_{k=1}^{3n}}ku_1(r_0^k)
<\frac{2}{\sqrt3}\underset{3\nmid k}{\sum_{k=1}^{3n}}kr_0^{k}
<\frac{2}{\sqrt3}\underset{3\nmid k}{\sum_{k=1}^{\infty}}kr_0^{k}\\
=\frac{2}{\sqrt3}\frac{r_0(1+2r_0+2r_0^3+r_0^4)}{(1-r_0^3)^2}<\frac{8}{9}(-\log r_0)^{-2}=\frac{6n}{\dd },
\end{multline*}
where the last inequality used the fact that the maximum of the function 
$$r\mapsto\frac{2r(1+2r+2r^3+r^4)(-\log r)^2}{\sqrt3(1-r^3)^2}$$ 
on $[0,1]$ is approximately $0.881906<8/9$.

\medskip
For the additional assertion at the end of  the lemma, we recall
from~\eqref{eq:Du1} that $u_1(r)$ is increasing in~$r$. Therefore,
by~\eqref{eqStationaryPoint2}, if~$m$ is increasing, so must be~$r$.
\end{proof}

\section{The choice of cutoff}\label{secutoff}
Our choice of the cutoff $\theta_0$ announced in
Part~C of Section~\ref{seOutline} is
\begin{equation}\label{eqCutoffTheta0}
\theta_0:=\Ct\frac{1-r^3}{1-r^{3n}},
\end{equation}
where the constant $\Ct$ is chosen as $\frac{10}{81}$.

We give some immediate consequences of \eqref{eqCutoffR0} and
\eqref{eqCutoffTheta0}, to be used in the following two sections.

\begin{lemma}\label{leCutoffPrelim}
With $\Z^+$ denoting the set of positive integers,
suppose that $n\in\Z^+$, $\dd \in\{1,2,3\}$, $t\geq0$, and~$r_0$ and~$\theta_0$
are defined as in \eqref{eqCutoffR0} and \eqref{eqCutoffTheta0},
respectively. 
Then the following results hold for $r\in(r_0,1]$ and $\theta\in[-t\theta_0,t\theta_0]$:
\begin{enumerate}
  \item For $n\geq4$, we have
  \begin{equation}\label{eqThetaBoundAux}
  \frac{1-r_0^3}{1-r_0^{3n}}\leq-3\log r_0,
  \end{equation}
  and consequently
  \begin{equation}\label{eqLogZBoundAux}
  \left|\log re^{i\theta}\right|<(1+3t\Ct)(-\log r_0)\leq\frac{2(1+3t\Ct)}{3\sqrt{n}}.
  \end{equation}
  \item For $k\in[0,3n]$, the complex number $r^ke^{ik\theta}$ belongs
    to the region $S_{3t\Ct}$, where $S_\rho$ is defined by 
  \begin{equation}\label{eqRegionS}
  S_\rho:=\left\{Re^{i\Theta}:0\leq R\leq1\text{ \em 
and } |\Theta|\leq\rho\frac{-\log R}{1-R}\right\}
  \end{equation}
  for $\rho>0$.
  \item Suppose $|\theta|\leq t\theta_0$ for some $t\geq0$. For $r\in(r_0,1]$ and $\ell\in\Z^+$, we have
  \begin{equation}\label{eqLogZGammaBound}
  \sup_{k\in[0,3n]}\left|\log re^{i\theta}\right|^{\ell}k^{\ell}r^k\leq \ell^\ell(e^{-1}+3t\Ct)^\ell.
  \end{equation}
\item For $j\geq0$, let $X_j(n,r)$ be defined as in \eqref{eqXmDef}. Then, for $n\geq400$ and $r\in(r_0,1]$, we have
\begin{align}
\label{eq:X0}
X_0(n,r)&>0.95\sqrt{n},\\ 
\label{eq:X1}
X_1(n,r)&>1.35n,\\  
\label{eq:X3}
X_3(n,r)&>16n^2,\\ 
\label{eq:X4}
X_4(n,r)&>94n^{5/2}.
\end{align}
\end{enumerate}
\end{lemma}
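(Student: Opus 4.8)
The plan is to verify the four parts of Lemma~\ref{leCutoffPrelim} in order, since each relies on the preceding ones. For part~(1), I would first establish~\eqref{eqThetaBoundAux}. Writing $x=r_0^3=e^{-3\sqrt{4\dd/27n}}$, the inequality $\frac{1-x}{1-x^n}\le -\log x$ is equivalent to $\frac{1-x}{-\log x}\le 1-x^n$. Since $\frac{1-x}{-\log x}\le 1$ for $x\in(0,1)$ and $1-x^n$ approaches $1$ as $n$ grows, one checks that for $n\ge 4$ the gap is sufficient; concretely $x^n=e^{-3n\sqrt{4\dd/27n}}=e^{-2\sqrt{\dd n/3}}\le e^{-2\sqrt{4/3}}$ is bounded away from~$1$, and $\frac{1-x}{-\log x}<\frac{1}{1+(-\log x)/2}$, so the inequality reduces to a one-variable estimate that holds comfortably for $n\ge 4$. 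Then~\eqref{eqLogZBoundAux} follows: $|\log re^{i\theta}|\le -\log r+|\theta|\le -\log r_0+t\theta_0$, and using $\theta_0=\Ct\frac{1-r^3}{1-r^{3n}}\le \Ct\frac{1-r_0^3}{1-r_0^{3n}}\le 3\Ct(-\log r_0)$ (the middle step using monotonicity of $\frac{1-r^3}{1-r^{3n}}$ in $r$, which should be checked), we get $|\log re^{i\theta}|\le(1+3t\Ct)(-\log r_0)$; the final bound is just $-\log r_0=\sqrt{4\dd/27n}\le\sqrt{4\cdot3/27n}=2/(3\sqrt n)$.

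For part~(2), I would show $r^ke^{ik\theta}\in S_{3t\Ct}$ by checking the defining condition of~\eqref{eqRegionS}. With $R=r^k$ and $\Theta=k\theta$ (reduced mod $2\pi$, but $|\Theta|$ stays small by part~(1)), we need $|k\theta|\le 3t\Ct\frac{-\log r^k}{1-r^k}=3t\Ct\frac{-k\log r}{1-r^k}$, i.e. $|\theta|\le 3t\Ct\frac{-\log r}{1-r^k}$. Since $|\theta|\le t\theta_0=t\Ct\frac{1-r^3}{1-r^{3n}}$, it suffices to show $\frac{1-r^3}{1-r^{3n}}\le 3\frac{-\log r}{1-r^k}$ for all $k\in[1,3n]$. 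As $1-r^k\le 1-r^{\min(k,3n)}$ and is largest at $k=3n$ giving $1-r^{3n}$, while $-\log r\ge\frac{1-r^3}{3}\cdot\frac{3}{1-r^3}\cdot(\dots)$—more carefully, $\frac{1-r^3}{1-r^{3n}}(1-r^k)\le 1-r^3\le -3\log r$ using $1-r^3\le 3(1-r)\le -3\log r$ for $r\in(0,1]$. This gives the claim. Part~(3) then follows by optimising: $|\log re^{i\theta}|^\ell k^\ell r^k\le(-\log r+3t\Ct(-\log r_0))^\ell k^\ell r^k$; bounding $(-\log r)^\ell k^\ell r^k$ via the elementary maximum $\sup_{x\ge0}x^\ell e^{-x}=\ell^\ell e^{-\ell}$ applied to $x=-k\log r$, together with $-\log r_0\le -\log r$... actually one needs $3t\Ct(-\log r_0)\cdot k r^{k/\ell}$ type control; the cleanest route is $|\log re^{i\theta}|\, k\, r^{k/\ell}\le(-\log r)k r^{k/\ell}+3t\Ct(-\log r_0)kr^{k/\ell}$ and then $\sup_k(-\log r^{k/\ell})\cdot(k/\ell)\cdot r^{k/\ell}\le e^{-1}$ handles the first term while a similar bound on $kr^{k/\ell}$ using part~(1)'s bound $-\log r_0\le\frac{1-r}{\cdots}$ handles the second, yielding $(e^{-1}+3t\Ct)$ before raising to the $\ell$th power.

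For part~(4), the lower bounds on $X_j(n,r)$, I would use monotonicity: since $X_j(n,r)$ is increasing in both $n$ and $r$ (noted after~\eqref{eqXmDef}), it suffices to bound $X_j(n,r_0)$ from below and then, because $r_0=e^{-\sqrt{4\dd/27n}}$ is \emph{smallest} when $\dd=1$... wait, one must be careful about which direction gives the worst case—$r_0$ decreases as $\dd$ increases, so the worst case is $\dd=3$, giving $r_0=e^{-2/(3\sqrt n)}$... but actually $\dd\in\{1,2,3\}$ only enters through $r_0$ and a larger $\dd$ means smaller $r_0$ means smaller $X_j$, so I take $\dd=3$. Then $X_j(n,r_0)=\sum_{k=1,\,3\nmid k}^{3n}k^j r_0^k$; comparing to the integral $\int_0^{3n}x^j e^{-cx}\,dx$ with $c=2/(3\sqrt n)$, so $cx$ ranges over $[0,2\sqrt n]$, and $\int_0^{3n}x^je^{-cx}dx=c^{-(j+1)}\gamma(j+1,2\sqrt n)\sim c^{-(j+1)}\Gamma(j+1)$, giving order $n^{(j+1)/2}$ as expected ($n^{1/2},n,n^2,n^{5/2}$ for $j=0,1,3,4$). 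The numerical constants $0.95,1.35,16,94$ then require an explicit lower bound: the $3\nmid k$ restriction removes a factor that is asymptotically $2/3$ of the terms but with extra $3^j$ weighting as in~\eqref{eqXmDef}, so $X_j(n,r)=\sum_{k=1}^{3n}k^jr^k-3^j\sum_{k=1}^n k^j r^{3k}$, and each sum is comparable to its integral with an Euler–Maclaurin error; for $n\ge400$ the error terms are small enough that the constants check out. The main obstacle is precisely this part~(4): obtaining clean, rigorous lower bounds with the stated explicit constants requires careful integral comparison with controlled error terms (the incomplete Gamma function $\gamma(j+1,2\sqrt n)$ must be bounded below away from $\Gamma(j+1)$, which is where the threshold $n\ge400$ comes from), and one must track the $3\nmid k$ subtraction precisely rather than just asymptotically. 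Parts~(1)--(3) are elementary one-variable calculus once the right reduction is in hand; part~(4) is where the genuine estimation work lies.
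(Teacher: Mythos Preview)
Your outline for parts~(1) and~(2) is essentially the paper's argument and is correct; the monotonicity of $r\mapsto\frac{1-r^3}{1-r^{3n}}$ that you flag does hold (it decreases from $1$ to $1/n$), and the key step in~(2) is exactly $(1-r^3)(1-r^k)\le(1-r^3)(1-r^{3n})\le(-3\log r)(1-r^{3n})$.

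Part~(3) has a real gap. Your bound $(-\log r)\,k\,r^{k/\ell}\le \ell/e$ is fine for the first piece, but the second piece $t\theta_0\cdot k\,r^{k/\ell}$ is not handled: you write ``using part~(1)'s bound $-\log r_0\le\frac{1-r}{\cdots}$'', but part~(1) gives no \emph{upper} bound on $-\log r_0$, and in any case $\theta_0$ involves $r$, not $r_0$. The paper closes this by computing $\bigl(\sup_{k\in[0,3n]}k^\ell r^k\bigr)^{1/\ell}$ explicitly and splitting into two cases according to whether the supremum is attained at the interior critical point $k=\ell/(-\log r)$ or at the endpoint $k=3n$; in each case the product $\theta_0\cdot(\text{sup})^{1/\ell}$ is bounded by $3\ell\Ct$ via an elementary inequality on $\frac{1-r^3}{(-\log r^3)(1-r^{3n})}$ or on $\frac{n(1-r^3)r^{3n/\ell}}{\ell(1-r^{3n})}$, respectively. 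Without this case split (or something equivalent) your argument does not close.

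For part~(4) your integral-comparison route is a reasonable alternative in spirit, but your reduction ``it suffices to bound $X_j(n,r_0)$ from below'' hides the fact that $r_0=r_0(n)$, so you cannot invoke monotonicity of $X_j$ in $n$ and $r$ separately. The paper avoids this by first proving the uniform estimate $X_j(n,r)>X_j(\infty,r)\bigl(1-(3n+1)^jr^{3n}\bigr)$, then substituting $r=r_0(n)$ and observing that $X_j(\infty,r_0)(1-r_0^3)^{j+1}$ is a polynomial in $r_0$ with non-negative coefficients (hence increasing in $n$) while $(3n+1)^je^{-2\sqrt n}$ is decreasing; a single evaluation at $n=400$ then suffices. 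Your Euler--Maclaurin approach would need to produce explicit error bounds that are themselves monotone in $n$, which is more work than the paper's comparison to the infinite sum.
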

\begin{proof}
(1) We  have $-\log r_0=\sqrt{{4\dd }/{27n}}\leq\sqrt{{12}/{108}}=1/3 $.
Next we substitute $x:=-3\log r_0$ in the inequality $(1-e^{-x})/x\leq1-e^{-1/x}$ (valid for $0\leq x\leq1$) to obtain
  \[
  \frac{1-r_0^{3}}{-3\log r_0}\leq 1-e^{\frac{1}{3\log r_0}}<1-e^{3n\log r_0}=1-r_0^{3n},
  \]
  where the last inequality holds because $9n(\log r_0)^2=4\dd /3>1$.
  The inequality \eqref{eqLogZBoundAux} follows from
  \[
  \left|\log re^{i\theta}\right|\leq -\log r+t\theta_0\leq -\log r_0+t\Ct\frac{1-r_0^3}{1-r_0^{3n}}\leq(1+3t\Ct)(-\log r_0).
  \]

\medskip
(2) The definition \eqref{eqCutoffTheta0} implies that
  \[
  k|\theta|\leq kt\Ct\frac{1-r^3}{1-r^{3n}}\leq kt\Ct\frac{-\log r^3}{1-r^k}=3t\Ct\frac{-\log r^k}{1-r^k}.
  \]

\medskip
(3) We first note that
  \[
  \left(\sup_{k\in[0,3n]}k^{\ell}r^k\right)^{1/\ell}=\begin{cases}
                                 \frac{\ell}{e(-\log r)}, & \mbox{if } r\leq e^{-\ell/(3n)}, \\
                                 3nr^{3n/\ell}, & \mbox{if } r>e^{-\ell/(3n)}.
                               \end{cases}
  \]
  On the other hand, we have $\left|\log re^{i\theta}\right|\leq -\log r+|\theta|\leq-\log r+t\theta_0$, and therefore
  \begin{align*}
    \left|\log
    re^{i\theta}\right|\left(\sup_{k\in[0,3n]}k^{\ell}r^k\right)^{1/\ell}&\leq
    \frac \ell e+t\theta_0\begin{cases}
                                 \frac{\ell}{e(-\log r)}, & \mbox{if } r\leq e^{-\ell/(3n)}, \\
                                 3nr^{3n/\ell}, & \mbox{if } r>e^{-\ell/(3n)},
                               \end{cases} \\
    &=\frac \ell e+3\ell t\Ct\begin{cases}
                                 \frac{(1-r^3)}{e(-\log r^3)(1-r^{3n})}, & \mbox{if } r\leq e^{-\ell/(3n)}, \\
                                 \frac{n(1-r^3)r^{3n/\ell}}{\ell(1-r^{3n})}, & \mbox{if } r>e^{-\ell/(3n)},
                               \end{cases} \\
    &\leq\frac \ell e+3\ell t\Ct\begin{cases}
                                 \frac{1}{e(1-e^{-\ell})}, & \mbox{if } r\leq e^{-\ell/(3n)}, \\
                                 \frac{r^{3n/\ell}(-\log r^{3n/\ell})}{1-r^{3n/\ell}}, & \mbox{if } r>e^{-\ell/(3n)},
                               \end{cases} \\
    &\leq\frac \ell e+3\ell t\Ct\begin{cases}
                                 \frac{1}{e(1-e^{-\ell})}, & \mbox{if } r\leq e^{-\ell/(3n)}, \\
                                 1, & \mbox{if } r>e^{-\ell/(3n)},
                               \end{cases} \\
    &\leq \frac \ell e+3\ell t\Ct.
  \end{align*}

\medskip
(4) We first note that, for all $j,n$ and $r\in[0,1]$, we have
  \[
  X_j(\infty,r)-X_j(n,r)=\underset{3\nmid k}{\sum_{k=1}^{\infty}}r^{3n+k}(3n+k)^j<r^{3n}
\underset{3\nmid k}{\sum_{k=1}^{\infty}}r^k(3nk+k)^j=r^{3n}(3n+1)^jX_j(\infty,r).
  \]
Thus,
  \[
  X_j(n,r)>X_j(\infty,r)\left(1-(3n+1)^jr^{3n}\right).
  \]

The only place where $\dd $ figures in the
inequalities \eqref{eq:X0}--\eqref{eq:X4} is in~$r_0$, which, in its turn, 
determines the range for~$r$, namely the interval
$(r_0,1]$. This interval is largest for
  $\dd =3$. Clearly, it suffices to consider that case. Hence, from here on we
  assume that $\dd =3$ and correspondingly $r_0=e^{-2/(3\sqrt n)}$. 

By the above considerations, we have
  \begin{align*}
  X_j(n,r)&>X_j(n,r_0)>X_j(\infty,r_0)\left(1-(3n+1)^jr_0^{3n}\right)\\
  &=X_j(\infty,r_0)\left(1-(3n+1)^je^{-2\sqrt{n}}\right)\\
  &\geq \left(-3\log r_0\right)^{-j-1} \left(X_j(\infty,r_0)(1-r_0^3)^{j+1}\right)\left(1-(3n+1)^je^{-2\sqrt{n}}\right)\\
  &\geq n^{(j+1)/2}2^{-j-1}
\left(X_j(\infty,r_0)(1-r_0^3)^{j+1}\right)\left(1-(3n+1)^je^{-2\sqrt{n}}\right).
  \end{align*}
  Since $X_j(\infty,r_0)(1-r_0^3)^{j+1}$ is a polynomial in $r_0$ with
  non-negative coefficients (and therefore increasing with respect
  to~$n$) and $(3n+1)^je^{-2\sqrt{n}}$ is evidently decreasing with
  respect to~$n$ whenever $n\geq j^2$, the
  inequalities~\eqref{eq:X0}--\eqref{eq:X4} follow from evaluating the factor
  \[
  2^{-j-1}
\left(X_j(\infty,r_0)(1-r_0^3)^{j+1}\right)\left(1-(3n+1)^je^{-2\sqrt{n}}\right)
  \]
at $n=400$ and $j=0,1,3,4$. 
\end{proof}

\section{The fundamental error inequality}\label{seError}

In this section we prove the fundamental inequality, claimed in 
\eqref{eqTargetMain}, that provides an upper bound for the
approximation of the coefficient of~$q^m$ in $Q_n(q)=P_n^\dd(q)$,
where $\dd\in\{1,2,3\}$, in terms of the error terms
$\epsilon_{0,Q_n}(m,r)$ and $\epsilon_{1,Q_n}(r)$ defined
in~\eqref{eq:ep0} and~\eqref{eq:ep1}. 

\begin{lemma} \label{lem:ep0ep1}
With the notations from Section~\ref{seOutline}, we have
\begin{multline}
\left|\frac{r^m\sqrt{2\pi g_{Q_n}(r)}}
{\erf\left({\theta_0\sqrt{g_{Q_n}(r)/2}}\right)}
\frac {1} {|Q_n(re^{2\pi i/3})|}[q^m]Q_n(q)
-2\cos\left(\arg Q_n(re^{2\pi i/3})-2m\pi/3\right)
\right|
\\
\le\epsilon_{0,Q_n}(m,r)+\epsilon_{1,Q_n}(r).
\label{eq:ep0eep1}
\end{multline}
\end{lemma}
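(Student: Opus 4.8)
The plan is to start from the exact contour integral representation \eqref{eqIntRep} and track precisely how the two approximations described in Part~C of Section~\ref{seOutline} are made, collecting the errors into $\epsilon_{0,Q_n}(m,r)$ and $\epsilon_{1,Q_n}(r)$. Concretely, I would write
\[
[q^m]Q_n(q)=\frac{r^{-m}}{2\pi}\left(\int_{I_{\text{peak}}}Q_n(re^{i\theta})e^{-im\theta}\,d\theta+\int_{I_{\text{tail}}}Q_n(re^{i\theta})e^{-im\theta}\,d\theta\right),
\]
and split the peak integral over the two symmetric subintervals $[2\pi/3-\theta_0,2\pi/3+\theta_0]$ and $[-2\pi/3-\theta_0,-2\pi/3+\theta_0]$. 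On the first subinterval I substitute $\theta\mapsto\theta+2\pi/3$ to pull out the factor $e^{-2\pi m i/3}$, exactly as in the first line of \eqref{eqPart1}; on the second I do the analogous substitution, and use that $Q_n$ has real coefficients so that $Q_n(re^{-i(\theta+2\pi/3)})=\overline{Q_n(re^{i(\theta+2\pi/3)})}$. Adding the two contributions and using $e^{2\pi m i/3}\overline{w}+e^{-2\pi m i/3}w$ gives a quantity whose leading term is $2\,\Re\!\bigl(e^{-2\pi m i/3}Q_n(re^{2\pi i/3})\bigr)=2|Q_n(re^{2\pi i/3})|\cos(\arg Q_n(re^{2\pi i/3})-2m\pi/3)$ once the Gaussian approximation $Q_n(re^{i(\theta+2\pi/3)})/Q_n(re^{2\pi i/3})\approx e^{-g_{Q_n}(r)\theta^2/2}$ is inserted.

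Next I would assemble the identity rather than an inequality first: define
\[
D:=\frac{r^m\sqrt{2\pi g_{Q_n}(r)}}{\erf\!\bigl(\theta_0\sqrt{g_{Q_n}(r)/2}\bigr)}\cdot\frac{[q^m]Q_n(q)}{|Q_n(re^{2\pi i/3})|},
\]
and show that $D-2\cos(\arg Q_n(re^{2\pi i/3})-2m\pi/3)$ equals a sum of exactly two remainder contributions: one coming from replacing, on each peak subinterval, $Q_n(re^{i(\theta+2\pi/3)})e^{-im\theta}/Q_n(re^{2\pi i/3})$ by $e^{-g_{Q_n}(r)\theta^2/2}$ (this is where $\int_{-\theta_0}^{\theta_0}e^{-g_{Q_n}(r)\theta^2/2}\,d\theta=\sqrt{2\pi/g_{Q_n}(r)}\,\erf(\theta_0\sqrt{g_{Q_n}(r)/2})$ is used to see that the Gaussian piece reproduces exactly the cosine term after normalisation), and one coming from the tail. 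Taking absolute values, the first contribution is bounded by $\epsilon_{0,Q_n}(m,r)$ essentially by definition \eqref{eq:ep0} — the factor of $2$ there accounts for the two conjugate peaks, and the triangle inequality $|e^{2\pi m i/3}\bar a+e^{-2\pi m i/3}a|\le 2|a|$ — while the second is bounded by $\epsilon_{1,Q_n}(r)$ via the elementary estimate $\bigl|\int_{I_{\text{tail}}}Q_n(re^{i\theta})e^{-im\theta}\,d\theta\bigr|\le|Q_n(re^{2\pi i/3})|\int_{I_{\text{tail}}}\bigl|Q_n(re^{i\theta})/Q_n(re^{2\pi i/3})\bigr|\,d\theta$ from \eqref{eqPart2}, followed by multiplication by the common normalising prefactor $\sqrt{g_{Q_n}(r)}/\bigl(\sqrt{2\pi}\,\erf(\theta_0\sqrt{g_{Q_n}(r)/2})\bigr)$ appearing in \eqref{eq:ep1}.

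The final step is purely bookkeeping: combine the two bounds by the triangle inequality to obtain \eqref{eq:ep0eep1}. I do not expect any genuine analytic obstacle here — the lemma is a clean reorganisation of definitions — so the only real care needed is in the algebra: matching the prefactor $r^m\sqrt{2\pi g_{Q_n}(r)}/\erf(\cdots)$ against the $r^{-m}/(2\pi)$ from \eqref{eqIntRep} and the $\sqrt{2\pi/g_{Q_n}(r)}\,\erf(\cdots)$ coming from the exact Gaussian integral, and checking that the conjugate-peak symmetry really does turn the two complex peak contributions into $2\cos(\arg Q_n(re^{2\pi i/3})-2m\pi/3)$ plus an error of modulus at most $\epsilon_{0,Q_n}(m,r)$. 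The one subtlety to flag is that $g_{Q_n}(r)$ must be positive and real for the Gaussian integral and the $\erf$ normalisation to make sense; this is implicit in the definition \eqref{eqDefG} together with the fact that the dominant saddle points are genuine maxima of $\theta\mapsto|Q_n(re^{i\theta})|$ near $\theta=\pm2\pi/3$, but I would at least remark on it, deferring the quantitative positivity to the later sections where $g_{Q_n}(r)$ is estimated.
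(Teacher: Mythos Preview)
Your proposal is correct and follows essentially the same route as the paper's proof: split the contour integral into peak and tail, shift the peak integrals to be centred at $\pm 2\pi/3$, use $Q_n(\bar z)=\overline{Q_n(z)}$ to relate the two peak pieces, and observe that the Gaussian integral $\int_{-\theta_0}^{\theta_0}e^{-g_{Q_n}(r)\theta^2/2}\,d\theta=\sqrt{2\pi/g_{Q_n}(r)}\,\erf\!\bigl(\theta_0\sqrt{g_{Q_n}(r)/2}\bigr)$ cancels the normalising prefactor to leave exactly the cosine term, after which the definitions \eqref{eq:ep0} and \eqref{eq:ep1} absorb the two error contributions via the triangle inequality. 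The paper writes this out by pulling out the phase $e^{i\gamma}$ with $\gamma=\arg Q_n(re^{2\pi i/3})$ at the outset rather than recombining at the end, but this is only a cosmetic reordering of the same computation.
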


\begin{proof}
Denoting the argument of $Q_n(re^{2\pi i/3})$ temporarily by~$\gamma$,
from the integral representation~\eqref{eqIntRep} of the coefficient
of~$q^m$ in $Q_n(q)$ and the division of the integration interval
$[-\pi,\pi]$ into $I_{\text{peak}}$ and $I_{\text{tail}}$ (see Part~C
in Section~\ref{seOutline}), we obtain
\begin{align*}
&\frac 1 {|Q_n(re^{2\pi i/3})|}[q^m]Q_n(q)
=e^{i\gamma}
\frac{r^{-m}}{2\pi}\int_{-\pi}^{\pi}
\frac {Q_n\left(re^{i\theta}\right)} {Q_n(re^{2\pi i/3})}
e^{-im\theta}\,d\theta\\
&=e^{i\gamma}
\frac{r^{-m}}{2\pi}
\left(
\int_{2\pi/3-\theta_0}^{2\pi/3+\theta_0}
\frac {Q_n\left(re^{i\theta}\right)} {Q_n(re^{2\pi i/3})}
e^{-im\theta}\,d\theta
+
\int_{-2\pi/3-\theta_0}^{-2\pi/3+\theta_0}
\frac {Q_n\left(re^{i\theta}\right)} {Q_n(re^{2\pi i/3})}
e^{-im\theta}\,d\theta\right.\\
&\kern3cm
\left.
+
\int_{I_{\text{tail}}}
\frac {Q_n\left(re^{i\theta}\right)} {Q_n(re^{2\pi i/3})}
e^{-im\theta}\,d\theta
\right)\\
&=
\frac{r^{-m}}{2\pi}
\left(
e^{i\gamma-2m\pi i/3}\int_{-\theta_0}^{\theta_0}
\frac {Q_n\left(re^{i(\theta+2\pi/3)}\right)} {Q_n(re^{2\pi i/3})}
e^{-im\theta}\,d\theta\right.\\
&\kern1cm
\left.
+e^{i\gamma+2m\pi i/3-2i\gamma}
\int_{-\theta_0}^{\theta_0}
\frac {Q_n\left(re^{i(\theta-2\pi/3)}\right)} {Q_n(re^{-2\pi i/3})}
e^{-im\theta}\,d\theta
+e^{i\gamma}
\int_{I_{\text{tail}}}
\frac {Q_n\left(re^{i\theta}\right)} {Q_n(re^{2\pi i/3})}
e^{-im\theta}\,d\theta
\right),
\end{align*}
where we used the earlier observed fact that $Q_n(\bar{z})=\overline{Q_n(z)}$ twice to 
obtain the last line.
Using this relation and the definitions \eqref{eq:ep0} and \eqref{eq:ep1}
of the error terms, we are led to the following estimation:
\begin{align*}
\notag
&\left|\frac{r^m\sqrt{2\pi g_{Q_n}(r)}}
{\erf\left({\theta_0\sqrt{g_{Q_n}(r)/2}}\right)}
\frac {1} {|Q_n(re^{2\pi i/3})|}[q^m]Q_n(q)\right.\\
&\kern2cm\left.
-\frac{\sqrt{g_{Q_n}(r)}}
{\sqrt{2\pi}\erf\left({\theta_0\sqrt{g_{Q_n}(r)/2}}\right)}
\left(e^{i(\gamma-2m\pi/3)}+e^{-i(\gamma-2m\pi/3)}\right)
\int_{-\theta_0}^{\theta_0}e^{-g_{Q_n}(r)\theta^2/2}\,d\theta
\right|
\\
&\le
\left|\frac{\sqrt{ g_{Q_n}(r)}}
{\sqrt{2\pi}\erf\left({\theta_0\sqrt{g_{Q_n}(r)/2}}\right)}
\int_{-\theta_0}^{\theta_0}\left(
\frac {Q_n\left(re^{i(\theta+2\pi/3)}\right)} {Q_n(re^{2\pi i/3})}
e^{-im\theta}
-e^{-g_{Q_n}(r)\theta^2/2}\right)\,d\theta
\right|\\
&\kern1cm
+\left|\frac{\sqrt{ g_{Q_n}(r)}}
{\sqrt{2\pi}\erf\left({\theta_0\sqrt{g_{Q_n}(r)/2}}\right)}
\int_{-\theta_0}^{\theta_0}\left(
\frac {Q_n\left(re^{i(\theta-2\pi/3)}\right)} {Q_n(re^{2\pi i/3})}
e^{-im\theta}
-e^{-g_{Q_n}(r)\theta^2/2}\right)\,d\theta
\right|\\
&\kern1cm
+\left|\frac{\sqrt{ g_{Q_n}(r)}}
{\sqrt{2\pi}\erf\left({\theta_0\sqrt{g_{Q_n}(r)/2}}\right)}
\int_{I_{\text{tail}}}
\frac {Q_n\left(re^{i(\theta+2\pi/3)}\right)} {Q_n(re^{2\pi i/3})}
e^{-im\theta}\,d\theta
\right|\\
&\le\epsilon_{0,Q_n}(m,r)+\epsilon_{1,Q_n}(r).
\end{align*}
By the definition of the Gau{\ss} error function, this turns out to
be equivalent to~\eqref{eq:ep0eep1}.
\end{proof}

\section{Bounding the peak error}\label{seEps0}

The goal of the section is to provide a bound for the peak error term
$\epsilon_{0,Q_n}(m,r)=\epsilon_{0,P_n^\dd}(m,r)$ 
(cf.~\eqref{eq:ep0}). We will derive it from a 
general bound on relative errors for the approximation of a (complex) function
by a Gau{\ss}ian, given in Lemma~\ref{leGaussian} below.
To serve our purpose, we must apply this lemma to the function 
in~\eqref{eq:Qfun}. In order to be able to do this, we have to first
provide bounds for the various constants, defined by the derivatives
of the function, that appear in the lemma. This is done in
Lemma~\ref{leDerivBounds}. After these preparations, our bound for 
$\epsilon_{0,Q_n}(m,r)$ is presented, and proved, in Lemma~\ref{leIneqPeak}.

\medskip
Here is the announced general result about bounding relative errors of
the approximation of a (complex) function by a Gau{\ss}ian from above. 

\begin{lemma}\label{leGaussian}
Suppose that $x_0>0$ and $f\in C^4([-x_0,x_0];\C)$ with $f(0)=0$. We define $f_k:=f^{(k)}(0)$ for $k=1,2$ as well as
\[
f_3:=3\int^{1}_{0}(1-t)^2\sup_{|x|\leq t x_0}|f^{(3)}(x)|\,dt
\]
and
\[f_{4}:=4\int^{1}_{0}(1-t)^3\sup_{|x|\leq t x_0}\left|f^{(4)}(x)\right|\,dt,
\]
and we write $g=-\Re f_2$ for simplicity.

Suppose further that $f_1\in\R$, $g>0$, that
$\mu_3:=\frac{x_0f_{3}}{3g}\in(0,1)$, 
and that\break $\mu_4:=\frac{x_0\sqrt{f_{4}}}{\sqrt{8g}}\in(0,1)$. Then we have
\begin{multline}
\left|\sqrt{\frac{g}{2\pi}}
\int_{-x_0}^{x_0}\left(e^{f(x)}-e^{-g x^2/2}\right)\,dx\right|
\leq\erf\left(x_0\sqrt{\frac{g}{2}}\right)\cosh(f_1x_0)\\
\times
\left(\frac{|\Im f_2|+f_1^2}{2g}+\frac{4f_3\beta_1(\mu_3)}{9\sqrt{\pi}g^3}+\frac{f_4\beta_3(\mu_4)}{3\sqrt{\pi}g^2}
+\frac{4f_1f_3\beta_2(\mu_3)}{3\sqrt{\pi}g^2}+\frac{\sqrt{2}f_1f_4\beta_4(\mu_4)}{3\sqrt{\pi}g^{5/2}}\right),
\label{eq:betamu}
\end{multline}
where the functions $\beta_i$, $i=1,2,3,4$, 
are as defined in Lemma~\ref{leIneqBeta}.
\end{lemma}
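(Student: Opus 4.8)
Below is the plan I would follow to prove Lemma~\ref{leGaussian}.

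The plan is to recast \eqref{eq:betamu} as a bound on a \emph{relative} error against the Gau{\ss}ian weight $e^{-gx^2/2}$, Taylor‑expand $f$ around $0$, and control the resulting pieces one at a time. Writing $h(x):=f(x)+\tfrac{g}{2}x^2$ one has $e^{f(x)}-e^{-gx^2/2}=e^{-gx^2/2}\bigl(e^{h(x)}-1\bigr)$, and since $\int_{-x_0}^{x_0}e^{-gx^2/2}\,dx=\sqrt{2\pi/g}\,\erf\bigl(x_0\sqrt{g/2}\bigr)$, the left‑hand side of \eqref{eq:betamu} equals $\erf\bigl(x_0\sqrt{g/2}\bigr)$ times the normalised integral $\bigl|\int_{-x_0}^{x_0}e^{-gx^2/2}(e^{h}-1)\,dx\bigr|\big/\int_{-x_0}^{x_0}e^{-gx^2/2}\,dx$; so it suffices to bound this quotient by $\cosh(f_1x_0)$ times the bracketed sum in \eqref{eq:betamu}. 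By Taylor's formula with integral remainder (using $f(0)=0$, $f_1\in\R$ and $\Re f_2=-g$),
\[
h(x)=f_1x+\tfrac{i\Im f_2}{2}x^2+\rho(x),\qquad \rho(x)=\tfrac{f^{(3)}(0)}{6}x^3+\sigma(x),
\]
where, straight from the definitions of $f_3$ and $f_4$ together with $|x|\le x_0$, one obtains $|\rho(x)|\le\tfrac{|x|^3}{6}f_3$, $|\sigma(x)|\le\tfrac{|x|^4}{24}f_4$ and $|f^{(3)}(0)|\le f_3$. The hypotheses $\mu_3,\mu_4\in(0,1)$ say precisely that $|\rho(x)|\le\tfrac{g\mu_3}{2}x^2<\tfrac{g}{2}x^2$ on $[-x_0,x_0]$ (with $\sigma$ of still higher order), so that $e^{-gx^2/2+\Re\rho(x)}\le e^{-g(1-\mu_3)x^2/2}$ is again a genuine, narrower Gau{\ss}ian; this is the sole role of the smallness assumptions.

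Next I would peel $e^{h}-1$ apart as
\[
e^{h}-1=\bigl(e^{f_1x}-1\bigr)+e^{f_1x}\bigl(e^{i\Im f_2\,x^2/2}-1\bigr)+e^{f_1x+i\Im f_2\,x^2/2}\bigl(e^{\rho(x)}-1\bigr),
\]
and, in the last summand, write $e^{\rho}-1=\rho+(e^{\rho}-1-\rho)$ with $\rho=\tfrac{f^{(3)}(0)}{6}x^3+\sigma$. In each integral $\int e^{-gx^2/2}(\cdot)\,dx$ the even weight lets one, after symmetrising $x\mapsto-x$, replace the factor $e^{f_1x}$—the only non‑even ingredient—by $\cosh(f_1x)$ in the pieces with even remainder and by $\sinh(f_1x)$ in those with odd remainder, and then bound these by $\cosh(f_1x_0)$ and $|f_1x|\cosh(f_1x_0)$ respectively ($f_1$ being real); this is what produces the overall factor $\cosh(f_1x_0)$. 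The contributions are then: $\tfrac{f_1^2}{2g}$ from $e^{f_1x}-1$ via $\cosh u-1\le\tfrac{u^2}{2}\cosh u$; $\tfrac{|\Im f_2|}{2g}$ from the imaginary quadratic via the \emph{sharp} estimate $|e^{i\vartheta}-1|\le|\vartheta|$ (a crude quadratic bound here would wrongly create an $(\Im f_2)^2$‑term); the term carrying $\beta_2(\mu_3)$ from the odd cubic monomial $\tfrac{f^{(3)}(0)}{6}x^3$ of the linear part, which being odd pairs with $\sinh(f_1x)$ and so acquires a factor $f_1$; the terms carrying $\beta_3(\mu_4)$ and $\beta_4(\mu_4)$ from the $\sigma$‑part of the linear term (the latter again from pairing with $\sinh(f_1x)$); and the term carrying $\beta_1(\mu_3)$ from the quadratic‑and‑higher remainder $e^{\rho}-1-\rho$, bounded by $\tfrac{|\rho|^2}{2}e^{|\Re\rho|}\le\tfrac{|\rho|^2}{2}e^{g\mu_3x^2/2}$. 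In every case one is reduced to one‑dimensional moment integrals of $x^k$ against $e^{-gx^2/2}$, or against the perturbed weight $e^{-g(1-\mu_3)x^2/2}$, over $[-x_0,x_0]$; bounding such integrals relative to $\int_{-x_0}^{x_0}e^{-gx^2/2}\,dx$, with the answer expressed through the functions $\beta_1,\dots,\beta_4$ of $\mu_3,\mu_4$ alone, is exactly what Lemma~\ref{leIneqBeta} supplies. Adding the five contributions and pulling out the common factor $\erf\bigl(x_0\sqrt{g/2}\bigr)\cosh(f_1x_0)$ then yields \eqref{eq:betamu}.

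I expect the main obstacle to be twofold. First, the decomposition must be engineered so that no term of the wrong order appears: the imaginary quadratic must be detached with $|e^{i\vartheta}-1|\le|\vartheta|$ rather than a quadratic estimate, and the cubic monomial $\tfrac{f^{(3)}(0)}{6}x^3$ must be isolated from the rest of the remainder so that its oddness can be played off against $\sinh(f_1x)$ (pairing it with $\cosh(f_1x)$ would give a term that is too large). Second, one has to pass from the \emph{truncated, perturbed} Gau{\ss}ian moment integrals that arise—after using $e^{|\Re\rho|}\le e^{g\mu_3x^2/2}$—to clean bounds depending only on $\mu_3,\mu_4$ and on $\int_{-x_0}^{x_0}e^{-gx^2/2}\,dx$; this relies on the monotonicity of truncated Gau{\ss}ian moments and is precisely the technical task discharged by Lemma~\ref{leIneqBeta}.
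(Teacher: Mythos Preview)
Your overall architecture---Taylor-expand $f$ about $0$, factor out $e^{-gx^2/2}$, peel off the real linear and imaginary quadratic parts, and exploit the even/odd symmetrisation in $x$ to produce the $\cosh(f_1x_0)$ factor---matches the paper. The treatment of the first two pieces, yielding $(|\Im f_2|+f_1^2)/(2g)$, is essentially the same (the paper bundles them into one step but uses the same estimates $|e^{i\vartheta}-1|\le|\vartheta|$ and $2\cosh u-2\le u^2\cosh u$).

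The handling of the remainder $\rho=R_2$ is where you diverge from the paper, and here there is a real gap. You propose to write $e^\rho-1=\rho+(e^\rho-1-\rho)$, split $\rho=\tfrac{f^{(3)}(0)}{6}x^3+\sigma$, and bound $|e^\rho-1-\rho|\le\tfrac{|\rho|^2}{2}e^{|\rho|}$, absorbing $e^{|\rho|}$ into the Gau{\ss}ian (which becomes \emph{wider}, not narrower, since $g(1-\mu_3)<g$). The paper does none of this: it never isolates $f^{(3)}(0)$ and never linearises $e^\rho-1$. Instead, after the $\cosh/\sinh(f_1x)$ split it is left with the symmetrised quantities $e^{R_2(x)}+e^{R_2(-x)}-2$ and $e^{R_2(x)}-e^{R_2(-x)}$, and it applies Lemma~\ref{leIneqComplex}:
\[
|e^z+e^w-2|\le 2\bigl(\cosh\max(|z|,|w|)-1\bigr)+2\sinh\tfrac{|z+w|}{2},\qquad
|e^z-e^w|\le 2\sinh\max(|z|,|w|)+2\sinh\tfrac{|z+w|}{2}.
\]
With $z=R_2(x)$, $w=R_2(-x)$ one has $\max(|z|,|w|)\le f_3|x|^3/6$ and $|z+w|/2=|R_e(x)|\le f_4|x|^4/24$; this produces \emph{exactly} the integrands $\cosh(f_3|x|^3/6)-1$, $\sinh(f_3|x|^3/6)$, and $\sinh(f_4|x|^4/24)$ that define $\beta_1,\beta_2,\beta_3,\beta_4$ in Lemma~\ref{leIneqBeta} (via Corollary~\ref{cor:beta}).

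Your decomposition does not reproduce these integrands. For the $\beta_2,\beta_3,\beta_4$ pieces you are safe, because your polynomial integrands ($x^3$, $x^4$, $x^5$) are termwise dominated by the corresponding $\sinh$'s, so invoking the $\beta_i$ as upper bounds is valid if wasteful. But for the $\beta_1$ piece your bound fails: $|e^\rho-1-\rho|\le e^{|\rho|}-1-|\rho|=\bigl(\cosh|\rho|-1\bigr)+\bigl(\sinh|\rho|-|\rho|\bigr)>\cosh|\rho|-1$, and after absorbing $e^{|\rho|}$ into the Gau{\ss}ian your integrand $\tfrac{(f_3x^3/6)^2}{2}e^{-g(1-\mu_3)x^2/2}$ is strictly larger than the paper's $\bigl(\cosh(f_3x^3/6)-1\bigr)e^{-gx^2/2}$. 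Consequently Lemma~\ref{leIneqBeta} does not cover it, and you cannot arrive at the $\beta_1(\mu_3)$ term as stated. The missing ingredient is precisely Lemma~\ref{leIneqComplex}; once you use it in place of your linearisation, the rest of your outline goes through as written.
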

\begin{proof}
Let $R_2(x)=f(x)-f_1x-f_2x^2/2$ be the second order Taylor remainder term of $f(x)$ at $x=0$, and let $R_{e}(x)=(R_2(x)+R_2(-x))/2$. Taylor's theorem (with the remainder in integral form) implies that
\begin{equation}
|R_2(x)|\leq\frac{f_{3}}{6}|x|^3 \quad \text{and}\quad  
|R_{e}(x)|\leq\frac{f_{4}}{24}|x|^4.
\label{eq:R2Re}
\end{equation}

We split the function $e^{f(x)}-e^{-g x^2/2}$ as follows:
\begin{align*}
e^{f(x)}-e^{-g x^2/2}&=e^{-g x^2/2}(e^{f_1x+i \Im f_2 x^2/2}-1)\\
&\kern2cm
+\cosh(f_1x)e^{f_2x^2/2}\left(e^{R_{2}(x)}-1\right)\\
&\kern2cm
+\sinh(f_1x)e^{f_2x^2/2}\left(e^{R_{2}(x)}-1\right).
\end{align*}
Subsequently, we consider the integral of each term over $[-x_0,x_0]$.

\medskip 
The integral of the first term is controlled by
  \begin{align*}
&\left|\int^{x_0}_{-x_0}e^{-g x^2/2}\left(e^{f_1x+i \Im f_2 x^2/2}-1\right)\right|\,dx\\
&\kern2cm
=\left|\int^{x_0}_{0}e^{-g x^2/2}(e^{f_1x+i \Im f_2 x^2/2}+e^{-f_1x+i \Im f_2 x^2/2}-2)\right|\,dx\\
&\kern2cm
\leq \int^{x_0}_{0}e^{-g x^2/2} \left(\left|\left(e^{f_1x}+e^{-f_1x}\right)\left(e^{i \Im f_2 x^2/2}-1\right)\right|+\left|e^{f_1x}+e^{-f_1x}-2\right|\right)\,dx\\
&\kern2cm
\leq \int^{x_0}_{0}e^{-g x^2/2} \left(2\cosh(f_1x_0)|\Im f_2|\frac {x^2}2+\cosh(f_1x_0)f_1^2x^2\right)\,dx\\
&\kern2cm
=\cosh(f_1x_0)\left(|\Im f_2|+f_1^2\right)\int^{x_0}_{0}x^2e^{-g x^2/2}\,dx\\
&\kern2cm
<\frac{\cosh(f_1x_0)(|\Im f_2|+f_1^2)}{g}\sqrt\frac{\pi}{2g}\erf\left(x_0\sqrt{\frac{g}{2}}\right).
  \end{align*}

\medskip For the second term, we utilise \eqref{eqIneqComplexE},
\eqref{eq:R2Re}, \eqref{coIneqBeta1} (with $u=g/2$ and $v=f_3/6$), 
and \eqref{coIneqBeta3} (with $u=g/2$ and $v=f_4/24$) to conclude that
  \begin{align*}
&\left|\int^{x_0}_{-x_0}\cosh(f_1x)e^{f_2x^2/2}\left(e^{R_{2}(x)}-1\right)\,dx\right|\\
&\kern3cm
=\left|\int^{x_0}_{0}\cosh(f_1x)e^{f_2x^2/2}\left(e^{R_{2}(x)}+e^{R_{2}(-x)}-2\right)\,dx\right|\\
&\kern3cm
\leq \cosh(f_1x_0)\int^{x_0}_{0}e^{-gx^2/2}\left|e^{R_{2}(x)}+e^{R_{2}(-x)}-2\right|\,dx\\
&\kern3cm
\leq 2\cosh(f_1x_0)\int^{x_0}_{0}e^{-gx^2/2}\left(\cosh\left(\frac
     {f_3|x|^3}6\right)-1+\sinh\left(\frac {f_4|x|^4}{24}\right
)\right)\,dx\\
&\kern3cm
\leq 2\cosh(f_1x_0)\erf\left(x_0\sqrt{\frac{g}{2}}\right)\left(\frac{8\sqrt2f_3^2\beta_1(\mu_3)}{36g^{7/2}}+\frac{4\sqrt2f_4\beta_3(\mu_4)}{24g^{5/2}}\right).
  \end{align*}

\medskip
For the third term, we utilise \eqref{eqIneqComplexO}, \eqref{coIneqBeta2} (with $u=g/2$ and $v=f_3/6$), and \eqref{coIneqBeta4} (with $u=g/2$ and $v=f_4/24$) to conclude that
  \begin{align*}
&\left|\int^{x_0}_{-x_0}\sinh(f_1x)e^{f_2x^2/2}\left(e^{R_{2}(x)}-1\right)\,dx\right|\\
&\kern3cm
=\left|\int^{x_0}_{0}\sinh(f_1x)e^{f_2x^2/2}\left(e^{R_{2}(x)}-e^{R_{2}(-x)}\right)\,dx\right|\\
&\kern3cm
\leq \int^{x_0}_{0}\sinh(f_1x)e^{-gx^2/2}\left|e^{R_{2}(x)}-e^{R_{2}(-x)}\right|\,dx\\
&\kern3cm
\leq 2f_1\cosh(f_1x_0)\int^{x_0}_{0}xe^{-gx^2/2}\left(\sinh\left(\frac
{f_3|x|^3}6\right)+\sinh\left(\frac {f_4|x|^4}{24}\right)\right)\,dx\\
&\kern3cm
\leq 2f_1\cosh(f_1x_0)\erf\left(x_0\sqrt{\frac{g}{2}}\right)\left(\frac{4\sqrt2f_3\beta_2(\mu_3)}{6g^{5/2}}+\frac{8f_4\beta_4(\mu_4)}{24g^3}\right).
  \end{align*}

Combining the above bounds, we get
\begin{multline*}
\left|\sqrt{\frac{g}{2\pi}}\int_{-x_0}^{x_0}
\left(e^{f(x)}-e^{-g x^2/2}\right)\,dx\right|\leq \cosh(f_1x_0)\erf\left(x_0\sqrt{\frac{g}{2}}\right)\\
\times
\left(\frac{|\Im f_2|+f_1^2}{2g}+\frac{4f_3\beta_1(\mu_3)}{9\sqrt{\pi}g^3}+\frac{f_4\beta_3(\mu_4)}{3\sqrt{\pi}g^2}
+\frac{4f_1f_3\beta_2(\mu_3)}{3\sqrt{\pi}g^2}+\frac{\sqrt{2}f_1f_4\beta_4(\mu_4)}{3\sqrt{\pi}g^{5/2}}\right),
\end{multline*}
which is exactly the assertion of the lemma.
\end{proof}

As announced at the beginning of this section.,
our plan is to apply Lemma~\ref{leGaussian} to the function 
$$x\mapsto\log\frac{e^{-imx}Q_n(re^{i(x+2\pi/3)})}{Q_n(re^{2\pi i/3})}$$ 
in order to get bounds on $\epsilon_{0,Q_n}(m,r)$. (The reader is reminded
from Part~B of the proof outline in Section~\ref{seOutline} that
$Q_n(q)=P_n^\dd (q)$ with $P_n(q)$ the ``Borwein polynomial'' from \eqref{eqPolynomial}.)
This application however requires upper and lower bounds for the various
constants in Lemma~\ref{leGaussian}, which we give next. 

\begin{lemma}\label{leDerivBounds}
Suppose that $n\geq400$, $m\in[3n,(\dd \deg P_n)/2]$, and $r$ is the
unique solution of the approximate saddle point
equation~\eqref{eqStationaryPoint} determined by $n$ and~$m$. Let 
$$f(\theta):=\dd \left(\log P_n(re^{i(\theta+2\pi/3)})-\log P_n(re^{2\pi
  i/3})-i m \theta\right),$$ 
and let the constants $f_j$, $j=1,2,3,4$, be defined as in Lemma~\ref{leGaussian} with the bound $\theta_0$ chosen as in \eqref{eqCutoffTheta0}. Then we have the following inequalities for the constants $f_j$:
\begin{align}
\label{eq:fX1}
f_1&< \frac{7}{40}\dd X_0(n,r),\\
\label{eq:fX2}
\frac{1}{3}\dd X_2(n,r)\leq-\Re f_2&<\frac{3}{5}\dd X_2(n,r), \\
\label{eq:fX3}
|\Im f_2|&<\frac{1}{3}\dd X_1(n,r),\\
\label{eq:fX4}
f_3&< \frac{2}{3}\dd X_3(n,r), \\
\label{eq:fX5}
f_{4}&< \frac{18}{25}\dd X_4(n,r),
\end{align}
with the quantities $X_j(n,r)$ defined in \eqref{eqXmDef}.
\end{lemma}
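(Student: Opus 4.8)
The plan is to use the explicit formulas from Section~\ref{seConvention} for the $\theta$-derivatives of $\log P_n(re^{i\theta})$, ``centred'' at $\theta=2\pi/3$, together with the approximate saddle point equation~\eqref{eqStationaryPoint}. Since $f(\theta)=\dd\bigl(\log P_n(re^{i(\theta+2\pi/3)})-\log P_n(re^{2\pi i/3})-im\theta\bigr)$, we have $f^{(j)}(\theta)=\dd\,\bigl(\frac{\partial}{\partial\theta}\bigr)^j\log P_n(re^{i(\theta+2\pi/3)})$ for $j\geq2$, while $f'(0)=\dd\bigl(\frac{\partial}{\partial\theta}\log P_n(re^{i\theta})\big|_{\theta=2\pi/3}-im\bigr)$. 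By~\eqref{eqPnDerivs} (with $\theta$ shifted so that the argument of $u_j,v_j$ is $r^k e^{ik\theta}$ before the shift, hence $r^k$ at the centre), evaluating at the centre $\theta=0$ gives
\[
f^{(j)}(0)=\dd\Bigl(\tfrac12 i^j U_j(n,r)+\tfrac{\sqrt3}{2} i^{j-1} V_j(n,r)\Bigr)\quad(j\geq2),
\]
and the condition that $f_1$ is real is exactly the content of~\eqref{eqStationaryPoint}: the $j=1$ term $\tfrac12 i U_1(n,r)+\tfrac{\sqrt3}{2}V_1(n,r)$ has real part $\tfrac{\sqrt3}{2}V_1(n,r)=\tfrac12\sum_{3\nmid k}k\,u_1(r^k)\cdot(\text{const})$; more precisely one checks from~\eqref{eqStationaryPoint2} that $f_1=\dd\cdot\tfrac12 i\,U_1(n,r)$ up to the cancellation coming from the saddle point equation, leaving $f_1=\tfrac12\dd\,\mathrm{Im}\,U_1(n,r)$ or a similar real expression. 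I would first record these identities cleanly, pinning down which combination of $U_j(n,r)$, $V_j(n,r)$ equals $f_1$, $\Re f_2$, $\Im f_2$, and which combination bounds $f_3$, $f_4$.

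The next step is to bound the relevant real/imaginary parts of $U_j(n,r)$ and $V_j(n,r)$ by multiples of the plain power sums $X_j(n,r)=\sum_{3\nmid k}k^j r^k$. For this I would invoke the inequalities ``\eqref{eq:Ungl1}'' and the companions in Lemma~\ref{leUVBounds} (cited in the proof of Lemma~\ref{leRadiusBound}) and the collection in Lemma~\ref{leIneqX}, which are precisely designed to give pointwise bounds such as $|u_j(z)|\le c_j$ and $|v_j(z)|\le c_j'$, or bounds on $\Re$ and $\Im$ of these rational functions, valid on the region $S_\rho$ of~\eqref{eqRegionS} to which $r^k$ (indeed $r^k e^{ik\theta}$ for $|\theta|\le t\theta_0$) belongs by part~(2) of Lemma~\ref{leCutoffPrelim}. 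Since $f_3$ and $f_4$ are defined via suprema of $|f^{(3)}|$, $|f^{(4)}|$ over $|\theta|\le\theta_0$ (after the $3\int(1-t)^2\,dt=1$ normalisation), I need those bounds uniformly over the peak interval, not just at the centre; here part~(2) of Lemma~\ref{leCutoffPrelim} with $t=1$ (so $3t\Ct=10/27$) guarantees all the arguments $r^k e^{ik\theta}$ stay in $S_{10/27}$, on which the rational functions $u_j,v_j$ are analytic (no pole at $z$ with $1+z+z^2=0$, since $|z|\le1$ keeps us away from the primitive cube roots except in modulus, and the region $S_\rho$ excludes a neighbourhood of those roots for small $\rho$). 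Thus $|f^{(j)}(\theta)|\le \dd\cdot(\text{const})\cdot X_j(n,r)$ for $j=2,3,4$ uniformly on the peak, and integrating out the $t$-weight preserves the same constant.

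With these pieces in hand, the five claimed inequalities reduce to verifying explicit numerical constants: $\tfrac{\sqrt3}{2}\cdot(\text{sup of }|v_1|)\le\tfrac{7}{40}$ for~\eqref{eq:fX1}, matching $\tfrac12|u_2|$-type bounds against $\tfrac13$ and $\tfrac35$ for~\eqref{eq:fX2} (the lower bound on $-\Re f_2$ requires a matching \emph{lower} bound on $\Re u_2(z)$ or the relevant combination over $S_{10/27}$, which is the one genuinely two-sided estimate), $\tfrac{\sqrt3}{2}|v_2|\le\tfrac13$ for~\eqref{eq:fX3}, and so on for $f_3$, $f_4$. Each is a one-variable (or two-real-variable, parametrising $z\in S_\rho$) optimisation of an explicit rational function, of exactly the flavour already carried out in Lemma~\ref{leRadiusBound} (``the maximum of \dots\ is approximately $0.881906<8/9$''). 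The main obstacle I anticipate is the \emph{lower} bound $-\Re f_2\ge\tfrac13\dd X_2(n,r)$ in~\eqref{eq:fX2}: upper bounds follow from triangle-inequality-style pointwise estimates, but the lower bound needs genuine control that $\Re\bigl(-\tfrac12 U_2(n,r)\bigr)+\tfrac{\sqrt3}{2}\Im\bigl(i\,V_2(n,r)\bigr)$ — i.e.\ the actual real part of the second $\theta$-derivative at the centre — cannot be too small relative to $\sum_{3\nmid k}k^2 r^k$, term by term, using that each summand $-\tfrac12\Re\bigl((3k-2)^2 u_2(r^{3k-2})+(3k-1)^2u_2(r^{3k-1})\bigr)$ is comparable to $\tfrac13\bigl((3k-2)^2 r^{3k-2}+(3k-1)^2 r^{3k-1}\bigr)$, which again comes down to bounding $\Re u_2(x)/x$ from below by $\tfrac23$ (hence the $\tfrac13$ after the factor $\tfrac12$) for $x\in(0,1]$, a routine but essential calculus check. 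Once that is secured, assembling~\eqref{eq:fX1}--\eqref{eq:fX5} is mechanical, with the hypothesis $n\ge400$ (entering only through the range $(r_0,1]$ for $r$) and $m\in[3n,(\dd\deg P_n)/2]$ (ensuring $r$ lies in that range via Lemma~\ref{leRadiusBound}) used exactly to restrict all arguments to $S_{10/27}\cap\{|z|\le1\}$.
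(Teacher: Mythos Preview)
Your general framework---express the derivatives via $U_j,V_j$ and reduce to pointwise bounds on the rational functions $u_j(z)/z$, $v_j(z)/z$---matches the paper for the $U_j$ contributions, and your remark about the lower bound on $u_2(x)/x$ over $(0,1]$ giving the lower half of~\eqref{eq:fX2} is exactly right. But there is a genuine gap in how you propose to treat the $V_j$.

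The alternating sums $V_j(n,r)=\sum_{k=1}^n\bigl((3k-2)^jv_j(r^{3k-2})-(3k-1)^jv_j(r^{3k-1})\bigr)$ cannot be handled by pointwise bounds of the form $|v_j(z)/z|\le c_j'$: that only yields $|V_j|\le c_j' X_j(n,r)$, whereas the lemma requires $|V_1|\lesssim X_0$ for~\eqref{eq:fX1} and $|V_2|\lesssim X_1$ for~\eqref{eq:fX3}---one index \emph{lower} than the naive estimate. (Concretely, $f_1=\dd\,\tfrac{\sqrt3}{2}V_1(n,r)$ after the saddle-point cancellation, and $\sup_{(0,1]}|v_1(z)/z|=1$, so the triangle inequality gives only $|f_1|\le\tfrac{\sqrt3}{2}\dd\,X_1$, useless against $\tfrac{7}{40}\dd\,X_0$.) Likewise for $f_3,f_4$: the table in Lemma~\ref{leUVBounds}(2) gives $|v_3(z)/z|<1.01$ on $S_{10/27}$, so $\tfrac12\cdot 1.44+\tfrac{\sqrt3}{2}\cdot 1.01\approx 1.6$, far above $\tfrac23$. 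The paper closes this gap by invoking the Maclaurin-type estimate of Lemma~\ref{leAlternatingSum} on $w_j(k,z)=k^j v_j(z^k)$ to exploit the alternating cancellation, obtaining $|V_1|<0.202\,X_0$, $|V_2|<0.38\,X_1$, and $|V_3|/X_3,\,|V_4|/X_4=O(1/n)$; only then do the constants in~\eqref{eq:fX1}, \eqref{eq:fX3}, \eqref{eq:fX4}, \eqref{eq:fX5} come out. A secondary point: for the $U_3,U_4$ bounds the paper does \emph{not} simply ``preserve the same constant'' under the $t$-weight but splits the integral (roughly $S_{3\Ct/2}$ for most of the mass, $S_{3\Ct}$ for the tail) to push the $U$-constant below $2\times\tfrac23$; without this refinement even the $U$-part alone would overshoot.
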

\begin{proof}
Since all four constants are linear in $f$ and therefore proportional
to $\dd $, we assume $\dd =1$ in subsequent arguments without loss of
generality. 

We first give expressions respectively preliminary upper
bounds on these constants.
For $f_1$, we have
\begin{align}
\notag
  f_1&=\left(\frac {d} {d\theta}\log
  P_n(re^{i(\theta+2\pi/3)})\right)\bigg\vert_{\theta=0}-i m \\
\notag
&=\Re\left(\frac {d} {d\theta}\log
  P_n(re^{i(\theta+2\pi/3)})\right)\bigg\vert_{\theta=0}
+i\Re\left(r\frac {d} {dr}\log
  P_n(re^{i(2\pi/3)})\right)-i m \\
&=\frac{\sqrt3}{2}V_1(n,r),
\label{eq:f1}
\end{align}
where we used \eqref{eqPnDerivs} with $j=1$ and
the approximate saddle point equation \eqref{eqStationaryPoint} to get
the last line. Still using \eqref{eqPnDerivs}, we have
\begin{align}
\label{eq:f2}
  f_2&=-\frac12U_2(n,r)+\frac{\sqrt3i}{2}V_2(n,r),\\
\label{eq:f3}
  f_3&\leq3\int^1_0(1-t)^2\sup_{|\theta|\leq t\theta_0}\left(\frac12\left|U_3(n,re^{i\theta})\right|+\frac{\sqrt3}{2}\left|V_3(n,re^{i\theta})\right|\right),\\
\label{eq:f4}
  f_4&\leq4\int^1_0(1-t)^3\sup_{|\theta|\leq t\theta_0}\left(\frac12\left|U_4(n,re^{i\theta})\right|+\frac{\sqrt3}{2}\left|V_4(n,re^{i\theta})\right|\right).
\end{align}
Therefore the problem is reduced to proving upper and lower bounds for~$U_j$ and~$V_j$.

\medskip
{\sc Upper and lower bounds for $U_2(n,r)$.} 
The quantities $U_j$ are comparable to the corresponding $X_j$;
indeed, by comparing \eqref{eqDerivU} and \eqref{eqXmDef} and using
\eqref{eq:Ungl1}, we immediately obtain 
\[
\frac23X_2(n,r)\leq U_2(n,r)<\frac65 X_2(n,r),
\]
which translates into 
\[
\frac{1}{3}X_2(n,r)\leq-\Re f_2<\frac{3}{5}X_2(n,r),
\]
establishing \eqref{eq:fX2}.

\medskip
{\sc Upper bounds for $U_3(n,r)$ and $U_4(n,r)$.} 
Upper bounds for $U_3$ and $U_4$ can also be obtained by the same
comparison. In fact, for arbitrary~$j$ we have 
\begin{align*}
\sup_{|\theta|\leq t\theta_0}\left|U_j(n,re^{i\theta})\right|&\leq X_j(n,r)\sup_{|\theta|<t\theta_0}\sup_{0\leq k\leq 3n}\left|\frac{u_j(r^ke^{ik\theta})}{r^ke^{ik\theta}}\right|\\
&\leq X_j(n,r)\sup_{z\in S_{3t\Ct}}\left|\frac{u_j(z)}{z}\right|,
\end{align*}
where $S_\rho$ is defined in \eqref{eqRegionS}.

Remembering from \eqref{eqCutoffTheta0} that $\Ct=10/81$,
we use Lemma~\ref{leUVBounds}(2) to conclude that
\begin{align*}
3\int^1_0(1-t)^2\sup_{|\theta|\leq t\theta_0}|U_3(n,re^{i\theta})|&\leq 3X_3(n,r) \int^1_0(1-t)^2\sup_{z\in S_{3t\Ct}}\left|\frac{u_3(z)}{z}\right|\\
&\leq \left(\frac18\sup_{z\in S_{3\Ct}}\left|\frac{u_3(z)}{z}\right|+\frac78\sup_{z\in S_{3\Ct/2}}\left|\frac{u_3(z)}{z}\right|\right)X_3(n,r)\\
&\leq \left(\frac18\times1.44+\frac78\times1.3\right)X_3(n,r)=1.3175X_3(n,r),
\end{align*}
and similarly
\begin{align*}
4\int^1_0(1-t)^3\sup_{|\theta|\leq t\theta_0}|U_4(n,re^{i\theta})|&\leq 4X_4(n,r) \int^1_0(1-t)^3\sup_{z\in S_{3t\Ct}}\left|\frac{u_4(z)}{z}\right|\\
&\kern-10pt
\leq \left(\frac{1}{16}\sup_{z\in S_{3\Ct}}\left|\frac{u_4(z)}{z}\right|+\frac{15}{16}\sup_{z\in S_{3\Ct/2}}\left|\frac{u_4(z)}{z}\right|\right)X_4(n,r)\\
&\kern-10pt
\leq \left(\frac{1}{16}\times1.721+\frac{15}{16}\times1.409\right)X_4(n,r)=1.4285X_4(n,r).
\end{align*}

\medskip
{\sc A preliminary upper bound for $V_j(n,r)$.} 
As opposed to the $U_j$'s,
the quantities $V_j$, as alternating sums, are expected to be much smaller than $X_j(n,r)$. Indeed, let $w_j(k,z):=k^jv_j(z^k)$. Using Lemma~\ref{leAlternatingSum} for the function $w_j$, we see that
\begin{align}
  |V_j(n,z)|&\leq \frac{1}{3}\left|w_j(3n,z)-w_j(0,z)\right|+\frac{2}{3}|w_j''(3n,z)-w_j''(0,z)|+\frac{11n}{96}\sup_{k\in[0,3n]}\left|w_j^{(4)}(k,z)\right|\nonumber\\
  &=\frac{1}{3}\left|w_j(3n,z)\right|+\frac{2}{3}|w_j''(3n,z)|+\frac{11n}{96}\sup_{k\in[0,3n]}\left|w_j^{(4)}(k,z)\right|,\label{eqVjPrelimUpperBound}
\end{align}
since direct calculations reveal that $w_j(0,z)=w_j''(0,z)=0$ for $j=1,2,3,4$.

In order to treat the derivatives of the functions $w_j$, we note that
\eqref{eqPnDerivs} implies that  
\begin{equation}\label{eqVDeriv}
\left(\frac{\partial}{\partial k}\right)^{\ell} v_j(z^k)=(\log z)^\ell
v_{j+\ell}(z^k),
\quad \text{for }\ell\geq0.
\end{equation}
With this representation in mind, we proceed to give upper bounds for
the right-hand side of \eqref{eqVjPrelimUpperBound} for $j=1,2,3,4$, by making frequent
use of inequalities from Lemma~\ref{leUVBounds}.

\medskip
{\sc Upper bound for $V_1(n,r)$.} 
By using \eqref{eq:Ungl2a} and subsequently \eqref{eq:Ungl2}, we have
\[
\frac{w_1(3n,r)/3}{X_0(n,r)}=\frac{1-r^3}{(-\log r)(1+r)}\frac{r^{3n-1}(-\log r^n)}{1-r^{9n}}\leq\frac32\frac{r^{(3-1/400)n}(-\log r^n)}{1-r^{9n}}<0.201
\]
for the main term.
Using \eqref{eqVDeriv} and \eqref{eq:Ungl4}, we get
\begin{align*}
|w_1''(3n,r)|&=\frac{1}{3n}\left|2(\log r^{3n})v_{2}(r^{3n})+(\log r^{3n})^2v_{3}(r^{3n})\right|<\frac{1}{9n}
\end{align*}
for the second derivative. On the other hand, using \eqref{eqVDeriv}
and \eqref{eq:Ungl5}, we have
\begin{align*}
|w_1^{(4)}(k,r)|&=|\log r|^3\,\left|4v_{4}(r^k)+(\log r^{k})v_{5}(r^k)\right|<\frac{9}{8}|\log r|^3
\end{align*}
for the fourth derivative. 
Substitution of these bounds in \eqref{eqVjPrelimUpperBound} with $j=1$, if combined
with~\eqref{eq:X0} and the fact from Lemma~\ref{leRadiusBound} that $\vert \log r\vert
<\vert \log r_0\vert\le \frac {2} {3}$, then yields
\begin{align*}
|V_1(n,r)|&\leq 0.201X_0(n,r)+\frac{2}{27n}+\frac{33n}{256}|\log
r|^3\\
&<\left(0.201+\frac{2}{27\times0.95n^{3/2}}+\frac{11}{288\times0.95n}\right)X_0(n,r)
<0.202X_0(n,r).
\end{align*}

\medskip
{\sc Upper bound for $V_2(n,r)$.} Similarly to above, using
\eqref{eqX1Lower} in Lemma~\ref{leIneqX}, and subsequently
\eqref{eq:Ungl3a} and \eqref{eq:Ungl3}, we obtain
\begin{align*}
\frac{w_2(3n,r)/3}{X_1(n,r)}&\leq\frac{3(1-r^3)^2}{(1+2r+2r^3+r^4)(-\log r)^2}\frac{r^{3n-1}(1-r^{6n})(-\log r^n)^2}{(1-r^{9n})(1-r^{3n/2})(1+r^{3n}+r^{6n})}\\
&<\frac92\frac{r^{(3-1/400)n}(1-r^{6n})(-\log r^n)^2}{(1-r^{9n})(1-r^{3n/2})(1+r^{3n}+r^{6n})}<0.378
\end{align*}
for the main term. Using \eqref{eqVDeriv} and \eqref{eq:Ungl6}, we get
\begin{align*}
|w_2''(3n,r)|&=\left|2v_2(r^{3n})+2(\log r^{3n})v_{3}(r^{3n})+(\log r^{3n})^2v_{4}(r^{3n})\right|<0.21
\end{align*}
for the second derivative. By \eqref{eqVDeriv} and \eqref{eq:Ungl7}, we infer
\begin{align*}
|w_2^{(4)}(k,r)|&=|\log r|^2\,\left|12v_{4}(r^k)+8(\log r^{k})v_{5}(r^k)+(\log r^{k})^2v_{6}(r^k)\right|<3.61|\log r|^2
\end{align*}
for the fourth derivative. 
Substitution of these bounds in \eqref{eqVjPrelimUpperBound} with $j=2$, if combined
with~\eqref{eq:X1} and the earlier mentioned fact that $\vert \log r\vert
< \frac {2} {3}$, then yields
\begin{align*}
V_2(n,r)&\leq 0.378X_1(n,r)+0.14+0.42n|\log r|^2\\
&<0.378X_1(n,r)+0.14+0.19<0.38X_1(n,r).
\end{align*}

\medskip
{\sc Upper bounds for $V_3(n,re^{i\theta})$ and
  $V_4(n,re^{i\theta})$.} 
For these two quantities, instead of proving $V_j=O(X_{j-1})$ as above, we prove $V_j=o(X_j)$ as $n\to\infty$. Observe that Lemma~\ref{leCutoffPrelim}(2) and \eqref{eqVDeriv} imply that for $a=0,2,4$ we have
    \[
    |w_j^{(a)}(k,re^{i\theta})|\leq r^k\sum_{\ell=0}^a \frac{a!}{\ell!}\binom{j}{a-\ell}k^{j-a+\ell}|\log re^{i\theta}|^{\ell}\sup_{z\in S_{3t\Ct}}\left|\frac{v_{j+\ell}(z)}{z}\right|.
    \]
Therefore, by \eqref{eqLogZGammaBound} and
    \eqref{eqVjPrelimUpperBound}, we get
    \begin{multline*}
    \sup_{|\theta|<t\theta_0}|V_j(n,re^{i\theta})|\leq \frac13(3n)^jr^{3n}\sup_{z\in S_{3t\Ct}}\left|\frac{v_{j}(z)}{z}\right|\\
    +\frac23(3n)^jr^{3n}\sum_{\ell=0}^2\frac{2}{\ell!}\binom{j}{2-\ell}\frac{|\log re^{i\theta}|^{\ell}}{(3n)^{2-\ell}}\sup_{z\in S_{3t\Ct}}\left|\frac{v_{j+\ell}(z)}{z}\right|\\
    +\frac{11n}{96}|\log re^{i\theta}|^{a-j}\sum_{\ell=0}^4\frac{24}{\ell!}\binom{j}{4-\ell}(j-a+\ell)^{j-a+\ell}(e^{-1}+3t\Ct)^{j-a+\ell}\sup_{z\in S_{3t\Ct}}\left|\frac{v_{j+\ell}(z)}{z}\right|.
    \end{multline*}
Here we put $t=1$ (thus raising the bound on the right-hand side since
here $0\le t\le 1$).
    Substitution of the upper bounds from \eqref{eqLogZBoundAux} (with
     $t=1$) and
from    Lemma~\ref{leUVBounds}(2) leads to
    \begin{align*}
    \sup_{|\theta|<\theta_0}|V_3(n,re^{i\theta})|&<(3n)^3r^{3n}\left(0.34+1.17n^{-1}+1.25n^{-3/2}+0.45n^{-2}\right)+45.1\sqrt{n}\\
    &\leq 0.344(3n)^3r^{3n}+45.1\sqrt{n},\\
    \sup_{|\theta|<\theta_0}|V_4(n,re^{i\theta})|&<(3n)^4r^{3n}\left(0.34+3.04n^{-1}+3.40n^{-3/2}+0.91n^{-2}\right)+1135n\\
    &\leq 0.349(3n)^3r^{3n}+1135n.
    \end{align*}
    We now note that for $j\in\Z^+$ we have
    \[
    \frac{X_j(n,r)}{(3n)^jr^{3n}}\geq\frac{X_j(n,1)}{(3n)^j}>\frac{2}{j+1}(n-1).
    \]
Hence, by also using \eqref{eq:X3} and \eqref{eq:X4}, we have
    \begin{align*}
      \frac{\sup_{|\theta|<\theta_0}|V_3(n,re^{i\theta})|}{X_3(n,r)}&<\frac{2\times0.344}{n-1}+\frac{45.1}{16n^{3/2}}<\frac{5}{6n},\\
      \frac{\sup_{|\theta|<\theta_0}|V_4(n,re^{i\theta})|}{X_4(n,r)}&<\frac{5\times0.349}{2(n-1)}+\frac{1135}{94n^{3/2}}<\frac{3}{2n}.
    \end{align*}
    
By combining all the bounds above and using them in
\eqref{eq:f1}--\eqref{eq:f4}, we obtain 
    \begin{align*}
      f_1&<\frac{\sqrt3}{2}0.202X_0(n,r)<\frac{7}{40}X_0(n,r), \\
      |\Im f_2|&<\frac{\sqrt3}{2}0.38X_1(n,r)<\frac{1}{3}X_1(n,r),\\
      f_3&<\left(\frac12\times1.3175+\frac{\sqrt3}{2}\times\frac{5}{6n}\right)X_3(n,r)<\frac{2}{3}X_3(n,r),\\
      f_4&<\left(\frac12\times1.4285+\frac{\sqrt3}{2}\times\frac{3}{2n}\right)X_4(n,r)<\frac{18}{25}X_4(n,r),
    \end{align*}
thereby establishing the remaining inequalities.
\end{proof}

We are now ready for presenting, and proving, our upper bound for the
peak error term $\epsilon_{0,P_n^\dd }(m,r)$ as defined in~\eqref{eq:ep0}.

\begin{lemma}\label{leIneqPeak}
Let $n\geq400$ and $\dd \in\{1,2,3\}$. Furthermore, for $m\in[3n,\dd (\deg P_n)/2]$,
let $r=r_{n,m,\dd }$ be the solution of the approximate saddle point
equation~\eqref{eqStationaryPoint}, and let $\theta_0$ be the cutoff
as defined in \eqref{eqCutoffTheta0}. Then we have the following upper
bound for the peak error term $\epsilon_{0,P_n^\dd }(m,r)$:
\begin{equation}\label{eqIneqPeak}
{\epsilon_{0,P_n^\dd }(m,r)}<(146.2\dd ^{-1}+6.46+0.124\dd )\frac{X_1(n,r)}{X_2(n,r)}+\frac{7.222}{\sqrt{\dd X_2(n,r)}},
\end{equation}
where the $X_j(n,r)$ are as defined in \eqref{eqXmDef} and
$g_{Q_n}(r)$ is defined in \eqref{eqDefG}. 
Moreover, the right-hand side of \eqref{eqIneqPeak} is decreasing with respect to~$r$.
\end{lemma}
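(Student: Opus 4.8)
The plan is to apply the general Gaussian-approximation bound of Lemma~\ref{leGaussian} to the function $f(\theta)=\dd\bigl(\log P_n(re^{i(\theta+2\pi/3)})-\log P_n(re^{2\pi i/3})-im\theta\bigr)$ with $x_0=\theta_0$, and then translate the resulting bound \eqref{eq:betamu} into the asserted inequality \eqref{eqIneqPeak} for $\epsilon_{0,P_n^\dd}(m,r)$. First I would observe that $f(0)=0$, that $f_1=\tfrac{\sqrt3}{2}\dd V_1(n,r)\in\R$ by \eqref{eq:f1}, and that $g=-\Re f_2>0$ by the lower bound $-\Re f_2\ge\tfrac13\dd X_2(n,r)$ from \eqref{eq:fX2}; this verifies the hypotheses of Lemma~\ref{leGaussian} once one also checks $\mu_3,\mu_4\in(0,1)$. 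For the latter, use $\mu_3=\tfrac{\theta_0 f_3}{3g}$ together with \eqref{eq:fX2}, \eqref{eq:fX4}, the definition \eqref{eqCutoffTheta0} of $\theta_0=\Ct(1-r^3)/(1-r^{3n})$, and the inequalities of Lemma~\ref{leCutoffPrelim}(4) to bound $X_3/X_2$ and $X_4/X_2^2$ from above; since $\theta_0$ carries the factor $1-r^3$ while $X_2\sim$ (const)$\cdot n^{3/2}$ is large, $\mu_3$ and $\mu_4$ come out comfortably below~$1$ for $n\ge400$.

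Next I would compare \eqref{eq:ep0} with \eqref{eq:betamu}. The prefactor $2\sqrt{g/(2\pi)}/\erf(\theta_0\sqrt{g/2})$ in $\epsilon_{0,Q_n}$ matches (up to the factor $2$ and the $\cosh(f_1\theta_0)$) the prefactor produced by Lemma~\ref{leGaussian}, so that
\[
\epsilon_{0,P_n^\dd}(m,r)\le 2\cosh(f_1\theta_0)\left(\frac{|\Im f_2|+f_1^2}{2g}+\frac{4f_3\beta_1(\mu_3)}{9\sqrt\pi g^3}+\frac{f_4\beta_3(\mu_4)}{3\sqrt\pi g^2}+\frac{4f_1f_3\beta_2(\mu_3)}{3\sqrt\pi g^2}+\frac{\sqrt2 f_1f_4\beta_4(\mu_4)}{3\sqrt\pi g^{5/2}}\right).
\]
Now I would substitute the bounds of Lemma~\ref{leDerivBounds}: $f_1<\tfrac{7}{40}\dd X_0$, $|\Im f_2|<\tfrac13\dd X_1$, $g\ge\tfrac13\dd X_2$, $f_3<\tfrac23\dd X_3$, $f_4<\tfrac{18}{25}\dd X_4$. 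For each term one uses the divisibility by $g^{\ge2}$ to cancel two powers of $\dd X_2$ and then invokes the ratio inequalities: $f_1^2/g\lesssim X_0^2/X_2\lesssim X_1/X_2$ (using $X_0^2<X_1$ or the relevant inequality from Lemma~\ref{leIneqX}), $f_3/g^2\lesssim X_3/(X_2)^2\lesssim X_1/X_2$ and $f_4/g^2\lesssim X_4/(X_2)^2\lesssim 1$, and likewise for the mixed terms. The $\beta_i(\mu_i)$ factors are bounded by their values at the (known, uniformly-bounded-away-from-$1$) arguments $\mu_3,\mu_4$, hence by absolute constants via Lemma~\ref{leIneqBeta}. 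Collecting the pieces, the terms of order $\dd^{-1}$ (coming from the $1/g$-type contributions carrying an extra $1/(\dd X_2)$), order $1$, and order $\dd$ assemble into the coefficient $146.2\dd^{-1}+6.46+0.124\dd$ of $X_1/X_2$, while the remaining $f_4$-term, of order $1/\sqrt{\dd X_2}$, gives the second summand $7.222/\sqrt{\dd X_2(n,r)}$. One also checks $\cosh(f_1\theta_0)$ is within a harmless constant of~$1$, since $f_1\theta_0\lesssim X_0(1-r^3)/(1-r^{3n})\lesssim X_0\cdot(-\log r)\lesssim n^{1/2}\cdot n^{-1/2}$ is bounded, and absorb it into the stated constants.

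Finally, for the monotonicity claim I would argue that each of $X_1(n,r)/X_2(n,r)$ and $1/\sqrt{X_2(n,r)}$ is decreasing in~$r$ on $(0,1]$: $X_2$ is increasing in~$r$ (as noted after \eqref{eqXmDef}), giving the second assertion immediately, and for the first one shows $X_1/X_2$ decreasing by a term-by-term/Chebyshev-type argument (the weights $k^2r^k$ shift mass to larger~$k$ as $r$ grows faster than $k^1r^k$ does, so the ratio $\sum k r^k/\sum k^2 r^k$ decreases), or more directly by checking $\tfrac{d}{dr}(X_1/X_2)\le0$ via $X_2\,\tfrac{d}{dr}X_1-X_1\,\tfrac{d}{dr}X_2=\tfrac1r\bigl(X_2 X_1 - X_1 X_2 + (\text{covariance-type difference})\bigr)\le0$. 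I expect the main obstacle to be bookkeeping: tracking the exact numerical constants through the five terms of \eqref{eq:betamu} so that they genuinely sum to the clean coefficients $146.2\dd^{-1}+6.46+0.124\dd$ and $7.222$, in particular being careful that the $\dd$-dependence enters only through $g\ge\tfrac13\dd X_2$ and the linear scaling of $f_1,f_3,f_4$, and that the $\beta_i$ arguments $\mu_3,\mu_4$ are controlled uniformly over the whole range $m\in[3n,\dd(\deg P_n)/2]$ using only the crude information $r\in(r_0,1]$ from Lemma~\ref{leRadiusBound}.
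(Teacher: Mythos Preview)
Your proposal is correct and follows essentially the same route as the paper: apply Lemma~\ref{leGaussian} to $f$ with $x_0=\theta_0$, feed in the derivative bounds of Lemma~\ref{leDerivBounds}, and reduce everything to $X_j$-ratios. Two small sharpenings worth noting: the precise bounds $\mu_3\le 20/27$ and $\mu_4\le 2/3$ (needed to invoke the numerical values in Lemma~\ref{leIneqBeta}) come from the ratio inequalities of Corollary~\ref{coIneqX} (specifically \eqref{eqIneqR3_02} and \eqref{eqIneqR4_002}) rather than Lemma~\ref{leCutoffPrelim}(4); and the monotonicity of $X_2/X_1$ in~$r$ is handled most cleanly by computing $\partial_r(X_2/X_1)=(X_3X_1-X_2^2)/(rX_1^2)\ge0$ via Cauchy--Schwarz.
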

\begin{proof}
We apply Lemma~\ref{leGaussian} with $x_0=\theta_0$ to the function 
\begin{equation} \label{eq:Qfun} 
x\mapsto\log\frac{e^{-imx}Q_n(re^{i(x+2\pi/3)})}{Q_n(re^{2\pi
    i/3})}.
\end{equation}
This produces a bound for $\epsilon_{0,P_n^\dd }(m,r)$ in terms of
the quantities $f_1,f_2,f_3,f_4,g$ and
$\beta_1(\mu_3),\beta_2(\mu_3),\beta_3(\mu_4),\beta_4(\mu_4)$.
We now need to estimate the individual terms in \eqref{eq:betamu} using
the inequalities in
Lemma~\ref{leDerivBounds} and Corollary~\ref{coIneqX}, and the
estimates for the particular values in Lemma~\ref{leIneqBeta}. 
In order to justify the use of Lemma~\ref{leIneqBeta}, we have to verify that
$\mu_3\le20/27$ and $\mu_4\le2/3$. Indeed, using \eqref{eq:fX2},
 \eqref{eq:fX4}, and the observation that, by definition, $g=-\Re f_2$ and
 $X_0(n,r)=r(1+r)(1-r^{3n})/(1-r^3)$, we have
\begin{equation*}
\mu_3=\frac{\theta_0f_3}{3g}\leq \frac{2r(r+1)\Ct X_3(n,r)}{3X_0(n,r)X_2(n,r)}\leq 6\Ct=\frac{20}{27},
\end{equation*}
where we used \eqref{eqIneqR3_02}.
Similarly, using in addition \eqref{eq:fX5}, we get
\begin{equation*}
\mu_4=\sqrt{\frac{\theta_0^2f_4}{8g}}\leq\sqrt{\frac{27\Ct^2X_4(n,r)r^2(r+1)^2}{100X_0^2(n,r)X_2(n,r)}}\leq\frac{27}{5}\Ct=\frac23,
\end{equation*}
where we used \eqref{eqIneqR4_002}.
Knowing these bounds, the application of Lemma~\ref{leDerivBounds} and
Corollary~\ref{coIneqX} in order to bound the individual terms in
\eqref{eq:betamu} with our choices of function~$f$ and $x_0=\theta_0$ 
is now straightforwardly done in the same way as the above estimations
for~$\mu_3$ and~$\mu_4$.

\medskip
The monotonicity with respect to~$r$ is proved by noticing that both $X_2$ and $X_2/X_1$ are increasing with respect to~$r$; this is obvious for $X_2$, and we have
\[
\frac{\partial}{\partial r}\frac{X_2(n,r)}{X_1(n,r)}=\frac{X_3(n,r)X_1(n,r)-X_2^2(n,r)}{rX_1^2(n,r)}\geq0,
\]
where the last inequality is a consequence of the Cauchy--Schwarz inequality.
\end{proof}

\section{Bounding the tails}\label{seEps1}

The goal of this section is to provide a bound for the tail error term
$\epsilon_{1,Q_n}(r)=\epsilon_{1,P_n^\dd}(r)$. 
By the definition~\eqref{eq:ep1} of $\epsilon_{1,P_n^\dd}(r)$, what we
need is upper bounds for $\left|\frac{P_n(re^{i\theta})}{P_n(re^{2\pi i/3})}\right|$.
Phrased differently, the objective is to get good lower bounds for the quantity
\begin{align}
\notag
-\log\left|\frac{P_n(re^{i\theta})}{P_n(re^{2\pi i/3})}\right|&=
-\underset{3\nmid k}{\sum_{k=1}^{3n}} \log\left|\frac{1-\left(re^{i\theta}\right)^k}{1-r^ke^{2\pi i/3}}\right|\\
&=-\frac12\underset{3\nmid k}{\sum_{k=1}^{3n}} \log\frac{1-2r^k\cos(k\theta)+r^{2k}}{1+r^k+r^{2k}}
\label{eq:P/P-sum}
\end{align}
in terms of $\theta$, $r$, and $n$.
Depending on the ranges of these parameters, we shall in fact
establish two different lower bounds, presented in
Lemmas~\ref{leTailBound2}
and~\ref{leTailBound1} below. Lemma~\ref{leTailIneq2} provides a preliminary
estimate that is used in the proof of Lemma~\ref{leTailBound2}.
After these preparations, our bound for 
$\epsilon_{1,P_n^\dd}(r)$ is stated, and proved, in Lemma~\ref{leIneqTail}.

\medskip
In the following,
we shall use two possible lower bounds for the summand in~\eqref{eq:P/P-sum}:

\begin{enumerate}
  \item For $x\in[-1/3,1]$, we have $-\log(1-x)\geq x$.
In this inequality, we replace $x$ by $\frac{r^k}{1+r^k+r^{2k}}(1+2\cos(k\theta))$ to obtain
  \begin{equation}\label{eqSummandBoundMain}
-\log\frac{1-2r^k\cos(k\theta)+r^{2k}}{1+r^k+r^{2k}}\geq \frac{r^k}{1+r^k+r^{2k}}(1+2\cos(k\theta)).
  \end{equation}
  \item For $z\in\C$ with $|z|\leq1$, we have $|1-z^k|\leq
    k|1-z|$. Use of this inequality for $z=re^{i\theta}$ implies that
  \begin{equation}\label{eqSummandBoundAlternate}
-\log\frac{1-2r^k\cos(k\theta)+r^{2k}}{1+r^k+r^{2k}}\geq\log(1+r^k+r^{2k})-\log(1-2r\cos\theta+r^{2})-2\log k.
  \end{equation}
\end{enumerate}

\begin{lemma}\label{leTailIneq2}
For $r\in(0,1]$ and $\theta\in\R$, we have
\begin{equation} \label{eq:sum-cos} 
-\log\left|\frac{P_n(re^{i\theta})}{P_n(re^{2\pi i/3})}\right|\geq\frac{1}{3}\sum_{k=1}^{n}r^{3k}\left(1-\cos 3k\theta\right)-\frac{0.8}{|1-re^{i\theta}|}.
\end{equation}
\end{lemma}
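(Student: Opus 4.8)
The sum on the right of \eqref{eq:P/P-sum} is bounded below term by term by the elementary inequality \eqref{eqSummandBoundMain}: the quantity $\frac{r^k(1+2\cos k\theta)}{1+r^k+r^{2k}}$ to which $-\log(1-x)\ge x$ is applied lies in $[-\frac13,1]$, because $\frac{r^k}{1+r^k+r^{2k}}\le\frac13$ and $1+2\cos k\theta\in[-1,3]$. (We may assume $\theta\in[0,\pi]$, both sides of \eqref{eq:sum-cos} being even and $2\pi$-periodic, and that $P_n(re^{i\theta})\ne0$, the inequality being trivial otherwise.) This gives
\[
-\log\left|\frac{P_n(re^{i\theta})}{P_n(re^{2\pi i/3})}\right|\ \ge\ \frac12\underset{3\nmid k}{\sum_{k=1}^{3n}}(1+2\cos k\theta)\,v_1(r^k),
\]
with $v_1(z)=z/(1+z+z^2)$ as in Section~\ref{seConvention}, so it is enough to prove that
\[
T(\theta):=\underset{3\nmid k}{\sum_{k=1}^{3n}}(1+2\cos k\theta)\,v_1(r^k)\ -\ \frac23\sum_{k=1}^n r^{3k}(1-\cos 3k\theta)\ \ge\ -\frac{1.6}{\,|1-re^{i\theta}|\,}.
\]

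Next I would isolate the oscillatory part of $T(\theta)$. Writing $v_1(x)=\frac13 x+e(x)$ with $e(x)=\frac{x(1-x)(2+x)}{3(1+x+x^2)}\ge0$ on $[0,1]$, splitting $\underset{3\nmid k}{\sum_{k\le 3n}}=\sum_{k\le 3n}-\sum_{j\le n}(\cdot)_{3j}$ by the roots-of-unity filter, and using the termwise inequality $r^{3j-2}+r^{3j-1}\ge 2r^{3j}$ (equivalently $r^{-1}+r^{-2}\ge2$), a direct computation shows that $T(2\pi/3)=0$ and
\[
T(\theta)=2\sum_{k=1}^{3n}v_1(r^k)\cos k\theta\ +\ \sum_{j=1}^{n}\bigl(v_1(r^{3j-2})+v_1(r^{3j-1})-2v_1(r^{3j})\bigr)\ +\ 2\sum_{j=1}^{n}e(r^{3j})(1-\cos 3j\theta).
\]
The second sum is nonnegative since $r^{3j-2}\ge r^{3j-1}\ge r^{3j}$ and $v_1$ is increasing on $[0,1]$, and the third since $e\ge0$; so everything reduces to a lower bound for the single oscillatory sum $\sum_{k=1}^{3n}v_1(r^k)\cos k\theta$.

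Since $k\mapsto v_1(r^k)$ is nonnegative, nonincreasing and bounded by $v_1(r)\le\frac13$, Abel summation against the partial sums $\bigl|\sum_{k=1}^K e^{ik\theta}\bigr|=\bigl|1-e^{iK\theta}\bigr|/\bigl|1-e^{i\theta}\bigr|\le 2/|1-e^{i\theta}|$ yields $\bigl|\sum_{k=1}^{3n}v_1(r^k)\cos k\theta\bigr|\le\frac{2}{3|1-e^{i\theta}|}$, and hence $T(\theta)\ge-\frac{4}{3|1-e^{i\theta}|}$. Together with $|1-re^{i\theta}|\le|1-e^{i\theta}|+(1-r)$ this already gives the desired bound whenever $|1-e^{i\theta}|\ge 5(1-r)$, for then $|1-re^{i\theta}|\le\frac65|1-e^{i\theta}|$ and $\frac{4}{3|1-e^{i\theta}|}\le\frac65\cdot\frac{4}{3|1-re^{i\theta}|}=\frac{1.6}{|1-re^{i\theta}|}$. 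The complementary range $|1-e^{i\theta}|<5(1-r)$ (which forces $r<1$) I would treat by a more hands-on estimate: expanding $v_1(r^k)=\frac{r^k(1-r^k)}{1-r^{3k}}=\sum_{m\ge0}\bigl((r^{3m+1})^k-(r^{3m+2})^k\bigr)$ and summing the inner geometric progressions turns $\sum_{k=1}^{3n}v_1(r^k)\cos k\theta$ into a combination of terms $\Re\frac{(se^{i\theta})(1-(se^{i\theta})^{3n})}{1-se^{i\theta}}$ with $0<s\le r$, whose denominators are comparable to $|1-re^{i\theta}|$ in this regime (where either $\cos\theta$ is close to $1$, or $1-r$ is bounded below), leaving a short case distinction according to the relative sizes of $|\theta|$, $1-r$ and $n^{-1}$.

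The main obstacle is getting the constants to fit \emph{uniformly}: every estimate above must hold for all $n\ge1$ and all $r\in(0,1]$, with no assumption on the asymptotic order of $1-r$. The tight spots are the two visible above: the neighbourhood of $\theta=0$, where the error term $\frac{1.6}{|1-re^{i\theta}|}$ is itself large and must be shown to swallow the (also large) sum $\sum_k v_1(r^k)\cos k\theta$, whose positivity there has to be exploited; and the neighbourhoods of $\theta=\pm2\pi/3$, where $|1-re^{i\theta}|$, the oscillatory sum, and both non-oscillatory sums in the decomposition of $T(\theta)$ are all merely $O(1)$, so the inequality is genuinely sharp and the cancellation $T(2\pi/3)=0$ together with the nonnegativity of the two non-oscillatory sums must be used in full force -- it is this sharpness at $\pm2\pi/3$, propagated through the cutoff $5(1-r)$ that separates the two cases, that dictates the constant $0.8$.
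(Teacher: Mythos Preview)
Your decomposition of $T(\theta)$ is correct and in fact coincides with the paper's: both reductions arrive at the same task, namely to show
\[
\left|\sum_{k=1}^{3n} v_1(r^k)\cos k\theta\right|\le \frac{0.8}{|1-re^{i\theta}|}.
\]
(Your two nonnegative sums are precisely the two terms the paper discards when it uses $1/(1+r^{3k}+r^{6k})\ge 1/3$ and the monotonicity of $v_1$.) The gap is in how you bound this oscillatory sum. You do Abel summation against the partial sums of $e^{ik\theta}$, obtaining the denominator $|1-e^{i\theta}|$, and are then forced into a case split whose second half you do not complete; your proposed expansion $v_1(r^k)=\sum_{m\ge0}\bigl(r^{(3m+1)k}-r^{(3m+2)k}\bigr)$ would require tracking cancellations across infinitely many geometric terms while hitting the sharp constant $0.8$, and your closing paragraph already concedes that this is not straightforward.

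The remedy is a one-line change that removes the case split entirely: factor $v_1(r^k)=u_k\cdot r^k$ with $u_k=1/(1+r^k+r^{2k})$, and do Abel summation against the partial sums of $(re^{i\theta})^k$ rather than of $e^{ik\theta}$. Since $u_k$ is now \emph{increasing}, the reverse Abel estimate with $\bigl|\sum_{k=a}^b z^k\bigr|\le(r^a+r^{b+1})/|1-z|$ (this is the paper's Lemma~\ref{leCosSum1}) yields directly
\[
\left|\sum_{k=1}^{3n} v_1(r^k)\cos k\theta\right|\le \frac{1}{|1-re^{i\theta}|}\left((1-r)\sum_{k=1}^{3n}v_1(r^k)+\frac{2r^{3n+1}}{1+r^{3n}+r^{6n}}\right).
\]
The bracket depends only on $r$ and $r^{3n}$; replacing the sum by an integral and writing $s=r^{3n}$ bounds it by
\[
\frac{2}{\sqrt3}\left(\frac\pi3-\arctan\frac{1+2s}{\sqrt3}\right)+\frac{2s}{1+s+s^2},
\]
whose maximum over $s\in[0,1]$ is $\approx 0.7937<0.8$. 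No case distinction on $\theta$ or $1-r$ is needed, and the ``tight spots'' you flag at $\theta\approx 0$ and $\theta\approx\pm 2\pi/3$ dissolve automatically.
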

\begin{proof}
We use \eqref{eqSummandBoundMain} to perform the following estimations:
\begin{align*}
&-\log\left|\frac{P_{n}(re^{i\theta})}{P_{n}(re^{2\pi i/3})}\right|\geq\underset{3\nmid k}{\sum_{k=1}^{3n}}\frac{r^k}{1+r^k+r^{2k}}\left(\frac12+\cos k\theta\right)\\
&\kern1cm
=\sum_{k=1}^{3n}\frac{r^k}{1+r^k+r^{2k}}\left(\frac12+\cos k\theta\right)-\sum_{k=1}^{n}\frac{r^{3k}}{1+r^{3k}+r^{6k}}\left(\frac12+\cos 3k\theta\right)\\
&\kern1cm
=\sum_{k=1}^{n}\frac{r^{3k}}{1+r^{3k}+r^{6k}}\left(1-\cos 3k\theta\right)+\sum_{k=1}^{3n}\frac{r^k\cos k\theta}{1+r^k+r^{2k}}\\
&\kern1cm
\quad+\frac12\sum_{k=1}^{n}\left(\frac{r^{3k-2}}{1+r^{3k-2}+r^{6k-4}}+\frac{r^{3k-1}}{1+r^{3k-1}+r^{6k-2}}-\frac{2r^{3k}}{1+r^{3k}+r^{6k}}\right)\\
&\kern1cm
\geq \frac{1}{3}\sum_{k=1}^{n}r^{3k}\left(1-\cos 3k\theta\right)+\sum_{k=1}^{3n}\frac{r^k\cos k\theta}{1+r^k+r^{2k}},
\end{align*}
where we used $1/(1+r^{3k}+r^{6k})\ge1/3$ and the fact that the
function $r^k/(1+r^k+r^{2k})$ is decreasing as a function in~$k$.
We apply Lemma~\ref{leCosSum1} with $\varphi=0$ to the last cosine sum to conclude that
\begin{align*}
\left|\sum_{k=1}^{3n}\frac{r^k\cos k\theta}{1+r^k+r^{2k}}\right|&\leq\frac{1}{|1-re^{i\theta}|}\left((1-r)\sum_{k=1}^{3n}\frac{r^{k}}{1+r^{k}+r^{2k}}+2\frac{r^{3n+1}}{1+r^{3n}+r^{6n}}\right)\\
&\leq\frac{1}{|1-re^{i\theta}|}\left((1-r)\int_{0}^{3n}\frac{r^{k}\,dk}{1+r^{k}+r^{2k}}+2\frac{r^{3n}}{1+r^{3n}+r^{6n}}\right)\\
&=\frac{1}{|1-re^{i\theta}|}\left(\frac{1-r}{-\log r}\frac{2}{\sqrt3}\left(\frac{\pi}{3}-\arctan\frac{1+2r^{3n}}{\sqrt3}\right)+2\frac{r^{3n}}{1+r^{3n}+r^{6n}}\right)\\
&<\frac{1}{|1-re^{i\theta}|}\left(\frac{2}{\sqrt3}\left(\frac{\pi}{3}-\arctan\frac{1+2r^{3n}}{\sqrt3}\right)+2\frac{r^{3n}}{1+r^{3n}+r^{6n}}\right).
\end{align*}

In order to complete the proof, we determine the maximum value of the function
\begin{equation}
f(s):=\frac{2\pi}{3\sqrt3}-\frac{2}{\sqrt3}\arctan\frac{1+2s}{\sqrt3}+\frac{2s}{1+s+s^2}
\end{equation}
on $[0,1]$.
Since $f'(s)=\frac{1-s-3s^2}{(1+s+s^2)^2}$ is decreasing with respect
to~$s$, we see that the unique maximum point of $f$ is located at the
unique zero of $f'(s)$ in $[0,1]$, namely\break $s_0=(\sqrt{13}-1)/6$, giving a value of
\[
f(s_0)\approx0.7937<0.8.
\qedhere
\]
\end{proof}

In order to find a closed-form lower bound
for the quantity $-\log\left|\frac{P_n(re^{i\theta})}{P_n(re^{2\pi
    i/3})}\right|$, we apply Lemma~\ref{leIneqCosSum1} to the sum on
the right-hand side of \eqref{eq:sum-cos}. In this manner, we obtain
the following estimate.

\begin{lemma}\label{leTailBound2}
If $\theta=2h\pi/3+\rho\frac{1-r^3}{1-r^{3n}}$ for some $h\in\Z$ and some $\rho\in\R$ such that $|\rho|\frac{1-r^3}{1-r^{3n}}\leq\pi/3$, then we have
\[
-\log\left|\frac{P_n(re^{i\theta})}{P_n(re^{2\pi i/3})}\right|\geq-\frac{0.8}{|1-re^{i\theta}|}+\frac{r^{3}(1+r^3)}{6}\frac{(1-r^{n/2})}{(1-r^{3})}\left(1-\sqrt\frac{1}{1+18\rho^2}\right).
\]
\end{lemma}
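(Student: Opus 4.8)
The plan is to feed the preliminary estimate of Lemma~\ref{leTailIneq2} into the cosine-sum inequality of Lemma~\ref{leIneqCosSum1}. By \eqref{eq:sum-cos} we already have
\[
-\log\left|\frac{P_n(re^{i\theta})}{P_n(re^{2\pi i/3})}\right|\ge\frac13\sum_{k=1}^n r^{3k}\left(1-\cos 3k\theta\right)-\frac{0.8}{|1-re^{i\theta}|},
\]
and since the term $-0.8/|1-re^{i\theta}|$ occurs verbatim in the assertion, everything reduces to the estimate
\[
\frac13\sum_{k=1}^n r^{3k}\left(1-\cos 3k\theta\right)\ge\frac{r^3(1+r^3)}{6}\cdot\frac{1-r^{n/2}}{1-r^3}\left(1-\sqrt{\frac{1}{1+18\rho^2}}\right).
\]

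First I would use the hypothesis on~$\theta$ to remove the dependence on~$h$. From $\theta=2h\pi/3+\rho\frac{1-r^3}{1-r^{3n}}$ we get $3k\theta=2hk\pi+3k\rho\frac{1-r^3}{1-r^{3n}}$, and since $2hk\pi$ is an integer multiple of~$2\pi$ for every $k\in\Z$, we obtain $\cos 3k\theta=\cos\left(3k\rho\frac{1-r^3}{1-r^{3n}}\right)$ term by term. Hence the sum in question equals $\sum_{k=1}^n r^{3k}\left(1-\cos\left(3k\rho\frac{1-r^3}{1-r^{3n}}\right)\right)$, which depends only on $\rho$ (and on $r$ and~$n$), not on~$h$.

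Next I would apply Lemma~\ref{leIneqCosSum1} to this sum, regarding it as a geometric cosine sum with base~$r^3$, length~$n$, and frequency $3\rho\frac{1-r^3}{1-r^{3n}}$, i.e.\ a frequency normalised by the cutoff scale $\frac{1-r^3}{1-r^{3n}}$ that figures in~\eqref{eqCutoffTheta0}. The standing hypothesis $|\rho|\frac{1-r^3}{1-r^{3n}}\le\pi/3$ forces this frequency to lie in $[-\pi,\pi]$, which is exactly what is needed to invoke Lemma~\ref{leIneqCosSum1}; that lemma then produces the factor $1-\sqrt{1/(1+18\rho^2)}$ together with the prefactor $\frac{r^3(1+r^3)}{2}\cdot\frac{1-r^{n/2}}{1-r^3}$, the constant $18=2\cdot3^2$ reflecting both the square of the scaling factor~$3$ (it is $3k\theta$, not $k\theta$, that enters) and the intrinsic normalisation in Lemma~\ref{leIneqCosSum1} by which $\rho$ appears undressed in the conclusion. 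Dividing by~$3$ and carrying along the $-0.8/|1-re^{i\theta}|$ term gives the asserted inequality.

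The one genuinely delicate point is the invocation of Lemma~\ref{leIneqCosSum1}: one must check that rescaling the base from~$r$ to~$r^3$ and the frequency by the factor~$3$ leaves all of its hypotheses in force, and that the bookkeeping of constants reproduces precisely $\frac{r^3(1+r^3)}{6}$, the exponent $n/2$ in $1-r^{n/2}$, and the~$18$; everything else is routine substitution. In the degenerate case $|1-re^{i\theta}|=0$, which forces $\rho=0$, the asserted inequality reads $-\infty\ge-\infty$ and is trivially true, so one may assume $r<1$ or $\theta\not\equiv0$ (mod~$2\pi$) throughout.
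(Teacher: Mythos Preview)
Your approach is exactly the paper's: apply Lemma~\ref{leTailIneq2}, strip off the $h$-dependence using $\cos 3k\theta=\cos\bigl(3k\rho\frac{1-r^3}{1-r^{3n}}\bigr)$, and feed the remaining sum into Lemma~\ref{leIneqCosSum1} with $r\mapsto r^3$ and $\theta\mapsto 3\rho\frac{1-r^3}{1-r^{3n}}$. The hypothesis $|\rho|\frac{1-r^3}{1-r^{3n}}\le\pi/3$ is indeed what places the new angle in $[-\pi,\pi]$.

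However, your ``bookkeeping'' paragraph understates what is left. Lemma~\ref{leIneqCosSum1} does \emph{not} directly produce $1-\sqrt{1/(1+18\rho^2)}$ with the prefactor you state; it yields
\[
\frac{1}{3}\sum_{k=1}^n r^{3k}(1-\cos 3k\theta)\ \ge\ \frac{r^3}{3}\cdot\frac{1-r^{3n}}{1-r^3}\left(1-\sqrt{\frac{1}{1+4\kappa\tan^2(3\theta'/2)}}\right),
\qquad \kappa=\frac{(1+r^3)(1-r^{3n})(1-r^{n/2})}{(1-r^3)^2},
\]
with $\theta'=\rho\frac{1-r^3}{1-r^{3n}}$. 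Two further estimates are then needed, neither of which is a pure substitution:
\begin{enumerate}
\item the tangent lower bound $\bigl|\tan\tfrac{3\theta'}{2}\bigr|\ge\tfrac{3|\rho|}{2}\frac{1-r^3}{1-r^{3n}}$, valid because $|3\theta'/2|\le\pi/2$;
\item the elementary convexity-type inequality $1-\sqrt{1/(1+cx)}\ge c\bigl(1-\sqrt{1/(1+x)}\bigr)$ for $0<c\le1$, applied with $x=18\rho^2$ and $c=\frac{\kappa}{2}\frac{(1-r^3)^2}{(1-r^{3n})^2}=\frac{(1+r^3)(1-r^{n/2})}{2(1-r^{3n})}\le1$.
\end{enumerate}
Step~(2) is what transfers the $(1+r^3)(1-r^{n/2})$ from $\kappa$ into the prefactor and simultaneously replaces the messy $9\kappa\rho^2\frac{(1-r^3)^2}{(1-r^{3n})^2}$ by the clean~$18\rho^2$; your heuristic ``$18=2\cdot3^2$'' does not quite account for this. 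Once you supply these two lines the argument is complete and coincides with the paper's.
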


\begin{rem}
The slightly unusual looking scaling of the deviation of $\theta$ from
$2h\pi/3$ above has its motivation in the desire of having the same
scaling as in the definition of the cutoff~$\theta_0$;
cf.~\eqref{eqCutoffTheta0}
(remember that $r$ depends on~$n$ and~$m$!).
\end{rem}

\begin{proof}[Proof of Lemma \ref{leTailBound2}]
Lemmas~\ref{leTailIneq2} and \ref{leIneqCosSum1} (with the substitutions $r\mapsto r^3$, $\theta\mapsto 3\theta$) imply the inequality
\begin{align*}
-\log\left|\frac{P_n(re^{i\theta})}{P_n(re^{2\pi i/3})}\right|&\geq\frac{1}{3}\sum_{k=1}^{n}r^{3k}\left(1-\cos 3k\theta\right)-\frac{0.8}{|1-re^{i\theta}|}\\
&\geq \frac{r^{3}}{3}\frac{1-r^{3n}}{1-r^3}\left(1-\sqrt{\frac{1}{1+4\kappa\tan^2(3\theta/2)}}\right)-\frac{0.8}{|1-re^{i\theta}|},
\end{align*}
where
\[
\kappa=\frac{(1+r^3)(1-r^{3n})(1-r^{n/2})}{(1-r^3)^2}.
\]

We note that
\[
\left|\tan\frac{3\theta}2\right|=\left|\tan \frac{3\rho}{2}\frac{1-r^3}{1-r^{3n}}\right|\geq\frac{3|\rho|}{2}\frac{1-r^3}{1-r^{3n}}
\]
if $|\rho|\frac{1-r^3}{1-r^{3n}}\leq\pi/3$. 
We use this inequality to get rid of the tangent function:
\[
1-\sqrt{\frac{1}{1+4\kappa\tan^2(3\theta/2)}}> 1-\sqrt{\frac{1}{1+\kappa\frac{9(1-r^3)^2}{(1-r^{3n})^2}\rho^2}}.
\]

By making use of the inequality
\[
1-\sqrt\frac{1}{1+cx}\geq c\left(1-\sqrt\frac{1}{1+x}\right)
\]
for $0<c\leq1$ and $x>0$, and by choosing
\[
c=\frac \kappa2\frac{(1-r^3)^2}{(1-r^{3n})^2}=\frac{(1+r^3)}{2}\frac{(1-r^{n/2})}{(1-r^{3n})}\leq1,
\]
we arrive at the claimed result:
\begin{align*}
-\log\left|\frac{P_{n}(re^{i\theta})}{P_{n}(re^{2\pi i/3})}\right|&\geq-\frac{0.8}{|1-re^{i\theta}|}+\frac{r^{3}(1+r^3)}{6}\frac{(1-r^{n/2})}{(1-r^{3})}\left(1-\sqrt\frac{1}{1+18\rho^2}\right).
\qedhere
\end{align*}
\end{proof}

Note that the lower bound in Lemma~\ref{leTailBound2} ceases
to be effective when $|1-re^{i\theta}|$ is small. For this case, we
present an alternative bound.

\begin{lemma}\label{leTailBound1}
If\/ $|1-re^{i\theta}|<\frac13$, then we have
\[
-\log\left|\frac{P_n(re^{i\theta})}{P_n(re^{2\pi i/3})}\right|\geq\frac16\frac{(r+r^2)(1-r^{3n})}{1-r^3}-5.44.
\]
\end{lemma}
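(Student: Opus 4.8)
The plan is to return to the summand identity \eqref{eq:P/P-sum}, written in the form
\[
-\log\left|\frac{P_n(re^{i\theta})}{P_n(re^{2\pi i/3})}\right|=\underset{3\nmid k}{\sum_{k=1}^{3n}}\left(\tfrac12\log(1+r^k+r^{2k})-\log\bigl|1-r^ke^{ik\theta}\bigr|\right),
\]
and to split the sum at a cutoff adapted to $\delta:=|1-re^{i\theta}|<\tfrac13$. First I would put $K:=\min(\lfloor1/\delta\rfloor,3n)$ (so $K\geq3$), use the subadditivity inequality $|1-r^ke^{ik\theta}|\leq k\delta$ underlying \eqref{eqSummandBoundAlternate} for the terms with $k\leq K$, and use \eqref{eqSummandBoundMain} for the terms with $k>K$. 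Since $k\delta\leq1$ when $k\leq K$, the first group contributes at least $\tfrac12\log(1+r^k+r^{2k})+\log\tfrac1{k\delta}$ with $\log\tfrac1{k\delta}\geq0$, while \eqref{eqSummandBoundMain} makes the second group contribute at least $\dfrac{r^k(1+2\cos k\theta)}{2(1+r^k+r^{2k})}$. Inserting the elementary bounds $\tfrac12\log(1+u+u^2)\geq\tfrac16u$ for $u\in[0,1]$ and $\dfrac{r^k}{2(1+r^k+r^{2k})}\geq\tfrac16r^k$ (valid since $1+r^k+r^{2k}\leq3$), and recalling $\tfrac16\underset{3\nmid k}{\sum_{k=1}^{3n}}r^k=\tfrac16\dfrac{(r+r^2)(1-r^{3n})}{1-r^3}$, I would arrive at
\[
-\log\left|\frac{P_n(re^{i\theta})}{P_n(re^{2\pi i/3})}\right|\geq\frac16\frac{(r+r^2)(1-r^{3n})}{1-r^3}+\underset{3\nmid k}{\sum_{k\leq K}}\log\frac1{k\delta}+\underset{3\nmid k}{\sum_{K<k\leq3n}}\frac{r^k\cos k\theta}{1+r^k+r^{2k}},
\]
reducing everything to the claim that the last two sums add up to at least $-5.44$.

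When $K=3n$ (i.e.\ $\delta\leq1/(3n)$) the cosine sum is empty and the first remaining sum is non-negative, so there is nothing to do. In the main case $K=\lfloor1/\delta\rfloor<3n$, the first remaining sum is still non-negative, and keeping all of its terms and comparing with $\int_1^{1/\delta}\log\tfrac1{x\delta}\,dx=\tfrac1\delta-1+\log\delta$ (using that $\{k:3\nmid k\}$ has density $\tfrac23$, and a standard monotonicity/Euler--Maclaurin comparison) gives an explicit lower bound of size $\approx\tfrac2{3\delta}$. For the cosine tail I would apply Lemma~\ref{leCosSum1} just as in the proof of Lemma~\ref{leTailIneq2}, once with $(r,\theta)$ and once with $(r^3,3\theta)$ to accommodate the restriction $3\nmid k$; the resulting estimate has the shape $1/|1-re^{i\theta}|=1/\delta$ times $(1-r)\sum_{k>K}\frac{r^k}{1+r^k+r^{2k}}$ (which is $\leq\tfrac{\pi}{3\sqrt3}$, and in fact of order $r^K$) plus boundary terms at $k=K,3n$ weighted by $r^K$. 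Since $\delta<\tfrac13$ forces $r>\tfrac23$, and a small $\delta$ forces both $1-r$ and $\theta$ small, I would use the relation $\delta^2=(1-r)^2+4r\sin^2(\theta/2)$ to control $r^K$ and the position of the first $k>K$ with $\cos k\theta<0$ (equivalently, to see that the negative oscillations only begin once $r^k$ has already decayed), thereby bounding the cosine tail by $\underset{3\nmid k}{\sum_{k\leq K}}\log\tfrac1{k\delta}+5.44$.

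The hard part is exactly this last balancing. When $\delta$ is small the positive sum $\underset{3\nmid k}{\sum_{k\leq K}}\log\tfrac1{k\delta}$ and the potential negative part of the cosine tail are both of order $1/\delta$, so the argument only closes because the leading constant coming from the former is strictly larger than the one coming from the latter; pinning down those constants uniformly in $n$, $r$ and $\theta$, together with absorbing the boundary terms from Lemma~\ref{leCosSum1}, the Euler--Maclaurin error above, and the regime where $\delta$ is merely ${}<\tfrac13$ (not small), into the single additive constant $5.44$, is the reason this case is isolated in a lemma of its own rather than handled inside Lemma~\ref{leTailBound2}.
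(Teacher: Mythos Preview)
Your overall strategy matches the paper's: split the sum at a cutoff of order $1/\delta$, use \eqref{eqSummandBoundAlternate} below the cutoff and \eqref{eqSummandBoundMain} above it. Your extraction of the main term via $\tfrac12\log(1+u+u^2)\ge u/6$ and $\tfrac{r^k}{2(1+r^k+r^{2k})}\ge r^k/6$ is valid and clean. The gap is in the remainder.

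Your proposed treatment of the cosine tail does not close. You suggest handling the restriction $3\nmid k$ by inclusion--exclusion, applying Lemma~\ref{leCosSum1} once with $(r,\theta)$ to the full sum over all $k$ and once with $(r^3,3\theta)$ to the multiples of~$3$. But the first application carries the prefactor $1/|1-re^{i\theta}|=1/\delta$, and its boundary term at $k=3n$ is $\dfrac{2r^{3n+1}}{1+r^{3n}+r^{6n}}$, which is weighted by $r^{3n}$ (not $r^K$, as you wrote) and can be as large as $2/3$. So this boundary term alone contributes $-\tfrac{2}{3\delta}$, already cancelling your positive $\log$-sum $\sum_{3\nmid k,\,k\le K}\log\tfrac{1}{k\delta}\sim\tfrac{2}{3\delta}$. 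Adding the $(1-r)\sum$ part (which is of order $r^K/\delta$ with $r^K$ bounded away from~$0$) and the second application makes it strictly worse. Concretely, at $r=1$ and $\theta\approx\delta\to 0$ your bound on the remainder tends to $-\infty$, so the claimed comparison of leading constants fails.

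The paper circumvents this in two linked ways. First, it applies Lemma~\ref{leCosSum1} to the two residue classes $k\equiv 1,2\pmod 3$ \emph{separately}, both with step $(r^3,3\theta)$; the prefactor is then $1/|1-r^3e^{3i\theta}|=1/(|1{-}z|\,|1{+}z{+}z^2|)\le \tfrac{9}{19\delta}$, a decisive factor-of-three gain over $1/\delta$. Second, the paper does not discard the surplus in $\tfrac12\log(1+r^k+r^{2k})$ via $\ge r^k/6$: it keeps it, via convexity, as $k_0\log(1+r^{3k_0/2}+r^{3k_0})$ (of size up to $\tfrac{\log 3}{3\delta}$), and packages this together with the $r^{3k_0}$- and $r^{3n}$-dependent pieces into $I_2,I_3,I_4$, using the auxiliary function $h(x)=\tfrac{6x}{|1+z+z^2|}-\log(1+\sqrt{x}+x)$ to match their dependencies. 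Only after this cancellation does the coefficient of $1/\delta$ become non-negative (and the worst case, at $\delta=1/3$, produce exactly the constant $5.44$). A bare comparison of leading $1/\delta$-coefficients from Lemma~\ref{leCosSum1} does not suffice.
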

\begin{proof}
Making reference to the sum representation \eqref{eq:P/P-sum},
we define a cutoff 
$$k_0=\min\left\{\left\lfloor \frac1{3|1-re^{i\theta}|}\right\rfloor,n\right\}.$$
Note that the condition on $|1-re^{i\theta}|$ implies that $k_0\geq1$.

\medskip
The part of the sum on the right-hand side of \eqref{eq:P/P-sum} where
$k<3k_0$ is treated by \eqref{eqSummandBoundAlternate}: 
\begin{align}
\notag
-\frac12\underset{3\nmid k}{\sum_{k=1}^{3k_0}} &\log\frac{1-2r^k\cos(k\theta)+r^{2k}}{1+r^k+r^{2k}}\\
&\geq \frac12\underset{3\nmid k}{\sum_{k=1}^{3k_0}}\left(\log(1+r^k+r^{2k})-\log(1-2r\cos\theta+r^{2})-2\log k\right)\nonumber\\
&=-k_0\log(1-2r\cos\theta+r^{2})-\log\frac {(3k)!} {3^kk!}+
\frac12\underset{3\nmid k}{\sum_{k=1}^{3k_0}}\log(1+r^k+r^{2k}).\nonumber
\end{align}
Now we use the inequality
$\frac{(3k)!}{3^kk!}<\sqrt3(3k/e)^{2k}$,
and the convexity of $k\mapsto \log(1+r^k+r^{2k})$, and obtain
\begin{align}
\notag
-{}&\frac12\underset{3\nmid k}{\sum_{k=1}^{3k_0}} \log\frac{1-2r^k\cos(k\theta)+r^{2k}}{1+r^k+r^{2k}}\\
&>-k_0\log(1-2r\cos\theta+r^{2})-\frac12\log3-2k_0(\log(3k_0)-1)+
\frac12\underset{3\nmid k}{\sum_{k=1}^{3k_0}}\log(1+r^k+r^{2k})\nonumber\\
&>-k_0\log(1-2r\cos\theta+r^{2})-\frac12\log3-2k_0(\log(3k_0)-1)+k_0\log(1+r^{3k_0/2}+r^{3k_0})\nonumber\\
&=-2k_0\log(3k_0|1-re^{i\theta}|)+2k_0-\frac12\log3+k_0\log(1+r^{3k_0/2}+r^{3k_0})\nonumber\\
&\geq 2k_0-\frac12\log3+k_0\log(1+r^{3k_0/2}+r^{3k_0}),\label{eqTailPart1}
\end{align}
%
where we used the definition of $k_0$ to get the last line.

\medskip
For the part where $k>3k_0$, we use \eqref{eqSummandBoundMain}, split
the sum according to the residue classes of $k$ modulo~$3$, 
and apply Lemma~\ref{leCosSum1} to each subsum, to get
\begin{multline}
-\frac12\underset{3\nmid k}{\sum_{k=3k_0+1}^{3n}} \log\frac{1-2r^k\cos(k\theta)+r^{2k}}{1+r^k+r^{2k}}\geq\frac12\underset{3\nmid k}{\sum_{k=3k_0+1}^{3n}} \frac{r^k(1+2\cos(k\theta))}{1+r^k+r^{2k}}\\
\geq \left(\frac12-\frac{1-r^3}{|1-r^3e^{3i\theta}|}\right)
\underset{3\nmid k}{\sum_{k=3k_0+1}^{3n}} \frac{r^k}{1+r^k+r^{2k}}-\frac{4}{|1-r^3e^{3i\theta}|}\frac{r^{3n}}{1+r^{3n}+r^{6n}}. \label{eqTailPart2}
\end{multline}

We first observe that in the case where $k_0=n$ the
estimate~\eqref{eqTailPart1} provides the lower bound 
\[
2n-\frac12\log3\geq \frac{(r+r^2)(1-r^{3n})}{1-r^3}-\frac12\log3>\frac16\frac{(r+r^2)(1-r^{3n})}{1-r^3}-5.44,
\]
as desired.

Therefore, we assume $0<k_0<n$ from now on. By combining \eqref{eqTailPart1} and \eqref{eqTailPart2}, we obtain
\begin{multline}
-\log\left|\frac{P_n(re^{i\theta})}{P_n(re^{2\pi i/3})}\right|\geq \left(\frac12-\frac{1-r^3}{|1-r^3e^{3i\theta}|}\right)\underset{3\nmid k}{\sum_{k=3k_0}^{3n}} \frac{r^k}{1+r^k+r^{2k}}\\
+(2+\log(1+r^{3k_0/2}+r^{3k_0}))k_0-\frac{4}{|1-r^3e^{3i\theta}|}\frac{r^{3n}}{1+r^{3n}+r^{6n}}-\frac12\log3. \label{eqTailBound1}
\end{multline}

We split the right-hand side of \eqref{eqTailBound1} into several parts:
\[
-\log\left|\frac{P_n(re^{i\theta})}{P_n(re^{2\pi i/3})}\right|\geq I_1+I_2+I_3+I_4,
\]
where
\begin{align*}
  I_1 &= \frac12 \underset{3\nmid k}{\sum_{k=3k_0}^{3n}}
  \frac{r^k}{1+r^k+r^{2k}}+\frac13k_0,
\\
  I_2 &= -\frac{1-r^3}{|1-r^3e^{3i\theta}|} 
\underset{3\nmid k}{\sum_{k=3k_0+1}^{3n}} \frac{r^k}{1+r^k+r^{2k}},
\\
  I_3 &= k_0(\log(1+r^{3k_0/2}+r^{3k_0})-\log(1+r^{3n/2}+r^{3n})),\\
  I_4 &= \left(\frac53+\log(1+r^{3n/2}+r^{3n})\right)k_0-\frac{4}{|1-r^3e^{3i\theta}|}\frac{r^{3n}}{1+r^{3n}+r^{6n}}-\log\sqrt3.
\end{align*}

For $I_1$ we have
$$
  I_1 \geq\frac12 \underset{3\nmid k}{\sum_{k=1}^{3n}} \frac{r^k}{1+r^k+r^{2k}}
  \geq\frac12 \underset{3\nmid k}{\sum_{k=1}^{3n}} \frac{r^k}{3}=\frac16\frac{(r+r^2)(1-r^{3n})}{1-r^3}.
$$
It should be noted that the right-hand side in this inequality 
is exactly the main term in the desired lower bound.
Consequently, what we need to prove is $I_2+I_3+I_4\geq -5.44$.

From here on, we write $z=re^{i\theta}$ for simplicity of notation.

We first deal with $I_4$. By utilising the inequality 
$$\left|\log(1+\sqrt s+s)-\frac{3s}{1+s+s^2}\right|\leq\frac{1}{10},
\qquad 0\le s\le 1,$$ 
for $s=r^{3n}$, we infer that
$$
I_4\geq \left(\frac{47}{30}+\frac{3r^{3n}}{1+r^{3n}+r^{6n}}\right)k_0-\frac{12(k_0+1)}{|1+z+z^2|}\frac{r^{3n}}{1+r^{3n}+r^{6n}}-\log\sqrt3.
$$
Now we note that for $0<k_0<n$ we have
$$k_0=\left\lfloor \frac1{3|1-z|}\right\rfloor
\geq\frac{1}{3|1-z|}-1.$$
We use this in the above estimate for $I_4$ to get
\begin{align*}
I_4
&\geq
\frac{47}{30}\left(\frac{1}{3|1-z|}-1\right)+\frac{3r^{3n}\left(\frac{1}{3|1-z|}-1\right)}{1+r^{3n}+r^{6n}}\\
&\kern4cm
-\frac{12}{3|1-z|\cdot|1+z+z^2|}\frac{r^{3n}}{1+r^{3n}+r^{6n}}-\log\sqrt3\\
&> \left(\frac{47}{30}-\left(\frac{12}{|1+z+z^2|}-3\right)\frac{r^{3n}}{1+r^{3n}+r^{6n}}\right)\frac{1}{3|1-z|}-\log\sqrt3-\frac{47}{30}-\frac{3r^{3n}}{1+r^{3n}+r^{6n}}\\
&\geq \left(\frac{47}{30}-\left(\frac{12}{|1+z+z^2|}-3\right)\frac13\right)\frac{1}{3|1-z|}-\log\sqrt3-\frac{77}{30}\\
&=\left(\frac{77}{30}-\frac{4}{|1+z+z^2|}\right)\frac{1}{3|1-z|}-\log\sqrt3-\frac{77}{30}.
\end{align*}

In order to bound $I_2$, we argue that
\begin{align*}
\underset{3\nmid k}{\sum_{k=3k_0+1}^{3n}} \frac{r^k}{1+r^k+r^{2k}}&\leq 
\underset{3\nmid k}{\sum_{k=3k_0+1}^{3n}} r^k=\frac{(r+r^2)(r^{3k_0}-r^{3n})}{1-r^3}\leq \frac{2(r^{3k_0}-r^{3n})}{1-r^3},
\end{align*}
and consequently
\begin{align*}
I_2&\geq-\frac{2}{|1-z^3|}(r^{3k_0}-r^{3n}).
\end{align*}

Writing $h(x)=\frac{6}{|1+z+z^2|}x-\log(1+\sqrt{x}+x)$,
we combine the above estimate for $I_2$ into one for $I_2+I_3$:
\begin{align*}
I_2+I_3&\ge-\frac{2}{|1-z^3|}(r^{3k_0}-r^{3n})\\
&\kern2cm
+\left(\frac {1}
{3|1-z|}-1\right)(\log(1+r^{3k_0/2}+r^{3k_0})-\log(1+r^{3n/2}+r^{3n}))\\
&=
-\left(h(r^{3k_0})-h(r^{3n})\right)\frac{1}{3|1-z|}
-(\log(1+r^{3k_0/2}+r^{3k_0})-\log(1+r^{3n/2}+r^{3n}))\\
&\geq -\left(h(r^{3k_0})-h(r^{3n})\right)\frac{1}{3|1-z|}-\log3\\
&\geq -\left(\max_{0\leq x\leq1}h(x)-\min_{0\leq x\leq1}h(x)\right)\frac{1}{3|1-z|}-\log3.
\end{align*}
Note that the function $h$ is convex with respect to~$x$. Hence,
the maximum of $h(x)$ is either $h(0)$ or $h(1)$. Since
$|1+z+z^2|\leq3$, we have $h(1)\geq2-\log3>0=h(0)$. Therefore, 
$$\max_{0\leq x\leq1}h(x)=h(1)=\frac{6}{|1+z+z^2|}-\log3.$$ 
On the other hand, again using that $|1+z+z^2|\le3$, we have
\begin{align*}
\min_{0\leq x\leq1}h(x)\geq \min_{0\leq x\leq1}(2x-\log(1+\sqrt{x}+x))\approx-0.1496>-\frac{3}{20},
\end{align*}
which in turn implies
\[
I_2+I_3\geq -\left(\frac{6}{|1+z+z^2|}-\log3+\frac{3}{20}\right)\frac{1}{3|1-z|}-\log3.
\]

Combining all the inequalities above, we obtain 
\begin{align*}
I_2+I_3+I_4&\geq \left(\frac{29}{12}+\log3-\frac{10}{|1+z+z^2|}\right)\frac{1}{3|1-z|}-\frac32\log3-\frac{77}{30}.
\end{align*}

We write $u=|1-z|$. By the assumptions of the lemma, we have
$u\in[0,1/3]$. We claim that $|1+z+z^2|\geq3-3u+u^2$ for $u\in[0,1/3]$.
This can be proved by writing 
$1-z=u e^{i\varphi}$ for some $\varphi$, expressing $z$ in terms of
$u$ and $\varphi$, and minimising $|1+z+z^2|$ with respect
to~$\varphi$. 
In addition, we point out that the function $u\mapsto \left(\frac{29}{12}+\log3-\frac{10}{3-3u+u^2}\right)\frac{1}{3u}$ is decreasing with respect to~$u$, and therefore
\begin{align*}
I_2+I_3+I_4&\geq \left(\frac{29}{12}+\log3-\frac{10}{3-3u+u^2}\right)\frac{1}{3u}-\frac32\log3-\frac{77}{30}\\
&\geq \frac{29}{12}+\log3-\frac{90}{19}-\frac32\log3-\frac{77}{30}\\
&=-\frac{1857}{380}-\frac12\log3>-5.44,
\end{align*}
as desired.
\end{proof}

%

We are now ready to provide, and prove, an explicit upper bound 
for the tail error term $\epsilon_{1,P}(n,r)$ as defined in~\eqref{eq:ep1}.

\begin{lemma}\label{leIneqTail}
Suppose that $n\in\Z^+$, and that $r_0$ is defined as in
\eqref{eqCutoffR0}. Then, for $\dd \in\{1,2,3\}$ and $r\in(r_0,1]$, we have
\begin{multline}
\epsilon_{1,P_n^\dd }(r)
<\frac {\sqrt{54\dd}} {\sqrt{5\pi}}\left/\erf{\sqrt{\frac{40\dd(1-r^{3n})}{243(1-r^3)}}}\right.\\
\times\left(4\left(\frac{1-r^{3n}}{1-r^3}\right)^{1/2}\int^{4}_{10/81}\exp\left(\frac{0.8\dd }{\sqrt3-(1+3\rho)(-\log r_0)}-\dd \phi(n,r,\rho)\right)\,d\rho\right.\\
\left.{}+2\pi\left(\frac{1-r^{3n}}{1-r^3}\right)^{3/2}\exp(5.44\dd -\dd \phi(n,r,4))\right), \label{eqIneqTail}
\end{multline}
where
\[
\phi(n,r,\rho):=\frac{r^3(1+r^3)}{6}\left(1-\sqrt\frac{1}{1+18\rho^2}\right)\frac{(1-r^{n/2})}{(1-r^{3})}.
\]
Moreover, for $n>546$, the right-hand side of \eqref{eqIneqTail} is decreasing with respect to~$r$.
\end{lemma}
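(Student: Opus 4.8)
The plan is to estimate the tail integral defining $\epsilon_{1,P_n^\dd}(r)$ by splitting the tail region $I_\text{tail}$ according to which of the two lower bounds---Lemma~\ref{leTailBound2} or Lemma~\ref{leTailBound1}---is applicable, and then to bound each resulting piece by an explicit integral in a ``deviation variable'' $\rho$. Recall from~\eqref{eq:ep1} that
\[
\epsilon_{1,P_n^\dd}(r)=\frac{\sqrt{g_{Q_n}(r)}}{\sqrt{2\pi}\erf\left(\theta_0\sqrt{g_{Q_n}(r)/2}\right)}\int_{I_\text{tail}}\left|\frac{P_n^\dd(re^{i\theta})}{P_n^\dd(re^{2\pi i/3})}\right|d\theta,
\]
and that $\left|P_n^\dd/P_n^\dd\right|=\left|P_n/P_n\right|^\dd$, so the integrand is the $\dd$-th power of the quantity bounded from below in Lemmas~\ref{leTailBound2} and~\ref{leTailBound1}. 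First I would handle the prefactor: using $g_{Q_n}(r)=-\Re f_2\ge\frac13\dd X_2(n,r)$ from~\eqref{eq:fX2}, together with the lower bound $X_2(n,r)\ge$ (a multiple of $(1-r^{3n})/(1-r^3)$ times $r^3(1+r^3)$-type factors---more precisely one reduces $g_{Q_n}$ to a clean bound of the form $\dd\cdot\frac{40}{243}\cdot\frac{1-r^{3n}}{1-r^3}$ after inserting $\theta_0=\frac{10}{81}\frac{1-r^3}{1-r^{3n}}$), one gets $\theta_0\sqrt{g_{Q_n}/2}\ge\sqrt{\tfrac{40\dd(1-r^{3n})}{243(1-r^3)}}$ and $\sqrt{g_{Q_n}/(2\pi)}\le\sqrt{54\dd/(5\pi)}\cdot\frac{1}{\theta_0}$ up to constants, which after clearing $\theta_0$ produces the $\sqrt{54\dd/(5\pi)}\big/\erf(\cdots)$ prefactor and a leftover factor of $(1-r^{3n})^{1/2}(1-r^3)^{-1/2}$ type that gets absorbed into the two integral terms.

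The core of the argument is the change of variables $\theta=2h\pi/3+\rho\frac{1-r^3}{1-r^{3n}}$ on each of the two arcs of $I_\text{tail}$ near $\theta=\pm2\pi/3$ (and symmetric arcs near $0$ and $\pm\pi$ which are handled the same way by periodicity of the relevant sums mod~$3$), so that $d\theta=\frac{1-r^3}{1-r^{3n}}d\rho$. The cutoff $\theta_0$ corresponds exactly to $|\rho|=10/81=\Ct$, so the tail in the $\rho$-variable starts at $\rho=10/81$. For $|\rho|\frac{1-r^3}{1-r^{3n}}\le\pi/3$, Lemma~\ref{leTailBound2} gives $-\log|P_n/P_n|\ge-\frac{0.8}{|1-re^{i\theta}|}+\phi(n,r,\rho)$, hence $\left|P_n^\dd/P_n^\dd\right|\le\exp\!\left(\frac{0.8\dd}{|1-re^{i\theta}|}-\dd\phi(n,r,\rho)\right)$; one then needs the lower bound $|1-re^{i\theta}|\ge\sqrt3-(1+3\rho)(-\log r_0)$, which follows from $|1-re^{i\theta}|\ge|1-e^{2\pi i/3}|-|e^{2\pi i/3}-re^{i\theta}|=\sqrt3-|e^{2\pi i/3}-re^{i\theta}|$ and the estimate $|e^{2\pi i/3}-re^{i\theta}|\le|\log re^{i(\theta-2\pi/3)}|\le(1+3\rho)(-\log r_0)$ coming from~\eqref{eqLogZBoundAux} in Lemma~\ref{leCutoffPrelim}(1) with $t=\rho$. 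Extending the $\rho$-integral to the full range $[10/81,\,\text{something}]$ and then up to an absolute constant upper limit (which is where the ``$4$'' in the integral $\int_{10/81}^4$ comes from: for $\rho>4$, or equivalently once $\theta$ is far enough from $2h\pi/3$, the bound $\phi$ is already so large that one can afford to replace the Lemma~\ref{leTailBound2} bound by the weaker-looking but globally valid estimate using Lemma~\ref{leTailBound1}), and multiplying by $2$ for the two signs of $\rho$ and by a further factor for the several arcs, yields the first integral term in~\eqref{eqIneqTail}. The region where $|1-re^{i\theta}|<1/3$ is exactly where Lemma~\ref{leTailBound2} degenerates; there Lemma~\ref{leTailBound1} supplies $-\log|P_n/P_n|\ge\frac16\frac{(r+r^2)(1-r^{3n})}{1-r^3}-5.44\ge\phi(n,r,4)-5.44$ (checking $\frac16(r+r^2)\frac{1-r^{3n}}{1-r^3}\ge\phi(n,r,4)$ is an elementary inequality in $r$), giving the second, $\exp(5.44\dd-\dd\phi(n,r,4))$, term, with the $(1-r^{3n})^{3/2}(1-r^3)^{-3/2}$ factor arising because this small arc has length $O\!\left(\frac{1-r^3}{1-r^{3n}}\right)$ but one is crudely bounding it and picking up an extra power from the prefactor manipulation and the $2\pi$ from its maximal possible length.

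Finally, for the monotonicity claim when $n>546$: I would differentiate the right-hand side of~\eqref{eqIneqTail} with respect to $r$ and show each factor is (eventually) monotone. The key observations are that $\frac{1-r^{3n}}{1-r^3}$ is increasing in~$r$ on $(0,1]$, that $\phi(n,r,\rho)$ is increasing in~$r$ (so $\exp(-\dd\phi)$ is decreasing), that the $\erf$ in the denominator has increasing argument (so $1/\erf(\cdots)$ is decreasing), and that the term $\frac{0.8\dd}{\sqrt3-(1+3\rho)(-\log r_0)}$ with $r_0=e^{-\sqrt{4\dd/27n}}$ depending on~$n$ only (not $r$) is harmless; the one genuinely competing effect is the growth of $\frac{1-r^{3n}}{1-r^3}$ against the decay of $\exp(-\dd\phi)$, and here the threshold $n>546$ is precisely what makes the exponential decay win---concretely one shows $\phi(n,r,10/81)$ grows fast enough that $\frac{d}{dr}\log\!\left[\left(\frac{1-r^{3n}}{1-r^3}\right)^{1/2}e^{-\dd\phi}\right]<0$ uniformly on $(r_0,1]$ for $n>546$. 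I expect \textbf{this last monotonicity verification to be the main obstacle}: it requires combining several not-quite-tight inequalities uniformly over the whole range $r\in(r_0,1]$ (equivalently over all asymptotic scales of $-\log r$ between $0$ and $\sqrt{4\dd/27n}$), and pinning down the exact numerical threshold $546$ will need careful bookkeeping of the constants, very much in the spirit of the estimations already carried out in Lemma~\ref{leCutoffPrelim}(4) and Lemma~\ref{leDerivBounds}.
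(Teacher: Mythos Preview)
Your overall plan---split the tail, apply Lemmas~\ref{leTailBound2} and~\ref{leTailBound1}, change variables $\theta\mapsto\rho$, and bound the prefactor---matches the paper's proof. Two points, however, need correcting.

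\textbf{The prefactor.} You invoke the \emph{lower} bound $g_{Q_n}(r)\ge\tfrac13\dd X_2(n,r)$ from~\eqref{eq:fX2}. That is the wrong direction: the factor $\sqrt{g_{Q_n}(r)}$ in the numerator of~\eqref{eq:ep1} needs an \emph{upper} bound. The paper uses the upper half of~\eqref{eq:fX2}, namely $g_{Q_n}(r)<\tfrac35\dd X_2(n,r)$, together with~\eqref{eqIneqR2_000} to get
\[
g_{P_n^\dd}(r)<\frac{108\dd}{5}\Bigl(\frac{1-r^{3n}}{1-r^3}\Bigr)^{3},
\]
and then exploits that $x\mapsto x/\erf x$ is increasing to bound the whole ratio $\sqrt{g}/\erf(\theta_0\sqrt{g/2})$ by substituting this single upper bound. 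This is also where the power $3/2$ on $(1-r^{3n})/(1-r^3)$ in the second term of~\eqref{eqIneqTail} comes from (it is \emph{not} from the arc length, which is bounded crudely by $2\pi$), and why only the power $1/2$ survives in the first term after the Jacobian $d\theta=\tfrac{1-r^3}{1-r^{3n}}\,d\rho$.

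\textbf{The monotonicity.} You are right that this is the delicate part, but your sketch understates what is needed: $\phi(n,r,\rho)$ alone being increasing in~$r$ does not suffice, because the competing factor $(1-r^{3n})/(1-r^3)$ is also increasing. The paper isolates precisely this competition in Lemma~\ref{leMonotone1}, which shows that
\[
\frac{1-r^n}{1-r}\exp\Bigl(-\lambda\,\frac{1-r^{n/6}}{1-r}\Bigr)
\]
is decreasing on $(\exp(-8\lambda/9),1)$ provided $n\ge 6+36/\lambda$. One applies this (with $r\mapsto r^3$) for two choices of the constant $\lambda$, tuned to the exponents $1/2$ and $3/2$ respectively, and checks that for $n>546$ both the hypothesis $n>6+36/\lambda$ and the range condition $-\log r^3<8\lambda/9$ hold for all $r\in(r_0,1]$ and $\rho\in[10/81,4]$. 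The threshold $546$ is thus dictated by the smallest admissible $\lambda$ (occurring at $\rho=10/81$), not by any direct derivative computation.
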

\begin{proof}
Lemmas~\ref{leTailBound2} and \ref{leTailBound1} imply that
\begin{align}
\notag
-\log\left|\frac{P_{n}(re^{i\theta})}{P_{n}(re^{2\pi i/3})}\right|&>
-\frac{0.8}{|1-re^{i\theta}|}+\phi(n,r,|\rho|), \\
\label{eq:0.8}
&\kern4cm
 \text{ for } \theta=\pm\frac{2\pi i}{3}+\rho\frac{1-r^{3}}{1-r^{3n}}\text{ and }|1-re^{i\theta}|\geq\frac13,\\
-\log\left|\frac{P_{n}(re^{i\theta})}{P_{n}(re^{2\pi i/3})}\right|&>
-5.44+\phi(n,r,+\infty), \quad  \text{ for } |1-re^{i\theta}|<\frac13.
\label{eq:5.44}
\end{align}

For $\theta:=\pm\frac{2\pi i}{3}+\rho\frac{1-r^{3}}{1-r^{3n}}$,
we have
\begin{align}
\notag
|1-re^{i\theta}|&=\left|(1-e^{\pm2\pi i/3})+(e^{\pm2\pi
  i/3}-e^{i\theta})+(e^{i\theta}-re^{i\theta})\right|\\
\notag
&\ge|1-e^{\pm2\pi i/3}|-|e^{\pm2\pi
  i/3}-e^{i\theta}|-|e^{i\theta}-re^{i\theta}|\\
\notag
&\geq \sqrt3-|\rho|\frac{1-r^{3}}{1-r^{3n}}-(1-r)
\geq \sqrt3-|\rho|\frac{1-r_0^3}{1-r_0^{3n}}-(1-r_0)\\
&\geq \sqrt3-(3|\rho|+1)(-\log r_0),
\label{eq:1-reit}
\end{align}
where we used that $r\in(r_0,1]$ to get the next-to-last line, and
\eqref{eqThetaBoundAux} to obtain the last line.

Here, in order to estimate the integral in \eqref{eq:ep1}, we divide
the tail part $I_{\text{tail}}$ into two disjoint subsets. Namely,
we define
$$I_{\text{tail1}}:=\left\{\pm2\pi/3+\rho\frac{1-r^{3}}{1-r^{3n}}:C_0<
|\rho|<4\right\}$$
and the complementary subset
$I_{\text{tail2}}:=I_{\text{tail}}\backslash I_{\text{tail1}}$.
The set $I_{\text{tail1}}$ consists of four distinct intervals. 
By \eqref{eq:0.8} and~\eqref{eq:1-reit}, 
the integral over these intervals can be estimated by
      \begin{multline}
        \int_{I_\text{tail1}}\left|\frac{Q_n(re^{i\theta})}{Q_n(re^{2\pi
            i/3})}\right|\,d\theta \\
 < 4\frac{1-r^{3}}{1-r^{3n}}\int^{4}_{C_0}\exp\left(\frac{0.8\dd }{\sqrt3-(1+3|\rho|)(-\log r_0)}-\dd \phi(n,r,\rho)\right)\,d\rho.
\label{eq:tail1}
      \end{multline}
For the remaining part of $I_{\text{tail}}$, 
$I_{\text{tail2}}:=I_{\text{tail}}\setminus I_{\text{tail1}}$, we note that the quantity
  \[
  -\log\left|\frac{P_{n}(re^{i\theta})}{P_{n}(re^{2\pi i/3})}\right|
  \]
  can be bounded below by either $-2.4+\phi(n,r,4)$ (if
  $|1-re^{i\theta}|\geq\frac13$, using~\eqref{eq:0.8}) or
  $-5.44+\phi(n,r,+\infty)$ (if $|1-re^{i\theta}|<\frac13$, using~\eqref{eq:5.44}), and a common lower bound for the two cases can be chosen as $-5.44+\phi(n,r,4)$. This implies that
      \begin{equation}
        \int_{I_\text{tail2}}\left|\frac{Q_n(re^{i\theta})}{Q_n(re^{2\pi i/3})}\right|\,d\theta  < 2\pi\exp(5.44\dd -\dd \phi(n,r,4)).
\label{eq:tail2}
      \end{equation}  

By combining the two bounds \eqref{eq:tail1} and \eqref{eq:tail2}, 
we obtain the following upper bound for
the integral in \eqref{eq:ep1}: 
\begin{multline*}
\int_{I_\text{tail}}\left|\frac{Q_n(re^{i\theta})}{Q_n(re^{2\pi
    i/3})}\right|\,d\theta\\
<4\frac{1-r^{3}}{1-r^{3n}}\int^{4}_{10/81}\exp\left(\frac{0.8\dd }{\sqrt3-(1+3|\rho|)(-\log r_0)}-\dd \phi(n,r,\rho)\right)\,d\rho\\
 +2\pi\exp(5.44\dd -\dd \phi(n,r,4)).
\end{multline*}

We recall that the definition \eqref{eq:ep1} of ${\epsilon}_{1,Q_n}(r)$
contains the factor 
\[
\sqrt{g_{Q_n}(r)}\left/\erf\left({\theta_0\sqrt{g_{Q_n}(r)/2}}\right)\right.
\]
in addition to the left-hand
side of the above inequality. We note that, using the upper
bound for $-\Re f_2$ in \eqref{eq:fX2} and the inequality
\eqref{eqIneqR2_000}, we have 
\[
g_{P_n^\dd }(r)<\frac{108\dd }{5}\left(\frac{1-r^{3n}}{1-r^3}\right)^3.
\]
Therefore, using the fact that $x/\erf x$ is increasing with respect to $x$ and recalling the definition of $\theta_0$ in \eqref{eqCutoffTheta0}, we obtain 
\begin{multline*}
\epsilon_{1,P_n^\dd }(r)
<\frac {\sqrt{54\dd}} {\sqrt{5\pi}}\left/\erf\left({\sqrt{\frac{40\dd(1-r^{3n})}{243(1-r^3)}}}\right)\right.\\
\times\left(4\left(\frac{1-r^{3n}}{1-r^3}\right)^{1/2}\int^{4}_{10/81}\exp\left(\frac{0.8\dd }{\sqrt3-(1+3\rho)(-\log r_0)}-\dd \phi(n,r,\rho)\right)\,d\rho\right.\\
\left.+2\pi\left(\frac{1-r^{3n}}{1-r^3}\right)^{3/2}\exp(5.44\dd -\dd \phi(n,r,4))\right),
\end{multline*}
as desired.

\medskip
It remains to show that the right-hand side of \eqref{eqIneqTail} is
decreasing with respect to~$r$. To this end, we first note that the factor
$1/\erf\left({\sqrt{\frac{40\dd(1-r^{3n})}{243(1-r^3)}}}\right)$ is decreasing with respect to $r$. 

We claim that the other factor on the right-hand side of
\eqref{eqIneqTail} is also decreasing with respect to~$r$. To see
this, let $r_1,r_2\in[r_0,1]$ such that $r_1<r_2$. We then use
Lemma~\ref{leMonotone1} with $r$ replaced by $r^3$ and
\begin{equation} \label{eq:lambda} 
\lambda=C\dd\frac{r_1^3(1+r_1^3)}{6}\left(1-\sqrt{\frac{1}{1+18\rho^2}}\right),
\end{equation}
to get
$$
\frac{1-r_2^{3n}}{1-r_2^3}\exp
\left(-C\dd\frac{r_1^3(1+r_1^3)}{r_2^3(1+r_2^3)}\phi(n,r_2,\rho)\right)
\le
\frac{1-r_1^{3n}}{1-r_1^3}\exp
\left(-C\dd\phi(n,r_1,\rho)\right),
$$
provided
\begin{equation} \label{eq:546}
546\ge 6+\frac {36} {\lambda}.
\end{equation}
(Recall that $n>546$ by assumption.)

Let us for the moment assume that the condition~\eqref{eq:546} is satisfied.
Then, since $r_2^3(1+r_2^3)>r_1^3(1+r_1^3)$, we obtain
\begin{equation} \label{eq:r-r0}
\frac{1-r_2^{3n}}{1-r_2^3}\exp
\left(-C\dd\phi(n,r_2,\rho)\right)
\le
\frac{1-r_1^{3n}}{1-r_1^3}\exp
\left(-C\dd\phi(n,r_1,\rho)\right),
\end{equation}
again provided \eqref{eq:546} holds.
It can be checked that, for $C=2$, the inequality~\eqref{eq:546} holds for
$\frac{10}{81}\leq\rho\leq 4$. Therefore, setting $C=2$ in
\eqref{eq:r-r0} and taking square roots of both sides, we obtain
\begin{multline}\label{eqMono1}
\left(\frac{1-r_2^{3n}}{1-r_2^3}\right)^{1/2}\exp\left(-\dd \phi(n,r_2,\rho)\right)\leq
\left(\frac{1-r_1^{3n}}{1-r_1^3}\right)^{1/2}\exp\left(-\dd \phi(n,r_1,\rho)\right),
\\
\text{for }\frac{10}{81}\leq\rho\leq 4.
\end{multline}
For $C=2/3$, the inequality \eqref{eq:546} only holds for $\rho=4$.
By doing these substitutions in \eqref{eq:r-r0} and raising both sides
to the power~$3/2$, we arrive at
\begin{equation}\label{eqMono2}
\left(\frac{1-r_2^{3n}}{1-r_2^3}\right)^{3/2}\exp\left(-\dd \phi(n,r_2,4)\right)\leq \left(\frac{1-r_1^{3n}}{1-r_1^3}\right)^{3/2}\exp\left(-\dd \phi(n,r_1,4)\right).
\end{equation}
The inequalities \eqref{eqMono1} and \eqref{eqMono2} together show
that the second factor on the right-hand side of \eqref{eqIneqTail}
is indeed also decreasing in~$r$.

\medskip
It remains to justify the use of Lemma~\ref{leMonotone1}, that is, of
the validity of the condition~\eqref{eq:546}.

\begin{itemize}
  \item We note that $n>546$ implies that
  \[
  r_0^3=\exp\left(-3\sqrt{\frac{4\dd}{27n}}\right)>\exp\left(-3\sqrt{\frac{12}{27\times546}}\right)>\frac{11}{12},
  \]
  and consequently
  \[
  \frac{r_1^3(1+r_1^3)}{6}\geq \frac{r_0^3(1+r_0^3)}{6}>0.292.
  \]
  \item Therefore, with the choice $C=2$ and $10/81\le \rho\le 4$,
the constant $\lambda$ in \eqref{eq:lambda} is at least 
\[
  2\times0.292\dd  (1-(1+18(10/81)^2)^{-1/2})>\dd /15.
  \]
Hence, the condition \eqref{eq:546} holds, which confirms
\eqref{eqMono1}. On the other hand, with the choice $C=2/3$ 
and $\rho= 4$,
the constant $\lambda$ in \eqref{eq:lambda} is at least 
  \[
  2/3 \times0.292 \dd (1-(1+18\times4^2)^{-1/2})>\dd /6.
  \]
Hence, again, the condition \eqref{eq:546} is satisfied, confirming
\eqref{eqMono2}. 
  \item The condition in Lemma~\ref{leMonotone1} on the range of $r$ is verified by noting that
  \[
  -\log r^3<\frac{\sqrt{\dd }}{9\sqrt{5}}<\frac{\sqrt{\dd }}{20}\leq\frac{\dd }{20}<\frac{8}{9}\lambda.
\qedhere  \]
\end{itemize}
\end{proof}

\section{Completion of the proofs}\label{seMain}
In this section, we combine the results of the two previous sections
to prove the First and Second Borwein Conjecture and ``two thirds'' of 
the cubic Borwein conjecture. We begin by giving a result that allows
us to control the
argument of $P_n(re^{2\pi i/3})$. As mentioned in Part~D of
Section~\ref{seOutline}, this is needed for accomplishing Task~(2)
below~\eqref{eqFinalTarget}.

\begin{lemma}\label{leArg}
For $n\in \Z^+$, $\arg P_n(re^{2\pi i/3})$ is increasing with respect to~$r$. Moreover, for $r\in(0,1]$ and $n\in\Z^+$, we have $\arg P_n(re^{2\pi i/3})\in(-\pi/18,0]$.
\end{lemma}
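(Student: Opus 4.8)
The plan is to work with the explicit expression for $\arg P_n(re^{2\pi i/3})$ coming from the factorisation of $P_n$. Writing $P_n(q)=\prod_{3\nmid k,\,1\le k\le 3n}(1-q^k)$ and substituting $q=re^{2\pi i/3}$, we have
\[
\arg P_n(re^{2\pi i/3})=\underset{3\nmid k}{\sum_{k=1}^{3n}}\arg\!\left(1-r^ke^{2\pi ik/3}\right).
\]
For $k\equiv 1\pmod 3$ the factor is $1-r^ke^{2\pi i/3}$, whose argument is $\arctan\!\bigl(\tfrac{\sqrt3\,r^k/2}{1+r^k/2}\bigr)>0$; for $k\equiv 2\pmod 3$ the factor is $1-r^ke^{-2\pi i/3}$, with argument $-\arctan\!\bigl(\tfrac{\sqrt3\,r^k/2}{1+r^k/2}\bigr)<0$. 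Pairing the term $k=3j-2$ with $k=3j-1$, the two contributions nearly cancel; what remains is governed by the function $\psi(x):=\arctan\!\bigl(\tfrac{\sqrt3\,x/2}{1+x/2}\bigr)$ evaluated at $r^{3j-2}$ and $r^{3j-1}$, and since $\psi$ is increasing on $[0,1]$ and $r^{3j-2}>r^{3j-1}$, each pair contributes a positive amount $\psi(r^{3j-2})-\psi(r^{3j-1})\ge 0$. This immediately yields $\arg P_n(re^{2\pi i/3})\ge 0$ is \emph{false} with the present sign convention — rather it shows the sum telescopes in a controlled way, and one obtains the bound $\arg P_n(re^{2\pi i/3})\le \sum_j\bigl(\psi(r^{3j-2})-\psi(r^{3j-1})\bigr)$, which is itself bounded by a telescoping-type estimate. (One must be careful about which residue class gives the $+$ sign; I would fix this by direct computation of $\arg(1-re^{2\pi i/3})$ at, say, $r=1$, where $1-e^{2\pi i/3}=\tfrac32-\tfrac{\sqrt3}{2}i$ has argument $-\pi/6$.)

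Concretely, I would proceed as follows. First, derive the closed form $\arg P_n(re^{2\pi i/3})=\sum_{3\nmid k}\varepsilon_k\,\psi(r^k)$ with $\varepsilon_k=\mathrm{sign}$ depending on $k\bmod 3$, and $\psi$ as above, noting $\psi(0)=0$, $\psi(1)=\pi/6$, and $\psi$ strictly increasing and concave on $[0,1]$. Second, for monotonicity in $r$: differentiate termwise. We get $\tfrac{\partial}{\partial r}\arg P_n(re^{2\pi i/3})=\sum_{3\nmid k}\varepsilon_k\,k\,r^{k-1}\psi'(r^k)$. Grouping $k=3j-2$ with $k=3j-1$ and using that $r\mapsto k r^{k-1}\psi'(r^k)$ is monotone in $k$ over the relevant range (since $\psi'$ is decreasing and $r<1$), one shows each paired contribution has a definite sign, giving monotonicity. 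Third, for the two-sided bound: monotonicity in $r$ reduces the problem to the endpoints. At $r\to 0^+$ all terms vanish, so $\arg P_n\to 0$; at $r=1$ the sum telescopes to a single residual term because $\psi(1^{3j-2})=\psi(1^{3j-1})=\pi/6$ for all $j$, and the alternating structure collapses — here I would check directly that $\arg P_n(e^{2\pi i/3})$ equals the argument of $(e^{2\pi i/3};e^{2\pi i/3})$-type products, which is known (and small). Then $\arg P_n(re^{2\pi i/3})$ lies between its value at $r=0$ and its value at $r=1$; establishing that the latter lies in $(-\pi/18,0]$ finishes the proof.

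The main obstacle I anticipate is the endpoint value at $r=1$: showing $\arg P_n(e^{2\pi i/3})\in(-\pi/18,0]$ uniformly in $n$. The telescoping is not literally exact for finite $n$ because the last index $k=3n-1$ is unpaired within its block only if one pairs $3j-2$ with $3j-1$; in fact the blocks $\{3j-2,3j-1\}$ for $j=1,\dots,n$ do pair up perfectly, so the issue is really just evaluating $\sum_{j=1}^n\bigl(\psi(r^{3j-2})-\psi(r^{3j-1})\bigr)$ and bounding it. By concavity of $\psi$, $\psi(r^{3j-2})-\psi(r^{3j-1})\le \psi'(r^{3j-1})\,(r^{3j-2}-r^{3j-1})=\psi'(r^{3j-1})\,r^{3j-1}(r^{-1}-1)$, and summing a geometric-type series gives an explicit bound; one then checks numerically that the maximum over $r\in(0,1]$ and $n\ge1$ stays below $\pi/18$. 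The nontrivial sign — that the quantity is $\le 0$ rather than $\ge 0$ — must be pinned down at the start via the $r=1$ computation $1-e^{2\pi i/3}=\tfrac32-\tfrac{\sqrt3}{2}i$, whence the $k\equiv 2$ terms dominate in sign; I would be careful to get this orientation right before optimising the bound, since an incorrect sign here would invalidate Task~(2) of the outline.
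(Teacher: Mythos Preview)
Your factorisation setup is correct, and with $\psi(x)=\arctan\frac{\sqrt3\, x}{2+x}$ (so that $\psi'(x)=\frac{\sqrt3}{2(1+x+x^2)}>0$ and $\psi''<0$) one has
\[
\arg P_n(re^{2\pi i/3})=-\sum_{j=1}^n\bigl(\psi(r^{3j-2})-\psi(r^{3j-1})\bigr)\le 0,
\]
since $\psi$ is increasing. This matches the paper's starting point exactly.

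The real gap is the lower bound $-\pi/18$. Your plan ``reduce to endpoints by monotonicity'' cannot work: at both $r\to 0^+$ and $r=1$ the argument equals~$0$ (at $r=1$ every pair cancels exactly, as you yourself noted), so if the function were monotone on all of $(0,1]$ it would have to be identically~$0$, which it is not --- for $n=1$ it dips to about $-0.147$ near $r\approx 0.55$. So monotonicity on the full interval gives you nothing for the lower bound. Your fallback, the concavity estimate $\psi(r^{3j-2})-\psi(r^{3j-1})\le \psi'(r^{3j-1})\,r^{3j-2}(1-r)$, after crudely bounding the denominator and summing the geometric series, yields at best
\[
\sum_{j\ge1}\frac{\sqrt3}{2}\,r^{3j-2}(1-r)\le\frac{\sqrt3\, r}{2(1+r+r^2)}\le\frac{\sqrt3}{6}\approx 0.289,
\]
which exceeds $\pi/18\approx 0.175$. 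So the concavity route, as written, does not close either.

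The paper obtains $\pi/18$ by a genuinely different idea. Writing $f_x(r,x)=\frac{\sqrt3\, r^x(-\log r)}{2(1+r^x+r^{2x})}$ so that the target sum is $\sum_j\int_{3j-2}^{3j-1}f_x\,dx$, they periodise to $g(r,x):=\sum_{k\in\Z}f_x(r,3k+x)$ and prove the averaging inequality $\int_1^2 g\,dx\le\tfrac13\int_0^3 g\,dx$; the right-hand side evaluates to $\tfrac16\int_{\R}f_x\,dx=\pi/18$. The averaging inequality is established by computing the Fourier coefficients $g_k(r)=\int_{\R}f_x(r,x)e^{2\pi ikx/3}\,dx$ via a contour integral over a horizontal strip (two simple poles), getting the closed form $g_k(r)=\pi\big/\bigl(1+2\cosh\tfrac{4k\pi^2}{9(-\log r)}\bigr)$, and then checking that the resulting alternating series is positive. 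This Fourier/residue step is the missing ingredient in your proposal; without it, or an equally sharp replacement, you will not reach $\pi/18$.
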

\begin{proof}
For $x\in\R$, define
\[
f(r,x):=\arg(1-r^{x}e^{2\pi i/3})=-\arctan\frac{\sqrt3r^x}{r^x+2}.
\]
By elementary manipulations, we have
\begin{align*}
\arg P_n(re^{2\pi i/3})&\equiv\sum_{k=1}^{n}\left(\arg(1-r^{3k-2}e^{2\pi i/3})+\arg(1-r^{3k-1}e^{-2\pi i/3})\right)\pmod{2\pi}\\
&=\sum_{k=1}^{n}\left(\arg(1-r^{3k-2}e^{2\pi i/3})-\arg(1-r^{3k-1}e^{2\pi i/3})\right)\\
&=-\sum_{k=1}^{n}\left(f(r,3k-1)-f(r,3k-2)\right).
\end{align*}

We claim that $f(r,3k-1)-f(r,3k-2)$ is decreasing with respect to~$r$,
and that
$$\sum_{k=1}^{n}\left(f(r,3k-1)-f(r,3k-2)\right)\in[0,\pi/18).$$
In order to see this, we note that
\[
\sum_{k=1}^{n}\left(f(r,3k-1)-f(r,3k-2)\right)=\sum_{k=1}^{n}\int^{3k-1}_{3k-2}f_x(r,x)\,dx,
\]
where as usual $f_x(r,x)=\frac {\partial} {\partial x}f(r,x)$.
Both the lower bound of~$0$ and the monotonicity with respect to~$r$ follow from the expression
\[
f_x(r,x)=\frac{\sqrt3r^x(-\log r)}{2(1+r^x+r^{2x})}.
\]

In order to prove the upper bound of $\pi/18$, we define
$g(r,x):=\sum_{k\in\Z}f_x(r,3k+x)$ and claim that
\begin{equation} \label{eq:12-03} 
\int^2_1 g(r,x)\,dx\leq \frac13\int^3_0g(r,x)\,dx.
\end{equation}
If we assume the truth of this inequality for a moment, then,
since $f_x$ is even with respect to~$x$, we see that

\begin{align*}
\sum_{k=1}^{n}\left(f(r,3k-1)-f(r,3k-2)\right)&<\frac12\sum_{k\in\Z}\left(f(r,3k-1)-f(r,3k-2)\right)\\
&\kern-4pt
=\frac12\int^2_1 g(r,x)\,dx\leq \frac16\int^3_0g(r,x)\,dx=\frac16\int_{-\infty}^{\infty} f_x(r,x)\,dx\\
&\kern-4pt
=\frac16f(r,x)\Big|^{+\infty}_{-\infty}=\pi/18,
\end{align*}
as required.

Hence, it remains to verify \eqref{eq:12-03}.
As a matter of fact, this inequality can be proved by a Fourier expansion of $g(r,x)$. 
To be precise, we define
\[
g_k(r):=\int^3_0 g(r,x)\cos(2\pi k x/3)\,dx=\int_{\R}f_x(r,x)\cos(2\pi k x/3)\,dx,
\]
so that
\[
g(r,x)=\frac13g_0(r)+\frac23\sum_{k=1}^{\infty}g_k(r) \cos(2k\pi x/3).
\]
To get an explicit expression for $g_k(r)$, we note that, since
$f_x(r,x)$ is even, we may express the Fourier coefficients as
\[
g_k(r)=\int_{\R}f_x(r,x)\exp(2\pi k i x/3)\,dx.
\]
We integrate the function $f_x(r,x)\exp(2\pi k i x/3)$ (clockwise)
along a rectangular contour with corners located at $\pm M$ and $\pm
M-2\pi i/(-\log r)$. In the limit as $M\to\infty$, the
integral along the two vertical parts of the contour converges to
zero, while the two parts of the integral along the horizontal parts
of the contour are proportional to each other. More precisely, we may
conclude that the integral along this rectangular contour, in the
limit as $M\to\infty$, 
is equal to 
$$\left(\exp\left(4k\pi^2/(-3\log r)\right)-1\right)\cdot
g_k(r).$$ 
The integrand has exactly two poles inside this rectangle,
namely at $x=-2\pi i/(-3\log r)$ and at $x=-4\pi i/(-3\log r)$, with
residues equal to $i \exp(4k\pi^2/(-9\log r))$ and to $-i
\exp(8k\pi^2/(-9\log r))$, respectively. Therefore we obtain that
\begin{align*}
g_k(r)&=\frac{\pi}{1+2\cosh\left(\frac{4k\pi^2}{9(-\log r)}\right)}.
\end{align*}
We are now in the position to accomplish a proof of \eqref{eq:12-03}
by employing the above facts: 
\begin{align*}
\left(\frac13\int^3_0-\int^2_1\right)g(r,x)\,dx&=\frac23\sum_{k=1}^{\infty}g_k(r)\left(\frac13\int^3_0-\int^2_1\right)\cos(2k\pi x/3)\,dx\\
&=\sum_{k=1}^{\infty}\frac{8(-1)^{k-1}g_k(r)\sin^3(k\pi/3)}{3k\pi}\\
&=\frac{\sqrt3}{\pi}\left(g_1(r)-\frac{g_2(r)}{2}+\frac{g_4(r)}{4}-\frac{g_5(r)}{5}+\cdots\right)>0,
\end{align*}
where the last inequality is due to the fact that $g_k(r)$ is decreasing with respect to~$k$.
\end{proof}
With concrete bounds on $\arg P_n(re^{2\pi i/3})$ proven, all three
pieces of the Borwein puzzle are now in place, and we can now 
present the announced proofs of the
First and Second Borwein Conjecture, and of ``two thirds'' of the Cubic
Borwein Conjecture.

We begin with the (in view of \cite{WANG2022108028}: alternative) 
proof of the First Borwein Conjecture.
In the arguments below, we always use $r_m$ to denote the solution of
the approximate saddle point equation~\eqref{eqStationaryPoint2} (that
depends on $n$, $m$, and $\dd$). 

\begin{theorem}\label{thMain1}
The First Borwein Conjecture,  Conjecture~\ref{cjBorwein}, is true.
\end{theorem}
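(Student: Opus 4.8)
The plan is to combine the estimates assembled in Sections~\ref{seInfinite}--\ref{seMain}, specialised to $\dd=1$ (so that $Q_n=P_n$); for a given $m$, let $r=r_m$ be the solution of the approximate saddle point equation~\eqref{eqStationaryPoint} supplied by Lemma~\ref{leRadiusBound}, and let $r_0$ be as in~\eqref{eqCutoffR0}. By Part~A of Section~\ref{seOutline} --- which uses the result of Andrews recalled in Section~\ref{seInfinite}, relation~\eqref{eq:d=infty}, and the palindromicity of $P_n$ --- the sign pattern $+--+--\cdots$ is already known for $0\le m\le 3n$ and for $\deg P_n-3n\le m\le\deg P_n$, so it remains only to determine the sign of $[q^m]P_n(q)$ for $3n\le m\le(\deg P_n)/2$. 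For such~$m$, Lemma~\ref{lem:ep0ep1} reduces the matter to Tasks~(1) and~(2) of Part~D of Section~\ref{seOutline}: establishing the inequality~\eqref{eqFinalTarget}, and controlling $\arg P_n(re^{2\pi i/3})$.

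I would first dispose of Task~(2), which is immediate from Lemma~\ref{leArg}: one has $\arg P_n(re^{2\pi i/3})\in(-\pi/18,0]$, whereas $2m\pi/3$ is congruent modulo~$2\pi$ to $0$, $2\pi/3$ or $-2\pi/3$ according to whether $m\equiv0$, $1$ or $2\pmod{3}$. Hence the argument of the cosine in~\eqref{eqTargetMain} lies within $\pi/18$ of an integer multiple of $2\pi/3$, so that $\bigl|2\cos(\arg P_n(re^{2\pi i/3})-2m\pi/3)\bigr|\ge 2\cos(7\pi/18)>2/3$, this cosine being positive if $3\mid m$ and negative otherwise.

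For Task~(1), I would use that, by Lemmas~\ref{leIneqPeak} and~\ref{leIneqTail} with $\dd=1$, the bounds there for $\epsilon_{0,P_n}(m,r)$ and $\epsilon_{1,P_n}(r)$ are decreasing in~$r$ (the latter for $n>546$); thus, for $n>546$, since $r=r_m>r_0$, the sum $\epsilon_{0,P_n}(m,r)+\epsilon_{1,P_n}(r)$ is at most the value of those bounds at $r=r_0$, an expression depending on~$n$ alone. Using the explicit lower bounds for the $X_j(n,r_0)$ from Lemma~\ref{leCutoffPrelim}(4), together with $X_0(n,r_0)\le(r_0+r_0^2)/(1-r_0^3)=O(\sqrt n)$ and the Cauchy--Schwarz inequality $X_2\ge X_1^2/X_0$, one sees that the peak contribution is $O(n^{-1/2})$ while the tail contribution decays like $e^{-c\sqrt n}$ up to a polynomial factor; in particular the whole bound tends to~$0$, and a direct numerical evaluation of these explicit estimates then produces an explicit integer $n_0>546$ such that $\epsilon_{0,P_n}(m,r)+\epsilon_{1,P_n}(r)<2/3$ for all $n\ge n_0$ and all admissible~$m$ --- that is, \eqref{eqFinalTarget} holds.

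With both tasks established, Lemma~\ref{lem:ep0ep1} forces $[q^m]P_n(q)$ to carry the sign of $2\cos(\arg P_n(re^{2\pi i/3})-2m\pi/3)$ for all $n\ge n_0$ and all $m\in[3n,(\deg P_n)/2]$; together with the reduction above, the pattern $+--+--\cdots$ then holds for every $n\ge n_0$, and the finitely many remaining cases $n<n_0$ are disposed of by expanding $P_n(q)$ and inspecting the signs of its coefficients. I expect the quantitative side of Task~(1) to be the main obstacle: the asymptotic vanishing of the errors is clear from the orders above, but keeping $n_0$ small enough for the residual finite verification to be practical is exactly what forces the sharp form of the constants in Lemmas~\ref{leIneqPeak} and~\ref{leIneqTail}. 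A secondary point, specific to the present approach and absent from~\cite{WANG2022108028} (where the dominant saddle points lay on the positive real axis), is that the control of $\arg P_n(re^{2\pi i/3})$ provided by Lemma~\ref{leArg} is genuinely indispensable here, because the dominant saddle points sit at complex arguments close to $\pm2\pi/3$.
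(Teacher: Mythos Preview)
Your proposal is correct and follows essentially the same route as the paper's proof: invoke Lemma~\ref{leArg} for Task~(2), bound the right-hand side of~\eqref{eqFinalTarget} below by $2\cos(7\pi/18)$ (the paper packages this as Lemma~\ref{lem:cos-cos} and records the value $0.684$), then use the monotonicity in~$r$ of the bounds from Lemmas~\ref{leIneqPeak} and~\ref{leIneqTail} to evaluate at $r=r_0$ and produce an explicit threshold~$n_0$ (the paper obtains $n_0=5300$ with $\epsilon_{0,P_n}<0.407$, $\epsilon_{1,P_n}<0.275$), before appealing to computer verification for small~$n$. The only difference is that the paper supplies the concrete numerics and cites the already-completed check for $n\le7000$ from~\cite{WANG2022108028}, whereas you leave these as tasks to be carried out.
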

\begin{proof}
We prove this claim by verifying \eqref{eqFinalTarget} for
``large''~$n$, with the help of the various bounds and inequalities we have derived,
and by a direct computation for ``small''~$n$, using the computer.

By Lemma~\ref{leArg}, we have $\arg P_n(r_me^{2\pi
  i/3})\in[-\pi/18,0]$. Hence, by Lemma~\ref{lem:cos-cos}, we infer
\begin{equation} \label{eq:cos-cos2} 
\left|2\cos\left(\arg P_n(r_me^{2\pi
  i/3})-2m\pi/3\right)\right|
\ge2\min\{1/2,\cos(7\pi/18)\}>0.684.
\end{equation}
Furthermore, for $n\geq5300$ and $m\in[3n,\deg P_n]$ (so that
$r_m\in(r_0,1]$ by Lemma~\ref{leRadiusBound}), we use
Lemma~\ref{leIneqPeak} and Lemma~\ref{leIneqTail} to see that
\begin{equation} \label{eq:ept1}
  \epsilon_{0,P_n}(m,r_m)<0.407, \qquad \epsilon_{1,P_n}(r_m)<0.275.
\end{equation}
Comparing the bounds in \eqref{eq:cos-cos2} and \eqref{eq:ept1}, we see
that \eqref{eqFinalTarget} holds. Hence, by \eqref{eqTargetMain},
the First Borwein Conjecture is true for $n\ge5300$.

A full computer verification for $n\leq7000$ of the First
Borwein Conjecture has already been done,
cf.\ \cite[Sec.~13]{WANG2022108028}. (But see also
Remark~\ref{rem:comp} below.)
This finishes the proof. 
\end{proof}

Next we finish the proof of the Second Borwein Conjecture.

\begin{theorem}\label{thMain2}
The Second Borwein Conjecture,  Conjecture~\ref{cjBorwein2}, is true.
\end{theorem}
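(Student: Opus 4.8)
The plan is to follow exactly the same template as the proof of Theorem~\ref{thMain1}, but now with $\dd=2$ and $Q_n=P_n^2$, splitting into a ``large~$n$'' regime handled analytically and a ``small~$n$'' regime handled by direct computation. First I would invoke Lemma~\ref{leArg}: since $\arg P_n^2(r_me^{2\pi i/3})=2\arg P_n(r_me^{2\pi i/3})\in(-\pi/9,0]$, we get from Lemma~\ref{lem:cos-cos} a lower bound
\begin{equation*}
\left|2\cos\left(\arg P_n^2(r_me^{2\pi i/3})-2m\pi/3\right)\right|\ge 2\min\{1/2,\cos(8\pi/18)\}>0.34,
\end{equation*}
which is the analogue of \eqref{eq:cos-cos2}. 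The weaker constant here (compared with $0.684$ for $\dd=1$) reflects that the argument of $P_n^\dd(re^{2\pi i/3})$ can drift twice as far, and this is precisely why the threshold for ``large~$n$'' will have to be taken larger than in Theorem~\ref{thMain1}.

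Next I would bound the two error terms. For the peak error I would apply Lemma~\ref{leIneqPeak} with $\dd=2$: since the right-hand side of \eqref{eqIneqPeak} is decreasing in~$r$, it suffices to evaluate it at $r=r_0=e^{-\sqrt{8/27n}}$, using the explicit lower bounds \eqref{eq:X1}--\eqref{eq:X4} on $X_j(n,r_0)$ (these hold for $n\ge400$) together with the relation $X_2(n,r)/X_1(n,r)$ being increasing. This yields $\epsilon_{0,P_n^2}(m,r_m)$ below a concrete constant for all $n$ exceeding some explicit bound. For the tail error I would apply Lemma~\ref{leIneqTail} with $\dd=2$, again exploiting that its right-hand side is decreasing in~$r$ for $n>546$, so the bound at $r=r_0$ controls the whole range; the integral $\int_{10/81}^4$ and the residual exponential term in \eqref{eqIneqTail} both decay (super-)exponentially in~$\sqrt n$, so $\epsilon_{1,P_n^2}(r_m)$ is small once $n$ is large enough. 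The upshot is an inequality of the form $\epsilon_{0,P_n^2}(m,r_m)+\epsilon_{1,P_n^2}(r_m)<0.34$ valid for $n\ge N_0$ with $N_0$ some explicit integer (the value will be somewhat larger than the $5300$ of Theorem~\ref{thMain1}, because the right-hand side target is smaller). Then \eqref{eqFinalTarget} holds, and by \eqref{eqTargetMain} the sign pattern of $[q^m]P_n^2(q)$ is as predicted for $3n\le m\le \deg P_n$, while Part~A (via Kane's result \cite{MR2052400}) and palindromicity take care of $m\le 3n$ and $m\ge 2\deg P_n-3n$.

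Finally I would dispose of the ``small''~$n$ range $n\le N_0$ by a direct computer check of the coefficients of $P_n^2(q)$: squaring $P_n(q)$ and reading off signs is a completely routine finite computation, so this closes the gap left by the analytic argument. The main obstacle I anticipate is simply the bookkeeping of constants: one must verify that the explicit bounds coming out of Lemmas~\ref{leIneqPeak} and~\ref{leIneqTail} at $r=r_0$ with $\dd=2$ really do fall below the target $0.34$ for a cleanly stated threshold~$N_0$, and then make sure the computer verification is carried out up to (and including) that~$N_0$. There is no new analytic difficulty beyond Theorem~\ref{thMain1}; the ``hard part'' is just pinning down $N_0$ so that the two halves of the argument overlap, which is a matter of plugging $n=N_0$ and $\dd=2$ into the already-established decreasing-in-$r$ bounds and checking the numerical inequality.
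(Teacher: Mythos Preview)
Your proposal is correct and follows essentially the same template as the paper's proof: Lemma~\ref{leArg} gives $\arg P_n^2(r_me^{2\pi i/3})\in(-\pi/9,0]$, Lemma~\ref{lem:cos-cos} then yields the lower bound $2\cos(4\pi/9)>0.347$ for the cosine term, and Lemmas~\ref{leIneqPeak} and~\ref{leIneqTail} with $\dd=2$ give $\epsilon_{0,P_n^2}(m,r_m)<0.262$ and $\epsilon_{1,P_n^2}(r_m)<0.079$ for $n\ge7000$, so~\eqref{eqFinalTarget} holds in that range.

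The one place where the paper goes further than your sketch is in the handling of the intermediate range $546<n<7000$. Rather than a full brute-force check of all coefficients of $P_n^2(q)$ (which for $n$ near $7000$ means roughly $3\times10^8$ coefficients per~$n$), the paper exploits an additional monotonicity: by Lemma~\ref{leArg} the quantity $|2\cos(\pi/3-\arg P_n^2(r_me^{2\pi i/3}))|$ is \emph{increasing} in~$r_m$, while the error bounds from Lemmas~\ref{leIneqPeak} and~\ref{leIneqTail} are \emph{decreasing} in~$r_m$ for $n>546$. Hence for each such~$n$ there is a threshold $r^*(n)$ (equivalently $m^*(n)$) above which~\eqref{eqFinalTarget} is already guaranteed; numerically $m^*(n)<25281$ throughout $546<n<7000$, so only the first $25281$ coefficients of $P_n^2(q)$ need to be checked in this range. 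This buys a large reduction in the computer verification (about a day on a laptop versus something far heavier), but conceptually your direct-check plan is perfectly sound.
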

\begin{proof}
Again,
we prove this claim by verifying \eqref{eqFinalTarget} for
``large''~$n$ and a direct computation for ``small''~$n$.

By Lemma~\ref{leArg}, we have $\arg P_n^2(r_me^{2\pi
  i/3})\in[-\pi/9,0]$. Then, by Lemma~\ref{lem:cos-cos}, we may
conclude that
\begin{equation} \label{eq:cos-cos3} 
\left|2\cos\left(\arg P_n^2 (r_me^{2\pi
  i/3})-2m\pi/3\right)\right|
\ge
\left|2\cos\left(\pi/3-\arg P_n^2 (r_me^{2\pi
  i/3})\right)\right|.
\end{equation}
In particular, we have
\begin{equation} \label{eq:cos2} 
\left|2\cos\left(\arg P_n^2 (r_me^{2\pi
  i/3})-2m\pi/3\right)\right|\ge
2\cos(4\pi/9)>0.347.
\end{equation}
Furthermore, for $n\geq7000$ and $m\in[3n,(\deg P_n^2)/2]$ (so that $r_m\in(r_0,1]$ by Lemma~\ref{leRadiusBound}), we
use Lemma~\ref{leIneqPeak} and Lemma~\ref{leIneqTail} to see that
\begin{equation} \label{eq:ep2}
  \epsilon_{0,P_n^2}(m,r_m)<0.262, \qquad \epsilon_{1,P_n^2}(r_m)<0.079.
\end{equation}
Comparing the bounds in \eqref{eq:cos2} and \eqref{eq:ep2}, we see
that \eqref{eqFinalTarget} holds. Hence, by \eqref{eqTargetMain},
the Second Borwein Conjecture is true for $n\ge7000$.

\medskip
We now discuss the range $546<n<7000$.
Again referring to Lemma~\ref{leArg}, the argument $\arg P_n(r_me^{2\pi i/3})$ is
increasing as a function in~$r_m$. Consequently, the right-hand side
of \eqref{eq:cos-cos3} is also increasing in~$r_m$.
On the other hand,
we note that, according to Lemma~\ref{leIneqPeak} and Lemma~\ref{leIneqTail},
for $n>546$ the left-hand side of \eqref{eqFinalTarget} with $\dd=2$ has an upper bound that is decreasing
with respect to~$r_m$. Therefore, for
$n>546$, there exists $r^*=r^*(n)$ such that \eqref{eqFinalTarget}
with $\dd=2$
holds for $r\in[r^*,1]$. For each specific $n$, $r^*(n)$ can be
calculated by any
method for the numerical approximation of zeroes of a function with
sufficient accuracy. If we substitute $r^*(n)$ in
\eqref{eqStationaryPoint2} then we can compute a
corresponding~$m^*(n)$. 
Now \eqref{eqTargetMain} implies that, for $m\in[m^*(n),(\deg
  P_n^2)/2]$, the coefficient $[q^m]P_n^2(q)$ has the predicted sign. 

It turns out that $m^*(n)<25281$ in the region $546<n<7000$. Hence, it
remains to calculate the first 25281 coefficients of $P_n^2(q)$ for
$546<n<7000$, and {\it all\/} coefficients of $P_n^2(q)$ for $n\le546$.  
We programmed the corresponding calculations 
using~C with the GMP library \cite{GMP}. They took
less than one day on a personal laptop computer.
\end{proof}

\begin{rem} \label{rem:comp}
A line of argument similar to the one in the preceding proof makes it
possible to reduce the amount of calculation reported in the proof
of Theorem~\ref{thMain1} significantly. 
Namely, this line of argument shows that only a full calculation of the
coefficients of $P_n(q)$ for 
$1\leq n\leq 546$, and a calculation of the coefficients
$[q^m]P_n(q)$ for $m\in[0,34168]$ and $546<n<5300$ is needed. 
The corresponding calculations took about 4 hours on a personal laptop
computer, as opposed to the computations reported in \cite[Sec.~13]{WANG2022108028}
which took 2~days using a multiple-core cluster. 
\end{rem}

Finally, the theorem below says that ``two thirds'' of the Cubic Borwein
Conjecture, Conjecture~\ref{cjBorwein4}, are true. 

\begin{theorem}\label{thMain3}
The coefficient $[q^m]P_n^3(q)$ is positive if $3|m$, and is negative
if\/ $m\leq 3(\deg P_n)/2$ and $m\equiv1\pmod3$. 
\end{theorem}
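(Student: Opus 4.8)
The plan is to rerun the argument of Theorems~\ref{thMain1} and~\ref{thMain2}, now with $\dd=3$ and $Q_n=P_n^3$; the only genuinely new point is to see why the residue class $m\equiv2\pmod3$ is beyond the reach of the method. First I would reduce to the ``middle range''. By Part~A of Section~\ref{seOutline} and the discussion in Section~\ref{seInfinite} --- in particular the Borwein--Borwein--Garvan identity \eqref{eq:BBG} --- we have $[q^m]P_n^3(q)=[q^m]P_\infty^3(q)$ for $0\le m\le 3n$, and multiplying \eqref{eq:BBG} by the power series $(q^3;q^3)_\infty^{-2}$ (nonnegative coefficients, constant term~$1$) shows that in this range $[q^m]P_n^3(q)$ is strictly positive for $3\mid m$, strictly negative for $m\equiv1\pmod3$, and equal to~$0$ for $m\equiv2\pmod3$. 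Since $P_n^3(q)$ is palindromic of degree $3\deg P_n$, for $3\mid m$ this already disposes of all $m\notin(3n,3\deg P_n-3n)$, leaving $m\in[3n,3(\deg P_n)/2]$; for $m\equiv1\pmod3$ the reflection $m\mapsto 3\deg P_n-m$ changes the residue to $\equiv2\pmod3$ and so is useless, which is precisely why the negativity assertion carries the restriction $m\le 3(\deg P_n)/2$, and the range still to be handled is again $m\in(3n,3(\deg P_n)/2]$.

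For $m$ in this range I would invoke the fundamental inequality \eqref{eqTargetMain} (Lemma~\ref{lem:ep0ep1}) with $\dd=3$, the peak bound of Lemma~\ref{leIneqPeak}, the tail bound of Lemma~\ref{leIneqTail} (both stated for $\dd\in\{1,2,3\}$), and Lemma~\ref{leArg}, which gives $\arg P_n^3(re^{2\pi i/3})=3\arg P_n(re^{2\pi i/3})\in(-\pi/6,0]$. Reducing $2m\pi/3$ modulo~$2\pi$, the cosine term in \eqref{eqTargetMain} is $2\cos(\arg P_n^3(re^{2\pi i/3}))\ge\sqrt3$ when $3\mid m$, and $2\cos(\arg P_n^3(re^{2\pi i/3})-2\pi/3)$ when $m\equiv1\pmod3$, whose absolute value lies in $[1,\sqrt3)$ because the angle lies in $(-5\pi/6,-2\pi/3]$; in both cases it is at least~$1$ (one may also package this through Lemma~\ref{lem:cos-cos}, exactly as in the proof of Theorem~\ref{thMain2}). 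By contrast, for $m\equiv2\pmod3$ the cosine term is $2\cos(\arg P_n^3(re^{2\pi i/3})+2\pi/3)$ with angle in $(\pi/2,2\pi/3]$, hence can be arbitrarily close to~$0$, whereas the error sum $\epsilon_{0,P_n^3}(m,r_m)+\epsilon_{1,P_n^3}(r_m)$ stays positive, so \eqref{eqFinalTarget} cannot be ensured --- this is the precise reason why only ``two thirds'' of Conjecture~\ref{cjBorwein4} is obtained. For $m\equiv0,1\pmod3$ it thus suffices to check $\epsilon_{0,P_n^3}(m,r_m)+\epsilon_{1,P_n^3}(r_m)<1$; by Lemma~\ref{leIneqPeak} the peak error is of order $X_1(n,r)/X_2(n,r)$ plus $1/\sqrt{X_2(n,r)}$, by Lemma~\ref{leIneqTail} the tail error is exponentially small, both are decreasing in~$r$, and the quantities $X_j(n,r_0)$ grow polynomially in~$n$ by Lemma~\ref{leCutoffPrelim}(4), so this holds for all $n\ge N_0$ with an explicit~$N_0$, which proves the theorem for such~$n$.

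The range $n<N_0$ is treated computationally, as in the proofs of Theorems~\ref{thMain1} and~\ref{thMain2}: for $546<n<N_0$ one combines the monotonicity in~$r_m$ of the upper bound for $\epsilon_{0,P_n^3}+\epsilon_{1,P_n^3}$ with an \emph{increasing} lower bound for the cosine term --- for $m\equiv1$ replacing $|2\cos(\arg P_n^3(re^{2\pi i/3})-2\pi/3)|$ by the increasing quantity $|2\cos(\pi/3-\arg P_n^3(re^{2\pi i/3}))|$, as in \eqref{eq:cos-cos3} and using the monotonicity part of Lemma~\ref{leArg} --- to produce a cutoff $m^*(n)$ past which the predicted sign is forced, and then one verifies the coefficients up to $m^*(n)$ by machine; for $n\le546$ one computes all coefficients of $P_n^3(q)$. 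I expect the main difficulty here to be purely computational: the factor $\dd=3$ inflates every constant in Lemmas~\ref{leIneqPeak} and~\ref{leIneqTail}, so $N_0$ is considerably larger than the values $5300$ and $7000$ of the two preceding theorems, and for $m\equiv1$ the increasing lower bound on the cosine degenerates as $r\to r_0$, pushing $m^*(n)$ upward; conceptually, however, once the $m\equiv2$ degeneracy is isolated no new analytic ingredient is needed, since Lemmas~\ref{leIneqPeak}, \ref{leIneqTail} and~\ref{leArg} already supply everything.
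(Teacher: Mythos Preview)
Your proposal is correct and follows the paper's approach essentially verbatim. Two minor corrections to your expectations: first, the paper's threshold is $N_0=3150$, \emph{smaller} than the $5300$ and $7000$ of Theorems~\ref{thMain1} and~\ref{thMain2} --- the cosine lower bound of~$1$ for $m\equiv0,1\pmod3$ is far more generous than $0.684$ or $0.347$, and the dominant term $146.2\dd^{-1}$ in Lemma~\ref{leIneqPeak} actually \emph{shrinks} as~$\dd$ grows (the paper obtains $\epsilon_{0,P_n^3}<0.335$ and $\epsilon_{1,P_n^3}<0.614$ for $n\ge3150$). Second, your worry about the cosine bound ``degenerating'' for $m\equiv1$ is unnecessary: Lemma~\ref{lem:cos-cos} gives the constant lower bound~$1$ uniformly for $m\equiv0,1\pmod3$, so $m^*(n)$ is simply the point where the (decreasing) error bound crosses~$1$, and the paper finds $\sup_{546<n<3150}m^*(n)<8864$.
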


\begin{rem} 
While it may seem at first sight that
the statement in Theorem~\ref{thMain3} is just ``one half'' 
of Conjecture~\ref{cjBorwein4}, it is indeed ``two thirds'' of
that conjecture. To understand this, we should recall that
$P_n(q)$ is palindromic, and therefore also $P_n^3(q)$.
Consequently, Theorem~\ref{thMain3} also implies that
the coefficient $[q^m]P_n^3(q)$ is negative
if $m\geq 3(\deg P_n)/2$ and $m\equiv2\pmod3$. 
\end{rem}

\begin{proof}[Proof of Theorem \ref{thMain3}]
The proof and calculations are completely analogous to the ones of
Theorems~\ref{thMain1} and~\ref{thMain2}, with the key difference
being that the constraint\break $m\equiv0,1\pmod{3}$ implies that,
again using Lemma~\ref{lem:cos-cos}, a lower
bound for\break $\left|2\cos\left(\arg P_n^\dd (r_me^{2\pi
  i/3})-2m\pi/3\right)\right|$ is actually $1$. We calculate for
$n\geq3150$ that
\begin{align*}
  \epsilon_{0,P_n^3}(m,r_m)<0.335, \qquad \epsilon_{1,P_n^3}(r_m)&<0.614,
\end{align*}
and perform a full calculation of the coefficients of $P_n^3(q)$ for
$1\leq n\leq 546$, as well as a calculation of the coefficients
$[q^m]P_n^3(q)$ for $m\in[0,8864]$ for $546<n<3150$. 
Since we have $\sup_{546<n<3150}m^*(n)<8864$, this suffices for the proof.
\end{proof}

\begin{rem} \label{rmZero}
The reason why we cannot prove Conjecture~\ref{cjBorwein4} for
$m\equiv2\pmod3$ with $m\leq 3(\deg P_n)/2$ 
is that the right-hand side of \eqref{eqFinalTarget}
can get arbitrarily close to $0$ since, by Lemma~\ref{leArg},
we can only conclude that $\arg P_n^3(r_me^{2\pi   i/3})\in[-\pi/6,0]$. 
We will elaborate on
this in Item~(1) of the next, and final, section.
\end{rem}

\section{Discussion and outlook}\label{seDiscuss}

In this paper, 
we proved the First and Second Borwein Conjecture, and --- partially
--- a Cubic Borwein Conjecture, by developing an asymptotic framework
that allowed us to verify these conjectures for ``large''~$n$, meaning
that in each case a specific $n_0$ of very modest size 
was given, and it was proved that
the corresponding conjecture held for $n\ge n_0$. Together with a
direct calculation for the remaining ``small''~$n$ using a computer,
the proofs could be completed. We are convinced that this framework
can be further enhanced and extended to a machinery that is capable
of establishing the positivity/negativity of coefficients in more
general products/quotients of $q$-shifted factorials. We discuss this
perspective in this section. 

We start our discussion by going back to the Cubic Borwein Conjecture,
Conjecture~\ref{cjBorwein4}, and work out what prevented us at this
stage to come up with a full proof (see Item~(1)). Indeed, that ``failure'' strongly
points out one direction where our method needs refinement.
Subsequently, we turn our attention to the Third Borwein Conjecture
and other ``Borwein-like'' sign pattern conjectures that one finds in the
literature, in particular a conjecture of Ismail, Kim and Stanton (see
Item~(2)). 
As we argue there, we have no doubt that our ideas that we
presented here will lead to substantial progress, if not full proof,
of these. Then we report on computer experiments that we performed
that led us to discover new Borwein-type conjectures for the moduli~4
and~7 and make other intriguing observations concerning sign patterns
in such polynomials (see Item~(3)). Bressoud's conjecture that was
mentioned in the introduction is a vast generalisation of the First
Borwein Conjecture. Although, from the outset, it does not seem that
our method has anything to say about that conjecture, we show that
Bressoud's alternating sum expression can be converted into a
double contour integral of a product of $q$-shifted factorials.
Therefore our ideas do apply. Whether progress can be made in this way remains to
be seen. We close this section by a discussion of the ``nature'' of
the Borwein Conjectures, whether they should be considered as
``combinatorial'' or as ``analytic''.

\medskip
(1) {\sc Which are the obstacles to complete the proof of the Cubic
  Borwein Conjecture, Conjecture~\ref{cjBorwein4}?}
It may have come somewhat unexpected that, with the machinery
developed here, we proved ``only'' ``two thirds'' of
Conjecture~\ref{cjBorwein4} and left non-positivity of the
coefficients of $q^{3m+2}$ in $P_n^3(q)$, $0\le m< (\deg P_n)/2$,
(and consequently also the non-positivity of the
coefficients of $q^{3m+1}$ in $P_n^3(q)$, $(\deg P_n)/2\le m\le \deg P_n$), open.

The main reason for this ``failure'', as mentioned in Remark~\ref{rmZero},
is that the right-hand side of \eqref{eqFinalTarget} can get
arbitrarily close to $0$. Indeed, by applying
Lemma~\ref{leAlternatingSum} to the function $x\mapsto f(r,x)$ defined
in the proof of Lemma~\ref{leArg}, we are able to obtain a much more
accurate estimate for the argument of $P_n(re^{2\pi i/3})$, namely
\begin{equation}\label{eqArgPn}
\arg P_n(re^{2\pi
  i/3})=-\frac{\pi}{18}+\frac13\arctan\frac{\sqrt{3}r^{3n}}{2+r^{3n}}+O(n^{-1}r^{3n}). 
\end{equation}
This implies that, for $\dd=3$ and $m\equiv2\pmod{3}$, the right-hand
side of \eqref{eqFinalTarget} is equal to
\begin{align*}
2\cos\left(3\arg P_n (re^{2\pi i/3})+2\pi/3\right)&=2\cos\left(\frac\pi2+\arctan\frac{\sqrt{3}r^{3n}}{2+r^{3n}}\right)+O(n^{-1}r^{3n})\\
&=\frac{\sqrt3\,r^{3n}}{\sqrt{1+r^{3n}+r^{6n}}}+O(n^{-1}r^{3n}),
\end{align*}
which, for values of $r=\exp(-\Theta(n^{-1/2}))$ near the cutoff $r_0$, is
of the order\break $\exp(-\Theta(n^{1/2}))$. In comparison, the bound for
the peak error term
$\epsilon_{0,P_n^\dd}(m,r_m)$ that results from
Lemma~\ref{leIneqPeak} is of the order $O(n^{-1/2})$ for
$r=\exp(-\Theta(n^{-1/2}))$. Therefore, in this regime for $r$, the
inequality \eqref{eqFinalTarget} does not hold in the $n\to\infty$
limit. 
Roughly speaking, this issue is caused by the addition of the two peak
contributions in~\eqref{eqIntRep}, which are complex conjugates of
each other (cf.\ Part~C in Section~\ref{seOutline}), but
in this case happen to have real part very close to zero (approaching
zero as $n\to\infty$), and therefore largely cancel each other.
What this observation implies is that
the peak contribution --- and thus the coefficient of $P_n^3(q)$ itself --- 
is ``unusually'' small in this case.
This is also mirrored by the earlier observed fact
(cf.\ the end of Section~\ref{seInfinite}) that the coefficient
$[q^m]P_\infty^3(q)$ is always zero if $m\equiv2\pmod{3}$. 
So, again roughly speaking, what is at stake here is to determine the
``next'' term(s) in the asymptotic expansion of the peak part of the integral
in order to allow for a more precise estimate of the error made
by approximating the peak part by a Gau\ss ian integral.

\medskip
(2) {\sc What about other ``Borwein-like'' conjectures?}
As we said in the introduction, {\it three} Borwein Conjectures were
reported in \cite{MR1395410}: Conjectures~\ref{cjBorwein}
and~\ref{cjBorwein2}, and the {\it Third Borwein Conjecture}, an
analogue of the First Borwein Conjecture (Conjecture~\ref{cjBorwein})
in which the modulus~3 is replaced by~5.

\begin{Conjecture}[\sc P. Borwein]\label{cjBorwein3}
For all positive integers $n$, the sign pattern of the coefficients
in the expansion of the polynomial $S_n(q)$ defined by
\begin{equation*}
S_n(q):=\frac {(q;q)_{5n}} {(q^5;q^5)_n}
\end{equation*}
is $+----+----+----\cdots$,
with the same convention concerning zero coefficients as in Conjectures~\ref{cjBorwein}
and \ref{cjBorwein2}.
\end{Conjecture}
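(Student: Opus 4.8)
\medskip
Our plan for Conjecture~\ref{cjBorwein3} is to follow the proof of the First Borwein Conjecture (Theorem~\ref{thMain1}) step by step, systematically replacing the modulus~$3$ by~$5$. The polynomial in question is $S_n(q)=(q;q)_{5n}/(q^5;q^5)_n$, and the predicted sign pattern means that $[q^m]S_n(q)\ge0$ for $m\equiv0\pmod5$ and $[q^m]S_n(q)\le0$ otherwise. One first disposes of the ``first'' and the ``last'' $5n+1$ coefficients: exactly as in~\eqref{eq:d=infty}, we have $[q^m]S_n(q)=[q^m]S_\infty(q)$ for $0\le m\le 5n$, and the sign pattern of $S_\infty(q)=(q;q)_\infty/(q^5;q^5)_\infty$ is known --- it follows, for instance, from Andrews' general result \cite[Theorem~2.1]{MR1395410}, or from \cite[Theorem~6]{SchlZhou} --- so that, together with the palindromicity of $S_n(q)$, the problem reduces to establishing the predicted sign of $[q^m]S_n(q)$ for $5n\le m\le(\deg S_n)/2$. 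For such $m$ one represents $[q^m]S_n(q)$ by the Cauchy integral over the circle $|q|=r$, with $r=r_{m,n}\in(r_0,1]$ the solution of the modulus-$5$ analogue $r\,\Re\bigl(\frac{d}{dr}\log S_n(re^{4\pi i/5})\bigr)=m$ of~\eqref{eqStationaryPoint}, and one proves the corresponding version of Lemma~\ref{leRadiusBound}.

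The essential new feature is that $\theta\mapsto|S_n(re^{i\theta})|$ now has \emph{four} dominant peaks, near the four primitive fifth roots of unity $e^{\pm2\pi i/5}$ and $e^{\pm4\pi i/5}$, forming two complex-conjugate pairs of \emph{a priori} different heights. Accordingly the coefficient is approximated by a sum of two pairs of Gau\ss ian integrals, and the fundamental error inequality (the analogue of Lemma~\ref{lem:ep0ep1}) must carry \emph{two} peak amplitudes $c_1,c_2$ --- with $c_j$ proportional to $|S_n(re^{2\pi ij/5})|$ divided by the square root of the corresponding curvature $g_\bullet(r)$ (cf.~\eqref{eqDefG}) --- as well as \emph{two} arguments $\gamma_j=\arg S_n(re^{2\pi ij/5})$, $j=1,2$. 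By the classical identity $\cos\tfrac\pi5-\cos\tfrac{2\pi}5=\tfrac12$, the predicted pattern translates into: for $m\equiv0\pmod5$ the relevant trigonometric combination is $\approx2(c_1+c_2)>0$, whereas for $m\not\equiv0\pmod5$ it is $\approx2\bigl(c_1\cos(\gamma_1-\tfrac{2\pi m}5)+c_2\cos(\gamma_2-\tfrac{4\pi m}5)\bigr)$, which is \emph{negative} provided $|\gamma_1|,|\gamma_2|$ are small and the ratio $c_1/c_2$ lies in the interval $\bigl(\cos(2\pi/5)/\cos(\pi/5),\ \cos(\pi/5)/\cos(2\pi/5)\bigr)=(\varphi^{-2},\varphi^2)$ with $\varphi=\tfrac{1+\sqrt5}2$. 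Both conditions should hold with room to spare: the modulus-$5$ version of Lemma~\ref{leArg}, proved by the same Fourier-expansion device now applied to $\sum_{k\in\Z}f_x(r,5k+x)$, should give $|\gamma_1|,|\gamma_2|=o(1)$ on $(r_0,1]$; and a short computation using $\log|1-z|=-\sum_{j\ge1}\Re(z^j)/j$ shows that the difference $\log|S_n(re^{2\pi i/5})|-\log|S_n(re^{4\pi i/5})|$ --- whose leading ($j=1$) part collapses over each block of five consecutive values of~$k$ to a quantity of order $1-r\le n^{-1/2}$ --- tends to~$0$ as $n\to\infty$, so that $c_1/c_2\to1$, comfortably inside $(\varphi^{-2},\varphi^2)$.

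The remaining steps are essentially mechanical re-runs of Sections~\ref{seConvention}--\ref{seMain}. First, rederive the $\theta$-derivatives of $\log S_n(re^{i\theta})$ centred at $\theta=\tfrac{2\pi}5$ and $\tfrac{4\pi}5$; the analogue of the decomposition~\eqref{eqPnDerivs} is now a combination indexed by the four primitive fifth roots rather than by a single conjugate pair, hence there is a somewhat larger supply of auxiliary rational functions, but each constituent is again a contour sum of rational functions of $r^ke^{ik\theta}$. Next, fix the cutoff $\theta_0$ as in~\eqref{eqCutoffTheta0} with the constant re-optimised, and rerun the peak-error estimate --- Lemma~\ref{leGaussian} applies verbatim, only the input constants $f_1,\dots,f_4$ change --- and then the tail estimate, with $\sum_{k=1}^n r^{5k}(1-\cos5k\theta)$ playing the role of $\sum_{k=1}^n r^{3k}(1-\cos3k\theta)$ in Lemma~\ref{leTailIneq2} and the auxiliary cosine-sum and rational-function inequalities of the appendix adapted to modulus~$5$. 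Finally, assemble the two-peak-pair error inequality and conclude exactly as in Theorems~\ref{thMain1}--\ref{thMain3}: verify the resulting analogue of~\eqref{eqFinalTarget} for ``large''~$n$ and run a computer check for the finitely many remaining ``small''~$n$.

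The main obstacle is quantitative rather than conceptual. Since the mass is now spread over four peaks and the ``negative'' coefficients are, to leading order, only about one quarter the size of the ``positive'' ones (a ratio of $1:4$ in the approximation above), both the peak error and the tail error have less slack than in the modulus-$3$ case, so the constants throughout Sections~\ref{seLocate}--\ref{seEps1} must be re-optimised with care, and one must pin down the bound on $c_1/c_2$ and the smallness of $\gamma_1,\gamma_2$ precisely enough to keep the $m\not\equiv0\pmod5$ combination bounded away from zero by strictly more than the sum of the two error terms. Crucially, however --- and in sharp contrast to the Cubic Borwein Conjecture (cf.\ Item~(1) above) --- no genuine degeneracy occurs here: just like those of $P_\infty(q)$, and unlike those of $P_\infty^3(q)$ on the residue class $2\bmod3$ (cf.\ the end of Section~\ref{seInfinite}), the coefficients of $S_\infty(q)$ are not identically zero on any non-zero residue class modulo~$5$, so the leading peak contribution does not collapse for $m\not\equiv0\pmod5$. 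We therefore expect the method to yield a \emph{complete} proof of Conjecture~\ref{cjBorwein3}, at the price of a substantial but essentially routine amount of additional computation --- including re-establishing the modulus-$5$ analogues of all the auxiliary inequalities collected in the appendix.
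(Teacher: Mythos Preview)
Your plan contains a genuine error at its most critical point. You assert that ``the coefficients of $S_\infty(q)$ are not identically zero on any non-zero residue class modulo~$5$'' and hence that ``no genuine degeneracy occurs here''. This is false. Andrews' identity~\eqref{eq:And5} expresses $(q;q)_\infty/(q^5;q^5)_\infty$ as a sum of five terms, each of which is a power series in~$q^5$ multiplied by $q^{k(3k+1)/2}$ for $k\in\{-2,-1,0,1,2\}$; the exponents $k(3k+1)/2$ are $5,1,0,2,7$, so only the residue classes $0,1,2$ modulo~$5$ occur. Consequently $[q^{5m+3}]S_\infty(q)=[q^{5m+4}]S_\infty(q)=0$ for all~$m$, exactly mirroring the vanishing of $[q^{3m+2}]P_\infty^3(q)$ that obstructs the Cubic Borwein Conjecture. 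The paper points this out explicitly in Item~(2) of Section~\ref{seDiscuss} and expects the present method to handle only the residue classes $0,1,2$ modulo~$5$.

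A related error is your claim that the modulus-$5$ analogue of Lemma~\ref{leArg} would give $|\gamma_1|,|\gamma_2|=o(1)$. Already in the modulus-$3$ case Lemma~\ref{leArg} does \emph{not} say the argument is $o(1)$: by~\eqref{eqArgPn} it tends to $-\pi/18$ (not~$0$) as $r^{3n}\to0$, i.e.\ near the cutoff~$r_0$. In the modulus-$5$ case the analogous limits are nonzero as well; working them out via~\eqref{eqGeneralArg} one finds, together with $c_1/c_2\to1$, that the two-pair trigonometric combination for $m\equiv3$ or $4\pmod5$ tends to~$0$ near the cutoff --- precisely the collapse you claimed does not happen. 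So the heuristic you offer for why the negative combination stays bounded away from zero breaks down for those two residue classes, and the method as outlined cannot deliver a complete proof of Conjecture~\ref{cjBorwein3}.
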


It should be clear that the approach that we presented in this paper
can also be applied to this conjecture, in adapted form. 
In order to show that the ``first few'' and the ``last few''
coefficients of $S_n(q)$ obey the predicted sign pattern (necessary for
completing the analogue of Part~A in Section~\ref{seOutline}), we would
quote \cite[Eq.~(2.5)]{MR1395410} with $p=5$,
\begin{equation} \label{eq:And5} 
\frac {(q;q)_\infty} {(q^5;q^5)_\infty}
=\sum_{k=-2}^2 (-1)^kq^{k(3k+1)/2}
\frac
    {(q^{75};q^{75})_\infty\,(q^{40+15k};q^{75})_\infty\,(q^{35-15k};q^{75})_\infty}
    {(q^5;q^5)_\infty},
\end{equation}
which Andrews derived by using Euler's pentagonal number theorem and
Jacobi's triple product identity. For the contour integral
representation of $[q^m]S_n(q)$ (the analogue of Part~B in
Section~\ref{seOutline}), we would again
choose a circle of radius~$r$, $0<r\le1$. 
Here, we would have to deal with {\it four} approximate saddle points (analogue
of Part~C in 
Section~\ref{seOutline}): $re^{\pm 2\pi i/5}$ and $re^{\pm4\pi i/5}$,
with $r$ being a solution of the obvious approximate saddle point equation
analogous to \eqref{eqStationaryPoint}. All these four approximate
dominant saddle points would contribute peaks of the same asymptotic order to
the contour integral. Clearly, the estimations in
Sections~\ref{seEps0} and~\ref{seEps1} would have to be adapted
accordingly. We expect however that this approach can prove that
the coefficients $[q^{5m}]S_n(q)$, $[q^{5m+1}]S_n(q)$,
$[q^{5m+2}]S_n(q)$ have the predicted signs for $n\le m\le\frac {1}
{10}\deg (S_n(q))$. On the other hand, in the case of the
coefficients $[q^{5m+3}]S_n(q)$ and $[q^{5m+4}]S_n(q)$ we would face
the same difficulty as we do for the coefficients $[q^{3m+2}]P_n^3(q)$
as discussed above: from \eqref{eq:And5} we see that the coefficients
$[q^{5m+3}]S_\infty(q)$ and  $[q^{5m+4}]S_\infty(q)$ are all zero, and
this indicates that the corresponding coefficients in $S_n(q)$ are
relatively small, and therefore it will require much more accurate
estimations in order to show that these coefficients are negative.

\medskip
Ismail, Kim and Stanton \cite[Conj.~1 in Sec.~7]{MR1660081}
generalised the First Borwein Conjecture, Conjecture~\ref{cjBorwein}, in
a direction different from the earlier mentioned Bressoud Conjecture.

\begin{Conjecture}[\sc Ismail, Kim and Stanton]\label{cjIsKimSt}
Let $a$ and $K$ be relatively prime positive integers, $1\le a\le
K/2$, with $K$ being odd. Put
$$
\prod _{i=0} ^{n-1}(1-q^{a+iK})(1-q^{K-a+iK})=\sum_{m\ge0}b_mq^m.
$$
The sign of\/ $b_m$ is determined by $m$ modulo~$K$. More precisely,
if $m\equiv\break \pm(2l+1)a$~{\em(mod~$K$)} for some $l$ with $0\le l<
K/2$, then $b_m\le0$, otherwise $b_m\ge0$.
\end{Conjecture}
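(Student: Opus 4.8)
\medskip

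\noindent\textbf{A proof proposal.} Set $Q_n(q):=\prod_{\ell=0}^{n-1}(1-q^{a+\ell K})(1-q^{K-a+\ell K})$, a palindromic polynomial of degree $Kn^2$; hence it suffices to pin down the signs of $b_m=[q^m]Q_n(q)$ for $0\le m\le Kn^2/2$. The plan is to run the entire machinery of Sections~\ref{seOutline}--\ref{seMain}, replacing the modulus~$3$ throughout by~$K$. For the analogue of Part~A one uses that $Q_n(q)$ agrees with $Q_\infty(q):=(q^a;q^K)_\infty\,(q^{K-a};q^K)_\infty$ in every coefficient of $q^m$ with $m<a+nK$, and that $Q_\infty(q)$ has exactly the sign pattern predicted by Conjecture~\ref{cjIsKimSt}. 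The latter follows from the Jacobi triple product identity, which gives $(q^K;q^K)_\infty\,Q_\infty(q)=\sum_{j\in\Z}(-1)^jq^{Kj(j-1)/2+aj}$: the exponent is $\equiv aj\pmod K$, so in each residue class modulo~$K$ the contributing indices~$j$ form an arithmetic progression of common difference~$K$ whose terms alternate in sign (as $K$ is odd), and since the quadratic $Kj(j-1)/2+aj$ has vertex at $\tfrac12-\tfrac aK\in[0,\tfrac12)$ the term of smallest exponent dominates --- its index being the balanced representative of $a^{-1}m\bmod K$, which produces precisely the claimed flag. Part~B is the Cauchy integral of $Q_n(q)q^{-m-1}$ over a circle $|q|=r$, $0<r\le1$, and one restricts to the saddle-point regime $a+nK\le m\le Kn^2/2$ (the complement being covered by the analogue of Part~A together with palindromy).

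\medskip

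\noindent\textbf{Two conjugate saddle points.} On $|q|=1$ each of the $2n$ factors of $Q_n$ has modulus $2|\sin(\pi at/K)|$ at $q=e^{2\pi it/K}$ (since there $q^{a+\ell K}=e^{2\pi iat/K}$ and $q^{K-a+\ell K}=e^{-2\pi iat/K}$), so $|Q_n(re^{i\theta})|$ peaks, for $r$ close to~$1$, near those $\theta=2\pi t/K$ for which $at\bmod K$ is closest to~$K/2$. Since $K$ is odd this singles out $at\equiv(K\pm1)/2\pmod K$, with the unique solutions $t_0:=a^{-1}\tfrac{K-1}2\bmod K$ and $K-t_0$, and one checks $\gcd(t_0,K)=1$. \textbf{Thus $Q_n$ has exactly two dominant (approximate) saddle points, near $e^{\pm2\pi it_0/K}$ and complex conjugate to each other} --- the same comfortable situation as for $K=3$ (where $t_0=1$, so that $\psi:=2\pi t_0/K=2\pi/3$), and in pleasant contrast to the Third Borwein Conjecture, which forces four competing saddle points. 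One then picks $r=r_{m,n}$ as the solution of the frozen-angle equation $r\,\Re\bigl(\tfrac{d}{dr}\log Q_n(re^{i\psi})\bigr)=m$, proves the $K$-analogue of Lemma~\ref{leRadiusBound} --- with a cutoff radius $r_0\approx\exp(-\Theta(1/(K\sqrt n)))$, chosen just below the radius at which $m=a+nK$ --- fixes a cutoff $\theta_0:=\Ct\,\tfrac{1-r^K}{1-r^{Kn}}$ with an appropriately small, now $K$-dependent, constant~$\Ct$, and splits $[-\pi,\pi]$ into the two peak windows about $\pm\psi$ and the tail.

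\medskip

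\noindent\textbf{Errors and conclusion.} The error decomposition of Lemma~\ref{lem:ep0ep1} carries over word for word (two conjugate peaks). The peak error $\epsilon_{0,Q_n}(m,r)$ is estimated by applying Lemma~\ref{leGaussian} to $x\mapsto\log\bigl(e^{-imx}Q_n(re^{i(x+\psi)})/Q_n(re^{i\psi})\bigr)$, the log-derivative identities of Section~\ref{seConvention} and the inequalities of Section~\ref{seEps0} being replaced by their $K$-analogues (the functions $u_j,v_j$ by the partial-fraction pieces of $(z\tfrac{d}{dz})^{j-1}$ applied to the rational functions attached to $1-z^K$ at $e^{\pm2\pi iat_0/K}$). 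The tail error $\epsilon_{1,Q_n}(r)$ is bounded as in Section~\ref{seEps1}. One also needs the $K$-analogue of Lemma~\ref{leArg}: using $e^{2\pi iat_0/K}=-e^{-i\pi/K}$ (a consequence of $at_0\equiv\tfrac{K-1}2\pmod K$), write $\arg Q_n(re^{i\psi})$ as a sum over~$\ell$ of the small differences $\arg(1+r^{a+\ell K}e^{-i\pi/K})+\arg(1+r^{K-a+\ell K}e^{i\pi/K})$ and run the Fourier-analytic argument of Lemma~\ref{leArg} to conclude that this argument is monotone in~$r$ and lies in an interval $(-\alpha_K,0]$ with $\alpha_K=\Theta(1/K)$. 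Finally one shows, for $n$ beyond an explicit threshold $n_0(K)$, that $b_m$ has the same sign as $2\cos\bigl(\arg Q_n(re^{i\psi})-2\pi mt_0/K\bigr)$; as $m$ varies, $mt_0\bmod K$ runs bijectively through all residues, and since $\alpha_K$ is small this cosine is positive precisely when $mt_0\bmod K$ is ``near~$0$'', i.e.\ exactly for the~$m$ that are not flagged, so the predicted sign pattern follows. The finitely many $n<n_0(K)$ are then cleared by a direct computation; in particular, for $K=3$, $a=1$ this reproduces Theorem~\ref{thMain1}.

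\medskip

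\noindent\textbf{The main obstacle.} As for the Cubic Borwein Conjecture (Item~(1) above), the bottleneck is that the target $\bigl|2\cos\bigl(\arg Q_n(re^{i\psi})-2\pi mt_0/K\bigr)\bigr|$ becomes small --- of relative order $\Theta(1/K)$, and hence in danger of dropping below the achievable error when $K$ is large --- for those residue classes in which $2\pi mt_0/K$ lies closest to $\pm\pi/2\pmod{2\pi}$. These are precisely the classes whose balanced representative $a^{-1}m\bmod K$ is closest to $\pm\tfrac{K-1}2$, i.e.\ the classes in which the two leading Jacobi--triple--product terms (opposite in sign, since their indices differ by the odd integer~$K$, and of nearly equal exponent, since the vertex of $Kj(j-1)/2+aj$ sits near~$0$) almost cancel, so that $b_m$ itself is unusually small. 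Since the peak error delivered by the $K$-analogue of Lemma~\ref{leIneqPeak} is only of order $O\bigl((K\sqrt n)^{-1}\bigr)$ at the cutoff radius, verifying \eqref{eqFinalTarget} for these classes forces $n$ to be large in terms of~$K$, so $n_0(K)$ grows with~$K$ and the argument, though complete for every fixed~$K$, does not close uniformly in~$K$. Two things are needed to remove the obstruction: a sufficiently sharp bound $\alpha_K<\pi/(2K)$ on the argument in the $K$-analogue of Lemma~\ref{leArg} (so that the cosine stays strictly bounded away from~$0$), and --- this is the decisive ingredient, shared with the full Cubic Borwein Conjecture --- extracting the next term in the asymptotic expansion of the peak part of the integral, i.e.\ refining the Gau\ss ian approximation so that it registers the near-cancellation of the two conjugate peaks.
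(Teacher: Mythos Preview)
The statement is a \emph{conjecture}, and the paper does not prove it; the paper's own ``treatment'' is a single paragraph in Section~\ref{seDiscuss}, Item~(2), sketching how the machinery of Sections~\ref{seOutline}--\ref{seMain} might be adapted. Your proposal follows exactly that suggested route and fleshes it out considerably: you correctly locate the two conjugate dominant peaks at $re^{\pm2\pi it_0/K}$ with $2at_0\equiv-1\pmod K$ (the paper writes $2ab\equiv1$, which gives the same pair), you spell out the Jacobi--triple--product argument for the infinite case (the paper just cites~\cite{MR1660081}), and you make the useful observation that --- unlike the Third Borwein Conjecture with its four peaks --- here one enjoys the same two-peak structure as for $K=3$. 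Your diagnosis of the obstacle (the target cosine shrinking to order $\Theta(1/K)$ for the residue classes with $2\pi mt_0/K$ nearest $\pm\pi/2$) is apt and parallels the paper's own discussion of the Cubic Borwein obstruction in Item~(1).

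That said, your proposal is a strategy, not a proof, and one claim overshoots. You assert the argument is ``complete for every fixed~$K$,'' but this hinges on the $K$-analogue of Lemma~\ref{leArg}, and specifically on the strict inequality $\alpha_K<\pi/(2K)$, which you do not establish --- indeed you list it yourself among the ``two things needed.'' If that bound fails for some~$K$, then for the worst residue class the cosine in~\eqref{eqFinalTarget} can vanish (exactly as in the $m\equiv2\pmod3$ case of Conjecture~\ref{cjBorwein4}), and no shrinking of the error terms rescues the inequality. The paper is correspondingly more guarded, saying only that a proof ``for sufficiently large~$n$ should however definitely be feasible.'' A second point the paper raises and you underplay: ``The finitely many $n<n_0(K)$ are then cleared by a direct computation'' is fine for each fixed pair $(a,K)$, but the conjecture quantifies over all such pairs, so the verification step is itself an infinite task --- this is precisely the ``obstacle for a full proof'' the paper flags.
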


Our approach is certainly tailored for an attack on this conjecture.
As already pointed out in \cite{MR1660081}, the ``infinite'' case
(the analogue of Part~A) follows easily from the Jacobi triple product
identity. For the contour integral representation of the coefficients
we would again choose a circle, with approximate saddle points of the
modulus of the integrand at $re^{\pm 2\pi ib/K}$, where $2ab\equiv 1$~mod~$K$.
The fact that this conjecture contains additional parameters ---
namely~$K$ and~$a$ --- may be an obstacle for a full proof, in particular in the checking
part (for small~$n$) of our approach. A proof of Conjecture~\ref{cjIsKimSt} 
for sufficiently large~$n$ should however definitely be feasible.

\medskip
It is reasonable to believe that, with the approach in this paper,
the sign-pattern problem for a general polynomial of the form
  \begin{equation}\label{eqGeneralPoly}
    Q_n(q):=\prod_{j}(q^{\alpha_j};q^K)_n\,(q^{K-\alpha_j};q^K)_n
  \end{equation}
can be reduced to an "infinite case" analogous to what is proved in
Section~\ref{seInfinite}, and an inequality analogous to \eqref{eqFinalTarget},
where the error terms tend to zero uniformly as $n\to\infty$. Naturally,
the sign pattern of the polynomial coefficients would be determined by
analogues of the right-hand side of~\eqref{eqFinalTarget}, which would
turn out to be
essentially a sum of the cosines of ``arguments'' over all dominant peaks.
Analogous to \eqref{eqArgPn}, the arguments of these peak values can
be well approximated as functions of the quantity~$r^{Kn}$.
Here, the~$r$ is the solution of an approximate saddle point
equation, which at the same time connects it to an index~$m$, and thus
to the coefficient of~$q^m$ in the polynomial~\eqref{eqGeneralPoly}.
Below we list a rough correspondence of the orders of magnitude
of the quantities~$r$ and~$m$,
which can in principle be obtained by arguments similar to those in
Section~\ref{seLocate}:

\vskip10pt\noindent
\begin{tabular}{r|ccc}
  \hline
  Coefficients & $r$ & $r^{Kn}$ & $m$ \\
  \hline
  near the cutoff & $\exp(-\Theta(n^{-1/2}))$ & $\exp(-\Theta(n^{1/2}))$ & $O(n)$ \\
  somewhere in the ``interior'' & $\exp(-\Theta(n^{-1}))$ & $\Theta(1)$ & $\Theta(n^2)$ \\
  the central coefficient & $1$ & $1$ & $\frac12(\deg Q_n)=\Theta(n^2)$ \\
  \hline
\end{tabular}
\vskip10pt

From the table above we can see that, as the index $m$ ranges from
$\Theta(n)$ --- where the coefficients of $Q_n(q)$ start to differ
from  $Q_\infty(q)$ --- to $\Theta(n^2)$ --- where we find the
central coefficient of $Q_n(q)$ ---, the quantity $r^{Kn}$ is expected
to take any values from $0$ to $1$. This allows us to predict the sign
patterns for polynomials or power series of the form
\eqref{eqGeneralPoly} by the following process: 

\medskip
{\it Step 1.} Identify the pair(s) of dominant peaks among $\varphi(K)/2$
    candidates located near primitive $K$-th roots of unity, where
    $\varphi(\,.\,)$ denotes Euler's totient function.

\smallskip
{\it Step 2.}
 For each pair of dominant peaks (say, located at arguments $\pm\theta$ where $0<\theta<\pi$), calculate the arguments of the function values at these places and approximate them by functions of $r^{Kn}$. Using Maclaurin summation techniques similar to Lemma~\ref{leAlternatingSum}, we claim that each factor $(q^{\alpha_j},q^{K-\alpha_j};q^K)_n$ in \eqref{eqGeneralPoly} contributes an amount of
      \begin{equation}\label{eqGeneralArg}
      -\frac{K-2\alpha_j}{K}\arctan\frac{(1-r^{Kn})\cot(\alpha_j\theta/2)}{1+r^{Kn}}+O(r^{Kn}n^{-1})
      \end{equation}
      to the argument of $Q_n(re^{i\theta})$.

\smallskip
{\it Step 3.}
 Therefore, the analogue of the right-hand side of \eqref{eqFinalTarget} would (approximately) be 
  \begin{equation}\label{eqGeneralTarget}
    \sum_{\ell }2\cos\left(-im\theta_\ell -\sum_{j}\frac{K-2\alpha_j}{K}\arctan\frac{(1-r^{Kn})\cot(\alpha_j\theta_\ell /2)}{1+r^{Kn}}\right),
  \end{equation}
  where the outer sum is over all pairs of arguments $\pm\theta_\ell $ of dominant peaks, and the inner sum is over all factors in \eqref{eqGeneralPoly}.
  By substituting different values for $r^{Kn}$ (remember that $r$
  depends on~$m$) and different residue
  classes of $m$ modulo $K$, we can read off the general behaviour of
  $[q^m]Q_n(q)$ from~\eqref{eqGeneralTarget}.

\medskip
(3) {\sc More conjectures.}
We have performed extensive computer calculations in order to see
whether, apart from the new Cubic Borwein Conjecture,
Conjecture~\ref{cjBorwein4}, there are more sign pattern phenomena in
Borwein-type polynomials that have not been discovered yet. 
Our most striking findings are the following two conjectures.
In the first of the two, we use the truth notation $\chi(\mathcal
A)=1$ if $\mathcal A$ is true and $\chi(\mathcal A)=0$ otherwise. 

\begin{Conjecture}[\sc A modulus $4$ ``Borwein Conjecture'']\label{cjmod4}
Let $n$ be a positive integer and $\dd\in\{1,2,3\}$. Furthermore,
consider the expansion of the polynomial
$$
\frac {(q;q)_{4n}^\dd} {(q^4;q^4)_n^\dd}
=\sum_{m=0}^Dc_m^{(\dd)}(n)q^m,
$$
which has degree $D=6\dd n^2$.
Then 
\begin{equation} \label{eq:mod4-3j} 
c_{4m}^{(\dd)}(n)\ge0\quad \text{and}\quad  c_{4m+2}^{(\dd)}(n)\le0,
\quad \text{for all~$m$ and $n$},
\end{equation}
while
\begin{equation} \label{eq:mod4-4j+1} 
c_{4m+1}^{(\dd)}(n)\le0,\quad \text{for }
\begin{cases} 
0\le m\le \frac {1} {8}(6\dd n^2-8),&\text{if $n$ is even,}\\
0\le m\le \frac {1} {8}(6\dd n^2-8+2\dd),&\text{if $n$ is odd,}
\end{cases}
\end{equation}
and
\begin{equation} \label{eq:mod4-4j+3} 
c_{4m+3}^{(\dd)}(n)\ge0,\quad \text{for }
\begin{cases} 
0\le m\le \frac {1} {8}(6\dd n^2-8),&\text{if $n$ is even,}\\
0\le m\le \frac {1} {8}(6\dd n^2-6\dd+8\chi(\dd=3)),&\text{if $n$ is odd,}
\end{cases}
\end{equation}
with the exception of two coefficients: for $\dd=1$ and $n=5$, 
we have $c_{71}^{(1)}(5)=-1$ and $c_{79}^{(1)}(5)=1$.
\end{Conjecture}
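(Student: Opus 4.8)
The plan is to run the five-part programme of Section~\ref{seOutline} with the modulus~$3$ replaced by~$4$. Throughout, we put $P_n^{(4)}(q):=(q,q^2,q^3;q^4)_n=\prod_{4\nmid k,\,1\le k\le 4n}(1-q^k)$, so that the polynomial in Conjecture~\ref{cjmod4} is $Q_n(q)=(q;q)_{4n}^\dd/(q^4;q^4)_n^\dd=P_n^{(4)}(q)^\dd$. For Part~A we first observe that $(q,q^2,q^3;q^4)_n=(q,q^2,q^3;q^4)_\infty\big[(q^{4n+1},q^{4n+2},q^{4n+3};q^4)_\infty\big]^{-1}$, whence $[q^m]Q_n(q)=[q^m]Q_\infty(q)$ for $0\le m\le 4n$, where $Q_\infty(q)=\big((q;q)_\infty/(q^4;q^4)_\infty\big)^\dd$. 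We then need the sign pattern of $Q_\infty(q)$: here $(q;q)_\infty/(q^4;q^4)_\infty=(q;q^2)_\infty(q^2;q^4)_\infty$, and the case $\dd=1$ follows by combining a theta-type identity for this product (derivable, as in Section~\ref{seInfinite}, from Euler's pentagonal number theorem and the Jacobi triple product) with the general result of Andrews~\cite[Theorem~2.1]{MR1395410}; the cases $\dd=2,3$ require analogues of the results of Kane~\cite{MR2052400} and of Borwein, Borwein and Garvan~\cite{MR1243610} used in Section~\ref{seInfinite}, and (as with Kane's $q^5$ outlier) any spurious exceptions disappear after multiplying by the non-negative series $(q^4;q^4)_\infty^{-\dd}$. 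Since $P_n^{(4)}$ is a product of $3n$ factors $1-q^k$, the polynomial $Q_n$ is palindromic when $\dd n$ is even and anti-palindromic when $\dd n$ is odd; in either case Part~A together with this (anti-)symmetry reduces the task to the range $4n\le m\le \tfrac12\deg Q_n=3\dd n^2$.

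For Parts~B--D we write $[q^m]Q_n(q)=\tfrac{r^{-m}}{2\pi}\int_{-\pi}^{\pi}Q_n(re^{i\theta})e^{-im\theta}\,d\theta$ for a radius $r\in(r_0,1]$ close to~$1$. For modulus~$4$ the dominant peaks of $\theta\mapsto|Q_n(re^{i\theta})|$ sit near the primitive fourth roots of unity, i.e.\ near $\theta=\pm\pi/2$; there is only $\varphi(4)/2=1$ such conjugate pair of candidates, the non-primitive point $\theta=\pi$ contributing only a small peak (the factor $1-q^2$ makes $|Q_n|$ comparatively small there, and it vanishes altogether at $q=-1$). We choose $r$ as the solution of $r\,\Re\!\left(\tfrac{d}{dr}\log P_n^{(4)}(re^{i\pi/2})\right)=m/\dd$, pick the cutoff $\theta_0$ by the analogue of~\eqref{eqCutoffTheta0}, and split $[-\pi,\pi]$ into the peak part (two intervals of half-width~$\theta_0$ around $\pm\pi/2$) and the tail. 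We then re-derive the modulus-$4$ analogues of Sections~\ref{seConvention}--\ref{seEps1}: the log-derivative expansion~\eqref{eqPnDerivs} centred at $\pi/2$ (with the rational functions $u_j,v_j$ now arising from expanding $z\tfrac{d}{dz}\log(1-z)$ around~$i$ instead of around~$e^{2\pi i/3}$), the radius and cutoff bounds (Lemmas~\ref{leRadiusBound} and~\ref{leCutoffPrelim}), the fundamental error inequality (Lemma~\ref{lem:ep0ep1}), the Gaussian peak-error bound via Lemma~\ref{leGaussian} (Lemma~\ref{leIneqPeak}), and the tail bound (Lemma~\ref{leIneqTail}) resting on lower bounds for $-\log|Q_n(re^{i\theta})/Q_n(re^{i\pi/2})|$ as in Lemmas~\ref{leTailIneq2}--\ref{leTailBound1}. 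Only the numerical constants change; one again obtains $\epsilon_{0,Q_n}(m,r)+\epsilon_{1,Q_n}(r)\to 0$ uniformly for $4n\le m\le 3\dd n^2$ as $n\to\infty$.

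For Part~E we need the analogue of Lemma~\ref{leArg}: since $\arg(1-r^{x}e^{i\pi/2})=-\arctan r^{x}$, we compute $\arg P_n^{(4)}(re^{i\pi/2})=\sum_{k=0}^{n-1}\big(\arctan r^{4k+3}-\arctan r^{4k+1}\big)$, which is increasing in~$r$ and lies in $(-\pi/8,0]$ (the bound $-\pi/8$ being established by a Fourier argument as in the proof of Lemma~\ref{leArg}), with the refinement $\arg P_n^{(4)}(re^{i\pi/2})=-\tfrac{\pi}{8}+D\arctan\!\big(\text{function of }r^{4n}\big)+O(n^{-1}r^{4n})$ obtained from Lemma~\ref{leAlternatingSum}. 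Hence $\dd\arg Q_n(re^{i\pi/2})\in(-\dd\pi/8,0]\subseteq(-\tfrac{3\pi}{8},0]$, so the relevant right-hand side of the analogue of~\eqref{eqFinalTarget}, namely $2\cos\!\big(\arg Q_n(re^{i\pi/2})-m\pi/2\big)$, equals $2\cos(\dd\arg)\ge 2\cos(3\pi/8)>0$ for $m\equiv 0\pmod4$ and equals $-2\cos(\dd\arg)\le -2\cos(3\pi/8)<0$ for $m\equiv 2\pmod4$, in both cases bounded away from~$0$ uniformly in $r\in(r_0,1]$. Exactly as in the proofs of Theorems~\ref{thMain1}--\ref{thMain3}, this would yield $c_{4m}^{(\dd)}(n)\ge 0$ and $c_{4m+2}^{(\dd)}(n)\le 0$ for all sufficiently large~$n$, and the remaining (small) values of~$n$ --- including $n=5$, where the two exceptional coefficients $c_{71}^{(1)}(5)=-1$ and $c_{79}^{(1)}(5)=1$ occur, both at distance~$4$ from the centre --- would be dealt with by a finite computer check, using the reduction ``$m\le m^{*}(n)$'' from the proof of Theorem~\ref{thMain2} to keep it feasible.

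The hard part will be the claims~\eqref{eq:mod4-4j+1} and~\eqref{eq:mod4-4j+3} for $c_{4m+1}^{(\dd)}(n)$ and $c_{4m+3}^{(\dd)}(n)$. For $m\equiv 1\pmod4$ the right-hand side of the analogue of~\eqref{eqFinalTarget} equals $2\sin\!\big(\dd\arg P_n^{(4)}(re^{i\pi/2})\big)\le 0$ (consistent with the conjecture), but it tends to~$0$ as $r^{4n}\to 1$, i.e.\ as~$m$ approaches the central index $3\dd n^2$; and the claimed range runs all the way up to $m=\tfrac18(6\dd n^2-8)$, whose index $4m+1$ lies only~$3$ below the centre. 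For $m$ within $O(n)$ of the central index the main cosine term is only of order $O(n^{-2})$, while the peak-error bound of the modulus-$4$ analogue of Lemma~\ref{leIneqPeak} is of order $O(n^{-1})$; there the sign of $c_{4m+1}^{(\dd)}(n)$ is in fact governed by the subleading contributions to the peak integral --- notably the term proportional to $\Im f_2/g$ coming from the imaginary part of the second derivative, cf.~\eqref{eq:fX3}. This is precisely the obstacle flagged in Item~(1) of Section~\ref{seDiscuss}, and completing~\eqref{eq:mod4-4j+1}--\eqref{eq:mod4-4j+3} --- in particular pinning down the exact cutoffs $\tfrac18(6\dd n^2-8)$, $\tfrac18(6\dd n^2-8+2\dd)$, etc., and explaining the two exceptions --- will require carrying the Gaussian approximation of the peak one order further and showing that the resulting refined ``main term'' carries the predicted sign. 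That refinement, which is the same one needed to finish Conjectures~\ref{cjBorwein4} and~\ref{cjBorwein3}, is the principal difficulty.
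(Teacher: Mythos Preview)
The statement you are attempting to prove is presented in the paper as an \emph{open conjecture}; the paper gives no proof of Conjecture~\ref{cjmod4} but merely reports it as the outcome of computer experiments in Item~(3) of Section~\ref{seDiscuss}. So there is no ``paper's own proof'' to compare against, and what you have written is not a proof either --- it is (as you yourself say at the end) a programme for attacking the conjecture by transplanting the modulus-$3$ machinery to modulus~$4$.

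As a programme, your outline is sound and faithfully mirrors the paper's strategy, and your identification of the main obstruction is correct: for $m\equiv1,3\pmod4$ the leading cosine term $2\sin\!\big(\dd\arg P_n^{(4)}(re^{i\pi/2})\big)$ vanishes as $r^{4n}\to1$, i.e.\ as $m$ approaches the centre, so the first-order Gaussian approximation cannot resolve the sign there --- precisely the phenomenon discussed in Item~(1) of Section~\ref{seDiscuss} for $P_n^3$. Pinning down the \emph{exact} changeover indices in~\eqref{eq:mod4-4j+1}--\eqref{eq:mod4-4j+3} would require not just one further term in the peak expansion but control of the sign of that correction term uniformly in~$r$, and this is genuinely beyond what the paper achieves anywhere.

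Even for the ``easy'' residues $m\equiv0,2\pmod4$ your sketch has a real gap in Part~A. You assert that the sign pattern of $\big((q;q)_\infty/(q^4;q^4)_\infty\big)^\dd$ for $\dd=1$ follows from Andrews' Theorem~2.1 and that $\dd=2,3$ ``require analogues'' of Kane and of Borwein--Borwein--Garvan. None of this is established: Andrews' theorem concerns two-parameter products $(q^a,q^{K-a};q^K)_\infty$, not the three-factor product $(q,q^2,q^3;q^4)_\infty$, and no modulus-$4$ analogues of \cite{MR2052400} or \cite{MR1243610} are cited or proved. Without the infinite case, the reduction to $4n\le m\le 3\dd n^2$ does not get off the ground. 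In short, your proposal is a reasonable blueprint that correctly isolates the difficulties, but it is not a proof, and the paper does not claim one.
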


\begin{rem}
Roughly speaking, what the above conjecture says is that
all coefficients $c_{4m}^{(\dd)}(n)$ are non-negative, all coefficients
$c_{4m+2}^{(\dd)}(n)$ are non-positive, the ``first half'' of the
coefficients $c_{4m+1}^{(\dd)}(n)$ is non-positive, and the ``first half'' of the
coefficients $c_{4m+3}^{(\dd)}(n)$ is non-negative (with the mentioned
exceptions in the case where $n=5$). Since the polynomial 
$(q;q)_{4n}/(q^4;q^4)_n$ is palindromic for even~$n$ and
skew-palindromic for odd~$n$, we have
\begin{equation*} 
c_{m}^{(\dd)}(n)=(-1)^{\dd n}c_{6\dd n^2-m}^{(\dd)}(n).
\end{equation*}
Consequently, the statements \eqref{eq:mod4-4j+1} and \eqref{eq:mod4-4j+3} 
imply that the coefficients $c_{4m+1}^{(\dd)}(n)$ are non-negative for
$m$ outside the ranges given in~\eqref{eq:mod4-4j+1} (with two exceptions
for $n=5$), and similarly
the coefficients $c_{4m+3}^{(\dd)}(n)$ are non-positive for $m$
outside the ranges given in~\eqref{eq:mod4-4j+3}.
\end{rem}

\begin{Conjecture}[\sc A modulus $7$ ``Borwein Conjecture'']\label{cjmod7}
For positive integers~$n$,
consider the expansion of the polynomial
$$
\frac {(q;q)_{7n}} {(q^7;q^7)_n}
=\sum_{m=0}^{21n^2}d_m(n)q^m.
$$
Then 
\begin{equation} \label{eq:mod7-3j} 
d_{7m}(n)\ge0\quad \text{and}\quad
d_{7m+1}(n),d_{7m+3}(n),d_{7m+4}(n),d_{7m+6}(n)\le0,
\quad \text{for all~$m$ and $n$},
\end{equation}
while
\begin{equation} \label{eq:mod7-7j+2} 
d_{7m+5}(n)\begin{cases}
\ge0,\quad \text{for }m\le 3\al(n)n^2,\\
\le0,\quad \text{for }m> 3\al(n)n^2,
\end{cases}
\end{equation}
where $\al(n)$ seems to stabilise around $0.302$.
\end{Conjecture}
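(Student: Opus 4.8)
The plan is to run the five-part scheme of Sections~\ref{seOutline}--\ref{seMain} with the modulus~$3$ replaced by~$7$, exactly along the lines sketched in Item~(2) of Section~\ref{seDiscuss} for the Third Borwein Conjecture, the single genuinely new feature being that one residue class changes sign. Write $S_{7,n}(q)=(q;q)_{7n}/(q^7;q^7)_n=\prod_{7\nmid k,\ k\le 7n}(1-q^k)$; then $\deg S_{7,n}=21n^2$ and, since the number $6n$ of factors is even, $S_{7,n}$ is palindromic, so it suffices to control the sign of $d_m(n)$ for $7n\le m\le\tfrac12\deg S_{7,n}$. For the analogue of Part~A one uses $d_m(n)=[q^m]S_{7,\infty}(q)$ for $0\le m\le 7n$, where $S_{7,\infty}(q):=(q;q)_\infty/(q^7;q^7)_\infty$, together with the $p=7$ case of Andrews's identity \cite[Eq.~(2.5)]{MR1395410},
\[
\frac{(q;q)_\infty}{(q^7;q^7)_\infty}=\sum_{k=-3}^{3}(-1)^k q^{k(3k+1)/2}\,\Theta_k(q),\qquad
\Theta_k(q):=\frac{(q^{147},q^{77+21k},q^{70-21k};q^{147})_\infty}{(q^7;q^7)_\infty}.
\]
The numerator of each $\Theta_k$ is a product $\prod(1-q^a)$ over positive multiples of~$7$, each occurring once, hence it divides $(q^7;q^7)_\infty$, so $\Theta_k(q)$ is the generating function for partitions into parts from a set of multiples of~$7$ and has non-negative coefficients (for example $\Theta_1(q)=(q^{49};q^{49})_\infty/(q^7;q^7)_\infty=\bigl(\prod_{i\ge1,\,7\nmid i}(1-q^{7i})\bigr)^{-1}$). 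As $k$ runs through $-3,\dots,3$ one has $k(3k+1)/2\equiv 0,1,2,5\pmod 7$, so $[q^m]S_{7,\infty}=0$ for $m\equiv3,4,6$, while $[q^{7m}]S_{7,\infty}\ge0$, $[q^{7m+1}]S_{7,\infty}\le0$ and $[q^{7m+2}]S_{7,\infty}\le0$, because in each of these classes the contributing terms all carry one and the same sign. Only $m\equiv5$, fed by $k=-2$ (sign $+$) and $k=-3$ (sign $-$), is not immediately sign-definite; establishing $[q^{7m+5}]S_{7,\infty}\ge0$ --- needed to settle this class among the first $7n$ coefficients --- will require either a finer $7$-dissection, an injective combinatorial argument between the two partition families, or a circle-method estimate in the spirit of Kane~\cite{MR2052400}. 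With palindromicity this completes Part~A.

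For Part~B one again writes $d_m(n)=\frac{r^{-m}}{2\pi}\int_{-\pi}^{\pi}S_{7,n}(re^{i\theta})e^{-im\theta}\,d\theta$, $0<r\le1$. For Part~C one locates the dominant approximate saddle points: since $\varphi(7)/2=3$, there are three conjugate pairs, near $re^{\pm2\pi i\ell/7}$ for $\ell=1,2,3$, and because $\gcd(\ell,7)=1$ for each $\ell$, a block-by-block comparison of $\prod_{7\nmid k\le7n}\lvert1-r^ke^{2\pi ik\ell/7}\rvert$ shows that all three pairs are of the same exponential order, so all six peaks have to be kept. The radius~$r$ is fixed by the obvious analogue of \eqref{eqStationaryPoint}, $r\,\Re\frac{d}{dr}\log S_{7,n}(re^{2\pi i/7})=m$, and the analogue of Lemma~\ref{leRadiusBound} places $r$ in $(r_0,1]$, $r_0=\exp(-\Theta(n^{-1/2}))$, when $m\in[7n,\tfrac12\deg S_{7,n}]$. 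Parts~D transplant in spirit: one introduces the mod-$7$ versions of the sums $U_j,V_j,X_j$ of Section~\ref{seConvention} (with $u_j,v_j$ replaced by the rational functions built from a complete set of primitive $7$th roots of unity), applies the modulus-free Lemma~\ref{leGaussian} at each peak, re-derives the tail bounds of Section~\ref{seEps1}, and arrives at relative peak and tail errors $\epsilon_0,\epsilon_1$ which, summed over the three peak pairs, tend to~$0$ uniformly for $m\in[7n,\tfrac12\deg S_{7,n}]$ as $n\to\infty$.

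The heart of the matter is the analogue of \eqref{eqFinalTarget}. Writing $A:=r^{7n}\in(0,1]$, the sign of $d_m(n)$ is dictated by
\[
T_m(A):=\sum_{\ell=1}^{3}2\cos\!\Bigl(\Phi_\ell(A)-\tfrac{2\pi m\ell}{7}\Bigr),
\]
where
\[
\Phi_\ell(A):=-\sum_{j=1}^{3}\frac{7-2j}{7}\arctan\frac{(1-A)\cot(\pi j\ell/7)}{1+A}+O\!\left(n^{-1}A\right)
\]
is the $K=7$, $\alpha_j=j$ instance of \eqref{eqGeneralArg}, approximating $\arg S_{7,n}(re^{2\pi i\ell/7})$ and obtained by applying Lemma~\ref{leAlternatingSum} to $x\mapsto\arg(1-r^xe^{2\pi i\ell/7})$ as in \eqref{eqArgPn} (together with the monotonicity and range control of an analogue of Lemma~\ref{leArg}). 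A case analysis in $m\bmod 7$, uniform in $A\in[0,1]$, then shows: $T_m(A)$ is bounded below by a positive constant for $m\equiv0$; bounded above by a negative constant for $m\equiv1,2$; non-positive on $[0,1]$ with $T_m(0)=0$ for $m\equiv3,4,6$, hence bounded away from~$0$ on every $[\delta,1]$; and, for $m\equiv5$, $T_5$ has a single sign change at some $A^\ast\in(0,1)$, being positive on $[0,A^\ast)$ and negative on $(A^\ast,1]$. Mapping $A^\ast$ back to the index through the saddle equation produces the threshold $m^\ast(n)=3\alpha(n)n^2$, with $\alpha(n)$ converging to the constant ($\approx0.302$) determined by the zero of~$T_5$. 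Comparing $\lvert T_m(A)\rvert$ with $\epsilon_0+\epsilon_1$ then establishes every asserted sign for all sufficiently large~$n$ (with the proviso for $m\equiv3,4,6$ discussed next), and a direct computation with the GMP library~\cite{GMP}, in the manner of Theorems~\ref{thMain1}--\ref{thMain3}, takes care of the remaining small~$n$.

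The principal obstacle is the classes $m\equiv3,4,6$ --- and, through palindromicity, the coefficients of classes~$1$ and~$2$ lying near the opposite end --- for which $T_m(0)=0$: near the cutoff $r\approx r_0$ one has $\lvert T_m(A)\rvert=\Theta(A)=\exp(-\Theta(\sqrt n))$, whereas Lemma~\ref{leIneqPeak} forces $\epsilon_0=\Theta(n^{-1/2})$ there. This is exactly the failure that leaves the Cubic Borwein Conjecture, Conjecture~\ref{cjBorwein4}, open for $m\equiv2\pmod3$: without a beyond-Gaussian (next-order) expansion of the peak integral the method proves $d_{7m+j}(n)\le0$ for $j\in\{3,4,6\}$ only for exponents bounded away from both ends, leaving a gap of length $o(n^2)$ near each end that falls outside the reach of Part~A. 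A secondary, softer difficulty concerns the $m\equiv5$ transition: one must upgrade ``$T_5$ changes sign once'' to the quantitative statement required --- a transversal zero of $T_5$ at $A^\ast$, monotonicity of $T_5$ through it, and a Lipschitz bound for the saddle map $m\mapsto r^{7n}$ --- so that $m^\ast(n)$ is determined to within the error $\epsilon_0+\epsilon_1$; this is delicate precisely because near $A^\ast$ the real parts of the two (conjugate) dominant peak contributions nearly cancel. All else --- the adapted estimation lemmas of Sections~\ref{seEps0}--\ref{seEps1}, the explicit error constants, and the small-$n$ verification --- is laborious but routine.
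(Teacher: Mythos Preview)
The statement is Conjecture~\ref{cjmod7}, an \emph{open conjecture}; the paper offers no proof. What the paper does provide is Remark~\ref{rem:7}(2), a heuristic computation explaining why a sign change should occur for $m\equiv5\pmod7$ and why $\alpha(n)$ should stabilise near $0.302$: one evaluates the peak sum~\eqref{eqGeneralTarget} at $r^{7n}=0$ and $r^{7n}=1$, locates its zero numerically at $r^{7n}\approx0.6089$, and feeds this into the approximate saddle-point equation to obtain the limiting ratio $m/21n^2\approx0.30214$. Your function~$T_5(A)$ is exactly this sum, so your outline recovers the paper's heuristic.

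Your proposal is a plausible programme, not a proof, and you say so yourself. You correctly isolate the two genuine obstacles. First, for $m\equiv3,4,6\pmod7$ the peak sum $T_m(A)$ vanishes at $A=0$, so near the cutoff $r\approx r_0$ one has $|T_m(A)|=\exp(-\Theta(\sqrt n))$ while your peak error remains $\Theta(n^{-1/2})$; this is precisely the obstruction that leaves one third of Conjecture~\ref{cjBorwein4} open in Theorem~\ref{thMain3} (cf.\ Remark~\ref{rmZero} and Item~(1) of Section~\ref{seDiscuss}), and you offer no new idea to overcome it. Second, for $m\equiv5\pmod7$ you need both the non-negativity of $[q^{7m+5}]S_{7,\infty}(q)$ for Part~A --- which you leave as an unproved subproblem, since the Andrews dissection feeds that class with terms of both signs --- and a quantitative transversality argument at the zero of~$T_5$ to localise the single sign change to within the error $\epsilon_0+\epsilon_1$. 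Neither is supplied. Thus your write-up is an honest and largely accurate assessment of where the method of the paper would lead and where it would stall, but it does not constitute a proof; the conjecture remains open, exactly as the paper presents it.
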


\begin{rem} \label{rem:7}
(1)
Since the polynomial $(q;q)_{7n}/(q^7;q^7)_n$ is palindromic,
the above conjecture makes also a prediction for the signs of the
coefficients $d_{7m+2}(n)$.

\medskip
(2)
The existence and approximate position of the sign change for the
coefficients of $q^m$ with $m\equiv2,5~(\text{mod } 7)$ predicted in~\eqref{eq:mod7-7j+2} 
can in fact be explained by the general procedure for approaching
proofs of sign patterns in the polynomial \eqref{eqGeneralPoly}, here specialised to
$K=7$ and $\al_j=j$ for $j=1,2,3$. 
As a matter of fact, the function $(q;q)_{7n}/(q^7;q^7)_n$ has three
pairs of dominant peaks (of the same order of
magnitude) located at $re^{\pm 2\pi i \ell/7}$ for $\ell=1,2,3$. We set
$\alpha_j=j$, for $j=1,2,3$, and $\theta_\ell=2\pi i \ell/7$, for
$\ell=1,2,3$, in~\eqref{eqGeneralTarget} to conclude that, for
$m\equiv5~(\text{mod } 7)$, the sum~\eqref{eqGeneralTarget} evaluates
to $2\sqrt7\cos(3\pi/7)$ for $r^{7n}=0$, and to $-1$ for $r^{7n}=1$.
This indicates a sign change somewhere in the middle. More precisely, in this
case we can pinpoint the zero of \eqref{eqGeneralTarget} as
$r^{7n}\approx0.6089$. For convenience, let us write
$s_0:=0.6089$. The analogue of the approximate saddle point equation
\eqref{eqStationaryPoint} for our situation here can be calculated as
\[
\frac13\underset{7\nmid k}{\sum_{{k=1}}^{7n}}k\frac{r^k-7r^{7k}+6r^{8k}}{(1-r^k)(1-r^{7k})}=2m.
\]
Therefore, for $r^{7n}=s_0$, making the substitution $k\mapsto 7nu$,
we get
\begin{align*}
\lim_{n\to\infty}\frac{m}{21n^2}
&=\lim_{n\to\infty}\frac{7}{18(7n)^2}
\left(
{\sum_{{k=1}}^{7n}}
k\frac{r^k-7r^{7k}+6r^{8k}}{(1-r^k)(1-r^{7k})}
-
{\sum_{{k=1}}^{n}}
7k\frac{r^{7k}-7r^{49k}+6r^{56k}}{(1-r^{7k})(1-r^{49k})}
\right)\\
&=\frac {7} {18}\times\frac {6} {7}
\int^1_0u\frac{s_0^u-7s_0^{7u}+6s_0^{8u}}{(1-s_0^{u})(1-s_0^{7u})}du\approx0.30214,
\end{align*}
which explains the occurrence of the constant $0.302$ in Conjecture~\ref{cjmod7}.
\end{rem}

Many similar conjectures could be proposed. For example, it seems that
the coefficient of $q^{6m}$ in $(q;q)_{6n}/(q^6;q^6)_n$ is
non-negative for all~$m$,
the coefficient of $q^{6m+3}$ in $(q;q)_{6n}/(q^6;q^6)_n$ is non-positive
for all~$m$,
while, for large enough~$n$, the other sequences of coefficients in
congruence classes modulo~6 of the exponents of~$q$ seem to satisfy sign
patterns similar to the one in \eqref{eq:mod7-7j+2}.
Similarly, for $\dd\in\{2,3\}$, it seems that
the coefficient of $q^{5m}$ in $(q;q)_{5n}^\dd/(q^5;q^5)_n^\dd$ is
non-negative for all~$m$,
while, for large enough~$n$, the other sequences of coefficients in
congruence classes modulo~5 of the exponents of~$q$ seem to also satisfy sign
patterns similar to the one in \eqref{eq:mod7-7j+2}.

\medskip
(4) {\sc The Bressoud Conjecture.}
Inspired by sum representations of the decomposition polynomials
$A_n(q),B_n(q),C_n(q)$ defined in \eqref{eq:ABC} which Andrews found
by the use of the $q$-binomial theorem (cf.\ \cite[Eqs.~(3.4)--(3.6)]{MR1395410}),
Bressoud \cite[Conj.~6]{MR1392489} came up with the following
far-reaching generalisation of the First Borwein Conjecture.
For the statement of Bressoud's conjecture we need to introduce the
usual $q$-binomial coefficients, defined by
$$
\begin{bmatrix} A\\B\end{bmatrix}_q:=\begin{cases} 
\displaystyle 
\frac {(q;q)_A} {(q;q)_B\,(q;q)_{A-B}},&\text{for }0\le B\le A,\\
0,&\text{otherwise.}
\end{cases}
$$

\begin{Conjecture}[\sc Bressoud]
Suppose that $M,N\in\Z^+$, $\alpha$ and $\beta$ are positive rational
numbers, and $K$ is a positive integer such that $\alpha K$ and $\beta
K$ are integers. If $1\leq\alpha+\beta\leq2K+1$ {\em(}with strict
inequalities if $K=2${\em)} and $\beta-K\leq n-M\leq K-\alpha$, then
the polynomial 
\begin{equation} \label{eq:Bressoud} 
\sum_{j=-\infty}^\infty(-1)^jq^{j(K(\alpha+\beta)j+K(\alpha-\beta))/2}
\begin{bmatrix} {M+N}\\{M+Kj}\end{bmatrix}_q
\end{equation}
has non-negative coefficients.
\end{Conjecture}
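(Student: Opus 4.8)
The plan is to follow the same seven-step template that produced the proofs of Theorems~\ref{thMain1} and~\ref{thMain2}, but starting from a \emph{double} contour integral rather than a single one. First I would rewrite the polynomial~\eqref{eq:Bressoud} as an integral over a product of $q$-shifted factorials. Using the $q$-binomial theorem $(-z;q)_{M+N}=\sum_{k\ge0}q^{\binom k2}\qbinom{M+N}{k}_qz^k$, one solves for $\qbinom{M+N}{M+Kj}_q$, extracts the coefficient of $z^{M+Kj}$ by a Cauchy integral, and notes that the terms with $M+Kj\notin[0,M+N]$ contribute nothing, so that the sum over~$j$ may be extended to all of~$\Z$; after collecting the quadratic $q$-powers, the inner bilateral sum is a Jacobi theta series, which by the Jacobi triple product identity is a product of $q$-shifted factorials in $q$, $z$ and $z^K$. (If $\al+\be\le K$ the naive theta series is not convergent; one avoids this by splitting the denominator, writing $\qbinom{M+N}{M+Kj}_q=(q;q)_{M+N}[z^Mw^N]\,(z;q)_{M+1}^{-1}(w;q)_{N+1}^{-1}$ via Euler's $1/(z;q)_\infty$-expansion, which turns the $j$-sum into $\sum_j(-1)^jq^{e(j)}(z/w)^{Kj}$ whose quadratic $q$-exponent coefficient $\tfrac{K(\al+\be)}{2}$ is \emph{always} positive.) Extracting $[q^m]$ by one further Cauchy integral then expresses the coefficient of~$q^m$ in~\eqref{eq:Bressoud} as a multiple contour integral of a product of $q$-shifted factorials, exactly as advertised in the introduction.

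Next I would run the machinery of Sections~\ref{seLocate}--\ref{seEps1} in two variables. One takes both contours to be circles, of radii $r$ (for~$q$) and $s$ (for~$z$), with $(r,s)$ the solution of a pair of approximate saddle point equations analogous to~\eqref{eqStationaryPoint}, the argument of the $z$-saddle being ``frozen'' at the appropriate primitive $K$-th root of unity dictated by the theta factor. After localising each circle to a ``peak part'' near the dominant (complex) saddles and a ``tail part'', the peak contribution is approximated by a two-dimensional Gaussian integral: here Lemma~\ref{leGaussian} must be upgraded to a bivariate version, with the Hessian matrix of $-\Re f$ replacing the scalar~$g$ and with mixed third- and fourth-order derivatives entering the error bound. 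The peak-error and tail-error estimates of Sections~\ref{seEps0} and~\ref{seEps1} then have direct, if lengthier, analogues --- the log-derivative identities of Section~\ref{seConvention} being replaced by those for $(-z;q)_{M+N}$ (or $(z;q)_{M+1}^{-1}$, $(w;q)_{N+1}^{-1}$) and for the theta product, with the alternating-sum lemmas still applying verbatim to the theta factor. Finally, an analogue of Lemma~\ref{leArg} is needed to keep the argument of the dominant saddle value inside a half-plane, so that the leading cosine-type term in the analogue of~\eqref{eqFinalTarget} stays bounded away from~$0$; the ``infinite case'' input (the analogue of Section~\ref{seInfinite}) is, as Ismail, Kim and Stanton already observed in a special case, immediate from the Jacobi triple product identity.

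The hard part will be the dependence on the parameters $M,N,\al,\be,K$. Unlike the Borwein polynomials, \eqref{eq:Bressoud} is not a single one-parameter family: although the hypotheses bound $n-M$, both $M$ (hence $N$) and~$K$ are unbounded, so there is \emph{no} reduction to finitely many ``small cases'' that a computer could dispatch. An unconditional theorem would require every estimate in the bivariate Steps~D and~E to be made uniform in all of $M,N,\al,\be,K$ simultaneously; in particular the analogue of Lemma~\ref{leRadiusBound} becomes a joint statement controlling $(r,s)$ over their full, parameter-dependent ranges (covering several scales at once). A second, more structural obstacle is that in certain corners of the admissible parameter region --- for instance when $\al+\be$ is near an extreme allowed value --- the two conjugate peak contributions nearly cancel, so that the leading term of the analogue of~\eqref{eqFinalTarget} degenerates to~$0$, exactly as happens for $[q^{3m+2}]P_n^3(q)$ (cf.\ Item~(1) of Section~\ref{seDiscuss}); those ranges would need the higher-order refinement of the Gaussian approximation that we leave open there. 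Consequently, what I expect to fall out cleanly is Bressoud's conjecture for $M+N$ large relative to the other parameters and away from the degenerate corners; a fully unconditional proof would demand the uniformity and the refinement just described.
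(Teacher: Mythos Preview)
The paper does \emph{not} prove this statement; it is stated as an open conjecture (Bressoud's Conjecture). What the paper offers in Item~(4) of Section~\ref{seDiscuss} is only a reformulation: it uses the $q$-binomial theorem to write $\qbinom{M+N}{M+Kj}_q=q^{-\binom{M+Kj}{2}}[z^{M+Kj}](-z;q)_{M+N}$, collects the bilateral $j$-sum into a theta series, and applies the Jacobi triple product identity (for $\alpha+\beta>K$) to express~\eqref{eq:Bressoud} as the coefficient of $z^Mq^m$ in a product of $q$-shifted factorials --- hence as a double contour integral. For $\alpha+\beta<K$ the paper suggests working with $|q|>1$, and for $\alpha+\beta=K$ summing a geometric series. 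The paper then explicitly stops: ``Whether progress can be made in this way remains to be seen.''

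Your proposal is therefore not to be compared against a proof, because there is none. What you have written is a research programme that begins exactly where the paper's sketch begins (same $q$-binomial/Jacobi rewriting, same double contour integral), and then speculates further about a bivariate version of Lemmas~\ref{leGaussian}, \ref{leRadiusBound}, and~\ref{leArg}. Your alternative treatment of the $\alpha+\beta\le K$ case via the Euler expansion of $1/(z;q)_{M+1}$ and $1/(w;q)_{N+1}$ is a reasonable variant of the paper's suggestion, but it introduces a \emph{triple} integral rather than a double one, so the subsequent analysis would be correspondingly heavier. You are candid that this would at best yield the conjecture for $M+N$ large relative to the other parameters and away from degenerate corners; that honesty is appropriate, but it also means that what you have submitted is not a proof of the conjecture --- nor does the paper claim one.
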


Conjecture~\ref{cjBorwein} turns out to be a special case of this
conjecture for the choices $\alpha=5/3$, $\beta=4/3$ and $K=3$.

To this day, Bressoud's conjecture has only been proved when
$\alpha,\beta\in\Z$ (corresponding to a result of Andrews et
al.~\cite{MR930170} on partitions with restricted hook differences), 
and some sporadic parametric infinite families (see
\cite{MR4119403,MR2140441,MR1874535,MR2009544}). 

If one tries a direct attack on proving non-negativity of the
coefficients of the polynomial~\eqref{eq:Bressoud} using contour
integral methods (in the style of \cite{WANG2022108028}, where
however different sum representations of $A_n(q),B_n(q),C_n(q)$ were
used as starting point), then one would discover that a large amount
of cancellation is going on in \eqref{eq:Bressoud} which is impossible
to control.

Instead, we could apply the $q$-binomial theorem \cite[Ex.~1.2({\it
  vi})]{GaRaAA} to express the $q$-binomial coefficient as
$$
\begin{bmatrix} A\\B\end{bmatrix}_q=q^{-\binom B2}[z^B](-z;q)_A.
$$
This leads to
\begin{align}
\notag
\sum_{j=-\infty}^\infty&(-1)^jq^{j(K(\alpha+\beta)j+K(\alpha-\beta))/2}
\begin{bmatrix} {M+N}\\{M+Kj}\end{bmatrix}_q\\
\notag
&=
\sum_{j=-\infty}^\infty
[z^{M+Kj}]
(-1)^jq^{\frac {1} {2}j(K(\alpha+\beta)j+K(\alpha-\beta))-\binom {M+Kj}2}(-z;q)_{M+N}\\
&=[z^M]q^{-\binom {M}2}(-z;q)_{M+N}
\sum_{j=-\infty}^\infty
(-1)^jq^{\frac {1} {2}(j^2K(\alpha+\beta-K)+jK(\alpha-\beta+1-M))}z^{-Kj}.
\label{eq:Brint}
\end{align}
If we assume that $|q|<1$ and $\alpha+\beta> K$, then we may now apply the
Jacobi triple product identity (cf.\ \cite[Eq.~(1.6.1)]{GaRaAA}),
\begin{equation} \label{eq:JTP} 
\sum_{j=-\infty}^\infty (-1)^j q^{\binom j2}u^j
=
(q,u,q/u;q)_\infty,
\end{equation}
where $(\alpha_1,\alpha_2,\dots,\alpha_s;q)_\infty$ is short for the
product $\prod _{i=1} ^{s}(\alpha_i;q)_\infty$. As a consequence, we obtain
\begin{multline*}
  \sum_{j=-\infty}^\infty(-1)^jq^{j(K(\alpha+\beta)j+K(\alpha-\beta))/2}
\begin{bmatrix} {M+N}\\{M+Kj}\end{bmatrix}_q
=[z^M]q^{-\binom {M}2}(-z;q)_{M+N}\\
\cdot
(q^{K(\alpha+\beta-K)},z^{-K}q^{K(2\alpha-K+1-M)/2},
z^Kq^{K(2\beta-K-1+M)/2};
q^{K(\alpha+\beta-K)})_\infty.
\end{multline*}
The coefficient of $q^m$ of the right-hand side can be represented
as a double contour integral over~$z$ and~$q$ of a product of (finite and infinite)
shifted $q$-factorials and is therefore --- at
least in principle --- amenable to the ideas that we developed in this paper.

If $\alpha+\beta< K$, then we would assume $|q|>1$ and try an
analogous approach. On the other hand, if $\alpha+\beta=K$, then the sum in
\eqref{eq:Brint} can be evaluated by summing a geometric series.\footnote{The 
reader should keep in mind that, for fixed~$M$ and~$N$, the sum over~$j$
is a finite sum.} Hence, again,
we obtain an expression that can be converted into a double contour integral that 
is amenable to the ideas developed in this paper.

\medskip
(5) {\sc Are the Borwein Conjectures combinatorial or analytic in nature?}
This question is somewhat on the provocative side.
It seems that it has been commonly believed that the Borwein Conjecture(s) is (are) 
combinatorial in nature, in the sense that the most promising approaches for
a proof are combinatorial, may it be by an injective argument, or by $q$-series 
manipulations, or by a combination of the two. However, we believe that 
by now considerable evidence has accumulated for the feeling 
that this might have been a misconception. On the superficial
level, one must simply admit that, despite considerable effort, 
until now ``combinatorial'' attacks have not led to any progress on  
the Borwein Conjectures (but undeniably to further intriguing
discoveries). By contrast, the first proof of the First Borwein
Conjecture in \cite{WANG2022108028} has been accomplished using
analytic methods, as well as the proof in this paper. More
substantially, several of the more recently discovered related or
similar results and conjectures, such as Conjecture~\ref{cjmod7}
(cf.\ in particular Remark~\ref{rem:7}(2)), the many conjectures by 
Bhatnagar and Schlosser in~\cite{MR4039550}, or Kane's
result~\cite{MR2052400} that we used in Section~\ref{seInfinite} seem
to indicate that {\it``typically''} such sign pattern results hold for
``large''~$n$, and in {\it some} cases --- such as in the case of the Borwein
Conjectures --- they {\it``accidentally''} also hold for ``small''~$n$.
This is not to say that we do not think that it is desirable to find a
combinatorial proof of, say, the First Borwein Conjecture. On the
contrary! However, one should be aware that such a proof would most
likely not have anything to say about the Second Borwein Conjecture
or the Cubic Borwein Conjecture,
while, by our analytic approach, we could do the First and Second
Borwein Conjecture (and large parts of the Cubic Borwein Conjecture) 
--- so-to-speak --- in one stroke.
Obviously, the last word in this matter has not yet been spoken.

\section*{Appendix: auxiliary inequalities}

\setcounter{equation}{0}
\setcounter{theorem}{0}
\global\def\thesection{\mbox{A}}
Here we collect several auxiliary inequalities of very technical
nature that we need in the
main text. We put them here so
as to not disturb the flow of arguments in the main text. 

\subsection{Bounds for certain rational functions in $s$ and $\log s$}

In the lemma below, we collect various bounds for the auxiliary
functions $u_j(z)$ and $v_j(z)$ from Section~\ref{seConvention}.
They are used ubiquitously in Sections~\ref{seLocate}, \ref{seEps0}, and~\ref{seEps1}.

\begin{lemma}\label{leUVBounds}
Suppose that $u_j(z)$ and $v_j(z)$, $j\in\Z^+$, are as defined
in~\eqref{eqDerivFuncs-u} and~\eqref{eqDerivFuncs-v}. 
Furthermore, for $\rho\in\R^+$, let the region $S_\rho$ be defined as in \eqref{eqRegionS}.

\medskip
{\em(1)}
For $s\in(0,1]$, we have the following inequalities:
\begin{align}
\label{eq:Ungl1}
\frac{u_1(s)}{s}\leq\frac2{\sqrt3}, \quad \frac23\leq\frac{u_2(s)}{s}&<\frac65,  \\
\label{eq:Ungl2a}
\frac{1-s^3}{(-\log s)(1+s)}&\le\frac {3} {2},\\
\label{eq:Ungl2}
\frac{s^{3-1/400}(-\log s)}{1-s^9}&<0.134,\\
\label{eq:Ungl3a}
\frac{(1-s^3)^2}{(-\log s)^2(1+2s+2s^3+s^4)}&\le\frac {3} {2},\\
\label{eq:Ungl3}
\frac{s^{3-1/400}(1-s^6)(-\log s)^2}{(1-s^9)(1-s^{3/2})(1+s^3+s^6)}&<0.084,\\
\label{eq:Ungl4}
\left|2(\log s)v_2(s)+(\log s)^2v_3(s)\right|&<\frac13,\\
\label{eq:Ungl5}
\left|4v_4(s)+(\log s) v_5(s)\right|&<\frac98,\\
\label{eq:Ungl6}
\left|2v_2(s)+2(\log s)v_3(s)+(\log s)^2v_4(s)\right|&<0.21,\\
\label{eq:Ungl7}
\left|12v_4(s)+8(\log s)v_5(s)+(\log s)^2v_6(s)\right|&<3.7.
\end{align}

\medskip
{\em(2)} We have upper bounds for $u_j(z)/z$ and $v_j(z)/z$ as given in the following table:

\bigskip
\begin{tabular}{|c|c|cccccc|}
  \hline
  {} & $j=$ & $3$ & $4$ & $5$ & $6$ & $7$ & $8$\\
  \hline
  \multirow{1}{*}{$z\in S_{5/27}$} & $|u_{j}(z)/z|<$ & $1.3$ & $1.409$ & {} & {} & {} & {}\\
  \hline
  \multirow{2}{*}{$z\in S_{10/27}$} & $|u_{j}(z)/z|<$ & $1.44$ & $1.721$ & {} & {} & {} & {}\\
  {} & $|v_{j}(z)/z|<$ & $1.01$ & $1.02$ & $2.09$ & $5.46$ & $19.1$ & $73$\\
  \hline
\end{tabular}
\end{lemma}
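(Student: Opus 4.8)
The plan is to treat this as a finite collection of elementary estimates for explicit rational functions in $s$ (and $\log s$) on $(0,1]$, and for $u_j(z)/z$, $v_j(z)/z$ on the closed regions $S_\rho$. I would organise the proof into two blocks corresponding to parts (1) and (2).

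For part (1), most inequalities reduce to bounding a single-variable function on a compact interval. The bounds \eqref{eq:Ungl1} for $u_1(s)/s$ and $u_2(s)/s$ are handled directly: $u_1(s)/s=(1+2s)/(1+s+s^2)$, whose derivative is $(1-2s-s^2)/(1+s+s^2)^2$, so the maximum on $[0,1]$ occurs at $s=\sqrt2-1$, yielding $2/\sqrt3$; for $u_2(s)/s=(1+4s+s^2)/(1+s+s^2)^2$ one checks the value $2/3$ at $s=1$ is the minimum and the maximum is $<6/5$ (attained at an interior critical point). The bounds \eqref{eq:Ungl2a} and \eqref{eq:Ungl3a} are of the shape ``rational function over $(-\log s)^k \times$ (polynomial) $\le 3/2$''; I would clear denominators and verify the resulting inequality $p(s)\le \tfrac32 q(s)(-\log s)^k$ by checking it holds at $s=1$ with room to spare and that the gap is monotone — or, more safely, by noting $-\log s\ge 1-s$ and reducing to a polynomial inequality. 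The remaining items \eqref{eq:Ungl2}, \eqref{eq:Ungl3}, \eqref{eq:Ungl4}--\eqref{eq:Ungl7} each assert that some explicit function of $s$ on $[0,1]$ stays below a given numerical constant; since all the $v_j$ are rational functions with denominator a power of $1+s+s^2$ (bounded below by $3/4$ on $[0,1]$) and numerator a polynomial, and $|\log s|^k$ times these quantities is continuous on $[0,1]$ with a finite limit at $s=1$, each is a continuous function on a compact interval and the stated numerical bound can be certified by locating the (finitely many) critical points and evaluating there, leaving a visible margin below the quoted constant.

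For part (2), the task is to bound $|u_j(z)/z|$ and $|v_j(z)/z|$ as $z$ ranges over the region $S_\rho$ defined in \eqref{eqRegionS}. Here $u_j(z)/z$ and $v_j(z)/z$ are rational functions of $z$ holomorphic on the unit disc except at the primitive sixth roots of unity $e^{\pm i\pi/3}$ (the zeros of $1+z+z^2$). Since $S_\rho$ is a compact subset of the closed unit disc that, for the relevant small $\rho=5/27$ or $10/27$, stays away from the singularities $e^{\pm i\pi/3}$ (these have argument $\pm\pi/3\approx\pm1.047$, while a point of $S_\rho$ with $|z|=R$ has $|\arg z|\le \rho(-\log R)/(1-R)$, and one checks this never reaches $\pi/3$ for $R\in[0,1]$ when $\rho\le 10/27$), the functions are bounded on $S_\rho$ and I would bound $|u_j(z)/z|$ by its maximum modulus over $S_\rho$. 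By the maximum modulus principle applied on the region bounded by the curve $\partial S_\rho$, the supremum is attained on the boundary; parametrising the boundary arc $\Theta=\rho(-\log R)/(1-R)$ by $R\in[0,1]$ reduces each entry of the table to a one-variable numerical maximisation, which again I would certify by evaluating at critical points with a margin below the tabulated value. In practice I would simply verify each table entry by this compact-set modulus estimate; the numerical values are chosen with slack, so no delicate optimisation is needed.

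The main obstacle — really the only subtle point — is being careful that the regions $S_\rho$ for $\rho=5/27,10/27$ genuinely avoid the poles at $e^{\pm i\pi/3}$, so that the functions are bounded and the maximum-modulus reduction to the boundary curve is legitimate; once that is checked, everything else is a finite, mechanical verification of continuous functions on compact intervals against numerical constants with built-in margins. I expect to perform these verifications with a short interval-arithmetic or grid computation, noting in each case that the true maximum lies safely below the quoted bound.
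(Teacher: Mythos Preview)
Your approach is essentially the same as the paper's: for part~(1) the purely rational inequalities \eqref{eq:Ungl1} are handled by elementary calculus, and all remaining one-variable bounds are certified numerically (the paper uses a grid of $10^6$ points plus an explicit bound on the derivative, which is what your ``interval-arithmetic or grid computation'' amounts to); for part~(2) the paper likewise invokes the maximum modulus principle to reduce to the boundary of $S_\rho$ and then verifies numerically.

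One small slip: the zeros of $1+z+z^2$ are the primitive \emph{third} roots of unity $e^{\pm 2\pi i/3}$ (argument $\approx\pm 2.094$), not $e^{\pm i\pi/3}$. This does not damage your argument---the region $S_{10/27}$ stays even further away from $\pm 2\pi/3$ than from $\pm\pi/3$---but you should correct it.
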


\begin{proof}
The inequalities \eqref{eq:Ungl1} are inequalities for rational
functions and therefore are straightforward to prove using standard
methods from classical analysis (or by the use of CAD; see
Footnote~\ref{foot:1}). For the inequalities
\eqref{eq:Ungl2a}--\eqref{eq:Ungl7}, we apply a numerical approach
(analogous to the one in the proof of Lemma~\ref{leIneqBeta} below).
Let $\LHS(s)$ denote the left-hand side of one such inequality.
We choose $M=10^6$ equally spaced points in the interval $[0,1]$. Then
we have
\[
\sup_{s\in[0,1]}\LHS(s)\leq
\sup_{0\le m\le M}\LHS\left(\frac{m}{M}\right)+\frac{1}{M}
\sup_{s\in[0,1]}\left|\frac{d\LHS}{d s}(s)\right|.
\]
The supremum of the derivative can easily be bounded since it is a
rational function in~$s$ and $\log s$ that has a finite value at $s=0$.

For the inequalities in Part (2) of the lemma, we also apply this
numerical approach. This is indeed feasible since, by the maximum
modulus principle, the maximum modulus of an analytic function on a
compact domain  (which, in
our case, are the sets $S_{5/27}$ respectively $S_{10/27}$)
is attained at the boundary of the domain.
%
%
%
\end{proof}

\subsection{Bounds for certain truncated perturbed Gau\ss ian integrals}

The central result of this subsection is Lemma~\ref{leIneqBeta} which
provides estimates for the constants that appear in
Lemma~\ref{leGaussian}, and which are used in Lemma~\ref{leIneqPeak}. 
A simple corollary of the lemma that is used in the proof of
Lemma~\ref{leGaussian} is stated separately as Corollary~\ref{cor:beta}.
The lemma below 
gives an estimate involving the lower incomplete gamma function 
that is needed in the proof of Lemma~\ref{leIneqBeta}.

Below, we will occasionally make use of the effective form of
Stirling's formula
\begin{equation} \label{eq:Stirling} 
\Ga(x)=\left(\frac {x} {e}\right)^x\frac {(2\pi)^{1/2}}
   {x^{1/2}}e^{\sigma(x)},    \qquad x>0,
\end{equation}
where
$$
0<\sigma(x)<\frac {1} {12x}.
$$
Here, the left inequality follows from \cite[Theoorem~1.6.3(i)]{AnARAA},
while the right inequality follows from \cite[Theorem~1.4.2
  with $m=1$]{AnARAA}.

\begin{lemma}\label{leIneqGamma}
Let $\gamma(s,a):=\int^a_0 e^{-x}x^{s-1}dx$ be the lower incomplete gamma function. Suppose that $c,d,\mu\in\R^+$ with $d> c$. Then we have
\[
\sup_{w\in\R^+}w^{-c}\gamma(d,\mu w)\leq \frac{\mu^c\Gamma(d-c+1)}{c\sqrt{2\pi(d-c)}}.
\]
\end{lemma}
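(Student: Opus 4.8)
The plan is to treat the quantity $w^{-c}\gamma(d,\mu w)$ as a function of $w\in\R^+$ and maximize it by elementary calculus, then bound the value at the critical point using Stirling's formula. First I would set $h(w):=w^{-c}\gamma(d,\mu w)$ and compute
\[
h'(w)=-c\,w^{-c-1}\gamma(d,\mu w)+w^{-c}\mu(\mu w)^{d-1}e^{-\mu w},
\]
using $\frac{d}{dw}\gamma(d,\mu w)=\mu(\mu w)^{d-1}e^{-\mu w}$. Since $h(w)\to0$ as $w\to0^+$ (because $\gamma(d,\mu w)=O(w^d)$ and $d>c$) and $h(w)\to0$ as $w\to\infty$ (because $\gamma(d,\mu w)\to\Gamma(d)$ while $w^{-c}\to0$), the supremum is attained at an interior critical point $w_0$, where $h'(w_0)=0$, i.e.
\[
c\,\gamma(d,\mu w_0)=\mu^{d}w_0^{d}e^{-\mu w_0}.
\]

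The next step is to use this stationarity relation to eliminate $\gamma$: at $w=w_0$ we have
\[
h(w_0)=w_0^{-c}\gamma(d,\mu w_0)=\frac{1}{c}\,w_0^{-c}\mu^{d}w_0^{d}e^{-\mu w_0}
=\frac{\mu^{c}}{c}\,(\mu w_0)^{d-c}e^{-\mu w_0}.
\]
So it remains to bound $\sup_{x>0}x^{d-c}e^{-x}$, which is a textbook optimization: the maximum is at $x=d-c$, with value $(d-c)^{d-c}e^{-(d-c)}$. Hence
\[
\sup_{w\in\R^+}w^{-c}\gamma(d,\mu w)\le h(w_0)\le\frac{\mu^{c}}{c}(d-c)^{d-c}e^{-(d-c)}.
\]
Finally I would invoke the lower bound in the effective Stirling formula \eqref{eq:Stirling}, namely $\Gamma(x)>(x/e)^{x}(2\pi)^{1/2}x^{-1/2}$ applied at $x=d-c+1$, which (after using $(d-c+1)^{d-c+1}\ge(d-c)^{d-c}(d-c+1)$ and absorbing constants, or more directly $\Gamma(d-c+1)=(d-c)\Gamma(d-c)>(d-c)((d-c)/e)^{d-c}(2\pi)^{1/2}(d-c)^{-1/2}$) gives
\[
(d-c)^{d-c}e^{-(d-c)}\le\frac{\Gamma(d-c+1)}{\sqrt{2\pi(d-c)}},
\]
so that $h(w_0)\le\dfrac{\mu^{c}\Gamma(d-c+1)}{c\sqrt{2\pi(d-c)}}$, which is the claimed bound.

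I do not anticipate a genuine obstacle here; the only point requiring a little care is the clean passage from the bare bound $(d-c)^{d-c}e^{-(d-c)}$ to the Gamma-function form, i.e.\ choosing the correct version of the Stirling inequality so that the constant $1/\sqrt{2\pi(d-c)}$ comes out exactly as stated rather than with a spurious factor. Using the one-sided effective bound from \eqref{eq:Stirling} at argument $d-c+1$ and the identity $\Gamma(d-c+1)=(d-c)\Gamma(d-c)$ handles this directly. A secondary (trivial) check is confirming the boundary behavior $h(0^+)=h(\infty)=0$ so that the critical point genuinely furnishes the global maximum; this uses only $d>c>0$, which is part of the hypothesis.
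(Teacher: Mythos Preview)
Your proposal is correct and follows essentially the same route as the paper: locate the interior maximum via the first-order condition, eliminate $\gamma$ at the critical point to reduce to $\sup_{x>0}x^{d-c}e^{-x}=(d-c)^{d-c}e^{-(d-c)}$, and then convert to the stated bound via the lower Stirling inequality applied at $d-c$ together with $\Gamma(d-c+1)=(d-c)\Gamma(d-c)$. Your caveat about the Stirling step is exactly the point the paper handles, and you resolve it the same way.
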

\begin{proof}
  We note that the limit of $w^{-c}\gamma(d,\mu w)$ is $0$ for both
  $w\to0^+$ (here we use that $d>c$) 
and $w\to+\infty$. This implies that the maximum value of
  $w^{-c}\gamma(d,\mu w)$ with $w\in\R^+$ occurs at a point where $\frac
  d {dw} (w^{-c}\gamma(d,\mu w))=0$. It is straightforward to see that
  this latter equation is equivalent to
  \[
  \gamma(d,\mu w)=\frac{e^{-\mu w}(\mu w)^d}{c}.
  \]
Therefore, we have
  \[
  \sup_{w\in\R^+}w^{-c}\gamma(d,\mu w)\leq \sup_{w\in\R^+} \frac{e^{-\mu w}w^{d-c}\mu^d}{c}.
  \]
  Another differentiation shows that the supremum of the latter expression occurs at $w=(d-c)/\mu$, which gives a final bound of
  \[
  \sup_{w\in\R^+}w^{-c}\gamma(d,\mu w)\leq \frac{\mu^ce^{-(d-c)}(d-c)^{d-c}}{c}<\frac{\mu^c\Gamma(d-c+1)}{c\sqrt{2\pi(d-c)}},
\]
where, to get the last bound, 
we used the lower bound in~\eqref{eq:Stirling}.
This is exactly what we wanted to prove.
\end{proof}

\begin{lemma}\label{leIneqBeta}
There exist functions $\beta_i:(0,1)\to\R^+$ for $i=1,2,3,4$, defined by
\begin{align}
\beta_1(\mu)&:=\sup_{w>0}\frac{w^{3/2}}{\erf(\mu\sqrt{w})}\int^{\mu}_0e^{-w y^2}\left(\cosh(wy^3)-1\right)\,dy,\label{eqIneqBeta1}\\
\beta_2(\mu)&:=\sup_{w>0}\frac{w^{3/2}}{\erf(\mu\sqrt{w})}\int^{\mu}_0ye^{-w y^2}\sinh(wy^3)\,dy,\label{eqIneqBeta2}\\
\beta_3(\mu)&:=\sup_{w>0}\frac{w^{3/2}}{\erf(\mu\sqrt{w})}\int^{\mu}_0e^{-w y^2}\sinh(wy^4)\,dy,\label{eqIneqBeta3}\\
\beta_4(\mu)&:=\sup_{w>0}\frac{w^{2}}{\erf(\mu\sqrt{w})}\int^{\mu}_0ye^{-w y^2}\sinh(wy^4)\,dy. \label{eqIneqBeta4}
\end{align}
Moreover, we have the following estimates for particular values:
\begin{align*}
\beta_1(20/27)&<1.39, & \beta_2(20/27)&<1.14, & \beta_3(2/3)&<0.73, & \beta_4(2/3)&<1.15.
\end{align*}
\end{lemma}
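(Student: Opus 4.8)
The plan is to establish the existence of the functions $\beta_i$ first, and then to bound the four indicated values numerically. For the existence, fix $\mu\in(0,1)$ and consider, say, the function
\[
w\mapsto \frac{w^{3/2}}{\erf(\mu\sqrt w)}\int_0^\mu e^{-wy^2}\bigl(\cosh(wy^3)-1\bigr)\,dy
\]
(and the analogous functions for $\beta_2,\beta_3,\beta_4$). First I would check that this function is continuous on $(0,\infty)$ and extends continuously to $[0,\infty]$: as $w\to0^+$ the integrand behaves like $\tfrac12w^2y^6$, so the integral is $O(w^2)$ while $\erf(\mu\sqrt w)\sim\tfrac{2}{\sqrt\pi}\mu\sqrt w$, giving an overall factor $O(w^3)\to0$; as $w\to\infty$ the substitution $y=x/\sqrt w$ turns the integral into $w^{-1/2}\int_0^{\mu\sqrt w}e^{-x^2}(\cosh(x^3/\sqrt w)-1)\,dx$, and since $\mu<1$ the cubic term in the exponent is dominated so that the integral remains bounded (one uses $\cosh(x^3/\sqrt w)-1\le \tfrac12 (x^3/\sqrt w)^2\cosh(x^3/\sqrt w)$ and absorbs the Gaussian), while $\erf(\mu\sqrt w)\to1$; a short estimate shows the whole expression tends to $0$. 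Hence the supremum over $w>0$ is attained and finite, so $\beta_i(\mu)$ is well-defined for every $\mu\in(0,1)$; the same argument (with the extra factor $y$, or the power $y^4$, or $w^2$ in place of $w^{3/2}$) handles $\beta_2,\beta_3,\beta_4$. The condition $\mu<1$ is exactly what guarantees the quartic/cubic terms do not overwhelm the Gaussian as $w\to\infty$; this is the one genuinely delicate point in the existence argument.

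For the numerical bounds at $\mu=20/27$ and $\mu=2/3$, I would reduce each supremum over $w$ to a computation on a finite interval plus a crude tail estimate. The substitution $y=x/\sqrt w$ rewrites, for instance,
\[
\beta_1(\mu)=\sup_{w>0}\frac{1}{\erf(\mu\sqrt w)}\int_0^{\mu\sqrt w}e^{-x^2}\Bigl(\cosh\bigl(x^3/\sqrt w\bigr)-1\Bigr)\,dx,
\]
which is manifestly increasing in $w$ for fixed $\mu$ (the integrand increases pointwise and the upper limit increases, while $1/\erf(\mu\sqrt w)$ decreases — here one checks that the product is still increasing, or more simply one bounds $1/\erf(\mu\sqrt w)\le 1/\erf(\mu\sqrt{w_0})$ on $w\ge w_0$). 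Thus for $w\ge w_0$ one has $\beta_1(\mu)\le \tfrac{1}{\erf(\mu\sqrt{w_0})}\int_0^\infty e^{-x^2}(\cosh(x^3/\sqrt{w_0})-1)\,dx$ and the right-hand side is an explicit convergent integral one can bound from above; choosing $w_0$ moderately large makes this tail bound comfortably below the target. On the complementary range $0<w\le w_0$ one evaluates the supremum of a smooth explicit function of one variable $w$: as in the proof of Lemma~\ref{leUVBounds}, sample $w$ at $M$ equally spaced points, add $\tfrac{1}{M}\sup_{0<w\le w_0}\bigl|\tfrac{d}{dw}(\cdots)\bigr|$ (which is an elementary, finite quantity), and check the maximum over the grid stays below the claimed constant. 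Doing this for all four functions $\beta_1,\beta_2$ at $\mu=20/27$ and $\beta_3,\beta_4$ at $\mu=2/3$ yields the four stated inequalities.

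The main obstacle is not conceptual but bookkeeping: one must pick the interior cutoff $w_0$ and the grid parameters so that \emph{both} the tail bound and the grid-plus-derivative bound land safely under the prescribed constants $1.39$, $1.14$, $0.73$, $1.15$, with a little room to spare. The values $20/27$ and $2/3$ for $\mu$ are close to $1$, so the $\cosh/\sinh$ factors contribute non-trivially and the margins are not enormous; care is needed that the crude $\tfrac1M$-type error term and the tail estimate together do not eat up the margin. A secondary point worth spelling out is the monotonicity (or at least boundedness) of $1/\erf(\mu\sqrt w)$ times the transformed integral on $w\ge w_0$, which justifies replacing the supremum over the infinite tail by a single evaluation; this is elementary but should be stated explicitly so that the tail estimate is rigorous rather than merely plausible.
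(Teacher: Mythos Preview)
Your substitution is wrong, and this error propagates into two false claims that break the argument. After $y=x/\sqrt w$ one has $dy=dx/\sqrt w$, so
\[
\frac{w^{3/2}}{\erf(\mu\sqrt w)}\int_0^\mu e^{-wy^2}\bigl(\cosh(wy^3)-1\bigr)\,dy
=\frac{w}{\erf(\mu\sqrt w)}\int_0^{\mu\sqrt w}e^{-x^2}\bigl(\cosh(x^3/\sqrt w)-1\bigr)\,dx,
\]
with an extra factor~$w$ in front, not~$1$. With this factor present, the expression does \emph{not} tend to~$0$ as $w\to\infty$: by dominated convergence (using $\cosh t-1\le\tfrac12 t^2\cosh t$ and $e^{-x^2}\cosh(x^3/\sqrt w)\le e^{-(1-\mu)x^2}$) one gets the limit $\tfrac12\int_0^\infty x^6e^{-x^2}\,dx=15\sqrt\pi/32\approx0.83$. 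So the existence argument survives (the limit is finite), but your claimed limit is incorrect. More seriously, the monotonicity claim is false: the actual supremum for $\mu=20/27$ is about $1.386$, strictly larger than the limit $0.83$, so $b_1(\mu,w)$ has an interior maximum and is certainly not increasing in~$w$. Consequently your tail bound ``replace the supremum over $w\ge w_0$ by a single evaluation at $w_0$'' has no justification.

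The paper handles the tail $w>w_0$ completely differently: it Taylor-expands $\cosh(wy^3)-1=\sum_{k\ge1}(wy^3)^{2k}/(2k)!$, integrates term by term to get lower incomplete gamma functions, and then bounds each term via the elementary estimate $\sup_{w>0}w^{-c}\gamma(d,\mu w)\le \mu^c\Gamma(d-c+1)/(c\sqrt{2\pi(d-c)})$ (Lemma~\ref{leIneqGamma}). This gives a $w$-independent bound on the series valid for all large~$w$, and evaluating it at $w_0=80$ yields a number below~$1.39$. Your grid-plus-derivative approach for the bounded range $w\in[0,w_0]$ is fine and is exactly what the paper does there; but you need a genuinely different mechanism for the tail.
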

\begin{proof}
We provide here only the proof concerning $\beta_1$. The
proofs for the other three suprema are completely analogous. 

\medskip
We must first show that the supremum in \eqref{eqIneqBeta1} is always finite.
Let
\[
b_1(\mu,w):=\frac{w^{3/2}}{\erf(\mu\sqrt{w})}\int^{\mu}_0e^{-w y^2}\left(\cosh(wy^3)-1\right)\,dy
\]
abbreviate the function of which we want to take the supremum.
We first note that the integrand in the above integral 
is bounded above by $\exp(-w y^2(1-y))$ and therefore also by~1. Hence,
\[
b_1(\mu,w)\leq \frac{\mu w^{3/2}}{\erf(\mu\sqrt{w})}.
\]

On the other hand, we perform a Taylor expansion of $\cosh(wx^3)-1$, and define
\[
u_1(k,\mu,w):=\frac{w^{3/2}}{(2k)!}\int^{\mu}_0e^{-w y^2}w^{2k}y^{6k}\,dy=\frac{\gamma(3k+1/2,\mu^2w)}{2(2k)!\,w^{k-1}},
\]
so that
\begin{align*}
b_1(\mu,w)&=\frac{1}{\erf(\mu\sqrt{w})}\sum_{k=1}^{\infty}u_1(k,\mu,w).
\end{align*}
Now, Lemma~\ref{leIneqGamma} implies that
\[
u_1(k,\mu,w)<\frac{\mu^{2k-2}\Gamma(2k+5/2)}{2(2k)!\,(k-1)\sqrt{(4k+3)\pi}}<\frac{(k+1)}{(k-1)\sqrt{2\pi}}\mu^{2k-2},
\]
where we used \eqref{eq:Stirling} to obtain the last inequality.
On the other hand, we trivially have
\[
u_1(k,\mu,w)<\frac{\Gamma(3k+1/2)}{2(2k)!w^{k-1}}.
\]
Both bounds combined, we find
\begin{align*}
b_1(\mu,w)\leq \frac{1}{\erf(\mu\sqrt{w})}\min\left(\mu w^{3/2},\frac{\Gamma(7/2)}{4}+\frac{\Gamma(13/2)}{48w}+\frac{1}{\sqrt{2\pi}}\sum_{k=3}^{\infty}\frac{k+1}{k-1}\mu^{2k-2}\right).
\end{align*}
This confirms the finiteness of the supremum in \eqref{eqIneqBeta1}
and therefore the existence of the function $\beta_1$. 

\medskip
In order to determine the particular value $\beta_1(20/27)$ (at least
approximately), we first dispose of large $w$ by providing an upper bound for $b_1(20/27,w)$ for $w>w_0:=80$. Indeed, in this regime we have $\mu\sqrt{w}>6$, and therefore
\begin{align*}
b_1(20/27,w)<\frac{1}{\erf 6}\left(\frac{\Gamma(7/2)}{4}+\frac{\Gamma(13/2)}{48w_0}+\frac{1}{\sqrt{2\pi}}\sum_{k=3}^{\infty}\frac{k+1}{k-1}(20/27)^{2k-2}\right)<1.37.
\end{align*}

We then determine the supremum of $b_1(20/27,w)$ over the interval
$[0,w_0]$ by a routine calculation. Namely, to begin with,
we provide a crude upper bound for the derivative $\frac{\partial
  b_1}{\partial w}(\mu,w)$ in this interval.  
To this end, we argue that the inequality $\erf(x)>x/(1+x)$ implies that
\[
\frac{w^{3/2}}{\erf(\mu\sqrt{w})}<\frac{w(1+\mu\sqrt{w})}{\mu}
\]
and
\begin{align*}
\left|\frac{\partial}{\partial
  w}\frac{w^{3/2}}{\erf(\mu\sqrt{w})}\right|
&=
\left|\frac{3\sqrt w}{2\erf(\mu\sqrt{w})}
-\frac{\mu we^{-\mu^2 w}}{\sqrt\pi\erf^2(\mu\sqrt{w})}\right|\\
&\le
\frac{3\sqrt w}{2\erf(\mu\sqrt{w})}
+\frac{\mu we^{-\mu^2 w}}{\sqrt\pi\erf^2(\mu\sqrt{w})}\\
&<
\frac{3(1+\mu\sqrt{w})}{2\mu}
+\frac{(1+\mu\sqrt{w})^2}{\mu
\sqrt\pi}
<\frac{4(1+\mu\sqrt{w})^2}{\mu\sqrt\pi}.
\end{align*}
On the other hand, for all $y\in[0,\mu]$ we have
$$
e^{-w y^2}\left(\cosh(wy^3)-1\right)<e^{-w y^2+w y^3}\leq1,
$$
and
\begin{align*}
\left|\frac{\partial}{\partial w}e^{-w
  y^2}\left(\cosh(wy^3)-1\right)\right|&=\left|y^2e^{-w y^2}(y
\sinh(wy^3)-\cosh(w y^3)+1)\right|\\
&<y^2e^{-w y^2+w y^3}\leq\mu^2.
\end{align*}
Combining these inequalities, we obtain
\begin{align*}
\left|\frac{\partial b_1}{\partial w}(\mu,w)\right|
&\le\left|
\frac{w^{3/2}}{\erf(\mu\sqrt{w})}\int^{\mu}_0
\frac {\partial } {\partial w}\left(
e^{-w  y^2}\left(\cosh(wy^3)-1\right)\right)\,dy
\right|\\
&\kern2cm
+\left|
\left(\frac {\partial } {\partial w}
\frac{w^{3/2}}{\erf(\mu\sqrt{w})}\right)
\int^{\mu}_0e^{-w
  y^2}\left(\cosh(wy^3)-1\right)\,dy\right|\\
&\leq (\mu^2)w(1+\mu\sqrt{w})+\frac{4(1+\mu\sqrt{w})^2}{\sqrt\pi}<\frac{6}{\sqrt\pi}w_0(1+\mu\sqrt{w_0})^2.
\end{align*}

With this upper bound proven, we choose $M=10^6$ uniformly distributed points in the interval $[0,w_0]$, and argue that
\[
\sup_{w\in[0,w_0]}b_1(20/27,w)\leq
\sup_{0\le m\le M}b_1\left(20/27,\frac{m}{M}w_0\right)+\frac{w_0}{M}
\sup_{w\in[0,w_0]}\left|\frac{\partial b_1}{\partial w}(20/27,w)\right|.
\]
The result of this calculation turns out to be $1.3860<1.39$ (accurate to the last significant digit given), which finishes the proof.
\end{proof}

\begin{cor} \label{cor:beta}
For $u,v\in\R^+$ and $x_0\in[0,u/v]$, we have
\begin{align}
\int^{x_0}_0e^{-u x^2}\left(\cosh(v x^3)-1\right)\,dx\leq \beta_1\left(x_0\frac{v}{u}\right)\frac{v^2}{u^{7/2}}\erf(x_0 \sqrt{u}),\label{coIneqBeta1}\\
\int^{x_0}_0xe^{-u x^2}\sinh(v x^3)\,dx\leq \beta_2\left(x_0\frac{v}{u}\right)\frac{v}{u^{5/2}}\erf(x_0 \sqrt{u}),\label{coIneqBeta2}\\
\int^{x_0}_0e^{-u x^2}\sinh(v x^4)\,dx\leq \beta_3\left(x_0\sqrt\frac{v}{u}\right)\frac{v}{u^{5/2}}\erf(x_0 \sqrt{u}),\label{coIneqBeta3}\\
\int^{x_0}_0xe^{-u x^2}\sinh(v x^4)\,dx\leq \beta_4\left(x_0\sqrt\frac{v}{u}\right)\frac{v}{u^3}\erf(x_0 \sqrt{u}).\label{coIneqBeta4}
\end{align}
\end{cor}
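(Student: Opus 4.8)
The plan is to derive all four inequalities from Lemma~\ref{leIneqBeta} by a single device: a linear substitution $x=cy$ chosen so that the Gaussian weight $e^{-ux^2}$ and the argument of the hyperbolic function are simultaneously normalised to the shapes $e^{-wy^2}$ and $\cosh(wy^3)$ (respectively $\sinh(wy^3)$, $\sinh(wy^4)$) occurring in the definitions \eqref{eqIneqBeta1}--\eqref{eqIneqBeta4}. This reduces each integral, up to an explicit constant, to one of the integrals over which $\beta_1,\dots,\beta_4$ take their suprema.

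For \eqref{coIneqBeta1} I would set $x=cy$ with $c=u/v$: then $-ux^2=-wy^2$ and $vx^3=wy^3$, where $w:=uc^2=u^3/v^2$, the two conditions $uc^2=w$ and $vc^3=w$ being exactly what forces this value of~$c$. The upper limit becomes $\mu:=x_0/c=x_0v/u$, which the hypothesis $x_0\le u/v$ places in $[0,1]$, the domain of $\beta_1$. Hence
\[
\int_0^{x_0}e^{-ux^2}\bigl(\cosh(vx^3)-1\bigr)\,dx
=c\int_0^{\mu}e^{-wy^2}\bigl(\cosh(wy^3)-1\bigr)\,dy
\le c\,\beta_1(\mu)\,\frac{\erf(\mu\sqrt w)}{w^{3/2}}
\]
by the very definition \eqref{eqIneqBeta1} of~$\beta_1$. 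It then remains to simplify the constants: one checks $\mu\sqrt w=x_0\sqrt u$, so the error function is precisely $\erf(x_0\sqrt u)$, and $c\,w^{-3/2}=v^2/u^{7/2}$, which is \eqref{coIneqBeta1}. Inequality \eqref{coIneqBeta2} follows identically with the same $c$ and $w$; the extra factor $x=cy$ in the integrand contributes an overall $c^2$ in place of $c$, and $c^2w^{-3/2}=v/u^{5/2}$. For \eqref{coIneqBeta3} and \eqref{coIneqBeta4} the argument of the hyperbolic function is a quartic, so the normalising conditions $uc^2=w$, $vc^4=w$ now force $c=\sqrt{u/v}$ and $w:=u^2/v$; the upper limit becomes $\mu:=x_0/c=x_0\sqrt{v/u}$, which is the argument of $\beta_3,\beta_4$ in the statement and which the hypothesis keeps in $(0,1)$. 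Substituting and invoking \eqref{eqIneqBeta3} resp.~\eqref{eqIneqBeta4}, one again verifies $\mu\sqrt w=x_0\sqrt u$ and reads off the constants $c\,w^{-3/2}=v/u^{5/2}$ and $c^2w^{-2}=v/u^3$, giving the two claimed bounds.

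There is essentially no analytic content left to prove here — the finiteness of the four suprema $\beta_i(\mu)$ for $\mu\in(0,1)$ is already supplied by Lemma~\ref{leIneqBeta}. The only thing requiring genuine care is the bookkeeping: picking the scaling $c$ correctly (it differs between the cubic and quartic cases), checking in each of the four cases the identity $\mu\sqrt w=x_0\sqrt u$ so that the error-function factors on the two sides match, and collecting the resulting powers of $u$ and~$v$. One should also confirm that the hypothesis on $x_0$ puts the transformed upper limit $\mu$ inside the interval $(0,1)$ on which $\beta_i$ is defined; this is immediate for \eqref{coIneqBeta1}--\eqref{coIneqBeta2}, and for \eqref{coIneqBeta3}--\eqref{coIneqBeta4} it holds throughout the range in which the corollary is applied in the proof of Lemma~\ref{leGaussian}, where moreover the sharper bounds $\mu_3,\mu_4<1$ are assumed outright.
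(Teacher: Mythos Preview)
Your proposal is correct and is essentially identical to the paper's own proof, which also derives the four inequalities from Lemma~\ref{leIneqBeta} via the substitutions $y=(v/u)x$, $w=u^3/v^2$ for \eqref{coIneqBeta1}--\eqref{coIneqBeta2} and $y=\sqrt{v/u}\,x$, $w=u^2/v$ for \eqref{coIneqBeta3}--\eqref{coIneqBeta4}. Your additional remark about the domain of $\beta_3,\beta_4$ is a valid observation that the paper leaves implicit.
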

\begin{proof}
This follows immediately from Lemma~\ref{leIneqBeta} by, on the one
hand, performing the substitutions $y\to (v/u)x$ and $w\to u^3/v^2$ in
\eqref{eqIneqBeta1} and \eqref{eqIneqBeta2}, and performing the substitutions $y\to (\sqrt{v/u})x$ and $w\to u^2/v$ in \eqref{eqIneqBeta3} and \eqref{eqIneqBeta4}.
\end{proof}

\subsection{A Maclaurin summation estimate}

The following upper bound for an alternating sum is crucial in the
proof of Lemma~\ref{leDerivBounds}, see~\eqref{eqVjPrelimUpperBound}.

\begin{lemma}\label{leAlternatingSum}
For $n\in\Z^+$ and $f\in C^4[0,3n]$, we have
\begin{multline*}
\left|\sum_{k=1}^n \left(f(3k-2)-f(3k-1)\right)\right|\\
\leq \frac13\left|f(3n)-f(0)\right|+\frac23\left|f''(3n)-f''(0)\right|+\frac{11}{96}\sup_{x\in[0,3n]}|f^{(4)}(x)|.
\end{multline*}
\end{lemma}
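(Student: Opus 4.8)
The plan is to convert the sum into an exact Euler--Maclaurin-type identity and then to bound a single remainder integral. First I would write $f(3k-2)-f(3k-1)=-\int_{3k-2}^{3k-1}f'(t)\,dt$ and sum over $k$, obtaining
\[
\sum_{k=1}^n\bigl(f(3k-2)-f(3k-1)\bigr)=-\int_0^{3n}f'(t)\,w(t)\,dt ,
\]
where $w$ is the $3$-periodic step function equal to $1$ on each interval $(3\ell+1,3\ell+2)$, $\ell\in\Z$, and to $0$ elsewhere. Its mean over a period is $\tfrac13$, so $w=\tfrac13+\tilde w$ with $\tilde w$ of mean zero, and the constant part of $w$ already produces the term $-\tfrac13\bigl(f(3n)-f(0)\bigr)$.

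Next, since $\tilde w$ has mean zero it admits a $3$-periodic primitive $\Phi_1$; normalising $\Phi_1$ to have mean zero it in turn admits a $3$-periodic primitive $\Phi_2$, and likewise $\Phi_3$. Integrating $\int_0^{3n}f'\tilde w$ by parts three times and using $\Phi_j(3n)=\Phi_j(0)$ to collapse the boundary contributions yields the identity
\[
\sum_{k=1}^n\bigl(f(3k-2)-f(3k-1)\bigr)
=-\tfrac13\bigl(f(3n)-f(0)\bigr)+\Phi_2(0)\bigl(f''(3n)-f''(0)\bigr)+\int_0^{3n}f^{(4)}(t)\,\Phi_3(t)\,dt .
\]
The structural point is that the weight pattern is antisymmetric within a period, which forces $\Phi_1$ and $\Phi_3$ to be odd $3$-periodic functions; hence $\Phi_1(0)=\Phi_3(0)=0$ and the boundary terms involving $f'$ and $f'''$ drop out. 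An elementary computation of the piecewise-linear $\Phi_1$, the piecewise-quadratic $\Phi_2$, and the piecewise-cubic $\Phi_3$ then gives $\Phi_2(0)$; one checks that $|\Phi_2(0)|\le\tfrac23$ (in fact $|\Phi_2(0)|=\tfrac19$, which a fortiori suffices).

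Finally, to estimate the remainder I would use that $\Phi_3$ is $3$-periodic, so $\int_0^{3n}|\Phi_3|=n\int_0^3|\Phi_3|$, and evaluate the explicit piecewise cubic: on $[1,2]$ its unique interior sign change sits at the midpoint $t=\tfrac32$, which is precisely what makes the integral come out to the clean value $\int_0^3|\Phi_3|=\tfrac{11}{96}$. Therefore $\bigl|\int_0^{3n}f^{(4)}\Phi_3\bigr|\le\tfrac{11}{96}\,n\sup_{[0,3n]}|f^{(4)}|$, which is the form of the last term actually needed (cf.~\eqref{eqVjPrelimUpperBound}), and combining this with the previous paragraph gives the assertion. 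The work is entirely explicit; the only points demanding care are verifying the vanishing $\Phi_1(0)=\Phi_3(0)=0$ (the mechanism suppressing the odd-order terms) and, above all, computing $\Phi_3$ together with the location of its zeros, since the evaluation $\int_0^3|\Phi_3|=\tfrac{11}{96}$ is what fixes the constant in the statement.
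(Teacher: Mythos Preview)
Your approach is essentially the same as the paper's: both are Euler--Maclaurin with periodic Bernoulli-type kernels. The paper invokes the offset Maclaurin summation formula directly, obtaining the remainder kernel $\frac{9}{8}\bigl(\bar B_4(\tfrac{2-x}{3})-\bar B_4(\tfrac{1-x}{3})\bigr)$ and then evaluating $\int_0^{3n}|\cdot|=\tfrac{11n}{108}$, which after the factor $\tfrac{9}{8}$ gives $\tfrac{11n}{96}$; your iterated primitives $\Phi_1,\Phi_2,\Phi_3$ are exactly the same objects rebuilt by hand, and your computation $\int_0^3|\Phi_3|=\tfrac{11}{96}$ is correct and matches.

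One point worth noting: your value $|\Phi_2(0)|=\tfrac19$ for the $f''$ coefficient is in fact the correct one (a quick check with $f(x)=x^3/6$, $n=1$ confirms it), whereas the paper's displayed coefficient $-\tfrac23$ appears to be a slip. This does not affect the lemma as stated, since $\tfrac19\le\tfrac23$, but it means your version is sharper. You also correctly flag that the last term should carry a factor~$n$ (as it is actually used in~\eqref{eqVjPrelimUpperBound}); the statement as printed omits it.
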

\begin{proof}
We use the offset Maclaurin summation formula (see, for example, \cite[Theorem~D.2.4]{MR1994507}) to see that
\begin{align*}
\sum_{k=1}^n \left(f(3k-2)-f(3k-1)\right)
&=\sum_{k=1}^{4}\frac{3^{k-1}(B_k(2/3)-B_k(1/3))}{k!}\left(f^{(k-1)}(3n)-f^{(k-1)}(0)\right)\\
&\kern1cm
-\frac98\int^{3n}_{0}f^{(4)}(x)\left(\bar{B}_4\left(\frac{2-x}{3}\right)-\bar{B}_4\left(\frac{1-x}{3}\right)\right)dx\\
&=\frac13\left(f(3n)-f(0)\right)-\frac23\left(f''(3n)-f''(0)\right)\\
&\kern1cm
-\frac98\int^{3n}_{0}f^{(4)}(x)\left(\bar{B}_4\left(\frac{2-x}{3}\right)-\bar{B}_4\left(\frac{1-x}{3}\right)\right)dx,
\end{align*}
where the Bernoulli polynomials $B_k(u)$ are defined by
$$
\sum_{k\ge0}B_k(u)\frac {t^k} {k!}=\frac {te^{ut}} {e^t-1},
$$ 
and $\bar{B}_k(u)=B_k\left(\{u\}\right)$, with $\{u\}$ denoting the
fractional part of~$u$ as usual, is the $k$-th
periodic Bernoulli function.
The lemma follows from the fact that
\[
\int^{3n}_{0}\left|\bar{B}_4\left(\frac{2-x}{3}\right)-\bar{B}_4\left(\frac{1-x}{3}\right)\right|\,dx=\frac{11n}{108}.
\qedhere
\]
\end{proof}

\subsection{Estimates for sums and differences of exponentials}

Here we record two elementary estimates for the difference respectively the sum
of two exponentials that are used in the proof of
Lemma~\ref{leGaussian}. 

\begin{lemma}\label{leIneqComplex}
For $z,w\in\C$, we have the following inequalities:
\begin{align}
  \left|e^z-e^w\right|&\leq 2\sinh\max(|z|,|w|)+2\sinh\frac{|z+w|}{2}, \label{eqIneqComplexO}\\
  \left|e^z+e^w-2\right|&\leq 2\cosh\max(|z|,|w|)-2+2\sinh\frac{|z+w|}{2}. \label{eqIneqComplexE}
\end{align}
\end{lemma}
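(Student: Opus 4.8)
The plan is to prove both inequalities by the same elementary device: an asymmetric splitting that isolates one exponential together with its ``reflection'', so that the surviving difference factors through the half-sum $\tfrac{z+w}{2}$. Since the statement is symmetric under $z\leftrightarrow w$ (all three quantities $|e^z-e^w|$, $|e^z+e^w-2|$, $\max(|z|,|w|)$, $|z+w|$ are unchanged), we may assume $\Re z\ge\Re w$ throughout. The only auxiliary facts needed are the termwise power-series estimates $|\sinh\zeta|\le\sinh|\zeta|$ and $|\cosh\zeta-1|\le\cosh|\zeta|-1$ for $\zeta\in\C$ (immediate since all Taylor coefficients of $\sinh$ and of $\cosh\,{-}\,1$ are positive reals), the monotonicity of $\sinh$ and $\cosh$ on $[0,\infty)$, and the factorisation $e^a-e^b=2e^{(a+b)/2}\sinh\tfrac{a-b}{2}$.

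For \eqref{eqIneqComplexO} I would write
\[
e^z-e^w=2\sinh z+\bigl(e^{-z}-e^w\bigr),
\]
and, applying the factorisation with $a=-z$, $b=w$, note $e^{-z}-e^w=-2e^{(w-z)/2}\sinh\tfrac{z+w}{2}$. Taking moduli and using the triangle inequality twice gives
\[
\bigl|e^z-e^w\bigr|\le 2\,|\sinh z|+2\,e^{\Re((w-z)/2)}\,\Bigl|\sinh\tfrac{z+w}{2}\Bigr|.
\]
Now $|\sinh z|\le\sinh|z|\le\sinh\max(|z|,|w|)$, and $\bigl|\sinh\tfrac{z+w}{2}\bigr|\le\sinh\tfrac{|z+w|}{2}$; finally $e^{\Re((w-z)/2)}\le1$ because $\Re w\le\Re z$. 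Combining these three bounds yields exactly \eqref{eqIneqComplexO}.

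For \eqref{eqIneqComplexE} the identical argument applies, with $\cosh$ replacing $\sinh$ in the ``diagonal'' term: write
\[
e^z+e^w-2=\bigl(2\cosh z-2\bigr)+\bigl(e^w-e^{-z}\bigr),
\]
factor $e^w-e^{-z}=2e^{(w-z)/2}\sinh\tfrac{z+w}{2}$ (the factorisation with $a=w$, $b=-z$), and bound $|\cosh z-1|\le\cosh|z|-1\le\cosh\max(|z|,|w|)-1$ together with $e^{\Re((w-z)/2)}\le1$ and $\bigl|\sinh\tfrac{z+w}{2}\bigr|\le\sinh\tfrac{|z+w|}{2}$. This gives $\bigl|e^z+e^w-2\bigr|\le 2\cosh\max(|z|,|w|)-2+2\sinh\tfrac{|z+w|}{2}$, as claimed.

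There is no genuine obstacle here; the single point requiring thought is the choice of split, namely keeping the surviving exponential prefactor $e^{(w-z)/2}$ on the side where its real part is $\le 0$ — which is precisely what the normalisation $\Re z\ge\Re w$ secures, and which is why the naive symmetric estimate $|e^z-e^w|\le 2e^{|z+w|/2}\sinh\tfrac{|z-w|}{2}$ is not good enough. The remaining items are routine, but when writing the details one should verify the two algebraic factorisations of $e^{-z}-e^w$ and $e^w-e^{-z}$ and double-check the final monotonicity steps $|z|\le\max(|z|,|w|)$ and $\tfrac{|z+w|}{2}\ge 0$.
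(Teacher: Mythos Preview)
Your proof is correct and is essentially identical to the paper's: the same normalisation $\Re z\ge\Re w$, the same asymmetric split $e^z-e^w=(e^z-e^{-z})+(e^{-z}-e^w)$ (respectively $e^z+e^w-2=(e^z+e^{-z}-2)+(e^w-e^{-z})$), and the same three bounds on the resulting pieces. The paper presents it slightly more tersely by applying the triangle inequality first and then bounding $|e^{-z}-e^w|$ directly, but the content is the same.
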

\begin{proof}
  Without loss of generality, we assume that $\Re (w-z)\leq0$. By the triangle inequality, we have
  \begin{align*}
    \left|e^z-e^w\right|&\leq\left|e^z-e^{-z}\right|+\left|e^{-z}-e^w\right|\\
    &\leq 2\sinh|z|+2\left|e^{(w-z)/2}\right|\sinh\frac{|z+w|}{2}\\
    &\leq 2\sinh\max(|z|,|w|)+2\sinh\frac{|z+w|}{2},
  \end{align*}
  and
  \begin{align*}
    \left|e^z+e^w-2\right|&\leq\left|e^z+e^{-z}-2\right|+\left|e^{-z}-e^w\right|\\
    &\leq 2\cosh|z|-2+2\left|e^{(w-z)/2}\right|\sinh\frac{|z+w|}{2}\\
    &\leq 2\cosh\max(|z|,|w|)-2+2\sinh\frac{|z+w|}{2}.
\qedhere
  \end{align*}
\end{proof}

\subsection{Inequalities for the sums $X_j(n,r)$}

The lemma below provides inequalities for various expressions
involving the sums $X_j(n,r)$  defined in \eqref{eqXmDef}.
These are used in the proof of Lemma~\ref{leDerivBounds} and
for the proof of several particular bounds presented in
Corollary~\ref{coIneqX} below. In their turn, the bounds of the
corollary are used in Lemmas~\ref{leIneqPeak} and~\ref{leIneqTail}.

\begin{lemma}\label{leIneqX}
For $n\in\Z^+$ and $r\in(0,1]$, we have the following inequalities
  concerning the quantities $X_j(n,r)$:
\begin{enumerate}
  \item 
  \begin{equation}\label{eqX1Lower}
    X_1(n,r)\geq\frac{r(1+2r+2r^3+r^4)(1-r^{3n})(1-r^{3n/2})}{(1-r^3)^2}.
  \end{equation}
  \item For $j=0,1,2,3$, we have
  \begin{equation}\label{eqXjBound1}
    \frac{X_{j+1}(n,r)}{X_0(n,r)X_j(n,r)}\leq \frac{X_{j+1}(\infty,r)}{X_0(\infty,r)X_j(\infty,r)}.
  \end{equation}
  \item For $j=0,1,2$, we have
  \begin{equation}\label{eqXjBound2}
    \frac{X_j(n,r)X_{j+2}(n,r)}{X_{j+1}^2(n,r)}\leq \frac{X_{j}(\infty,r)X_{j+2}(\infty,r)}{X_{j+1}^2(\infty,r)}.
  \end{equation}
\end{enumerate}

\end{lemma}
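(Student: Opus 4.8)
The plan is to handle the three parts separately, in each case reducing to the infinite sums, for which $X_0(\infty,r)=\frac{r(1+r)}{1-r^3}$ and $X_1(\infty,r)=\frac{r(1+2r+2r^3+r^4)}{(1-r^3)^2}$ (obtained by applying $\sum_{k\ge1}kx^k=x/(1-x)^2$ to $\sum kr^k-3\sum kr^{3k}$).

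For part (1) I would first produce the closed form of $X_1(n,r)$: applying $\sum_{k=1}^{N}kx^k=\frac{x(1-(N+1)x^N+Nx^{N+1})}{(1-x)^2}$ to $\sum_{k=1}^{3n}kr^k$ and to $3\sum_{k=1}^{n}k(r^3)^k$, and simplifying via $(1-r^3)^2=(1-r)^2(1+r+r^2)^2$, one gets
\[
X_1(n,r)=X_1(\infty,r)(1-r^{3n})-3n\,r^{3n}X_0(\infty,r).
\]
Substituting this, the asserted inequality becomes equivalent (after cancelling the positive factor $X_1(\infty,r)(1-r^{3n})$) to $\dfrac{X_1(\infty,r)}{X_0(\infty,r)}(1-r^{3n})\ge 3n\,r^{3n/2}$, i.e.\ to $\dfrac{1+2r+2r^3+r^4}{(1+r)(1-r^3)}(1-r^{3n})\ge 3n\,r^{3n/2}$. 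The decisive step is the sharp elementary bound $\dfrac{1+2r+2r^3+r^4}{1+r}\ge 3r^{3/2}$ on $[0,1]$ (provable by putting $r=u^2$ and checking that $1+2u^2-3u^3-3u^5+2u^6+u^8=(1-u)^2q(u)$ with $q\ge0$ on $[0,1]$); with it, the left-hand side is at least $\dfrac{3r^{3/2}}{1-r^3}(1-r^{3n})=3\sum_{i=0}^{n-1}r^{3i+3/2}$, and since the exponents $3i+\tfrac32$, $i=0,\dots,n-1$, have arithmetic mean exactly $\tfrac{3n}{2}$, AM--GM gives $\sum_{i=0}^{n-1}r^{3i+3/2}\ge n\,r^{3n/2}$, which finishes part (1). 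The choice of the exponent $\tfrac32$ is essential: a cruder bound such as $1+2r+2r^3+r^4\ge 3r^2(1+r)$ would push the mean of the exponents to $\tfrac{3n+1}{2}$ and the argument would fail.

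For parts (2) and (3) I would use the splitting $X_i(\infty,r)=X_i(n,r)+Y_i$ with $Y_i:=\sum_{k>3n,\,3\nmid k}k^ir^k$, and rewrite the two assertions in terms of $\beta_i:=X_i(n,r)/X_i(\infty,r)$: after clearing denominators, assertion (2) is exactly $\beta_{j+1}\le\beta_0\beta_j$ and assertion (3) is exactly the log-concavity relation $\beta_{j+1}^2\ge\beta_j\beta_{j+2}$. In each case one expands the inequality into a sum over triples, resp.\ quadruples, of indices in $\{k:3\nmid k\}$, partitions the index set according to which coordinates are ``small'' ($\le3n$) and which are ``big'' ($>3n$), and is left with a combination of products of the moments $X_i(n,r)$ and $Y_i$. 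This combination is then shown to be sign-definite using Cauchy--Schwarz applied separately to $(X_i(n,r))_i$ and to $(Y_i)_i$ (log-convexity of moment sequences) together with the separation of supports: $X_{j+1}(n,r)/X_j(n,r)\le 3n-1$ and $X_j(n,r)/X_0(n,r)\le(3n)^j$, while $Y_{j+1}/Y_j\ge 3n+1$ and $Y_j/Y_0\ge(3n)^j$.

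The main obstacle is that each of the three inequalities is attained in an appropriate limit — part (1) as $r\to1^-$ with $3n(1-r)$ held fixed, parts (2) and (3) as $n\to\infty$ with $r$ fixed (where all $\beta_i\to1$) — so every estimate along the way must be sharp to leading order and lossy simplifications are excluded. For part (1) this is exactly what forces the exponent $\tfrac32$ above. I expect the genuinely delicate piece to be part (3): the log-concavity $\beta_{j+1}^2\ge\beta_j\beta_{j+2}$ sits just barely inside what the support-separation bounds yield (one essentially needs that, after the $k^j$-tilt, the tail on $(3n,\infty)$ is, multiplicatively, at least as dispersed as the truncation on $[1,3n]$), so there the finer structure of the index set $\{k:3\nmid k\}$ — not merely support separation — will have to be exploited, and combining the various Cauchy--Schwarz estimates without waste is where the real work lies.
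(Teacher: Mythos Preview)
Your part (1) is correct and is in fact the same proof as the paper's, just phrased in a slightly different order: the paper's two key steps are exactly (i) $3r^{3/2}(1+r)\le 1+2r+2r^3+r^4$ and (ii) $3n\,r^{3n}(1-r^3)\le 3r^{3/2}\,r^{3n/2}(1-r^{3n})$, which is your AM--GM inequality $\sum_{i=0}^{n-1}r^{3i+3/2}\ge n\,r^{3n/2}$ rewritten.

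For parts (2) and (3), what you have is a plan rather than a proof, and the plan as stated does not go through with the bounds you list. Take the simplest case, part (2) with $j=0$. Writing $A_i=X_i(n,r)$ and $B_i=Y_i$, the claim $\beta_1\le\beta_0^2$ unwinds to
\[
A_0^2B_1\ \ge\ 2A_0B_0A_1+B_0^2A_1.
\]
Your support-separation bounds $A_1/A_0\le 3n-1$ and $B_1/B_0\ge 3n+1$ give only $A_0^2B_1\ge(3n+1)A_0^2B_0$ on the left, while on the right they give at best $(3n-1)A_0B_0(2A_0+B_0)$. For $r$ close to $1$ with $n$ fixed, $A_0\to 2n$ while $B_0\sim\frac{2}{3(1-r)}\to\infty$, so this comparison fails badly. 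If you compute the true asymptotics, $B_1/B_0\sim 3n+(1-r)^{-1}$ and $B_0\sim\frac{2}{3}(1-r)^{-1}$, you find the actual inequality holds with \emph{exactly} a factor of $2$ to spare in this regime; any loss at all in the moment bounds is therefore fatal. The log-convexity inequalities $A_iA_{i+2}\ge A_{i+1}^2$ coming from Cauchy--Schwarz point in the wrong direction to help here. So ``combining the various Cauchy--Schwarz estimates without waste'' is not a matter of care but an actual obstruction: the listed ingredients are genuinely insufficient, and one would need the precise closed forms of the $X_j(\infty,r)$, not inequalities.

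The paper's route for (2) and (3) sidesteps all of this and is quite different in flavour. It observes that the differences
\[
(1-r^3)^{j+2}\bigl(X_{j+1}(\infty,r)X_0(n,r)X_j(n,r)-X_{j+1}(n,r)X_0(\infty,r)X_j(\infty,r)\bigr)\big/\bigl((1+r)r^{3n+3}\bigr)
\]
and the analogous expression for (3) are \emph{polynomials} in $r$ (with $r^{3n}$ treated as a parameter), and then verifies by explicit computation that every coefficient, as a piecewise-polynomial function of $n$ and the exponent, is non-negative. No estimates or limits are used; the work is entirely in writing out and checking these coefficient formulae for each fixed $j$.
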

\begin{proof}
(1)
    Inequality \eqref{eqX1Lower} can be proved by observing that 
    \begin{align*}
      X_1(n,r)(1-r^3)^2&=r(1+2r+2r^3+r^4)(1-r^{3n})-3nr^{3n+1}(1+r)(1-r^3)\\
      &\geq r(1+2r+2r^3+r^4)(1-r^{3n})-r^{3n/2+1}(3r^{3/2}+3r^{5/2})(1-r^{3n})\\
      &\geq r(1+2r+2r^3+r^4)(1-r^{3n})-r^{3n/2+1}(1+2r+2r^3+r^4)(1-r^{3n})\\
      &=r(1+2r+2r^3+r^4)(1-r^{3n})(1-r^{3n/2}).
    \end{align*}

\medskip
(2) To prove \eqref{eqXjBound1} and \eqref{eqXjBound2}, we claim that the expressions 
    \[
    \frac{(1-r^3)^{j+2}}{(1+r)r^{3n+3}}\left(X_{j+1}(\infty,r)X_0(n,r)X_j(n,r)-X_{j+1}(n,r)X_0(\infty,r)X_j(\infty,r)\right)
    \]
    and
    \[
    \frac{(1-r^3)^{2j+4}}{3nr^{3n+4}}\left(X_{j}(\infty,r)X_{j+2}(\infty,r)X_{j+1}^2(n,r)-X_j(n,r)X_{j+2}(n,r)X_{j+1}^2(\infty,r)\right)
    \]
    are actually polynomials in $r$ with non-negative coefficients. This claim can be routinely verified by explicitly calculating each coefficient of these expressions as piecewise polynomial function. As an illustrative example, we have
    \[
    \frac{(1-r^3)^{4}}{3nr^{3n+4}}\left(X_{0}(\infty,r)X_{2}(\infty,r)X_1^2(n,r)-X_0(n,r)X_{2}(n,r)X_1^2(\infty,r)\right)=(1+r)\sum_{m=0}^{3n+2}a_mr^m,
    \]
    where the coefficients are given by $a_0=3n$, $a_1=15n-2$, $a_2=24n-4$, $a_3=30n-18$, $a_{3n}=27n-18$, $a_{3n+1}=3n-4$, $a_{3n+2}=3n-2$, and
    \[
    a_m=\begin{cases}
          3(2m-3)(3n-m)+9(m-2), & \mbox{if } m\equiv0\pmod3, \\
          3(3m-4)(3n-m)+9(m-2), & \mbox{if } m\equiv1\pmod3, \\
          3(3m-5)(3n-m)+18(m-2), & \mbox{if } m\equiv2\pmod3.
        \end{cases}
    \]
    for $4\leq m\leq 3n-1$.
\end{proof}

\begin{cor}\label{coIneqX}
For $n\geq1$ and $r\in(0,1]$, we have
  \begin{align}
  \frac{X_0^2(n,r)}{r X_1(n,r)}&\leq \frac43, \label{eqIneqR00_1}\\
  \frac{rX_2(n,r)}{X_0(n,r)X_1(n,r)}&\leq 3, \label{eqIneqR2_01}\\
  \frac{r^2X_2(n,r)}{X_0^3(n,r)}&\leq \frac92, \label{eqIneqR2_000}\\
  \frac{rX_3(n,r)}{X_0(n,r)X_2(n,r)}&\leq \frac92, \label{eqIneqR3_02}\\
  \frac{r^2X_4(n,r)}{X_0^2(n,r)X_2(n,r)}&\leq 27, \label{eqIneqR4_002}\\
  \frac{X_0(n,r)X_3(n,r)}{X_1(n,r)X_2(n,r)}&\leq 3, \label{eqIneqR03_12}\\
  \frac{X_0(n,r)X_3^2(n,r)}{X_2^3(n,r)}&\leq \frac92, \label{eqIneqR033_222}\\
  \frac{X_0(n,r)X_4(n,r)}{X_2^2(n,r)}&\leq 6, \label{eqIneqR04_22}
\end{align}
where the $X_j(n,r)$ are defined in \eqref{eqXmDef}.
\end{cor}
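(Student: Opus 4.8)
The plan is to obtain all eight inequalities from Lemma~\ref{leIneqX}. The key point is that seven of them have a left-hand side that is a power of~$r$ times a product of non-negative integer powers of the ``building-block'' ratios
\[
A_j:=\frac{X_{j+1}(n,r)}{X_0(n,r)\,X_j(n,r)}\ \ (0\le j\le 3),\qquad
B_j:=\frac{X_j(n,r)\,X_{j+2}(n,r)}{X_{j+1}^2(n,r)}\ \ (0\le j\le 2),
\]
all of which are positive for $r\in(0,1]$ and $n\ge1$. Indeed, one checks at once that \eqref{eqIneqR2_01} is $rA_1$, \eqref{eqIneqR2_000} is $r^2A_0A_1$, \eqref{eqIneqR3_02} is $rA_2$, \eqref{eqIneqR4_002} is $r^2A_2A_3$, \eqref{eqIneqR03_12} is $B_0B_1$, \eqref{eqIneqR033_222} is $B_0B_1^2$, and \eqref{eqIneqR04_22} is $B_0B_1^2B_2$. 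Since \eqref{eqXjBound1} bounds each $A_j$, and \eqref{eqXjBound2} bounds each $B_j$, by the same quantity with $n$ replaced by~$\infty$, each of these seven inequalities follows as soon as it is checked in the limiting case $n=\infty$.

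In that limiting case I would use $X_0(\infty,r)=r(1+r)/(1-r^3)$ together with $X_{j+1}(\infty,r)=r\frac{d}{dr}X_j(\infty,r)$, so that every $X_j(\infty,r)$ is an explicit rational function of~$r$ whose denominator is a power of $1-r^3$. Substituting these turns each of the seven inequalities into a rational-function inequality in the single real variable~$r$ on $(0,1]$. These are of precisely the elementary type already handled in Lemma~\ref{leUVBounds}, and can be disposed of by elementary calculus, by cylindrical algebraic decomposition, or by the numerical sampling argument used there; moreover the right-hand constants ($3$, $9/2$, $27$, $6$) are generous roundings, so a comfortable margin is available.

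The one inequality that does not fit this scheme is \eqref{eqIneqR00_1}, because its left-hand side $X_0^2(n,r)/(rX_1(n,r))$ contains the ratio $X_1/X_0^2$ in inverted position, and \eqref{eqXjBound1} then points the wrong way. Here I would argue directly: substitute the closed forms $X_0(n,r)=r(1+r)(1-r^{3n})/(1-r^3)$ and $X_1(n,r)=r(1+2r+2r^3+r^4)(1-r^{3n})/(1-r^3)^2-3nr^{3n+1}(1+r)/(1-r^3)$, and bound the subtracted term by the AM--GM estimate $1+r^3+\dots+r^{3(n-1)}\ge n r^{3(n-1)/2}$, i.e. $3nr^{3n}(1-r^3)/(1-r^{3n})\le 3r^{3(n+1)/2}(1+r)$. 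After this the claim reduces to the assertion that, writing $p:=r^{3n/2}\in(0,1]$, the quadratic
\[
3(1+r)^2 p^2-12r^{3/2}(1+r)\,p+\bigl(1+2r-3r^2+8r^3+4r^4\bigr)
\]
is non-negative; its discriminant equals $-12(1+r)^2(1-r)^2(2r+1)^2\le 0$, which finishes the proof. (Alternatively, rerunning the polynomial-positivity computation behind Lemma~\ref{leIneqX} with $X_j(n+1,r)$ in place of $X_j(\infty,r)$ shows that $X_0^2(n,r)/X_1(n,r)$ is non-increasing in~$n$, whence \eqref{eqIneqR00_1} reduces to the case $n=1$, where it is just $(1+r)^2\le\frac43(1+2r)$, i.e. $(3r+1)(r-1)\le0$.)

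The bookkeeping that identifies each left-hand side with the appropriate monomial in the $A_j$ and $B_j$, and the ensuing reduction to $n=\infty$, is mechanical; the genuine (but routine) work is the verification of the seven resulting rational inequalities in~$r$. The only conceptually delicate point is \eqref{eqIneqR00_1}, where passage to $n=\infty$ is unavailable, and one must instead either establish the $n$-monotonicity of $X_0^2/X_1$ or carry out the explicit quadratic/discriminant argument above; this is where I expect the main obstacle to lie.
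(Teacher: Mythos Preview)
Your proposal is correct and follows essentially the same route as the paper: for the seven inequalities \eqref{eqIneqR2_01}--\eqref{eqIneqR04_22} you reduce to $n=\infty$ via the monotonicity statements \eqref{eqXjBound1}--\eqref{eqXjBound2} of Lemma~\ref{leIneqX} (your decomposition into the $A_j$ and $B_j$ is exactly how the paper's appeal to that lemma unpacks), and then verify the resulting one-variable rational inequalities.

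The only genuine difference is in the treatment of \eqref{eqIneqR00_1}. The paper splits off $n=1$ by direct computation and for $n\ge 2$ invokes the lower bound \eqref{eqX1Lower} on $X_1$, which reduces the claim to $(1+r)^2(1+r^{3n/2})/(1+2r+2r^3+r^4)\le 4/3$ and then to a single-variable polynomial inequality. Your discriminant argument is a nice alternative: it handles all $n\ge 1$ uniformly, and the factorisation $\Delta=-12(1+r)^2(1-r)^2(2r+1)^2$ checks out. Either route is short; yours avoids the case split and the separate appeal to \eqref{eqX1Lower}, while the paper's reuses an already-established estimate.
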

\begin{proof}
To prove \eqref{eqIneqR00_1}, we argue that
    \[
    4rX_1(n,r)-3X_0^2(n,r)=r^2(1-r)(1+3r)\geq0
    \]
    for $n=1$, and make use of \eqref{eqX1Lower} to see that
    \[
    \frac{X_0^2(n,r)}{r X_1(n,r)}\leq\frac{(1+r)^2(1+r^{3n/2})}{(1+2r+2r^3+r^4)}\leq\frac{(1+r)^2(1+r^3)}{(1+2r+2r^3+r^4)}\leq\frac43
    \]
    for $n\geq2$.

\medskip
For the other seven inequalities, we invoke \eqref{eqXjBound1} for \eqref{eqIneqR2_01}--\eqref{eqIneqR4_002} and \eqref{eqXjBound2} for \eqref{eqIneqR03_12}--\eqref{eqIneqR04_22} to see that the left-hand side of all six inequalities does not exceed the corresponding $n\to\infty$ limit. The six limits in question are simple rational functions in $r$ and can be routinely shown to be bounded above by the right-hand side; as an example, for \eqref{eqIneqR2_000} we have
        \[
        \frac{r^2X_2(n,r)}{X_0^3(n,r)}\leq\frac{r^2X_2(\infty,r)}{X_0^3(\infty,r)}=\frac{1+3r-3r^2+16r^3-3r^4+3r^5-r^6}{(1+r)^2},
        \]
        and 
        \[
        9(1+r)^2-2(1+3r-3r^2+16r^3-3r^4+3r^5-r^6)=(1-r)(7+19r+34r^2+2r^3+8r^4+2r^5)\geq0.
\qedhere        \]
\end{proof}

\subsection{Upper bounds for certain trigonometric sums}

This subsection contains two auxiliary results, of different flavour, which provide upper
bounds for the absolute value of certain trigonometric sums, the
second more special than the first. 
Lemma~\ref{leCosSum1} is used in the
proofs of Lemmas~\ref{leTailIneq2} and~\ref{leTailBound1},
while Lemma~\ref{leIneqCosSum1} is used in the proof of Lemma~\ref{leTailBound2}.
An auxiliary result that is needed in the proof of
Lemma~\ref{leIneqCosSum1} is stated separately in Lemma~\ref{leIneqChebyshev2}.

\begin{lemma}\label{leCosSum1}
Suppose that $0<r\leq 1$ and $\theta,\varphi\in\R$. For all positive monotonically increasing sequences $\{u_n\}_{n\geq0}$, and for all non-negative integers $a,b$ such that $a\leq b$, we have 
\[
\left|\sum_{k=a}^{b}u_kr^k\cos(k\theta+\varphi)\right|\leq \frac{1}{|1-re^{i\theta}|}\left((1-r)\sum_{k=a}^{b}u_k r^k+2r^{b+1}u_{b}\right).
\]
\end{lemma}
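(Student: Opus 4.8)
The plan is to prove this bound by the standard Abel summation (summation-by-parts) technique, which is the natural tool for trigonometric sums with slowly varying coefficients. First I would introduce the partial sums of the pure exponential tail: for $k$ in the range $[a,b]$, set $T_k:=\sum_{j=k}^{b}r^j e^{i(j\theta+\varphi)}$, so that $T_k-T_{k+1}=r^k e^{i(k\theta+\varphi)}$ (with $T_{b+1}:=0$). Since this is a geometric-type sum, we have the closed form
\[
T_k=e^{i(k\theta+\varphi)}\,r^k\,\frac{1-(re^{i\theta})^{b-k+1}}{1-re^{i\theta}},
\]
and hence the crude bound $|T_k|\le \dfrac{r^k+r^{b+1}}{|1-re^{i\theta}|}\le \dfrac{2r^k}{|1-re^{i\theta}|}$ for $k\le b$ (using $r\le 1$, so $r^{b+1}\le r^k$), while the slightly sharper form $|T_k|\le \dfrac{r^k(1-r^{b-k+1})+r^{b+1}}{|1-re^{i\theta}|}$ will be what we actually feed into the two separate pieces at the end.

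Next, I would apply Abel summation: writing $u_k r^k e^{i(k\theta+\varphi)} = u_k(T_k - T_{k+1})$ and summing by parts,
\[
\sum_{k=a}^{b}u_k r^k e^{i(k\theta+\varphi)}
= u_b T_{b+1}-u_a T_a \cdot(-1) \;=\; u_a T_a + \sum_{k=a+1}^{b}(u_k-u_{k-1})\,T_k,
\]
after re-indexing (here I am being schematic; the exact bookkeeping of indices is routine). The point is that monotonicity of $\{u_n\}$ makes every increment $u_k-u_{k-1}$ non-negative, so the triangle inequality gives
\[
\left|\sum_{k=a}^{b}u_k r^k \cos(k\theta+\varphi)\right|
\le u_a|T_a| + \sum_{k=a+1}^{b}(u_k-u_{k-1})|T_k|
\le \frac{1}{|1-re^{i\theta}|}\left(u_a\bigl(r^a+r^{b+1}\bigr) + \sum_{k=a+1}^{b}(u_k-u_{k-1})\bigl(r^k+r^{b+1}\bigr)\right).
\]
Now the sum telescopes in the $u$-variable: $\sum_{k=a+1}^b (u_k-u_{k-1}) r^{b+1} = (u_b-u_a)r^{b+1}$, which combines with the $u_a r^{b+1}$ term to give exactly $u_b r^{b+1}\le u_b r^{b}$... but to land on the claimed $2r^{b+1}u_b$ I should instead keep the dichotomy $|T_k|\le r^k(1-r^{b-k+1})/|1-re^{i\theta}| + r^{b+1}/|1-re^{i\theta}|$ cleanly: the first (geometric) halves assemble, via $r^k-r^{b+1}\ge(1-r)(r^k+r^{k+1}+\dots)$ type manipulations together with $\sum_k(u_k-u_{k-1})r^k\le(1-r)^{-1}$-free bookkeeping, into a term bounded by $(1-r)\sum_{k=a}^b u_k r^k$, using the elementary identity $\sum_{k}u_k r^k(1-r)=\sum_k u_k(r^k-r^{k+1})$ and Abel summation once more; the second halves contribute at most $\bigl(u_a+(u_b-u_a)\bigr)r^{b+1}=u_b r^{b+1}$, and doubling this absorbs the cross-terms, yielding the stated $2r^{b+1}u_b$.

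The main obstacle I expect is purely organizational rather than conceptual: getting the constant right in the ``geometric half'' so that it is exactly $(1-r)\sum_{k=a}^{b}u_k r^k$ and not something slightly larger. Concretely, one must check that
\[
u_a(r^a-r^{b+1}) + \sum_{k=a+1}^{b}(u_k-u_{k-1})(r^k-r^{b+1}) \le (1-r)\sum_{k=a}^{b}u_k r^k,
\]
which after rearranging is equivalent (again by Abel summation on the left, comparing with $\sum u_k(r^k - r^{k+1})$) to a manifestly true inequality once one uses $r^{b+1}\le r^{k+1}$ for $k\le b$ and $u_k\ge 0$. I would carry out this verification carefully but briefly, noting that all the inequalities used are either the triangle inequality, the elementary estimate $|1-z^{b-k+1}|\le 1$ for $|z|\le 1$ (equivalently $|T_k|$-bound above), or monotonicity/positivity of $u$. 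Everything else is a direct computation, so the proof will be short.
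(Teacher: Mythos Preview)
Your approach is the same as the paper's: Abel summation against the tail sums $T_k=\sum_{j=k}^b (re^{i\theta})^j e^{i\varphi}$, together with the geometric bound $|T_k|\le (r^k+r^{b+1})/|1-re^{i\theta}|$. What you anticipate as the ``main obstacle'' does not exist: once you substitute that bound into
\[
u_a|T_a|+\sum_{k=a+1}^{b}(u_k-u_{k-1})|T_k|
\le \frac{1}{|1-re^{i\theta}|}\Bigl(u_a(r^a+r^{b+1})+\sum_{k=a+1}^{b}(u_k-u_{k-1})(r^k+r^{b+1})\Bigr),
\]
the bracket is computed \emph{exactly}, not bounded: the $r^{b+1}$-part telescopes to $u_b r^{b+1}$, and a second Abel summation on the $r^k$-part gives $u_a r^a+\sum_{k=a+1}^b(u_k-u_{k-1})r^k=(1-r)\sum_{k=a}^b u_k r^k+u_b r^{b+1}$. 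Adding these yields $(1-r)\sum u_k r^k+2u_b r^{b+1}$ on the nose---there are no ``cross-terms'' to absorb and no inequality left to check. So drop the dichotomy and the ``sharper form'' entirely.

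One genuine error to correct: the claim ``$|1-z^{b-k+1}|\le 1$ for $|z|\le 1$'' is false (take $z=-1$), and your ``slightly sharper form'' $|T_k|\le r^k/|1-re^{i\theta}|$ fails as well (take $\theta=\pi$, $k=b$). Fortunately you never need it; the bound $|T_k|\le (r^k+r^{b+1})/|1-re^{i\theta}|$, which you already wrote down, is both correct and sufficient.
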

\begin{proof} We write $z=re^{i\theta}$, and note that the sum $S_{a,b}:=\sum_{k=a}^{b}r^k\cos(k\theta+\varphi)$ can be bounded above by
\begin{align*}
\left|\sum_{k=a}^{b}r^k\cos(k\theta+\varphi)\right|&\leq\left|\sum_{k=a}^{b}z^k\right|=\left|\frac{z^a-z^{b+1}}{1-z}\right|\leq \frac{r^a+r^{b+1}}{|1-z|}.
\end{align*}
Therefore, we can use Abel's lemma (summation by parts) to get
\begin{align*}
&\left|\sum_{k=a}^{b}u_kr^k\cos(k\theta+\varphi)\right|\leq u_a|S_{a,b}|+(u_{a+1}-u_a)|S_{a+1,b}|+\dots+(u_b-u_{b-1})|S_{b,b}|\\
&\kern20pt
\leq \frac{1}{|1-z|}\left(u_a(r^a+r^{b+1})+(u_{a+1}-u_a)(r^{a+1}+r^{b+1})+\dots+(u_b-u_{b-1})(r^{b}+r^{b+1})\right)\\
&\kern20pt
=\frac{1}{|1-z|}\left((1-r)\sum_{k=a}^{b}u_k r^k+2r^{b+1}u_b\right).
\qedhere
\end{align*}
\end{proof}

The following inequality improves Lemma~B.4 from~\cite{WANG2022108028}.

\begin{lemma}\label{leIneqCosSum1}
For $r\in(0,1)$, $n\in\Z^+$, and $\theta\in[-\pi,\pi]$, we have 
\begin{equation}\label{eqIneqCosSum1}
\sum_{k=1}^{n}r^{k-1}\cos k\theta\leq \frac{1-r^{n}}{1-r}\sqrt{\frac{1}{1+4\kappa\tan^2(\theta/2)}},
\end{equation}
where
\[
\kappa=\frac{(1+r)(1-r^n)(1-r^{n/6})}{(1-r)^2}.
\]
\end{lemma}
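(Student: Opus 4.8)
The plan is to square the inequality and observe that squaring removes the sign changes of the left-hand side entirely. Write $S_n(\theta):=\sum_{k=1}^n r^{k-1}\cos k\theta$, $L_n:=\sum_{k=1}^n r^{k-1}=\frac{1-r^n}{1-r}$, $\tau:=\tan(\theta/2)$, and $Q:=\sum_{k=1}^n r^{k-1}\sin^2(k\theta/2)$, so that $S_n(\theta)=L_n-2Q$. Then \eqref{eqIneqCosSum1} is \emph{equivalent} to
\begin{equation}\label{eq:Qclaim}
Q=\sum_{k=1}^n r^{k-1}\sin^2(k\theta/2)\ \ge\ \frac{1-r^n}{2(1-r)}\left(1-\frac{1}{\sqrt{1+4\kappa\tau^2}}\right),\qquad \theta\in[-\pi,\pi].
\end{equation}
Indeed, multiplying \eqref{eqIneqCosSum1} by $\sqrt{1+4\kappa\tau^2}>0$ and using $S_n(\theta)=L_n-2Q$, the inequality $S_n(\theta)\sqrt{1+4\kappa\tau^2}\le L_n$ rearranges into \eqref{eq:Qclaim} whenever $S_n(\theta)\ge0$, while if $S_n(\theta)<0$ then $Q>L_n/2$ and both \eqref{eq:Qclaim} and \eqref{eqIneqCosSum1} hold trivially. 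The gain is that the left-hand side of \eqref{eq:Qclaim} is a genuine nonnegative trigonometric sum, to which Maclaurin summation and partial summation apply, and the awkward set $\{\theta:S_n(\theta)\ge0\}$ has disappeared.

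To establish \eqref{eq:Qclaim} I would split the range of $\theta$. For $\tau$ not too large one uses the elementary bound $1-(1+a)^{-1/2}\le a/2$ ($a\ge0$) to reduce \eqref{eq:Qclaim} to the \emph{sharp} quadratic lower bound
\begin{equation}\label{eq:Qquad}
\sum_{k=1}^n r^{k-1}\sin^2(k\theta/2)\ \ge\ \kappa\,\tau^2 L_n=\frac{(1+r)(1-r^n)^2(1-r^{n/6})}{(1-r)^3}\tan^2(\theta/2).
\end{equation}
This is the estimate I would draw from Lemma~\ref{leIneqChebyshev2}: its Chebyshev-type trigonometric sum is calibrated exactly so that the constant $\kappa$ in \eqref{eq:Qquad} matches $\frac{1-r}{1-r^n}\sum_{k=1}^n k^2r^{k-1}$ asymptotically (this is the $\theta\to0$ content of \eqref{eq:Qquad}), the factor $(1-r^{n/6})$ being precisely the loss that must be accepted in order for \eqref{eq:Qquad} to hold for \emph{all} admissible $\theta$, not merely near $0$. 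For the complementary range --- $\tau$ large, i.e.\ $\theta$ near $\pm\pi$, where the right-hand side of \eqref{eq:Qclaim} is close to $L_n/2$ but never attains it --- one instead bounds $Q$ from below by a quantity close to $L_n/2$, again by means of the estimates behind Lemma~\ref{leIneqChebyshev2}. In both ranges, after these substitutions the task collapses to an inequality between polynomials in~$r$, with equality at $\theta=0$ and in the limit $r\to1$, which is checked by a direct computation.

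The main obstacle is sharpness: the constant $4\kappa$ is essentially optimal. The two sides of \eqref{eqIneqCosSum1} coincide at $\theta=0$ for every $r$ and $n$, and in the limit $r\to1$ (or $n\to\infty$) they agree to second order in~$\theta$; equivalently, \eqref{eq:Qquad} becomes an equality to leading order in these limits. Hence no slack is available, so crude tools --- the partial-summation bound $|S_n(\theta)|\le\tfrac12+\tfrac1{2|\sin(\theta/2)|}$, or $\sin(k\theta/2)\ge\tfrac2\pi\cdot\tfrac{k|\theta|}{2}$, which already loses a factor $\pi^2/4$ --- are too weak, and it is precisely here that the refined Lemma~\ref{leIneqChebyshev2} is needed. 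The remaining difficulties --- matching the two $\theta$-ranges at their common boundary, and verifying the polynomial inequalities in~$r$ --- are routine.
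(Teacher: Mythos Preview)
Your reformulation $S_n(\theta)=L_n-2Q$ and the equivalence with \eqref{eq:Qclaim} are correct and give a clean starting point, but the proposal never becomes a proof: the two places where the real work lies are left as assertions.

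First, the appeal to Lemma~\ref{leIneqChebyshev2} is misplaced. That lemma bounds the single difference $\cos(n\theta)-\cos((n+1)\theta)$ by a Pad\'e-type rational function; it says nothing about the weighted sum $Q=\sum_{k=1}^n r^{k-1}\sin^2(k\theta/2)$. In the paper's argument, the reason Lemma~\ref{leIneqChebyshev2} is relevant is that one first \emph{evaluates} $S_n(\theta)$ in closed form as a geometric series, obtaining an expression that involves $\cos(n\theta)$ and $\cos((n+1)\theta)$ explicitly, and only then replaces these transcendental pieces by rational bounds (Lemma~\ref{leIneqChebyshev2} among them). You never introduce that closed form, so there is no object in your outline to which Lemma~\ref{leIneqChebyshev2} applies. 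If you try instead to bound $Q$ term by term via $\sin^2(k\theta/2)\ge(\text{something})\tan^2(\theta/2)$, you will not reach the sharp constant $\kappa$, as you yourself note.

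Second, the statement that ``the task collapses to an inequality between polynomials in~$r$ \dots{} checked by a direct computation'' considerably understates the difficulty. Even granting a correct reduction to a rational inequality, the resulting polynomials (in $r$, $r^n$, $r^{n/6}$ and $n$) are not obviously nonnegative; in the paper this step occupies the bulk of the proof and requires a mixture of hand estimates (for the leading coefficients~$a_0,a_1$) and Cylindrical Algebraic Decomposition (for $a_2$ and a combined term). The large-$\theta$ regime is likewise not routine: the paper needs a separate argument via Cauchy--Schwarz and a reduction to a two-variable problem in $(r,s)$ with $s=r^n$, with its own boundary analysis. Your outline identifies the correct obstacles (sharpness at $\theta=0$ and as $r\to1$) but does not yet supply any of the devices that overcome them.
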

\begin{proof}
Writing $\cos(k\theta)=\frac {1} {2}\left(e^{ik\theta}+e^{-ik\theta}\right)$,
we see that the sum on the left-hand side can be evaluated 
as it is just the sum of two geometric series. After substitution of
the result, it turns out that
the claimed inequality is equivalent to
\begin{equation}\label{eqIneqCosSum2}
\frac{-r+\cos\theta+r^{n+1}\cos(n\theta)-r^n\cos(n\theta+\theta)}{1-2r\cos\theta+r^2}\leq \frac{1-r^n}{1-r}\sqrt{\frac{1}{1+4\kappa\tan^2(\theta/2)}}.
\end{equation}

Without loss of generality we assume that $\theta\geq0$. We prove
\eqref{eqIneqCosSum2} for all {\it real\/} $n\geq1$ and
$\theta\in[0,\pi]$. We divide the proof into two parts according to
whether $\theta$ is larger than $\frac{\pi}{n+1}$ or not.

\medskip
{\sc Part I.}
$\theta\leq\frac{\pi}{n+1}$. We construct Pad\'e approximants as
bounds for the various non-rational functions involved, with the goal
of reducing the proof of the inequality to the proof of an inequality
for a {\it rational function}. The reason is that inequalities for rational
functions are easier to handle. In particular, they
can be automatically proved by using {\it Cylindrical
Algebraic Decomposition} (CAD),\footnote{\label{foot:1}Cylindrical Algebraic
Decomposition (CAD) is an algorithm that, among others, is able to
{\it prove} that a given polynomial in several variables is
positive (non-negative), respectively provides a description of the subset of the
parameter space for which the polynomial is positive (non-negative).
It also allows one to verify the positivity (non-negativity) of
polynomials in several variables under (polynomial) constraints on the variables.
The reader is referred to the ``user guide'' \cite{MR2773014} and the
references therein. Implementations of CAD are available within any
standard computer algebra programme. The one that we used is the
command {\tt CylindricalDecomposition} within {\sl Mathematica}.} 
and this is what we are going to do in the end
for the most intricate ones.

We let $t=\tan^2(\theta/2)$ so that
$\cos\theta=\frac{1-t}{1+t}$. Using Lemma~\ref{leIneqChebyshev2}
below, Lemma~B.3 from~\cite{WANG2022108028}, and elementary manipulations, we obtain 
\begin{align*}
\cos(n\theta)=\frac {3 + (3 - 5 n^2) t - n^2 t^2} {(1 + t) (3 + n^2 t)} 
&\geq\frac{3-(5n^2-2)t}{3+(n^2+2)t}, \quad \text{for all } \theta\in\R,\\
\cos(n\theta)-\cos(n\theta+\theta)&\leq\frac{6(2n+1)t}{3+(2n^2+2n+3)t}, \quad \text{for all }\theta\in[0,\pi/n],\\
\frac{1}{\sqrt{1+4x}}&\geq\frac{1}{1+2x}, \quad \text{for all } x\geq0.
\end{align*}
With these inequalities in mind, it is sufficient to prove that
  \begin{multline}
  (1-r)\left(1-r^n\frac{3-(5n^2-2)t}{3+(n^2+2)t}\right)-1+\frac{1-t}{1+t}+r^n\frac{6(2n+1)t}{3+(2n^2+2n+3)t}\\
  \leq \frac{1-r^n}{1-r}\left(1-2r\frac{1-t}{1+t}+r^2\right)\frac{1}{1+2\kappa t}. \label{eqIneqCosSumRational}
  \end{multline}
  The difference between the two sides of \eqref{eqIneqCosSumRational} can be written as
\begin{equation} \label{eq:diff} 
  \frac{2t(9a_0+3a_1t+a_2t^2+\kappa (1-r)a_3 t^3)}{(1-r)^2(1+t)(3+(n^2+2)t)(3+(2n^2+2n+3)t)(1+2\kappa t)},
\end{equation}
  where
{\allowdisplaybreaks
  \begin{align*}
    a_0 &= 1+r-r^n\left(\left(1+n(1-r)\right)^2+r\right)-\kappa(1-r)^2(1-r^n), \\
    a_1&=2(n^2+n+3)a_0+3\kappa(1-r)(1+r-3r^n+r^{n+1}) \\
    &\qquad+(n^2-1)(1+r-2r^{n+1})+(n+1)^2(2n+1)r^n(1-r)\\
    &\qquad-\kappa(1-r)\left((n^2-1)(1-r)(1+5r^n)+12nr^n\right), \\
    a_2&=\sum_{j=0}^4a_{2j}n^j, \text{with}\\
    &a_{20}=3(1-r^n)(2+2r+\kappa(1-r)(3+7r)), \\
    &a_{21}=4(1+r-3r^n+r^{n+1})+2\kappa(1-r)(1+5r-25r^n-5r^{n+1})), \\
    &a_{22}=7+7r-12r^n+7r^{n+1}-9r^{n+2}+2\kappa(1-r)(1+8r-13r^n+10r^{n+1}), \\
    &a_{23}=2(1+r-6r^n+7r^{n+1}-3r^{n+2})-2\kappa(1-r)(1-r+11r^n-5r^{n+1}), \\
    &a_{24}=2(1+r-3r^n+4r^{n+1}-6r^{n+2})-2\kappa(1-r)^2(1+5r^n), \\
    a_3&=(1+r)(1-r^n)(n^2+2)(2n^2+2n+3)\\
    &\qquad-6n^2r^n(1-r)(2n^2+2n+3)+4nr^n(n-2)(n^2+2).
  \end{align*}}%
In the following, we are going to prove non-negativity results for these coefficients.

\smallskip
(1) $a_0\geq0$. We substitute the definition of $\kappa$ in \eqref{eq:diff}.
After some simplification, the inequality can be shown to be equivalent to
\begin{equation} \label{eq:diff2} 
  \frac{\left(1+n(1-r)\right)^2+r}{1+r}\leq\frac{1-(1-r^n)^2(1-r^{n/6})}{r^n}.
\end{equation}
  In order to prove this, we first use the classical inequalities $1-r\leq(-\log r)$ and $\frac{1-r}{1+r}\leq(-\log r)/2$ to conclude that
  \[
  \frac{\left(1+n(1-r)\right)^2+r}{1+r}\leq 1+(-\log r)n+(-\log r)^2\frac {n^2}2.
  \]
  Note that the right-hand side is exactly the Taylor polynomial of
  $$r^{-n}(1-(1-r^n)^2(1-r^{n/6}))$$ 
of order $2$ at $n=0$. So, in
  order to prove~\eqref{eq:diff2}, it suffices to show that its third derivative is non-negative. Indeed, this third derivative can be calculated as
  \begin{multline*}
  \left(\frac{d}{dn}\right)^3\frac{(1-(1-r^n)^2(1-r^{n/6}))}{r^n}\\
=\frac{(-\log r)^3r^{n/6}}{216}\left(125r^{-n}+2+216r^{5n/6}-343r^n\right)\geq0.
  \end{multline*}

\smallskip
(2) $a_1\geq0$. We claim that
  \begin{multline}
  (n^2-1)(1+r-2r^{n+1})+(n+1)^2(2n+1)r^n(1-r)\label{eqCoeffa2}\\
\geq\kappa(1-r)^2(n^2-1)(1+5r^n)+12n\kappa r^n(1-r).
  \end{multline}
  By substituting the definition of $\kappa$ and using the inequality $1+5r^n\leq(1-r^n)/(1-r^{n/6})$, we see that \eqref{eqCoeffa2} is implied by
  \[
  2n(n+1)(n+2)\geq\frac{1-r^n}{1-r}(1+r)\left(12n\frac{1-r^{n/6}}{1-r}-n^2+1\right).
  \]
  This can be proved by noting that $n\geq(1-r^n)/(1-r)$, and that
  \begin{align*}
    (1+r)\left(12n\frac{1-r^{n/6}}{1-r}-n^2+1\right)&\leq \begin{cases}
                                                            (1+r)\left(2n^2-n^2+1\right), & \mbox{if } n\geq6, \\
                                                            12n\frac{1-r^{n/6}}{1-\sqrt{r}}-n^2+1, & \mbox{if }n<6,
                                                          \end{cases}\\
    &\leq\begin{cases}
           2n^2+2, & \mbox{if } n\geq6, \\
           12n\max(1,n/3)-n^2+1, & \mbox{if }n<6,
         \end{cases}\\
    &<2(n+1)(n+2).
  \end{align*}

\smallskip
(3) $a_2\geq0$. We prove that $a_{20}$, $a_{22}$, $a_{24}$,
$(1-r^{1/6})a_{21}+(1-r^{n/6})a_{22}$ and\break $\sum_{j=0}^4
(1-r^{1/6})^{4-j}(1-r^{n/6})^ja_{2j}$ are non-negative. All these
expressions are rational functions in $r^{1/6}$ and $r^{n/6}$.
In order to get these expressions ready for application of CAD, we
replace each occurrence of $r^{n/6}$ by $X$, and each occurrence
of $r^{1/6}$ by $Y$, say. 
In this manner, we obtain rational functions
in $X$ and~$Y$. (In order to illustrate this: a term
$r^{n+2/3}$ would be replaced by $X^6Y^2$.)
Now CAD can be applied under the constraints $0<X\leq
Y<1$, and it yields the claimed result.

\smallskip
(4) $(1-r^{n/6})a_2+\kappa (1-r)(1-r^{1/6})a_3\geq0$. The proof is
completely analogous to the proof of $a_2\geq0$ above: we write 
$$(1-r^{n/6})a_2+\kappa (1-r)(1-r^{1/6})a_3=\sum_{j=0}^4n^jb_j,$$ 
and verify by CAD that $b_0, b_2, b_4, (1-r^{1/6})b_1+(1-r^{n/6})b_2$
and 
$$\sum_{j=0}^4 (1-r^{1/6})^{4-j}(1-r^{n/6})^jb_j$$ 
are non-negative.

\smallskip
  With these non-negativity results proven, 
the inequality \eqref{eqIneqCosSumRational} follows from the fact that
  \[
  t\leq\tan^2\left(\frac{\pi}{2n+2}\right)\leq \frac1n\leq\frac{1-r^{1/6}}{1-r^{n/6}}.
  \]

\medskip
{\sc Part II.} 
$\theta>\frac{\pi}{n+1}$. We apply the Cauchy--Schwarz inequality to
the vectors\break $(r-\cos\theta,\sin\theta)$ and $(\cos n\theta,\sin
n\theta)$. This yields
$$
(r-\cos\theta)\cos n\theta+\sin\theta\sin n\theta\le
\sqrt{(r-\cos\theta)^2+\sin^2\theta}\cdot 1,
$$
which is equivalent to
  \begin{equation}\label{eqCauchy}
  r\cos(n\theta)-\cos(n\theta+\theta)\leq \sqrt{1-2r\cos\theta+r^2}.
  \end{equation}
Equality in \eqref{eqCauchy} holds if and only if the two vectors are
proportional to each other, that is, if and only if
$$
\frac {r-\cos\theta} {\sin\theta}=\frac {\cos n\theta} {\sin
  n\theta}=\cot n\theta.
$$
We define the quantity
  \[
  n_0(\theta,r)=\frac{1}{\theta}\left(\frac\pi2-\arctan\frac{r-\cos\theta}{\sin\theta}\right)\in\left[\frac{\pi-\theta}{2\theta},\frac{\pi-\theta}{\theta}\right].
  \]
From the above observation, it follows readily that
we have equality in \eqref{eqCauchy} for $n=n_0(\theta,r)$.

  We now claim that the strengthened inequality
  \begin{equation}\label{eqIneqCosSum3}
    \frac{-r+\cos\theta+s\sqrt{1-2r\cos\theta+r^2}}{1-2r\cos\theta+r^2}\leq \frac{1-s}{1-r}\sqrt{\frac{1}{1+4\kappa^*\tan^2(\theta/2)}},
  \end{equation}
  holds in the region
  \[
  \left\{(r,s,\theta):r,s\in[0,1),\ 0\leq\theta<\pi,\ s\leq r^{\max(1,n_0(\theta,r))}\right\},
  \]
where $\kappa^*$ is defined by
$$
\kappa^*:=\frac{(1+r)(1-s)(1-s^{1/6})}{(1-r)^2}.
$$
If we assume the validity of this inequality, then
 the desired result follows by choosing $s=r^n$ in \eqref{eqIneqCosSum3}, and applying \eqref{eqCauchy}; we point out that, since $n_0(\theta,r)\leq\pi/\theta-1<n$, our desired value of $s=r^n$ indeed belongs to the region.

In order to prove \eqref{eqIneqCosSum3}, first note that the
left-hand side of \eqref{eqIneqCosSum3} is linear with respect
to~$s$. Furthermore, computation of the second derivative of the
right-hand side shows that it is concave with respect
to~$s$. Therefore it suffices to prove \eqref{eqIneqCosSum3} for the
values of $s$ on the boundary --- that is, for $s=0$ and $s=r^{\max(1,n_0(\theta,r))}$. We write $c:=\cos\theta$ for simplicity of notation.

\smallskip
(1) $s=0$. In this case, the inequality \eqref{eqIneqCosSum3} reduces to
    \[
    \frac{c-r}{1-2rc+r^2}\leq \sqrt{\frac{1}{(1-r)^2+4(1+r)\frac{1-c}{1+c}}}.
    \]
    This inequality clearly holds if $c\leq r$. If $r<c\leq 1$, then we have
    \begin{multline*}
    \frac{1}{(1-r)^2+4(1+r)\frac{1-c}{1+c}}-\frac{(c-r)^2}{(1-2cr+r^2)^2}\\
    =\frac{(1-c)^2(1+r)^2(1+3c-2r)}{(1-2cr+r^2)^2((1-c)(1+r)(5-r)+2(c-r)(1-r))}\geq0.
    \end{multline*}

\smallskip
(2) {\sc $s=r$ and $n_0(\theta,r)\leq 1$.} 
Elementary manipulations reveal that the inequality for $n_0$ is
equivalent to $r\geq2c$. Moreover, the equality $s=r$ implies that
        \[
        \kappa=\frac{(1+r)(1-r^{1/6})}{1-r}\leq \frac{1+r}{1+\sqrt{r}}\leq1.
        \]
        So it suffices to prove that
        \begin{equation}\label{eqRC1}
        \frac{c-r+r\sqrt{1-2rc+r^2}}{1-2rc+r^2}\leq\sqrt{\frac{1}{1+4\frac{1-c}{1+c}}}=\sqrt{\frac{1+c}{5-3c}}
        \end{equation}
        holds for $r\in[0,1]$ and $c\in[-1,r/2]$. We argue that the left-hand side of \eqref{eqRC1} is increasing with respect to~$r$ for $r\in[\max(0,2c),1]$ because of
        \begin{equation*}
        \frac{\partial}{\partial r}\frac{c-r+r\sqrt{1-2rc+r^2}}{1-2rc+r^2}
        =\frac{(1-cr)\sqrt{1-2cr+r^2}-(1-c^2-(r-c)^2)}{(1-2cr+r^2)^2},
        \end{equation*}
        and that
        \[
        (1-cr)^2(1-2cr+r^2)-(1-c^2-(r-c)^2)^2=(1-c^2)(r-2c)(3r-2c-r^3)\geq0.
        \]
        Therefore we have
        \begin{align*}
          \frac{c-r+r\sqrt{1-2rc+r^2}}{1-2rc+r^2}\leq \frac{1}{\sqrt{2-2c}}-\frac12\leq\frac{1+c}{3}\leq\sqrt{\frac{1+c}{5-3c}},
        \end{align*}
        as desired.

\smallskip
(3) {\sc $s=r^{n_0(\theta,r)}$ and $n_0(\theta,r)\geq 1$}. 
We recall that \eqref{eqCauchy} holds for $n=n_0(\theta,r)$. This
means that \eqref{eqIneqCosSum3} is equivalent to the special case of \eqref{eqIneqCosSum2} where $n$ is replaced by $n_0(r,\theta)$. Since we have $n_0\leq\pi/\theta-1$ and therefore $\theta\leq \pi/(n_0+1)$, we invoke the result of the first part to conclude the proof.
\end{proof}

The following inequality proves that a Pad\'e approximant of
$\cos(n\theta)-\cos(n\theta+\theta)$ is a lower bound in a small
interval around~0. 

\begin{lemma}\label{leIneqChebyshev2}
For $n\geq1$ and $\theta\in[-\pi/n,\pi/n]$, we have
\begin{equation}\label{eqIneqChebyshev2}
\cos(n\theta)-\cos(n\theta+\theta)\leq \frac{6(2n+1)}{3\cot^2(\theta/2)+2n^2+2n+3}.
\end{equation}
\end{lemma}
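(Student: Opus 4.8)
The plan is to collapse \eqref{eqIneqChebyshev2} to a single bound on $\sin\big((2n+1)\theta/2\big)$ and then to settle that bound by Euler's product formula for the sine together with a couple of elementary inequalities. First, since both sides of \eqref{eqIneqChebyshev2} are even in $\theta$, I may take $\theta\in[0,\pi/n]$ and set $\alpha:=\theta/2\in[0,\pi/(2n)]$. The product-to-sum identity gives $\cos(n\theta)-\cos\big((n+1)\theta\big)=2\sin\alpha\,\sin\big((2n+1)\alpha\big)$, while multiplying the numerator and denominator of the right-hand side of \eqref{eqIneqChebyshev2} by $\sin^2\alpha$ and using $\cos^2\alpha+\sin^2\alpha=1$ rewrites it as $6(2n+1)\sin^2\alpha\big/\big(3+2n(n+1)\sin^2\alpha\big)$. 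Dividing by $2\sin\alpha$ (the case $\alpha=0$ being trivial), \eqref{eqIneqChebyshev2} is therefore equivalent to
\[
\sin\big((2n+1)\alpha\big)\le\frac{3(2n+1)\,\sin\alpha}{3+2n(n+1)\sin^2\alpha},\qquad 0<\alpha\le\frac{\pi}{2n}.
\]

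For the key step I distinguish two cases. If $(2n+1)\alpha>\pi$ then the left-hand side above is $\le0$ (note $(2n+1)\alpha\le\pi+\pi/(2n)<2\pi$) while the right-hand side is positive, so there is nothing to prove; I may thus assume $(2n+1)\alpha\le\pi$. In that regime every factor of the numerator in Euler's product $\sin z=z\prod_{k\ge1}\big(1-z^2/(k^2\pi^2)\big)$ with $z=(2n+1)\alpha$ is non-negative, and every factor with $z=\alpha$ is positive, so
\[
\frac{\sin\big((2n+1)\alpha\big)}{(2n+1)\sin\alpha}=\prod_{k\ge1}\frac{1-(2n+1)^2\alpha^2/(k^2\pi^2)}{1-\alpha^2/(k^2\pi^2)}\,.
\]
Each factor lies in $[0,1]$ and, because its denominator is at most $1$ and $(2n+1)^2-1=4n(n+1)$,
\[
\frac{1-(2n+1)^2\alpha^2/(k^2\pi^2)}{1-\alpha^2/(k^2\pi^2)}=1-\frac{4n(n+1)\alpha^2/(k^2\pi^2)}{1-\alpha^2/(k^2\pi^2)}\le 1-\frac{4n(n+1)\alpha^2}{k^2\pi^2}\le\exp\!\Big(-\frac{4n(n+1)\alpha^2}{k^2\pi^2}\Big);
\]
moreover $4n(n+1)\alpha^2/(k^2\pi^2)\le(2n+1)^2\alpha^2/(k^2\pi^2)\le1$, so the quantities $1-4n(n+1)\alpha^2/(k^2\pi^2)$ are non-negative, which legitimizes multiplying these bounds over $k$. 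Doing so and using $\sum_{k\ge1}k^{-2}=\pi^2/6$ gives $\sin\big((2n+1)\alpha\big)\big/\big((2n+1)\sin\alpha\big)\le\exp\!\big(-\tfrac23 n(n+1)\alpha^2\big)$.

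To finish, $\sin\alpha\le\alpha$ turns this into the upper bound $\exp\!\big(-\tfrac23 n(n+1)\sin^2\alpha\big)$, and the elementary inequality $e^{-y}\le(1+y)^{-1}$ for $y\ge0$ then gives $3\big/\big(3+2n(n+1)\sin^2\alpha\big)$; multiplying back by $(2n+1)\sin\alpha$ yields exactly the reduced inequality, hence \eqref{eqIneqChebyshev2}. The argument nowhere uses that $n$ is an integer, so it is valid for all real $n\ge1$, as needed for its application in the proof of Lemma~\ref{leIneqCosSum1}. I do not expect a serious obstacle here: the only delicate points are the sign bookkeeping in the case $(2n+1)\alpha>\pi$ (which guarantees that no factor of the Euler product is negative in the range actually used), and the small coincidence that the exponent $\tfrac23 n(n+1)\alpha^2$ produced by the product, once pushed through $\sin\alpha\le\alpha$ and $e^{-y}\le(1+y)^{-1}$, reproduces precisely the Padé denominator $3+2n(n+1)\sin^2\alpha$ on the right-hand side of \eqref{eqIneqChebyshev2}.
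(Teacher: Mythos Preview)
Your proof is correct, and it takes a genuinely different route from the paper's. Both arguments first reduce \eqref{eqIneqChebyshev2} to the equivalent statement
\[
\sin\big((2n+1)\alpha\big)\le\frac{3(2n+1)\sin\alpha}{3+2n(n+1)\sin^2\alpha},\qquad \alpha=\theta/2,
\]
and both dispose of the range $(2n+1)\alpha>\pi$ by the trivial sign observation. From there, however, the paper replaces $\sin^2\alpha$ by $\alpha^2$ on the right-hand side and proves the resulting inequality $m\sin(\phi/m)\ge\bigl(1+\tfrac{m^2-1}{6m^2}\phi^2\bigr)\sin\phi$ (with $m=2n+1$, $\phi=m\alpha$) by checking equality at $m=1$ and showing that the $m$-derivative of the difference is non-negative. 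Your argument instead bounds the ratio $\sin\big((2n+1)\alpha\big)\big/\big((2n+1)\sin\alpha\big)$ directly via the Euler product for $\sin$, estimating each factor by $\exp\big(-4n(n+1)\alpha^2/(k^2\pi^2)\big)$ and summing using $\zeta(2)=\pi^2/6$; the chain $\sin\alpha\le\alpha$ and $e^{-y}\le(1+y)^{-1}$ then lands exactly on the target. Your approach has the virtue of making transparent \emph{why} the constants $2n(n+1)=\tfrac14\big((2n+1)^2-1\big)$ and $3=\pi^2/(2\zeta(2))$ appear together in the Pad\'e denominator, while the paper's monotonicity-in-$m$ argument is somewhat shorter and more direct but reads more like a verification. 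Both proofs work for all real $n\ge1$, as required for the application in Lemma~\ref{leIneqCosSum1}.
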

\begin{proof}
Without loss of generality assume that $\theta\in[0,\pi/n]$.
If $\theta>2\pi/(2n+1)$ then the left-hand side of
\eqref{eqIneqChebyshev2} is negative and there is nothing to
prove. Otherwise let $\phi:=(2n+1)\theta/2\in[0,\pi]$ and $m:=2n+1$. 
By elementary manipulations, we see that the inequality \eqref{eqIneqChebyshev2} is equivalent to
\[
m\sin\frac{\phi}{m}\geq \left(1+\frac{m^2-1}{6}\sin^2\frac{\phi}{m}\right)\sin\phi.
\]
We use the fact that $\sin^2({\phi}/{m})\leq(\phi/m)^2$ to observe that it suffices to prove
\[
m\sin\frac{\phi}{m}\geq \left(1+\frac{m^2-1}{6m^2}\phi^2\right)\sin\phi.
\]
This is evidently an equality if $m=1$. 
We claim that the difference between the two sides is increasing with respect to~$m$. Indeed, we have
\begin{align*}
\frac{\partial}{\partial
  m}\left(m\sin\frac{\phi}{m}-\left(1+\frac{m^2-1}{6m^2}\phi^2\right)\sin\phi\right)
&=\sin\frac{\phi}{m}-\frac{\phi}{m}\cos\frac{\phi}{m}-\frac{\phi^2}{3m^3}\sin\phi\\
&\geq\sin\frac{\phi}{m}-\frac{\phi}{m}\cos\frac{\phi}{m}-\frac{\phi^2}{3m^2}\sin\frac{\phi}{m}\\
&=\frac13\int^{\phi/m}_0 t(\sin t-t\cos t)\,dt\geq0.
\qedhere
\end{align*}
\end{proof}

\subsection{A decreasing function}

The following technical lemma is of crucial importance in the proof of
the monotonicity property in Lemma~\ref{leIneqTail}.

\begin{lemma}\label{leMonotone1}
For $\lambda>0$ and $n\geq6+36/\lambda$, the function
\[
\frac{1-r^n}{1-r}\exp\left(-\lambda\frac{1-r^{n/6}}{1-r}\right)
\]
is decreasing with respect to~$r$ in the interval $(\exp(-8\lambda/9),1)$.
\end{lemma}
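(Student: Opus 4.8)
The plan is to prove that the logarithmic derivative of the function with respect to~$r$ is negative in the stated interval. Write $F(r):=\log\frac{1-r^n}{1-r}-\lambda\frac{1-r^{n/6}}{1-r}$, so that it suffices to show $F'(r)<0$ for $r\in(\exp(-8\lambda/9),1)$. A direct computation gives
\[
F'(r)=\frac{1}{1-r}-\frac{nr^{n-1}}{1-r^n}
-\lambda\frac{-\tfrac n6 r^{n/6-1}(1-r)+(1-r^{n/6})}{(1-r)^2}.
\]
I would first bound the first two terms. Since $\frac{1}{1-r}-\frac{nr^{n-1}}{1-r^n}=\frac{1-nr^{n-1}+(n-1)r^n}{(1-r)(1-r^n)}$, and the numerator is $\sum_{k=1}^{n-1}(1-r^{n-1})\ge 0$-type (more precisely it factors with a nonnegative quotient), this contribution is positive but small; I expect the clean bound $\frac{1}{1-r}-\frac{nr^{n-1}}{1-r^n}\le\frac{1}{1-r}-\frac{nr^{n-1}}{n(1-r)}\cdot\frac{n(1-r)}{1-r^n}$, i.e.\ eventually something like $\le\frac{1-r^{n-1}}{1-r}\cdot\frac{?}{1-r^n}$; alternatively and more robustly, use $\frac{nr^{n-1}}{1-r^n}\ge\frac{nr^{n-1}}{1-r}\cdot 1$ is false, so instead I would just use $\frac{1}{1-r}-\frac{nr^{n-1}}{1-r^n}\le\frac{1}{1-r}$ as a crude upper bound and show the $\lambda$-term already dominates.

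For the $\lambda$-term, set $s=r^{n/6}$ and observe that the bracket $(1-s)-\tfrac n6 r^{n/6-1}(1-r)=(1-s)-\tfrac n6 r^{-1}(r^{n/6}-r^{n/6+1})$. Using the elementary inequality $1-s\ge -\log s\cdot s$ is the wrong direction; rather I would use convexity of $x\mapsto x^{n/6}$ to get $1-r^{n/6}\ge \tfrac n6 r^{n/6-1}(1-r)$ fails as well — in fact the sign of the bracket is what carries the argument. The key observation is that $1-r^{n/6}\ge\tfrac n6(1-r)r^{(n/6-1)}$ would make the bracket nonnegative, but we actually want a \emph{lower} bound on the whole $\lambda$-term, hence an upper bound on the bracket. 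By the mean value theorem, $1-r^{n/6}=\tfrac n6\xi^{n/6-1}(1-r)$ for some $\xi\in(r,1)$, so the bracket equals $\tfrac n6(1-r)(\xi^{n/6-1}-r^{n/6-1})\le\tfrac n6(1-r)(1-r^{n/6-1})\le\tfrac n6(1-r)$. Thus $\lambda\cdot\frac{\text{bracket}}{(1-r)^2}\ge 0$ from one side; I actually need the reverse — so I would instead bound the bracket \emph{below}. Since $\xi>r$ we get bracket $\ge 0$, giving $F'(r)\le \frac{1}{1-r}-0$, which is not enough. Therefore the real work is a sharper two-sided estimate: I will show $\frac{(1-s)-\frac n6 r^{n/6-1}(1-r)}{(1-r)^2}\ge \frac{c}{1-r}$ for an explicit constant $c$ of order $1/9$ on the range $r>\exp(-8\lambda/9)$, which combined with $n\ge 6+36/\lambda$ forces $\lambda c>1$ and hence $F'(r)<0$.

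Concretely, the main obstacle — and the step I would spend the most care on — is establishing the inequality
\[
\lambda\cdot\frac{(1-r^{n/6})-\tfrac n6 r^{n/6-1}(1-r)}{(1-r)^2}\;\ge\;\frac{1}{1-r}
\]
on $(\exp(-8\lambda/9),1)$ under the hypothesis $n\ge 6+36/\lambda$. I would substitute $t=-\log r\in(0,8\lambda/9)$ and $N=n/6\ge 1+6/\lambda$, expand $1-r^{N}=1-e^{-Nt}$ and $r^{N-1}(1-r)=e^{-(N-1)t}(1-e^{-t})$ in the variable $t$, and reduce the claim to a single-variable inequality in $t$ (with parameters $N,\lambda$ linked by $N\lambda\ge\lambda+6$). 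The expression $\frac{1-e^{-Nt}-Nt e^{-(N-1)t}\frac{1-e^{-t}}{t}}{(1-e^{-t})^2}$ is, after dividing by $1-e^{-t}$, a ratio whose Taylor expansion at $t=0$ begins with $\tfrac{N-1}{2}+O(t)$; I would show it is $\ge\tfrac{N-1}{2}e^{-t}$ or a similar clean lower bound valid for all $t>0$ (this itself may need a short monotonicity or convexity lemma, provable by differentiating once), and then use $t<8\lambda/9$ together with $N-1\ge 6/\lambda$ to conclude $\lambda\cdot\tfrac{N-1}{2}e^{-t}\cdot\frac{1}{1-r}\cdot(1-e^{-t})/(1-e^{-t})\ge 3e^{-8\lambda/9}/ (1-r)\cdot\ldots$ — and finally check that the remaining numeric inequality $3e^{-8/9}>1$ (and more generally the $\lambda$-dependence) holds, possibly tightening the constant $8/9$ or $36$ slightly if needed. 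If the clean lower bound $\ge\tfrac{N-1}{2}e^{-t}$ turns out to be false near $t$ close to its upper limit, the fallback is to prove the weaker $\ge\tfrac{N-1}{2}\cdot\frac{1-e^{-2t}}{2t}\cdot e^{-t}$-type bound or simply to invoke CAD on the resulting rational inequality after replacing $e^{-t}$ by a Padé approximant, exactly as is done elsewhere in the paper.
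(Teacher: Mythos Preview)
Your opening is right: take the logarithmic derivative and reduce to
\[
\frac{1}{1-r}-\frac{nr^{n-1}}{1-r^n}\;\le\;\lambda\,\frac{(1-r^{n/6})-\tfrac n6 r^{n/6-1}(1-r)}{(1-r)^2}.
\]
This is exactly what the paper does. The fatal step is your decision to use the ``crude upper bound'' $\frac{1}{1-r}-\frac{nr^{n-1}}{1-r^n}\le\frac{1}{1-r}$ and then try to show
\[
\lambda\,\frac{(1-r^{n/6})-\tfrac n6 r^{n/6-1}(1-r)}{(1-r)^2}\;\ge\;\frac{1}{1-r}.
\]
This inequality is simply false near $r=1$: the right-hand side blows up, while the left-hand side tends to the finite limit $\lambda\cdot\frac{(n/6)(n/6-1)}{2}$. (Taylor-expand the numerator $1-r^{N}-Nr^{N-1}(1-r)$ with $N=n/6$ at $r=1$; it is of order $(1-r)^2$.) So no amount of sharpening the lower bound on the bracket can rescue this route; the term $\frac{nr^{n-1}}{1-r^n}$ that you discarded is precisely what cancels the $\frac{1}{1-r}$ singularity. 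Your final numeric check $3e^{-8\lambda/9}>1$ also fails for $\lambda>\tfrac{9}{8}\log 3\approx 1.24$, which is another symptom of the same problem.

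The paper keeps both sides in play. It bounds the left-hand side above by $\frac{1-r^n+r^n\log r^n}{(1-r)(1-r^n)}$ (a quantity that stays bounded as $r\to1$) and the derivative $\frac{\partial}{\partial r}\frac{1-r^{n/6}}{1-r}$ below by $\frac{(1-r^{n/6})(1-r^{(n-6)/12})}{(1-r)^2}$. After cross-multiplying and substituting $s=r^{n-6}$, the required inequality becomes
\[
(n-6)\cdot\frac{(1-s^{1/6})(1-s^{1/12})(1-s)}{(1-s+s\log s)(-\log s)}\;\ge\;\frac{1}{\lambda},
\]
and one then shows the displayed function of $s$ is $\ge\tfrac{1}{36}$ for $s$ not too small and $\ge\tfrac{8}{9(-\log s)}$ for small $s$, which together with $n-6\ge 36/\lambda$ and $-\log s<\tfrac{8}{9}\lambda(n-6)$ finishes the proof. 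The essential point you are missing is that the two sides of the log-derivative inequality have matching singularities at $r=1$ that must be allowed to cancel; you cannot bound them separately by quantities with different orders there.
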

\begin{proof}
By taking logarithmic derivatives with respect to~$r$, we see that it suffices to prove that
\[
\frac{\partial}{\partial r}\log \frac{1-r^n}{1-r}\leq\lambda\frac{\partial}{\partial r}\frac{1-r^{n/6}}{1-r}.
\]
For the left-hand side, we have
$$
\frac{\partial}{\partial r}\log \frac{1-r^n}{1-r}
\le \frac{(1-r^n+r^n\log(r^n))}{(1-r)(1-r^n)}
$$
(which, after simplification, turns out to be equivalent to the obvious
$-\log r^{-1}\ge 1-r^{-1}$), and for the right-hand side (without $\lambda$
and with $n$ replaced by~$6n$)
$$
\frac{\partial}{\partial r}\frac{1-r^{n}}{1-r}
\ge \frac{(1-r^n)(1-r^{(n-1)/2})}{(1-r)^2}
$$
(which, after simplification, turns out to be equivalent to
the easily derived inequality 
$n\le r^{-(n-1)/2}+r^{-(n-3)/2}+\dots+r^{(n-1)/2}$).
Therefore, it suffices to prove that
\[
\frac{(1-r^n+r^n\log(r^n))}{(1-r)(1-r^n)}\leq \lambda\frac{(1-r^{n/6})(1-r^{(n-6)/12})}{(1-r)^2},
\]
or, equivalently,
\[
\frac{(1-r^{n/6})(1-r^{(n-6)/12})(1-r^n)}{(1-r^n+r^n\log(r^n))(1-r)}\geq
\frac 1{\lambda}.
\]
We write $s:=r^{n-6}$.
It is not difficult to show that the function
$x\mapsto\frac{(1-x)(1-x^{1/6})}{1-x+x\log x}$ is decreasing for
$x\in(0,1)$. Since $s=r^{n-6}\geq r^n$, this observation implies that
\begin{align*}
\frac{(1-r^{n/6})(1-r^{(n-6)/12})(1-r^n)}{(1-r^n+r^n\log(r^n))(1-r)}&\geq \frac{(1-s^{1/6})(1-s^{1/12})(1-s)}{(1-s+s\log s)(-\log r)}\\
&=(n-6)\frac{(1-s^{1/6})(1-s^{1/12})(1-s)}{(1-s+s\log s)(-\log s)}.
\end{align*}
Therefore it remains to prove that 
\begin{equation} \label{eq:h(s)} 
\frac{(1-s^{1/6})(1-s^{1/12})(1-s)}{(1-s+s\log s)(-\log s)}\geq\frac{1}{\la(n-6)}
\end{equation}
for $s\in(e^{-8\lambda/9},1)$. 
Let $h(s)$ denote the left-hand side of \eqref{eq:h(s)}.
The function $s\mapsto h(s)$, for $s\in(0,1)$, 
equals $0$ for $s\to0^+$ (due to the term $-\log s$ in the denominator), it equals
$1/36$ for $s\to1^-$, it is increasing at the beginning, has a unique
maximum at (numerically) $s=0.00003158\ldots=e^{-10.3629\dots}$
(with value $h(0.00003158\ldots)=0.0459021\dots$),
and from there on is decreasing. Since, by assumption, we have
$\lambda(n-6)\ge
36$, the inequality \eqref{eq:h(s)} will  be satisfied
on an interval of the form $[y,1]$, with $y$ depending on~$\lambda$ and~$n$. 

We have $h(10^{-12})=0.0322464\ldots>\frac {1} {36}=0.02777\dots$. Since $10^{-12}$
is smaller than the place of the unique maximum of $h(s)$, this implies
\begin{equation} \label{eq:hUngl} 
h(s)\ge \frac {1} {36}\ge\frac {1} {\lambda(n-6)},\quad \quad 
\text{for }s\in(10^{-12},1).
\end{equation}
In order to get an estimate for~$y$, we observe that the function
$s\mapsto h(s)(-\log s)$, that is,
$$
s\mapsto 
\frac{(1-s^{1/6})(1-s^{1/12})(1-s)}{(1-s+s\log s)},$$
is decreasing for $s\in(0,1)$. Its value at $s=10^{-12}$ is
$0.891\dots>\frac {8} {9}$.
Therefore, we have
$$
h(s)\ge \frac {8} {9}\frac {1} {(-\log s)},\quad \quad 
\text{for }s\in(0,10^{-12}).
$$
If we now choose $y=e^{-\frac {8} {9}\lambda(n-6)}$, then we have
$$
h(s)\ge \frac {8} {9}\frac {1} {(-\log s)}\ge \frac {1} {\lambda(n-6)},\quad \quad 
\text{for }s\in(y,10^{-12}).
$$
Together with \eqref{eq:hUngl} and the fact that $n\ge7$ by assumption, 
we have proven \eqref{eq:h(s)} and thus the lemma.
\end{proof}

\subsection{A cosine inequality}
The elementary cosine estimate below is needed in the proofs of
Theorems~\ref{thMain1}, \ref{thMain2}, and~\ref{thMain3}.

\begin{lemma} \label{lem:cos-cos}
For $x\in [-\pi/6,0]$ and all integers $m$, we have
\begin{equation} \label{eq:cos-cos} 
\left|\cos\left(x-2m\pi/3\right)\right|
\ge\begin{cases} 
\frac {1} {2},&\text{for }m\equiv0,1~\text{\em(mod $3$)},\\
\left|\cos\left(\pi/3-x\right)\right|,&\text{for }m\equiv2~\text{\em(mod $3$)}.
\end{cases}
\end{equation}
\end{lemma}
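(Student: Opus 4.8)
The plan is to treat the three residue classes of $m$ modulo $3$ separately, and in each case reduce the cosine inequality to a monotonicity statement on a single real interval. First I would write $y=x-2m\pi/3$ and observe that $\cos$ is even and $2\pi/3$-periodic only up to sign, so it suffices to replace $m$ by its residue modulo $3$; since $\cos(\theta+2\pi)=\cos\theta$ and $\cos(\theta)=\cos(-\theta)$, the value $|\cos(x-2m\pi/3)|$ depends only on $m\bmod 3$. With $x\in[-\pi/6,0]$ fixed, I then split into cases.

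For $m\equiv 0\pmod 3$: here $|\cos(x-2m\pi/3)|=|\cos x|=\cos x$ since $x\in[-\pi/6,0]\subset(-\pi/2,\pi/2)$, and $\cos x\ge \cos(\pi/6)=\sqrt3/2>1/2$, so the claimed bound $\tfrac12$ holds with room to spare. For $m\equiv 1\pmod 3$: $|\cos(x-2\pi/3)|=|\cos(x-2\pi/3)|$; as $x$ ranges over $[-\pi/6,0]$, the argument $x-2\pi/3$ ranges over $[-5\pi/6,-2\pi/3]$, on which $\cos$ is negative with $|\cos|$ equal to $\cos(2\pi/3-\ldots)$ — more simply, $|\cos(x-2\pi/3)|$ is decreasing in $x$ on this interval (differentiate: $-\sin(x-2\pi/3)$ and note $\sin(x-2\pi/3)<0$ there, so $\cos(x-2\pi/3)$ is... ) and its minimum over $x\in[-\pi/6,0]$ is attained at $x=0$, giving $|\cos(2\pi/3)|=1/2$. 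So again $|\cos(x-2m\pi/3)|\ge 1/2$. For $m\equiv 2\pmod 3$: $x-4\pi/3\equiv x+2\pi/3\pmod{2\pi}$, so $|\cos(x-2m\pi/3)|=|\cos(x+2\pi/3)|=|\cos(2\pi/3+x)|$. Since $\cos$ is even, $|\cos(2\pi/3+x)|=|\cos(-2\pi/3-x)|$, and because $2\pi/3+x\in[\pi/2,2\pi/3]$ (as $x\in[-\pi/6,0]$), we have $\cos(2\pi/3+x)\le 0$, hence $|\cos(2\pi/3+x)|=-\cos(2\pi/3+x)=\cos(\pi-(2\pi/3+x))=\cos(\pi/3-x)$, which is exactly the right-hand side. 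So in this case equality holds identically, not merely an inequality.

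The only genuine care needed is bookkeeping with signs and ranges of the cosine argument; there is no real obstacle. I would double-check the $m\equiv1$ case carefully, since there the reduction to the bound $1/2$ uses that the extremal value is attained at an endpoint, and one should verify the direction of monotonicity on the relevant sub-interval. Everything else is a direct evaluation. A clean way to present all three cases uniformly is to note that $|\cos(x-2m\pi/3)|=|\cos(x)|$, $|\cos(x+\pi/3)|$ (via $\cos(x-2\pi/3)=\cos(\pi-(x-2\pi/3))$... ) — but rather than forcing uniformity I expect the three-case split above is the shortest rigorous route, and I would simply write it out, checking in each case that the argument of $\cos$ lies in an interval where the sign of $\cos$ is constant so that the absolute value can be removed and the resulting monotone function evaluated at the appropriate endpoint.
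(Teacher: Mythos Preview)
Your proposal is correct and follows essentially the same approach as the paper: split into the three residue classes of $m$ modulo $3$, verify the bound $\tfrac12$ directly for $m\equiv0,1$ by locating the argument of the cosine in an interval where its sign is constant and evaluating at the extremal endpoint, and for $m\equiv2$ show that in fact $|\cos(x-4\pi/3)|=\cos(\pi/3-x)$, so equality holds. The paper's proof is the same in structure but even terser, merely asserting the first two cases are ``straightforward'' and writing out only the identity for $m\equiv2$; your version fills in the details the paper omits, though you should clean up the unfinished parenthetical remarks in the $m\equiv1$ case before presenting it formally.
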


\begin{proof}
We distinguish the congruence classes of~$m$ modulo~$3$.
If $m\equiv0$~(mod~$3$), then we have
\begin{equation} \label{eq:cos-ineq} 
\left|\cos\left(x-2m\pi/3\right)\right|
=
\left|\cos\left(x\right)\right|.
\end{equation}
The claim on the right-hand side of \eqref{eq:cos-ineq} is then
straightforward to verify.
The case where $m\equiv1$~(mod~$3$) can be treated similarly. 
On the other hand, for $m\equiv2$~(mod~$3$) we actually have
\begin{equation*}
\left|\cos\left(x-2m\pi/3\right)\right|
=
\left|\cos\left(\pi/3-x\right)\right|.
\qedhere
\end{equation*}
\end{proof}


\end{document}